\newtheorem{thm}{Theorem}[section]
\newtheorem{cor}[thm]{Corollary}
\newtheorem{lem}[thm]{Lemma}
\newtheorem{prop}[thm]{Proposition}
\theoremstyle{definition}
\newtheorem{defn}[thm]{Definition}
\newtheorem{exam}[thm]{Example}
\newtheorem{rem}[thm]{Remark}
\title[Silting interval reduction and 0-Auslander extriangulated]{Silting interval reduction and 0-Auslander extriangulated categories}
\author{Jixing Pan and Bin Zhu}
\subjclass{16G10, 18G80, 18E40, 16S90.}
\keywords{silting interval reduction, 0-Auslander extriangulated category, silting mutation, cotorsion pair, 2-Calabi-Yau reduction.}
\address{J. Pan: Department of  Mathematical Sciences, Tsinghua University, Beijing 100084, P. R. China.}
\email{pjx19@mails.tsinghua.edu.cn}
\address{B. Zhu: Department of  Mathematical Sciences, Tsinghua University, Beijing 100084, P. R. China.}
\email{zhu-b@mail.tsinghua.edu.cn}
\begin{document}

\begin{abstract}
We give a reduction technique for silting intervals in extriangulated categories, which we call "silting interval reduction". It provides a reduction technique for tilting subcategories when the extriangulated categories are exact categories.

In 0-Auslander extriangulated categories (a generalization of the well-known two-term category $K^{[-1,0]}(\mathsf{proj}\Lambda)$ for an Artin algebra $\Lambda$), we provide a reduction theory for silting objects as an application of silting interval reduction. It unifies two-term silting reduction and Iyama-Yoshino's 2-Calabi-Yau reduction. The mutation theory developed by Gorsky, Nakaoka and Palu recently can be deduced from it. Since there are bijections between the silting objects and the support $\tau$-tilting modules over certain finite dimensional algebras, we show it is compatible with $\tau$-tilting reduction. This compatibility theorem also unifies the two compatibility theorems obtained by Jasso in his work on $\tau$-tilting reduction.

We give a new construction for 0-Auslander extriangulated categories using silting mutation, together with silting interval reduction, we obtain some results on silting quivers. Finally, we prove that $d$-Auslander extriangulated categories are related to a certain sequence of silting mutations.
\end{abstract}

\maketitle

\tableofcontents

\section{Introduction}

As a generalization of tilting theory, classical silting theory in triangulated categories was introduced by Keller and Vossieck \cite{KV}, and rediscovered by Aihara and Iyama \cite{AI}. There are two main techniques in silting theory called mutation and reduction (e.g. \cite{AI}, \cite{IY18}). Mutation allows us to construct a new silting object (resp. subcategory) from a given one by replacing a direct summand (resp. some objects). Reduction concerns a collection of silting objects/subcategories in a category with a common part, then bijectively reduces them to a smaller category.  Recently, silting theory was generalized to extriangulated categories (e.g. \cite{AT22}, \cite{AT23}, \cite{LZZZ}).

Extriangulated categories were introduced by Nakaoka and Palu \cite{NP}, which are a simultaneous generalization of exact categories and triangulated categories. There are more examples such as extension closed subcategories, ideal quotient by projective-injectives \cite{NP}, relative substructure (e.g. \cite{ZZ}, \cite{ZH}, \cite{HLN}) and localization (e.g. \cite{NP}, \cite{NOS}). This greater framework allows us to study silting theory from a unified point of view.  Gorsky, Nakaoka and Palu \cite{GNP23} introduced the notion of 0-Auslander extriangulated categories. It is a generalization of the homotopy categories of two-term complexes of projectives, and more generally, two-term categories $\mathcal{R}\ast \mathcal{R}[1]$ for a rigid subcategory $\mathcal{R}$ in a triangulated category (e.g. \cite{YZ}, \cite{YZZ19}, \cite{FGL}, \cite{ZZ20}, \cite{Dong Yang}) and so on (see \cite[Section 3]{GNP23}). It allows a well-performed mutation theory (see \cite[Theorem 1.3]{GNP23}), which generalizes the well-known mutation theory of two-term silting objects, and relative cluster tilting objects \cite{YZ}.

As we know, reduction usually concerns a collection of objects/subcategories with a common part, then bijectively reduces them to a subcategory (or its quotient). In $\tau$-tilting reduction \cite{G.Jasso}, this collection becomes an interval in the poset of all basic support $\tau$-tiling pairs in $\mathsf{mod}A$. There are more results concerning bijections of intervals such as \cite{AET} and \cite{AT22}. Thus these results can also be regarded as reduction techniques. Motivated by such results, we provide a reduction technique for intervals of silting subcategories in extriangulated categories and call it "silting interval reduction". It allows us to study silting theory from a new perspective. Note that though we use the word "reduction", it is different from classical silting reduction (\cite{AI}, \cite{IY18}, \cite{LZZZ}). Recently, Børve \cite{Borve} generalized Iyama-Yang's classical silting reduction to extriangulated categories. Then he applied his result to 0-Auslander categories and obtained one of our main results (Theorem \ref{1.2}) again.

This paper is organized as follows. In Section \ref{section 2}, we recall some basic notions, notations and results which are needed in the following sections. In Section \ref{section 3}, we study intervals of cotorsion pairs. Then we obtain our first main theorem and call it silting interval reduction. Denote by $\mathsf{silt}\,\mathcal{B}$ the poset of silting subcategories in an extriangulated category $\mathcal{B}$. Since it is a poset, we can define intervals (see Section \ref{section 2}).

\begin{thm}{\rm (Theorem \ref{main thm_1})}
	Let $\mathcal{C}$ be an extriangulated category and $\mathcal{M},\mathcal{N}$ be silting subcategories such that $\mathcal{M}\leq \mathcal{N}$. Then $\mathsf{silt}(-\infty,\mathcal{N}]\cong \mathsf{silt}(\mathcal{N}^{\bot})$, $\mathsf{silt}[\mathcal{M},+\infty)\cong \mathsf{silt}({^{\bot}\mathcal{M}})$ and $\mathsf{silt}\,[\mathcal{M},\mathcal{N}]\cong \mathsf{silt}({^{\bot}\mathcal{M}}\cap \mathcal{N}^{\bot})$ as posets.
\end{thm}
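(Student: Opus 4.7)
The plan is to leverage the correspondence between silting subcategories and hereditary cotorsion pairs established in Section~3, together with the interval-reduction principle for cotorsion pairs developed there. The three statements are variants of the same argument; I focus on the two-sided isomorphism $\mathsf{silt}[\mathcal{M},\mathcal{N}]\cong \mathsf{silt}({}^{\bot}\mathcal{M}\cap\mathcal{N}^{\bot})$, since the one-sided versions arise formally by dropping one of the bounds. Write $\mathcal{Z}:={}^{\bot}\mathcal{M}\cap\mathcal{N}^{\bot}$.

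The first thing to check is that $\mathcal{Z}$ is closed under extensions in $\mathcal{C}$ and hence carries an induced extriangulated structure in which the $\mathsf{Ext}^{i}$-groups ($i\geq 1$) coincide with those of $\mathcal{C}$. This guarantees that self-orthogonality of any subcategory $\mathcal{L}\subseteq \mathcal{Z}$ is insensitive to whether it is computed in $\mathcal{Z}$ or in $\mathcal{C}$. With this in hand, the candidate maps $F:\mathsf{silt}[\mathcal{M},\mathcal{N}]\to \mathsf{silt}(\mathcal{Z})$ and $G:\mathsf{silt}(\mathcal{Z})\to \mathsf{silt}[\mathcal{M},\mathcal{N}]$ can both be defined as the identity on underlying subcategories: $F$ is set-theoretically well defined because $\mathcal{M}\leq \mathcal{L}\leq \mathcal{N}$ forces $\mathsf{Ext}^{>0}(\mathcal{L},\mathcal{M})=0=\mathsf{Ext}^{>0}(\mathcal{N},\mathcal{L})$, so $\mathcal{L}\subseteq \mathcal{Z}$; and $G$ is set-theoretically well defined because any silting $\mathcal{L}'\subseteq \mathcal{Z}$ automatically sits inside $[\mathcal{M},\mathcal{N}]$ by the very definition of $\mathcal{Z}$.

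The substance of the proof lies in verifying that $F$ and $G$ preserve the silting property, which reduces to transferring thick-generation across the embedding $\mathcal{Z}\hookrightarrow \mathcal{C}$. Here I would invoke the cotorsion pairs cut out in $\mathcal{C}$ by the silting subcategories $\mathcal{M}$ and $\mathcal{N}$: their approximation conflations express each object of $\mathcal{Z}$ as a finite iterated extension of objects of any silting $\mathcal{L}\in[\mathcal{M},\mathcal{N}]$, yielding $\mathsf{thick}_{\mathcal{Z}}(\mathcal{L})=\mathcal{Z}$; conversely, the same approximations express every object of $\mathcal{C}$ in terms of $\mathcal{M}$, $\mathcal{N}$ and objects of $\mathcal{Z}$, and since $\mathcal{M},\mathcal{N}\subseteq \mathcal{Z}$ any $\mathcal{L}'$ thickly generating $\mathcal{Z}$ also thickly generates $\mathcal{C}$. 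Finally, $F$ and $G$ are mutually inverse on the nose and preserve the partial order, since the silting order is defined by Ext-vanishing, which is intrinsic to the extriangulated structure.

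The main obstacle is the thick-generation transfer described above, and this is exactly where the cotorsion-pair interval reduction from Section~3 is needed: one must package the two approximation conflations attached to $\mathcal{M}$ and $\mathcal{N}$ into a coherent statement identifying the interval of cotorsion pairs lying between those of $\mathcal{M}$ and $\mathcal{N}$ with the poset of cotorsion pairs in $\mathcal{Z}$. Without an ambient triangulated structure providing shifts of objects for free, controlling iterated extensions purely through conflations is the delicate technical ingredient. Once that is in place, translating between cotorsion pairs and silting subcategories delivers all three isomorphisms, the one-sided cases following by the same argument with only one of the cotorsion pairs in play.
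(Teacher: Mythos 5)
Your proposal is correct and follows essentially the same route as the paper: the maps are the identity on underlying subcategories, and the real content — agreement of higher extensions on $\mathcal{Z}={^{\bot}\mathcal{M}}\cap\mathcal{N}^{\bot}$ and the transfer of the silting/thick-generation property — is obtained exactly as in the paper, by passing through the bijection of Theorem \ref{AT22thm5.7} between silting subcategories and bounded hereditary cotorsion pairs and the cotorsion-pair interval reduction of Proposition \ref{first prop} and Corollary \ref{main cor}. The only sketch-level imprecision is phrasing thick generation as "iterated extensions" (one needs cones and cocones, i.e.\ the finite resolutions and coresolutions supplied by the bounded complete cotorsion pairs), but this is what the invoked machinery delivers.
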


As immediate applications, we apply it to triangulated categories and exact categories. In triangulated categories, we generalize \cite[Theorem 2.3]{IJY}, \cite[Theorem 2.2]{PZ}, \cite[Corollary 1.22]{BZ} and \cite[Corollary 5.10]{GNP23} to $n$-term categories (see Corollary \ref{generalize PZ}). Moreover we prove that $n$-term silting subcategories are precisely silting subcategories in the $n$-term extriangulated categories, which generalizes \cite[Theorem 3.4(2)]{FGL}. In exact categories, we obtain a reduction technique for intervals of tilting subcategories in the sense of Sauter \cite{Sauter} (see Proposition \ref{appli in exact cat}).

In Section \ref{section 4}, we focus on 0-Auslander extriangulated categories (Definition \ref{defn of 0-Aus}). Let $\mathcal{C}$ be a $k$-linear ($k$ is a field), Hom-finite and Krull-Schmidt reduced 0-Auslander extriangulated category with $\mathsf{proj}\,\mathcal{C}=\mathsf{add}P$ and $A={\rm End}_{\mathcal{C}}(P)$. First we prove there are bijections between silting objects (resp. cotorsion pairs) in $\mathcal{C}$ and support $\tau$-tilting pairs (resp. left weak cotorsion-torsion triples) in $\mathsf{mod}A$ (see Corollary \ref{silt-tau-tilt} and Proposition \ref{cotors-lwcotorstors}). It generalizes many well-known results (e.g. \cite{AIR}, \cite{IJY}, \cite{YZ}, \cite{FGL}, \cite{PZ}, \cite{BZ}).

Next we give a reduction theory in 0-Auslander categories as an application of silting interval reduction. Let $U$ be a basic presilting (not silting) object and $N,M$ be the Bongartz and co-Bongartz completions of $U$ respectively (Lemma \ref{two completions}). Define $\mathcal{C}':={^{\bot}M}\cap N^{\bot}$ and $\widetilde{(-)}:\mathcal{C}'\rightarrow \widetilde{\mathcal{C}'}:=\mathcal{C}'/[\mathsf{add}U]$. Denote by $\mathsf{silt}_U\mathcal{C}$ the set of basic silting objects with direct summand $U$. Then we have 

\begin{thm}{\rm (Theorem \ref{reduction in 0-Aus})}\label{1.2}
	$\widetilde{(-)}$ induces an isomorphism of posets
	\[{\rm red}:\mathsf{silt}_{U}\mathcal{C}\rightarrow \mathsf{silt}\,\widetilde{\mathcal{C}'}.\]
\end{thm}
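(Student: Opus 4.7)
The plan is to factor the map $\mathrm{red}$ as a composition of three poset isomorphisms,
\[\mathsf{silt}_U\mathcal{C} \;\xrightarrow{\,\sim\,}\; \mathsf{silt}\,[M,N] \;\xrightarrow{\,\sim\,}\; \mathsf{silt}\,\mathcal{C}' \;\xrightarrow{\,\sim\,}\; \mathsf{silt}\,\widetilde{\mathcal{C}'},\]
in which the middle arrow is furnished directly by Theorem~\ref{main thm_1} applied to $\mathcal{M}=\add M$ and $\mathcal{N}=\add N$, while the outer arrows require independent argument. Throughout, basic silting objects are identified with their $\add$-closures as silting subcategories.

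For the first arrow, I would invoke Lemma~\ref{two completions}: the Bongartz completion $N$ and co-Bongartz completion $M$ are the maximum and minimum basic silting objects admitting $U$ as a direct summand. Combined with the standard fact that common direct summands of $M$ and $N$ propagate to every silting object in the interval $[M,N]$, this yields the equality $\mathsf{silt}_U\mathcal{C} = \mathsf{silt}\,[M,N]$ as posets. Moreover, for any $T \in [M,N]$, the defining inequalities $\mathbb{E}^{>0}(T,M)=0$ and $\mathbb{E}^{>0}(N,T)=0$ force every indecomposable summand of $T$ to lie in ${^{\bot}M} \cap N^{\bot} = \mathcal{C}'$, which is precisely what Step~2 needs.

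For the third arrow, I would first verify that $\add U$ sits inside $\mathcal{C}'$ as a projective-injective subcategory: projectivity of $U$ in $\mathcal{C}'$ follows from $\mathcal{C}' \subseteq N^{\bot}$ together with $U \in \add N$, and injectivity is dual via $\mathcal{C}' \subseteq {^{\bot}M}$ and $U \in \add M$. By the Nakaoka--Palu ideal quotient construction, $\widetilde{\mathcal{C}'} = \mathcal{C}'/[\add U]$ then inherits a natural extriangulated structure in which $\mathbb{E}$-groups between objects outside $\add U$ are preserved. Since $U$ is projective in $\mathcal{C}'$, it must occur as a direct summand of every silting object there, so any $T \in \mathsf{silt}\,\mathcal{C}'$ decomposes as $T = U \oplus T'$. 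The quotient functor then sends $T$ to $\widetilde{T} = \widetilde{T'}$, and I would show this defines a bijection onto $\mathsf{silt}\,\widetilde{\mathcal{C}'}$ with inverse $S \mapsto U \oplus \widehat{S}$ for any lift $\widehat{S}$ of $S$ to $\mathcal{C}'$.

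The main obstacle I anticipate is precisely this last step: checking in both directions that rigidity ($\mathbb{E}^{>0}(T,T)=0$) and the silting generation condition pass cleanly through the projective-injective quotient. The forward direction should be routine given the preservation of $\mathbb{E}$ on objects off $\add U$, but the reverse direction---showing that lifting a silting of $\widetilde{\mathcal{C}'}$ and adjoining $U$ produces a genuine silting of $\mathcal{C}'$, with neither spurious self-extensions arising nor the generation condition being spoiled by the lift---requires careful bookkeeping of how higher extensions in $\mathcal{C}'$ correspond to those in the quotient. Once this compatibility is established, order-preservation is automatic from the preservation of $\mathbb{E}$, and the composition of the three isomorphisms coincides by construction with $\mathrm{red} = \widetilde{(-)}$, completing the proof.
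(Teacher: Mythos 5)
Your three-step factorization $\mathsf{silt}_U\mathcal{C}=\mathsf{silt}\,[M,N]\cong\mathsf{silt}\,\mathcal{C}'\cong\mathsf{silt}\,\widetilde{\mathcal{C}'}$ is exactly the paper's proof of Theorem \ref{reduction in 0-Aus}, so the route is the same; the only issues are where you locate the ingredients and the one step you leave open. First, the maximality/minimality of the Bongartz and co-Bongartz completions among silting objects having $U$ as a summand is not contained in Lemma \ref{two completions}; it is Lemma \ref{interval=summand completion}, proved by applying ${\rm Hom}_{\mathcal{C}}(-,T)$ and ${\rm Hom}_{\mathcal{C}}(T,-)$ to the two defining $\mathbb{E}$-triangles, and the ``propagation'' of $U$ to every $T\in\mathsf{silt}\,[M,N]$ comes from $U\in{^{\bot}N}\cap M^{\bot}\subseteq{^{\bot}T}\cap T^{\bot}=\mathsf{add}T$ via Theorem \ref{AT22thm5.7}. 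Relatedly, your claim that projectivity of $U$ in $\mathcal{C}'$ alone forces $U$ to be a summand of every silting object is not valid in general (a projective need not lie in $\mathcal{T}^{\bot}$; think of tilting modules over a hereditary algebra with no projective summands); what saves you is that $U$ is projective \emph{and} injective in $\mathcal{C}'$, so the cotorsion-pair argument above applies. Second, the ``main obstacle'' you flag --- transferring silting in both directions through $\mathcal{C}'\to\widetilde{\mathcal{C}'}=\mathcal{C}'/[\mathsf{add}U]$ --- is precisely Lemma \ref{silting bijection}, which is already available: the paper does not check rigidity and generation object by object, but shows $\mathbb{E}^{n}(X,Y)\cong\widetilde{\mathbb{E}}^{n}(X,Y)$ for all $n\geq 1$ by dimension shift (using that $\mathcal{C}'$ has enough projectives and that objects of $\mathsf{add}U$ are projective-injective there, Corollary \ref{main cor}(1) and Lemma \ref{proj-inj}), and then matches bounded hereditary cotorsion pairs on the two sides via Theorem \ref{AT22thm5.7}. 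Citing that lemma closes your gap; if you insist on reproving it, the dimension-shift identification of higher extensions is exactly the bookkeeping you anticipated, and the cotorsion-pair formulation is what makes the ``lifting'' direction painless.
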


As an application, we recover the mutation theory in $\mathcal{C}$ (see \cite[Theorem 1.3]{GNP23}). That is, "reduction" implies "mutation", which is similar to \cite[Corollary 3.18]{G.Jasso}.

\begin{prop}{\em (Theorem \ref{another proof for GNP1.3})}
	Let $U\in \mathcal{C}$ be a basic presilting (not silting) object. There exists objects $X'$ and $Y'$ which are not isomorphic, such that $N:=X'\oplus U$ and $M:=Y'\oplus U$ are basic silting objects and $M\leq N$. Moreover $\mathsf{silt}\,[M,N]=\mathsf{silt}_{U}\mathcal{C}$ and there is an $\mathbb{E}$-triangle
	\[X'\stackrel{f}\rightarrowtail U_{1}\stackrel{g}\twoheadrightarrow Y'\dashrightarrow \]
	with $f$ (resp. $g$) a minimal left (resp. right) $\mathsf{add}U$-approximation. In particular, if $U$ is almost complete (i.e. $|U|=|P|-1$), then $\mathsf{silt}_{U}\mathcal{C}=\{M,N\}$ and this is just \cite[Theorem 1.3]{GNP23}.
\end{prop}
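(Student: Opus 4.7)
The plan is to derive this statement as a direct application of Theorem~\ref{reduction in 0-Aus} together with the (co-)Bongartz completion machinery of Lemma~\ref{two completions} and the silting interval reduction Theorem~\ref{main thm_1}. First I would take $N=X'\oplus U$ to be the Bongartz completion and $M=Y'\oplus U$ to be the co-Bongartz completion of $U$; both are basic silting with $U$ as a direct summand by Lemma~\ref{two completions}, and the extremal property of the two completions immediately yields $M\leq N$ in $\mathsf{silt}\,\mathcal{C}$.

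The core technical step is to show that $\mathsf{silt}\,[M,N]=\mathsf{silt}_{U}\mathcal{C}$. For the inclusion $\supseteq$, any basic silting object $L$ containing $U$ as a summand must satisfy $M\leq L\leq N$ by the universal (extremal) properties of the (co-)Bongartz completions. For $\subseteq$, I would use Theorem~\ref{main thm_1} to identify $\mathsf{silt}\,[M,N]$ with $\mathsf{silt}(\mathcal{C}')$ where $\mathcal{C}':={^{\bot}M}\cap N^{\bot}$; since $U$ is a summand of both $M$ and $N$ and these are presilting, $U\in \mathcal{C}'$ and in fact $U\in \mathsf{proj}\,\mathcal{C}'$, so every silting subcategory of $\mathcal{C}'$ contains $\mathsf{add}\,U$ and hence $U$ is a summand of $L$.

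Applying Theorem~\ref{reduction in 0-Aus} then produces the sequence of poset isomorphisms
\[\mathsf{silt}_{U}\mathcal{C}\;\cong\; \mathsf{silt}\,\widetilde{\mathcal{C}'}\;\cong\; \mathsf{silt}\,[M,N],\]
where $\widetilde{\mathcal{C}'}=\mathcal{C}'/[\mathsf{add}U]$ is itself a 0-Auslander extriangulated category. Under this reduction $M$ and $N$ correspond to the minimum and maximum of $\mathsf{silt}\,\widetilde{\mathcal{C}'}$, so $\widetilde{Y'}$ and $\widetilde{X'}$ play the roles of the basic projective and its image under the distinguished autoequivalence inside $\widetilde{\mathcal{C}'}$. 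The defining $\mathbb{E}$-triangle of the 0-Auslander structure on $\widetilde{\mathcal{C}'}$ then lifts to an $\mathbb{E}$-triangle $X'\stackrel{f}\rightarrowtail U_{1}\stackrel{g}\twoheadrightarrow Y'\dashrightarrow $ in $\mathcal{C}$ with $U_{1}\in \mathsf{add}\,U$, and $f$, $g$ arise as minimal left and right $\mathsf{add}\,U$-approximations respectively. The non-isomorphism $X'\not\cong Y'$ follows because an isomorphism would force $M\cong N$, hence $\widetilde{\mathcal{C}'}$ to be trivial, and hence $M\in \mathsf{add}\,U$, contradicting that $U$ is not silting. Finally, when $|U|=|P|-1$, the reduced category $\widetilde{\mathcal{C}'}$ has a single indecomposable projective, so $\mathsf{silt}\,\widetilde{\mathcal{C}'}$ contains exactly two elements, recovering \cite[Theorem~1.3]{GNP23}.

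The main obstacle will be rigorously establishing the equality $\mathsf{silt}\,[M,N]=\mathsf{silt}_{U}\mathcal{C}$, specifically proving that $U\in \mathsf{proj}\,\mathcal{C}'$ so that any silting in $\mathcal{C}'$ necessarily contains $\mathsf{add}\,U$, and verifying the extremality of the (co-)Bongartz completions within the silting poset. This requires the 0-Auslander hypothesis combined with the cotorsion-pair description of silting developed in Section~\ref{section 4}.
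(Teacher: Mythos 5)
Your overall architecture is the same as the paper's: take $N,M$ to be the Bongartz and co-Bongartz completions (Lemma \ref{two completions}), identify $\mathsf{silt}\,[M,N]$ with $\mathsf{silt}_{U}\mathcal{C}$, pass through the reduction $\mathsf{silt}_{U}\mathcal{C}\cong\mathsf{silt}\,\widetilde{\mathcal{C}'}$, and lift the conflation relating the projective and injective of the reduced $0$-Auslander category $\widetilde{\mathcal{C}'}$ to obtain $X'\rightarrowtail U_{1}\twoheadrightarrow Y'\dashrightarrow$. However, one of your justifications is genuinely wrong as stated: you claim that since $U\in\mathsf{proj}\,\mathcal{C}'$, \emph{every} silting subcategory of $\mathcal{C}'$ contains $\mathsf{add}U$. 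Projectivity alone does not put an object into a silting subcategory (for a silting $\mathcal{T}$ one only gets $U\in{}^{\bot}\mathcal{T}$, not $U\in\mathcal{T}^{\bot}$); already for two-term silting complexes over an algebra $\Lambda$, the projective stalk complex $\Lambda$ is not a summand of every two-term silting complex. What you actually need, and what holds here, is that $U$ is projective-\emph{injective} in $\mathcal{C}'$, since $\mathcal{C}'$ has enough projectives $\mathsf{add}N$ and enough injectives $\mathsf{add}M$ (Corollary \ref{main cor}) and $U\in\mathsf{add}M\cap\mathsf{add}N$; then $U\in{}^{\bot}\mathcal{T}\cap\mathcal{T}^{\bot}=\mathsf{add}\mathcal{T}$ by Theorem \ref{AT22thm5.7}. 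The paper avoids passing through $\mathcal{C}'$ altogether: for $T\in\mathsf{silt}\,[M,N]$ it notes $U\in{}^{\bot}N\cap M^{\bot}\subseteq{}^{\bot}T\cap T^{\bot}=\mathsf{add}T$ (Lemma \ref{interval=summand completion}).

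Two further steps are asserted where a short argument is required. First, the ``extremal property'' of the completions ($M\le T\le N$ for all $T\in\mathsf{silt}_{U}\mathcal{C}$, in particular $M\le N$) is not part of Lemma \ref{two completions}; it is proved in Lemma \ref{interval=summand completion} by applying ${\rm Hom}_{\mathcal{C}}(-,T)$ and ${\rm Hom}_{\mathcal{C}}(T,-)$ to the defining $\mathbb{E}$-triangles and using $\mathbb{E}^{2}=0$ (from ${\rm pd},{\rm id}\le 1$). Second, your non-isomorphism argument ``$X'\cong Y'\Rightarrow M\cong N\Rightarrow\widetilde{\mathcal{C}'}$ trivial'' does not follow: $M\cong N$ alone does not make the quotient by $[\mathsf{add}U]$ vanish. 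The correct route is either the paper's Lemma \ref{proj-inj} (every map $X\to Y$ is radical, so $\mathsf{add}M\cap\mathsf{add}N=\mathsf{add}U$), or: if $X'\cong Y'$ then $\widetilde{X'}$ would be projective-injective in the \emph{reduced} category $\widetilde{\mathcal{C}'}$ (Lemma \ref{C'is 0-Aus}), forcing $X'=0$ and hence $U=N$ silting, a contradiction. With these repairs your plan goes through; for the almost complete case, your count of $\mathsf{silt}\,\widetilde{\mathcal{C}'}$ can indeed be justified via Corollary \ref{silt-tau-tilt} over the local algebra ${\rm End}_{\widetilde{\mathcal{C}'}}(\widetilde{X'})$, which is a legitimate alternative to the paper's direct argument using \cite[Lemma 4.23]{GNP23}.
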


Next we show the reduction map ${\rm red}:\mathsf{silt}_{U}\mathcal{C}\rightarrow \mathsf{silt}\,\widetilde{\mathcal{C}'}$ is compatible with Jasso's $\tau$-tilting reduction as follows. This result generalizes \cite[Theorem 4.12]{G.Jasso} and the reduction map ${\rm red}:\mathsf{silt}_U\mathcal{C}\rightarrow \mathsf{silt}\,\widetilde{\mathcal{C}'}$ generalizes two-term silting reduction map (\cite[Corollary 4.16]{G.Jasso}).

\begin{thm}{\rm (Theorem \ref{main thm2})}\label{thm3 in intro}
	There is a commutative diagram
	\[\begin{tikzcd}
		\mathsf{silt}_{U}\mathcal{C} \arrow[rr,"\overline{(-)}={\rm Hom}_{\mathcal{C}}(P{,}-)"] \arrow[d,"{\rm red}"] & {} & s\tau\text{-}\mathsf{tilt}_{(\overline{U},\overline{\Omega I_{U}})}A \arrow[d,"{\rm red}"] \\
		\mathsf{silt}\,\widetilde{\mathcal{C}'} \arrow[rr,"{\rm Hom}_{\widetilde{\mathcal{C}'}}(N{,}-)"] & {} & s\tau\text{-}\mathsf{tilt}B
	\end{tikzcd}\]
	in which all maps are isomorphisms of posets.
\end{thm}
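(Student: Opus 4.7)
The plan is to check that each arrow in the diagram is a well-defined isomorphism of posets, identify Jasso's reduced algebra with $B$, and then verify commutativity by tracing a silting object through both paths.

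The top horizontal arrow is the restriction of the poset isomorphism of Corollary \ref{silt-tau-tilt} to silting objects having $U$ as a summand; under that bijection, the summand $U$ on the $\mathcal{C}$-side corresponds to the summand pair $(\overline{U}, \overline{\Omega I_U})$ on the module side, so the restriction is an isomorphism onto $s\tau\text{-}\mathsf{tilt}_{(\overline{U},\overline{\Omega I_U})} A$. The left vertical arrow is Theorem \ref{reduction in 0-Aus}. For the bottom horizontal arrow, silting interval reduction realises $\widetilde{\mathcal{C}'}$ as a reduced Hom-finite Krull-Schmidt 0-Auslander extriangulated category with projective generator (the image of) $N$; applying Corollary \ref{silt-tau-tilt} to $\widetilde{\mathcal{C}'}$ with $B := {\rm End}_{\widetilde{\mathcal{C}'}}(N)$ then gives the isomorphism. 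The right vertical arrow is Jasso's $\tau$-tilting reduction \cite{G.Jasso}.

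To compare the two columns I identify Jasso's reduced algebra with $B$. Bongartz completions are characterised by an $\mathbb{E}$-perp condition in $\mathcal{C}$ and by an Ext-perp condition in $\mathsf{mod}A$; these correspond under Proposition \ref{cotors-lwcotorstors}, so $\overline{N}$ is the Bongartz completion of $\overline{U}$. Consequently Jasso's reduced algebra ${\rm End}_A(\overline{N})/\langle e_{\overline{U}}\rangle$ is isomorphic to ${\rm End}_{\mathcal{C}}(N)/[\mathsf{add}\,U] = B$. With this identification, commutativity reduces, for each $T = T' \oplus U \in \mathsf{silt}_U \mathcal{C}$, to the statement
\[
{\rm Hom}_{\widetilde{\mathcal{C}'}}(N, \widetilde{T}) \;\cong\; {\rm Hom}_A(\overline{N}, \overline{T})/[\overline{U}](\overline{N}, \overline{T}).
\]
The left side equals ${\rm Hom}_{\mathcal{C}}(N, T)/[\mathsf{add}\,U](N, T)$ by definition of the ideal quotient (using $N, T \in \mathcal{C}'$), so it suffices to produce a natural isomorphism ${\rm Hom}_{\mathcal{C}}(N, T) \cong {\rm Hom}_A(\overline{N}, \overline{T})$ respecting the ideals of morphisms factoring through $\mathsf{add}\,U$ and $\mathsf{add}\,\overline{U}$.

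The main obstacle is this Hom-comparison, which is not formal because $\overline{(-)} = {\rm Hom}_{\mathcal{C}}(P, -)$ is not in general fully faithful on all of $\mathcal{C}$. I would argue as follows: in a 0-Auslander category every silting object $S$ admits an $\mathbb{E}$-triangle $P_1 \rightarrowtail P_0 \twoheadrightarrow S \dashrightarrow$ with $P_0, P_1 \in \mathsf{add}\,P$, and its image under $\overline{(-)}$ is a projective presentation of $\overline{S}$ in $\mathsf{mod}A$. Applying ${\rm Hom}_{\mathcal{C}}(-, T)$ to the resolution of $N$ yields a four-term exact sequence that matches ${\rm Hom}_A(-, \overline{T})$ applied to the projective presentation of $\overline{N}$ term-by-term on the $\mathsf{add}\,P$ pieces (where the comparison is tautological), whence ${\rm Hom}_{\mathcal{C}}(N, T) \cong {\rm Hom}_A(\overline{N}, \overline{T})$. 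Compatibility with the respective ideals is then immediate from naturality, and commutativity of the diagram follows.
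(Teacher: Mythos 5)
Your overall architecture (identify the four arrows, identify $B$ with Jasso's reduced algebra, then check commutativity objectwise via a Hom-comparison) matches the paper's strategy, but the key step is wrong as written. The claimed natural isomorphism ${\rm Hom}_{\mathcal{C}}(N,T)\cong {\rm Hom}_{A}(\overline{N},\overline{T})$ is false in general: the functor $\overline{(-)}={\rm Hom}_{\mathcal{C}}(P,-)$ only induces an equivalence $\mathcal{C}/[\mathcal{I}]\stackrel{\sim}\rightarrow \mathsf{mod}A$ (Lemma \ref{properties of 0-Aus}(5)), so what is true is ${\rm Hom}_{A}(\overline{N},\overline{T})\cong {\rm Hom}_{\mathcal{C}}(N,T)/[\mathcal{I}](N,T)$, and the ideal $[\mathcal{I}](N,T)$ is typically nonzero (for instance, if $U$ has a nonzero injective direct summand $I_{U}$, its identity map gives a nonzero element of $[\mathcal{I}](N,T)$ for every $T\in \mathsf{silt}_{U}\mathcal{C}$). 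Your four-term exact sequence argument breaks precisely here: in an extriangulated category that is not exact, applying ${\rm Hom}_{\mathcal{C}}(-,T)$ to $P_{1}\rightarrowtail P_{0}\twoheadrightarrow N\dashrightarrow$ does not give injectivity of ${\rm Hom}_{\mathcal{C}}(N,T)\rightarrow {\rm Hom}_{\mathcal{C}}(P_{0},T)$ (deflations are not epimorphisms in this setting; think of the two-term category $K^{[-1,0]}(\mathsf{proj}\Lambda)$), whereas on the module side ${\rm Hom}_{A}(\overline{N},\overline{T})$ is the kernel of the corresponding map. The true and needed statement is exactly the paper's Lemma \ref{func iso}: $[\mathcal{I}](N,T)\subseteq [\mathsf{add}U](N,T)$, which is not formal but uses that $N=X\oplus U$ is the Bongartz completion, so that via the triangle $X\stackrel{a}\rightarrowtail U'\twoheadrightarrow I\dashrightarrow$ every map from $X$ to an injective factors through $a$; only after this does ${\rm Hom}_{\widetilde{\mathcal{C}'}}(N,\widetilde{T})\cong {\rm Hom}_{A}(\overline{N},\overline{T})/[\mathsf{add}\overline{U}](\overline{N},\overline{T})$ follow. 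So your final displayed identity is correct, but your route to it would not survive scrutiny, and the fix is essentially the missing lemma.

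There is a second, smaller gap: you reduce commutativity to the displayed Hom-quotient identity, but Jasso's reduction map is not defined as $\overline{T}\mapsto {\rm Hom}_{A}(\overline{N},\overline{T})/[\mathsf{add}\overline{U}]$; it sends $\overline{T}$ to ${\rm Hom}_{A}(\overline{N},{\rm f}\overline{T})$, where ${\rm f}\overline{T}$ is the torsion-free part for the torsion pair $(\mathsf{Fac}\overline{U},\overline{U}^{\bot_{0}})$. Identifying these two descriptions is a genuine step of the paper's proof of Theorem \ref{main thm2}: one applies ${\rm Hom}_{A}(\overline{N},-)$ to $0\rightarrow {\rm t}\overline{T}\rightarrow \overline{T}\rightarrow {\rm f}\overline{T}\rightarrow 0$, uses $\tau$-rigidity of $\overline{N}$ to kill ${\rm Ext}^{1}_{A}(\overline{N},{\rm t}\overline{T})$, and then computes the image of ${\rm Hom}_{A}(\overline{N},{\rm t}\overline{T})$ to be $[\mathsf{add}\overline{U}](\overline{N},\overline{T})$, the nontrivial inclusion resting on \cite[Lemma 2.6]{AIR} applied to a right $\mathsf{add}\overline{U}$-approximation of ${\rm t}\overline{T}$. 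You should either supply this comparison or phrase the commutativity check directly against ${\rm Hom}_{A}(\overline{N},{\rm f}\overline{T})$, as the paper does; one also needs to note that the resulting isomorphisms are isomorphisms of right $B$-modules, not just of vector spaces.
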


In Section \ref{section 5}, we provide a new construction of 0-Auslander extriangulated categories using silting mutations.

\begin{thm}{\rm (Theorem \ref{main thm: new 0-Aus})}\label{thm6 in intro}
	Let $\mathcal{C}$ be an extriangulated category, $\mathcal{M},\mathcal{N}\in \mathsf{silt}\,\mathcal{C}$ and $\mathcal{C}':={^{\bot}\mathcal{M}}\cap \mathcal{N}^{\bot}$. Then $\mathcal{M}\leq \mathcal{N}$ and $\mathcal{C}'$ is 0-Auslander if and only if $\mathcal{M}$ is a left mutation of $\mathcal{N}$ with respect to a good covariantly finite subcategory $\mathcal{D}$ of $\mathcal{N}$.
\end{thm}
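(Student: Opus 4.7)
The plan is to transport the problem into $\mathcal{C}'$ via silting interval reduction (Theorem \ref{main thm_1}). When $\mathcal{M} \leq \mathcal{N}$, the inclusions $\mathcal{M},\mathcal{N} \subseteq {^{\bot}\mathcal{M}} \cap \mathcal{N}^{\bot} = \mathcal{C}'$ make $\mathcal{M}$ and $\mathcal{N}$ into silting subcategories of the extriangulated category $\mathcal{C}'$, namely the minimum and maximum elements of the image poset $\mathsf{silt}\,[\mathcal{M},\mathcal{N}] \cong \mathsf{silt}\,\mathcal{C}'$. The key structural fact I would establish and use is that in a 0-Auslander extriangulated category the projective and injective subcategories are precisely the maximum and minimum silting subcategories; this bridges the conditions on $\mathcal{C}'$ with the mutation data on $(\mathcal{M},\mathcal{N})$.

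For the "only if" direction, assume $\mathcal{M}\leq \mathcal{N}$ and $\mathcal{C}'$ is 0-Auslander. First identify $\mathsf{proj}\,\mathcal{C}' = \mathcal{N}$ and $\mathsf{inj}\,\mathcal{C}' = \mathcal{M}$ using that both are silting in $\mathcal{C}'$, that $\mathcal{N} \subseteq {^{\bot}\mathcal{M}}$ forces $\mathcal{N}$ to consist of projectives, and dually for $\mathcal{M}$. The 0-Auslander axiom then produces, for each $N \in \mathcal{N}$, an $\mathbb{E}$-triangle $N \rightarrowtail I_0 \twoheadrightarrow I_1 \dashrightarrow$ with $I_0, I_1 \in \mathcal{M}$; setting $\mathcal{D}$ to be the additive closure of those indecomposables of $\mathcal{N}$ not lying in $\mathcal{M}$, these triangles give precisely the left $\mathcal{M}$-approximations witnessing that $\mathcal{M}$ is the left mutation of $\mathcal{N}$ along $\mathcal{D}$. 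Covariant finiteness and goodness of $\mathcal{D}$ follow by reading off the minimality and compatibility properties from the 0-Auslander structure.

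For the "if" direction, suppose that $\mathcal{M}$ is the left mutation of $\mathcal{N}$ along a good covariantly finite $\mathcal{D} \subseteq \mathcal{N}$, with defining approximation triangles $D \rightarrowtail N_D \twoheadrightarrow M_D \dashrightarrow$ for $D \in \mathcal{D}$. Standard silting mutation arguments (applied in the extriangulated setting) give $\mathcal{M} \leq \mathcal{N}$. I would then check that $\mathcal{C}'$ inherits an extriangulated structure from $\mathcal{C}$, that $\mathcal{N} = \mathsf{proj}\,\mathcal{C}'$ and $\mathcal{M} = \mathsf{inj}\,\mathcal{C}'$, and finally that every projective admits a length-$1$ injective coresolution: decomposing any $N \in \mathcal{N}$ as $E \oplus D$ with $E \in \mathcal{N}\cap \mathcal{M}$ (already projective-injective) and $D \in \mathcal{D}$, the mutation triangle for $D$ supplies the required resolution, verifying the 0-Auslander axioms.

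The main obstacle is the bi-directional identification $\mathsf{proj}\,\mathcal{C}' = \mathcal{N}$ and $\mathsf{inj}\,\mathcal{C}' = \mathcal{M}$; one must rule out extra projectives/injectives in $\mathcal{C}'$ coming from the interval silting reduction, which should follow from silting-ness of $\mathcal{N}$ and $\mathcal{M}$ together with the fact that $\mathcal{C}'$ is additively generated, via iterated extensions, by objects controlled by these two silting subcategories. A secondary point requiring care is matching the "good" hypothesis on $\mathcal{D}$ precisely with the minimality/uniqueness of the two-term injective resolution of the 0-Auslander structure, so that the two directions of the equivalence really invert each other.
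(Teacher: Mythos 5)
Your overall frame (identify $\mathsf{proj}\,\mathcal{C}'=\mathcal{N}$ and $\mathsf{inj}\,\mathcal{C}'=\mathcal{M}$ via the interval/cotorsion-pair machinery of Corollary \ref{main cor}, then translate the dominant-dimension triangles into mutation triangles) is the same as the paper's, but as written both directions have genuine gaps. In the ``if'' direction the decisive condition ${\rm pd}_{\mathcal{C}'}\leq 1$ is never verified: you only check enough projectives/injectives and the existence of conflations $N\rightarrowtail D\twoheadrightarrow M_N$ with $D$ projective-injective, and these properties alone do \emph{not} imply heredity (for instance, $\mathsf{mod}\Lambda$ for a self-injective non-semisimple $\Lambda$ has enough projectives and injectives and satisfies ${\rm dom.dim}\geq 1$ trivially, yet has objects of infinite projective dimension). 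The vanishing $\mathbb{E}^{\geq 2}_{\mathcal{C}'}=0$ is precisely where the hypothesis that $\mathcal{M}$ is obtained from $\mathcal{N}$ in \emph{one} mutation step enters (compare Theorem \ref{d-Auslander vs mutation}, where $d+1$ steps give ${\rm pd}\leq d+1$): the paper deduces $\mathbb{E}^{\geq 2}(\mathcal{M},\mathcal{N})=0$ from the approximation triangles, dimension-shifts to get $\mathbb{E}^{\geq 2}({}^{\bot}\mathcal{M},\mathcal{N}^{\bot})=0$ using ${}^{\bot}\mathcal{M}=\mathcal{M}^{\vee}$, $\mathcal{N}^{\bot}=\mathcal{N}^{\wedge}$, and then invokes Corollary \ref{main cor}(2) to transfer this to the relative bifunctor $\mathbb{E}_{\mathcal{C}'}$ (a transfer that is itself not automatic). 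None of this appears in your outline. In addition your ``defining approximation triangles $D\rightarrowtail N_D\twoheadrightarrow M_D$'' are written backwards (left mutation uses $N\rightarrowtail D\twoheadrightarrow M_N$ with $N\in\mathcal{N}$, $D\in\mathcal{D}$), and the decomposition $N=E\oplus D$ with $D\in\mathcal{D}$ is neither available (no Krull--Schmidt hypothesis) nor needed.

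In the ``only if'' direction your choice of $\mathcal{D}$ is wrong: the mutation is with respect to $\mathcal{D}=\mathcal{M}\cap\mathcal{N}$, the projective-injectives of $\mathcal{C}'$, not the additive closure of the indecomposables of $\mathcal{N}$ \emph{not} lying in $\mathcal{M}$ (and appealing to indecomposables again presupposes Krull--Schmidt, which the theorem does not assume). With $\mathcal{D}=\mathcal{M}\cap\mathcal{N}$, the dominant-dimension triangles $N\rightarrowtail D\twoheadrightarrow M_N$ become left $\mathcal{D}$-approximation triangles because $\mathbb{E}(M_N,\mathcal{D})=0$ ($\mathcal{M}$ is presilting), which gives goodness and $\mu^{L}(\mathcal{N};\mathcal{D})\subseteq\mathcal{M}$; but you still owe the reverse inclusion $\mathcal{M}\subseteq\mu^{L}(\mathcal{N};\mathcal{D})$, which the paper gets from the codominant-dimension conflations $N_M\rightarrowtail D'\twoheadrightarrow M$ for each $M\in\mathcal{M}$ (alternatively from a silting comparison as in Lemma \ref{basic lem on mutation}). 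As it stands, ``these triangles give precisely the left approximations witnessing that $\mathcal{M}$ is the left mutation'' asserts exactly the two points that need proof.
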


It can be interpreted as "0-Auslander" is equivalent to "one step of silting mutation". We show that some well-known examples of 0-Auslander extriangulated categories can also arise from silting mutation (Example \ref{examples}). Using silting interval reduction and the above theorem, we obtain several direct consequences on silting mutation. For example, under certain conditions, silting subcategories between a mutation pair form an $n$-regular subquiver of the silting quiver for some $n$ (Corollary \ref{silting between mutation}). We also show that a left mutation can be obtained by a sequence of irreducible left mutations if and only if a certain algebra admits a maximal green sequence (Corollary \ref{mutation vs maximal green sequence}).

2-Calabi-Yau triangulated category with a cluster tilting object can be regarded as a reduced 0-Auslander extriangulated category, and cluster tilting objects become silting objects (see \cite{YZ}, \cite{FGL}, \cite{PPPP23}, \cite{GNP23}). Thus reduction theory in 0-Auslander categories can also be applied to this situation. Indeed, we show in Section \ref{section 6} that it recovers a general version of 2-Calabi-Yau reduction introduced by Faber, Marsh and Pressland \cite{FMP} recently. Then we see Theorem \ref{thm3 in intro} also covers \cite[Theorem 4.24]{G.Jasso}.

\begin{thm}{\rm (Proposition \ref{relative struc on stably 2-CY}, Theorem \ref{our reduction vs 2-CY reduction})}
	Let $(\mathcal{C},\mathbb{E},\mathfrak{s})$ be a stably 2-Calabi-Yau Frobenius extriangulated category with a contravariantly finite subcategory $\mathcal{T}$ of $\mathcal{C}$ such that $\mathcal{T}=\mathcal{T}^{\bot_{1}}$. Let $\mathcal{X}$ be a functorially finite rigid subcategory of $\mathcal{C}$. Consider the relative substructure $\mathbb{E}_{\mathcal{T}}$ on $\mathcal{C}$ determined by $\mathcal{T}$, then $(\mathcal{C},\mathbb{E}_{\mathcal{T}})$ is 0-Auslander and cluster tilting subcategories in $(\mathcal{C},\mathbb{E})$ coincide with functorially finite silting subcategories in $(\mathcal{C},\mathbb{E}_{\mathcal{T}})$. Moreover, reduction theory in 0-Auslander categories recovers the reduction in \cite[Theorem 3.21]{FMP}.
\end{thm}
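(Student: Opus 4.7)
The proof breaks into three linked assertions: (a) $(\mathcal{C},\mathbb{E}_{\mathcal{T}})$ is 0-Auslander, (b) cluster tilting subcategories of $(\mathcal{C},\mathbb{E})$ coincide with functorially finite silting subcategories of $(\mathcal{C},\mathbb{E}_{\mathcal{T}})$, and (c) Theorem~\ref{1.2} applied in $(\mathcal{C},\mathbb{E}_{\mathcal{T}})$ reproduces the 2-Calabi-Yau reduction of \cite[Theorem 3.21]{FMP}. I would tackle them in this order.

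For (a), the first step is to identify the relative projectives and injectives of $\mathbb{E}_{\mathcal{T}}$. The hypothesis $\mathcal{T}=\mathcal{T}^{\bot_{1}}$ forces $\mathsf{proj}\,\mathcal{C}\subseteq\mathcal{T}$ and makes $\mathcal{T}$ rigid in the original structure; contravariant finiteness then delivers enough relative projectives, and one checks $\mathsf{proj}(\mathcal{C},\mathbb{E}_{\mathcal{T}})=\mathcal{T}$. Using 2-Calabi-Yau Serre duality $\mathbb{E}(X,Y)\cong D\mathbb{E}(Y,X)$, the relative injectives are identified with $\Sigma\mathcal{T}$ in the stable category (which inherits covariant finiteness and vanishing self-extensions from $\mathcal{T}$), and enough of them follows dually. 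The 0-Auslander condition then amounts to producing, for each $T\in\mathcal{T}$, an $\mathbb{E}_{\mathcal{T}}$-triangle $T\rightarrowtail I\twoheadrightarrow T'$ with $I\in\Sigma\mathcal{T}$ injective and $T'\in\mathcal{T}$; this follows from a right $\mathcal{T}$-approximation of a cosyzygy of $T$, whose kernel lands again in $\mathcal{T}^{\bot_{1}}=\mathcal{T}$ by a standard Yoneda argument applied to the $\mathbb{E}$-triangle of the approximation.

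For (b), any silting $\mathcal{U}$ in $(\mathcal{C},\mathbb{E}_{\mathcal{T}})$ must contain the relative projectives $\mathcal{T}$, after which the inclusion $\mathcal{T}\subseteq\mathcal{U}$ makes $\mathbb{E}$- and $\mathbb{E}_{\mathcal{T}}$-extensions with target (or source) in $\mathcal{U}$ coincide; hence the silting condition $\mathcal{U}=\mathcal{U}^{\bot_{1,\mathbb{E}_{\mathcal{T}}}}$ translates into the cluster tilting condition $\mathcal{U}=\mathcal{U}^{\bot_{1}}$ in $(\mathcal{C},\mathbb{E})$, and vice versa, with functorial finiteness preserved across the translation. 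The converse direction is direct: a cluster tilting subcategory automatically contains $\mathcal{T}$ (by the $\mathcal{T}$-approximation argument and $\mathcal{T}=\mathcal{T}^{\bot_{1}}$) and satisfies all the silting axioms in the sparser relative structure.

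Finally for (c), given the functorially finite rigid $\mathcal{X}$, I form its Bongartz completion $N$ and co-Bongartz completion $M$ in $(\mathcal{C},\mathbb{E}_{\mathcal{T}})$ via Lemma~\ref{two completions}; by (b), $M$ and $N$ are cluster tilting subcategories of $(\mathcal{C},\mathbb{E})$ containing $\mathcal{X}$. The key identification is ${}^{\bot}M\cap N^{\bot}=\mathcal{X}^{\bot_{1}}$: one inclusion is automatic from $\mathcal{X}\subseteq M\cap N$ together with the 2-Calabi-Yau symmetry $\mathcal{X}^{\bot_{1}}={}^{\bot_{1}}\mathcal{X}$, while the other follows because objects in $\mathcal{X}^{\bot_{1}}$ inherit vanishing $\mathbb{E}_{\mathcal{T}}$-extensions against the cluster tilting subcategories $M,N$. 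Quotienting by $[\mathsf{add}\,\mathcal{X}]$ then matches the two reduced categories, and Theorem~\ref{1.2} supplies the bijection parallel to \cite[Theorem 3.21]{FMP}. The main obstacle I anticipate is precisely this last step: one must verify not only that the underlying additive categories agree but that the extriangulated (indeed 2-Calabi-Yau after reduction) structures induced on $\widetilde{\mathcal{C}'}$ by our reduction and by the Faber-Marsh-Pressland construction coincide, so that the images of ``silting'' and ``cluster tilting'' are truly identified under the equivalence from (b) now applied on the reduced side.
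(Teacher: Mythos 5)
The central step of your part (b) is unfortunately false: neither do functorially finite silting subcategories of $(\mathcal{C},\mathbb{E}_{\mathcal{T}})$ contain the relative projectives $\mathcal{T}$, nor do cluster tilting subcategories of $(\mathcal{C},\mathbb{E})$ contain $\mathcal{T}$. In a 0-Auslander category both the projectives and the injectives are themselves silting, and a general silting subcategory contains neither; concretely, any cluster tilting subcategory of $(\mathcal{C},\mathbb{E})$ other than $\mathcal{T}$ gives (according to the very statement being proved) a silting subcategory of $(\mathcal{C},\mathbb{E}_{\mathcal{T}})$ not containing $\mathcal{T}$, and already for $\mathcal{U}\in\mathcal{T}$ one has $\mathbb{E}_{\mathcal{T}}(\mathcal{U},-)=0\neq\mathbb{E}(\mathcal{U},-)$, so the claimed coincidence of $\mathbb{E}$- and $\mathbb{E}_{\mathcal{T}}$-extensions also fails. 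Since your whole translation between ``silting'' and ``cluster tilting'' rests on this containment, part (b) is unproved. The mechanism the paper uses instead is a \emph{symmetric} vanishing transfer (Proposition \ref{relative struc on stably 2-CY}(2)): applying the stable 2-Calabi-Yau duality to a right $\mathcal{T}$-approximation conflation $T^{1}\rightarrowtail T^{0}\twoheadrightarrow X$ one shows $\mathbb{E}(X,Y)=0=\mathbb{E}(Y,X)$ if and only if $\mathbb{E}_{\mathcal{T}}(X,Y)=0=\mathbb{E}_{\mathcal{T}}(Y,X)$; this makes ${^{\bot_{1}}\mathcal{N}}\cap\mathcal{N}^{\bot_{1}}$ the same in both structures, and then one quotes the 0-Auslander characterization of silting as functorially finite with $\mathcal{N}={^{\bot_{1}}\mathcal{N}}\cap\mathcal{N}^{\bot_{1}}$ (\cite[Theorem 4.3]{GNP23}), together with Theorem \ref{AT22thm5.7} for the converse. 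You invoke Serre duality only in (a), where it is not needed, and not in (b), where it is essential. (In (a) your dominant-dimension step is also slightly off: what is required is, for each $T\in\mathcal{T}$, an $\mathbb{E}_{\mathcal{T}}$-conflation $T\rightarrowtail P\twoheadrightarrow \Sigma T$ with $P\in\mathcal{P}$ relative projective-\emph{injective} -- which holds because $\mathcal{T}$ is rigid -- not an injective copresentation with cone in $\mathcal{T}$; the approximation/orthogonality argument you sketch is the one needed for enough relative projectives and ${\rm pd}\leq 1$, i.e.\ for ${\rm Cone}(\mathcal{T},\mathcal{T})=\mathcal{C}$.)

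In part (c) the identification ${^{\bot}\mathcal{M}}\cap\mathcal{N}^{\bot}=\mathcal{X}^{\bot_{1}}$ is indeed the right intermediate statement (Lemma \ref{silt contain X}(3)), but the comparison with \cite[Theorem 3.21]{FMP} should not be made after passing to the quotient by $[\mathsf{add}\mathcal{X}]$: the Faber--Marsh--Pressland reduction keeps $\mathcal{X}^{\bot_{1}}$ as a Frobenius extriangulated category with projective-injectives $\mathsf{add}(\mathcal{X}\cup\mathcal{P})$, so the matching must happen on $\mathcal{C}'=\mathcal{X}^{\bot_{1}}$ itself. Moreover, the point you explicitly defer as ``the main obstacle'' is precisely the missing content: one has to prove that the restricted structure $\mathbb{E}_{\mathcal{T}}|_{\mathcal{C}'}$ coincides with the relative structure $(\mathbb{E}|_{\mathcal{X}^{\bot_{1}}})_{\mathcal{N}}$ on the FMP-reduced category determined by the Bongartz completion $\mathcal{N}$, after checking that $\mathcal{N}$ is contravariantly finite with $\mathcal{N}=\mathcal{N}^{\bot_{1}}$ there (this is Lemma \ref{two 0-Aus struc coincide}, proved by an explicit lifting argument using that the middle terms lie in $\mathsf{add}(\mathcal{X}\cup\mathcal{P})$ and the ambient objects in $\mathcal{X}^{\bot_{1}}$). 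Only then can the equivalence of (b), applied now to $(\mathcal{X}^{\bot_{1}},\mathbb{E}|_{\mathcal{X}^{\bot_{1}}})$ with $\mathcal{T}$ replaced by $\mathcal{N}$, identify cluster tilting subcategories of the FMP reduction with functorially finite silting subcategories of $(\mathcal{C}',\mathbb{E}_{\mathcal{T}}|_{\mathcal{C}'})$, which Corollary \ref{reduction in 0-Aus_2} matches with $\mathsf{silt}_{\mathcal{X}}(\mathcal{C},\mathbb{E}_{\mathcal{T}})$. Acknowledging the obstacle is not the same as resolving it, so as written the final assertion that your reduction ``recovers'' Theorem \ref{main thm in FMP} remains open in your proposal.
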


In Section \ref{section 7}, we generalize Theorem \ref{thm6 in intro} as follows.

\begin{thm}{\rm (Theorem \ref{d-Auslander vs mutation})}
Let $\mathcal{M},\mathcal{N}\in \mathsf{silt}\,\mathcal{C}$, $\mathcal{C}':={^{\bot}\mathcal{M}}\cap \mathcal{N}^{\bot}$ and $d\geq 0$ an integer. Then $\mathcal{M}\leq \mathcal{N}$ and $\mathcal{C}'$ is $d$-Auslander if and only if there is a sequence of silting subcategories 
\[\mathcal{N}=\mathcal{T}_{0}\geq \mathcal{T}_{1}\geq \cdots \geq \mathcal{T}_{d+1}=\mathcal{M}\]
such that for $0\leq i\leq d$, $\mathcal{T}_{i+1}=\mu^{L}(\mathcal{T}_{i};\mathcal{D})$ for a good covariantly finite subcategory $\mathcal{D}$ of $\mathcal{T}_{i}$.
\end{thm}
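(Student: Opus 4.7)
The plan is to proceed by induction on $d\ge 0$, with the base case $d=0$ being exactly Theorem \ref{main thm: new 0-Aus} (which provides the one-step $\mathcal{N}=\mathcal{T}_0 \geq \mathcal{T}_1=\mathcal{M}$ case). Throughout I will lean on silting interval reduction (Theorem \ref{main thm_1}), which identifies $\mathsf{silt}[\mathcal{M},\mathcal{N}]$ with $\mathsf{silt}(\mathcal{C}')$, so that any intermediate silting subcategory $\mathcal{T}_1$ with $\mathcal{M}\leq \mathcal{T}_1\leq \mathcal{N}$ corresponds to a silting subcategory $\mathcal{S}_1$ of $\mathcal{C}'$, and conversely. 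The strategy is then to decompose the $d$-Auslander structure of $\mathcal{C}'$ into a 0-Auslander ``top piece'' ${^{\bot}\mathcal{T}_1}\cap \mathcal{N}^{\bot}$ and a $(d-1)$-Auslander ``bottom piece'' ${^{\bot}\mathcal{M}}\cap \mathcal{T}_1^{\bot}$, then iterate.

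For the forward direction, assume $\mathcal{M}\leq \mathcal{N}$ and that $\mathcal{C}'$ is $d$-Auslander. I would first isolate as a preparatory lemma the following decomposition: a $d$-Auslander extriangulated category admits a distinguished cotorsion pair whose lower component, regarded as an extriangulated subcategory, is 0-Auslander, while the relative substructure on the complement is $(d-1)$-Auslander. This is the natural analog of ``peeling off one layer'' of the defining chain of cotorsion pairs for a $d$-Auslander category. Transporting this cotorsion pair across silting interval reduction yields $\mathcal{T}_1\in \mathsf{silt}\,[\mathcal{M},\mathcal{N}]$ such that ${^{\bot}\mathcal{T}_1}\cap \mathcal{N}^{\bot}$ is 0-Auslander; by the base case $\mathcal{T}_1=\mu^{L}(\mathcal{N};\mathcal{D})$ for a good covariantly finite $\mathcal{D}\subseteq \mathcal{N}$. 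Since $^{\bot}\mathcal{M}\cap \mathcal{T}_1^{\bot}$ is $(d-1)$-Auslander, the inductive hypothesis applied to the pair $(\mathcal{M},\mathcal{T}_1)$ produces the remainder $\mathcal{T}_1\geq \mathcal{T}_2\geq \cdots \geq \mathcal{T}_{d+1}=\mathcal{M}$ of the desired chain.

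For the reverse direction, given the chain of mutations, set $\mathcal{T}_1=\mu^{L}(\mathcal{N};\mathcal{D})$. By Theorem \ref{main thm: new 0-Aus}, $\mathcal{N}\geq \mathcal{T}_1$ with $^{\bot}\mathcal{T}_1\cap \mathcal{N}^{\bot}$ being 0-Auslander. By the inductive hypothesis applied to the tail $\mathcal{T}_1\geq \cdots \geq \mathcal{T}_{d+1}=\mathcal{M}$, the subcategory $^{\bot}\mathcal{M}\cap \mathcal{T}_1^{\bot}$ is $(d-1)$-Auslander. Then reassembling these two pieces via the decomposition lemma (applied in the opposite direction, i.e.\ as a gluing statement) shows that $\mathcal{C}'={^{\bot}\mathcal{M}}\cap \mathcal{N}^{\bot}$ is $d$-Auslander, completing the induction.

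The main obstacle is the decomposition/gluing lemma for $d$-Auslander categories, since everything else is a formal induction on top of the base case plus silting interval reduction. Concretely one must verify that the defining chain of cotorsion pairs of a $d$-Auslander category truly splits at an intermediate silting subcategory into a 0-Auslander cotorsion pair followed by the chain certifying that the complement is $(d-1)$-Auslander, and that this correspondence is reversible. This requires a careful match between the cotorsion-pair definition of $d$-Auslander, the role of $\mathsf{proj}$/$\mathsf{inj}$ in relative substructures on $^{\bot}\mathcal{T}_1\cap \mathcal{N}^{\bot}$ and $^{\bot}\mathcal{M}\cap \mathcal{T}_1^{\bot}$, and the compatibility of silting interval reduction with taking $^{\bot}(-)$ and $(-)^{\bot}$. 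Once that lemma is in place, the induction runs mechanically.
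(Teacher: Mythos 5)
Your plan defers its entire mathematical content to the ``decomposition/gluing lemma'', and the gluing half of that lemma is false in the form your induction needs it. Your inductive hypothesis only records that ${^{\bot}\mathcal{T}_{1}}\cap \mathcal{N}^{\bot}$ is 0-Auslander and ${^{\bot}\mathcal{M}}\cap \mathcal{T}_{1}^{\bot}$ is $(d-1)$-Auslander; it forgets that the subcategories at which one mutates must all be contained in $\mathcal{M}\cap \mathcal{N}$, i.e.\ among the projective-injectives of $\mathcal{C}'$ itself. Dominant dimension of $\mathcal{C}'$ asks for coresolutions of the projectives $\mathcal{N}$ by objects of $\mathcal{M}\cap\mathcal{N}$, whereas the two halves only control coresolutions by objects of $\mathcal{N}\cap\mathcal{T}_{1}$ and $\mathcal{T}_{1}\cap\mathcal{M}$, which can be strictly larger. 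Concretely, take $\mathcal{C}=K^{b}(\mathsf{proj}\,kA_{2})$ with $A_{2}\colon 1\to 2$, $\mathcal{N}=\mathsf{add}(P_{1}\oplus P_{2})$, $\mathcal{T}_{1}=\mu^{L}(\mathcal{N};\mathsf{add}P_{1})=\mathsf{add}(P_{1}\oplus S_{1})$ and $\mathcal{M}=\mu^{L}(\mathcal{T}_{1};\mathsf{add}S_{1})=\mathsf{add}(S_{1}\oplus P_{2}[1])$. Both ${^{\bot}\mathcal{T}_{1}}\cap\mathcal{N}^{\bot}$ and ${^{\bot}\mathcal{M}}\cap\mathcal{T}_{1}^{\bot}$ are 0-Auslander by Theorem \ref{main thm: new 0-Aus}, but $\mathcal{C}'={^{\bot}\mathcal{M}}\cap\mathcal{N}^{\bot}$ has projective-injectives $\mathcal{M}\cap\mathcal{N}=0$ and $P_{1}[1]\notin\mathcal{C}'$ (since $\mathbb{E}(P_{1}[1],S_{1})\cong{\rm Hom}(P_{1},S_{1})\neq 0$), so no conflation $P_{1}\rightarrowtail 0\twoheadrightarrow P_{1}[1]$ exists in $\mathcal{C}'$, ${\rm dom.dim}\,\mathcal{C}'=0$ and $\mathcal{C}'$ is not 1-Auslander. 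The point is that the theorem uses one and the same $\mathcal{D}$ (necessarily contained in $\mathcal{M}\cap\mathcal{N}$) for the whole chain; your induction applied to the pair $(\mathcal{M},\mathcal{T}_{1})$ only returns ``some'' good subcategory at each step, so the forward direction of your scheme outputs strictly less than what your backward (gluing) direction would need, and the backward step as stated cannot be repaired without reinstating the common $\mathcal{D}$.

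The paper's proof avoids any gluing. For sufficiency it works directly in $\mathcal{C}'$: splicing the approximation conflations $N\rightarrowtail D_{0}\twoheadrightarrow T_{1},\ \dots,\ T_{d}\rightarrowtail D_{d}\twoheadrightarrow M$ with all $D_{k}\in\mathcal{D}\subseteq\mathcal{M}\cap\mathcal{N}$ exhibits ${\rm dom.dim}\,\mathcal{C}'\geq d+1$ (the $D_{k}$ are projective-injective in $\mathcal{C}'$ by Corollary \ref{main cor}), and dimension shifting along the same conflations, combined with the $\mathbb{E}^{\geq 2}$-vanishing argument from Theorem \ref{main thm: new 0-Aus}, yields $\mathbb{E}^{\geq d+2}(\mathcal{M}^{\vee},\mathcal{N}^{\wedge})=0$, i.e.\ ${\rm pd}\,\mathcal{C}'\leq d+1$. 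For necessity, the chain is constructed inside the $d$-Auslander category in the proof of Proposition \ref{equi defn for d-Aus}: one mutates the projectives at the fixed $\mathcal{D}=\mathcal{P}\cap\mathcal{I}$ and shows by an (ET4) splice that every object of $\mu^{L}(\mathcal{P};\mathcal{D})$ again admits a length-$d$ chain of conflations with projective-injective middle terms, so the process iterates until it terminates at $\mathcal{I}$ after $d+1$ steps; Theorem \ref{main thm_1} and Corollary \ref{main cor} then transport this chain to the interval $[\mathcal{M},\mathcal{N}]$ in $\mathcal{C}$. Your ``peel off one layer'' idea is close in spirit to this necessity argument, but to turn it into a proof you must prove the propagation of the dominant-dimension chains (the (ET4) step), establish good covariant finiteness of $\mathcal{M}\cap\mathcal{N}$ in each intermediate silting subcategory, and carry the fixed $\mathcal{D}$ through the induction; as written, the proposal is an outline whose crucial lemma is unproved and, in the gluing direction, untrue.
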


\section{Preliminaries}\label{section 2}

Let $\mathcal{C}=(\mathcal{C},\mathbb{E},\mathfrak{s})$ be a $k$-linear {\em extriangulated category} ($k$ is a commutative ring with 1) where $\mathcal{C}$ is an $k$-linear additive category, $\mathbb{E}:\mathcal{C}^{op}\times \mathcal{C}\rightarrow {\rm Mod}k$ is a $k$-bilinear functor (sometimes we write $\mathbb{E}_{\mathcal{C}}$ to emphasize the category) and $\mathfrak{s}$ is a {\em realization} of $\mathbb{E}$. We omit the definition but refer to \cite{NP} \cite{Palu23} for details. For an $\mathbb{E}$-{\em extension} $\delta \in \mathbb{E}(C,A)$, if $\mathfrak{s}(\delta)=[A\stackrel{a}\rightarrow B\stackrel{b}\rightarrow C]$, then we call the complex $A\stackrel{a}\rightarrow B\stackrel{b}\rightarrow C$ an ($\mathfrak{s}$-){\em conflation} and $a$ (resp. $b$) an ($\mathfrak{s}$-){\em inflation} (resp. ($\mathfrak{s}$-){\em deflation}). We write the pair ($A\stackrel{a}\rightarrow B\stackrel{b}\rightarrow C$, $\delta$) as $A\stackrel{a}\rightarrowtail B\stackrel{b}\twoheadrightarrow C\stackrel{\delta}\dashrightarrow$ and call it an $\mathbb{E}$-{\em triangle}. Throughout this paper $\mathcal{C}$ is an essentially small $k$-linear extriangulated category. All subcategories are full and closed under isomorphisms.

Let $X\in \mathcal{C}$ and $\mathcal{D}\subseteq \mathcal{C}$ be a subcategory. A morphism $f:X\rightarrow D$ with $D\in \mathcal{D}$ is a {\em left} $\mathcal{D}$-{\em approximation} of $X$ if $f^{\ast}:{\rm Hom}_{\mathcal{C}}(D,D')\rightarrow {\rm Hom}_{\mathcal{C}}(X,D')$ is surjective for each $D'\in \mathcal{D}$. Let $\mathcal{M}$ be a subcategory of $\mathcal{C}$ such that $\mathcal{D}\subseteq \mathcal{M}$. $\mathcal{D}$ is {\em covariantly finite} in $\mathcal{M}$ if each $M\in \mathcal{M}$ has a left $\mathcal{D}$-approximation. Dually, we define a morphism $g:D\rightarrow X$ to be a {\em right} $\mathcal{D}$-{\em approximation} of $X$ and $\mathcal{D}$ to be {\em contravariantly finite} in $\mathcal{M}$. $\mathcal{D}$ is {\em functorially finite} in $\mathcal{M}$ if it is both covariantly finite and contravariantly finite in $\mathcal{M}$. Let $\mathsf{add}\mathcal{D}$ denote the smallest subcategory of $\mathcal{C}$ containing $\mathcal{D}$ and closed under finite direct sums and direct summands.

For an additive subcategory $\mathcal{D}$ of $\mathcal{C}$, denote by $[\mathcal{D}]$ the ideal of $\mathcal{C}$ consisting of morphisms that factor through an object in $\mathcal{D}$. That is, for any $X,Y\in \mathcal{C}$, we have $[\mathcal{D}](X,Y)=\{f\in {\rm Hom}_{\mathcal{C}}(X,Y)\,|\,f\text{ factors through an object in }\mathcal{D}\}$.

For any two subcategories $\mathcal{X}$, $\mathcal{Y}$ in $\mathcal{C}$, define three subcategories as follows:
\[\mathcal{X}\ast \mathcal{Y}:=\{M\in \mathcal{C}\,|\,\exists~\mathbb{E}\text{-triangle}~X\rightarrowtail M \twoheadrightarrow Y\dashrightarrow~\text{with}~X\in \mathcal{X}~\text{and}~Y\in \mathcal{Y}\},\]
\[{\rm Cone(\mathcal{X},\mathcal{Y})}:=\{M\in \mathcal{C}\,|\,\exists~\mathbb{E}\text{-triangle}~X\rightarrowtail Y \twoheadrightarrow M\dashrightarrow~\text{with}~X\in \mathcal{X}~\text{and}~Y\in \mathcal{Y}\},\]
\[{\rm Cocone(\mathcal{X},\mathcal{Y})}:=\{M\in \mathcal{C}\,|\,\exists~\mathbb{E}\text{-triangle}~M\rightarrowtail X \twoheadrightarrow Y\dashrightarrow~\text{with}~X\in \mathcal{X}~\text{and}~Y\in \mathcal{Y}\}.\]
Note that the definitions above rely on the category $\mathcal{C}$. Thus to be precise, we should write them as $\mathcal{X}\ast_{\mathcal{C}} \mathcal{Y},{\rm Cone_{\mathcal{C}}(\mathcal{X},\mathcal{Y})}$ and ${\rm Cocone_{\mathcal{C}}(\mathcal{X},\mathcal{Y})}$. But if there is no confusion, we often omit the subscript. We say $\mathcal{X}$ is {\em closed under cones} (resp. {\em closed under cocones, closed under extensions}) if Cone$(\mathcal{X},\mathcal{X})\subseteq \mathcal{X}$ (resp. Cocone$(\mathcal{X},\mathcal{X})\subseteq \mathcal{X}$, $\mathcal{X}\ast \mathcal{X}\subseteq \mathcal{X}$). A {\em thick} subcategory is a subcategory which is closed under cones, cocones, extensions and summands. Denote by $\mathsf{thick}\mathcal{X}$ the smallest thick subcategory containing $\mathcal{X}$. Note that for an extension closed subcategory $\mathcal{D}$ of $\mathcal{C}$, there is a restricted extriangulated structure on $\mathcal{D}$. That is, $(\mathcal{D},\mathbb{E}_{\mathcal{D}},\mathfrak{s}_{\mathcal{D}})$ is extriangulated (see \cite[Remark 2.18]{NP}).

For a subcategory $\mathcal{X}$ of $\mathcal{C}$, let $\mathcal{M}^{\wedge}:=\displaystyle\bigcup_{n\geq 0}\mathcal{M}_{n}^{\wedge}$ (resp. $\mathcal{M}^{\vee}:=\displaystyle\bigcup_{n\geq 0}\mathcal{M}_{n}^{\vee}$) where $\mathcal{M}_{n}^{\wedge}$ (resp. $\mathcal{M}_{n}^{\vee}$) is defined inductively as
\[\mathcal{M}_{n}^{\wedge}:={\rm Cone}(\mathcal{M}_{n-1}^{\wedge},\mathcal{M}), \,\mathcal{M}_{0}^{\wedge}=\mathcal{M}\,({\rm resp.} \mathcal{M}_{n}^{\vee}:={\rm Cocone}(\mathcal{M},\mathcal{M}_{n-1}^{\vee}), \, \mathcal{M}_{0}^{\vee}=\mathcal{M}).\]

In \cite{GNP21}, the authors defined higher extensions $\mathbb{E}^{n}$ for $n\geq 1$. They are $k$-bilinear functors $\mathbb{E}^{n}:\mathcal{C}^{\rm op}\times \mathcal{C}\rightarrow {\rm Mod}k$. Note that $\mathbb{E}^{1}=\mathbb{E}$ and for an extension closed subcategory $\mathcal{D}$ of $\mathcal{C}$, we have $\mathbb{E}_{\mathcal{D}}(X,Y)=\mathbb{E}(X,Y)$ but not necessarily $\mathbb{E}_{\mathcal{D}}^{n}(X,Y)\cong \mathbb{E}^{n}(X,Y)$ (see \cite[Remark 3.29]{GNP21}). For any $\mathbb{E}$-triangle $A\stackrel{a}\rightarrowtail B\stackrel{b}\twoheadrightarrow C\stackrel{\delta}\dashrightarrow$ and any $X\in \mathcal{C}$, there are two long exact sequences (see \cite[Theorem 3.5]{GNP21})
\begin{equation*}
	\begin{split}
		{\rm Hom}_{\mathcal{C}}(X,A) & \rightarrow {\rm Hom}_{\mathcal{C}}(X,B)\rightarrow {\rm Hom}_{\mathcal{C}}(X,C)\rightarrow \mathbb{E}(X,A)\rightarrow \mathbb{E}(X,B) \\
		& \rightarrow \mathbb{E}(X,C)\rightarrow \mathbb{E}^{2}(X,A)\rightarrow \cdots
	\end{split}
\end{equation*}
and
\begin{equation*}
	\begin{split}
		{\rm Hom}_{\mathcal{C}}(C,X) & \rightarrow {\rm Hom}_{\mathcal{C}}(B,X)\rightarrow {\rm Hom}_{\mathcal{C}}(A,X)\rightarrow \mathbb{E}(C,X)\rightarrow \mathbb{E}(B,X) \\ 
		& \rightarrow \mathbb{E}(A,X)\rightarrow \mathbb{E}^{2}(C,X)\rightarrow \cdots .
	\end{split}
\end{equation*}
If $\mathcal{C}$ has enough projectives and injectives, then the higher extensions are isomorphic to those defined in \cite{LN} and \cite{HLN} (see \cite[Corollary 3.21]{GNP21}). For a subcategory $\mathcal{X}$ in $\mathcal{C}$, we define
\[\mathcal{X}^{\bot_{1}}:=\{M\in \mathcal{C}\,|\,\mathbb{E}(X,M)=0,~\forall X\in \mathcal{X}\}\]
and
\[\mathcal{X}^{\bot}:=\{M\in \mathcal{C}\,|\,\mathbb{E}^{\geq 1}(X,M)=0,~\forall X\in \mathcal{X}\}.\]
Dually we define ${^{\bot_{1}}}\mathcal{X}$ and ${^{\bot}}\mathcal{X}$. For an object $X\in \mathcal{C}$, define
\[{\rm pd}_{\mathcal{C}}X:={\rm sup}\{n\,|\,\mathbb{E}^{n}(X,Y)\neq 0\text{ for some object }Y\in \mathcal{C}\}.\]
Dually we define ${\rm id}_{\mathcal{C}}X$.

\begin{defn}
	Let $(\mathcal{X},\mathcal{Y})$ be a pair of subcategories in $\mathcal{C}$. It is a {\em (complete) cotorsion pair} if it satisfies
	
	(1) $\mathcal{X}$ and $\mathcal{Y}$ are closed under taking direct summands,
	
	(2) $\mathbb{E}(\mathcal{X},\mathcal{Y})=0$,
	
	(3) $\mathcal{C}$ = Cone($\mathcal{Y}$, $\mathcal{X}$) = Cocone($\mathcal{Y}$, $\mathcal{X}$).

	If moreover $\mathbb{E}^{2}(\mathcal{X},\mathcal{Y})=0$ (resp. $\mathbb{E}^{\geq 2}(\mathcal{X},\mathcal{Y})=0$), it is called an {\em s-cotorsion pair} (resp. {\em hereditary cotorsion pair}). A cotorsion pair $(\mathcal{X},\mathcal{Y})$ is {\em bounded} if $\mathcal{C}=\mathcal{X}^{\wedge}=\mathcal{Y}^{\vee}$ (see \cite{AT22}).
\end{defn}

We denote by $\mathsf{cotors}\,\mathcal{C}$ the set of cotorsion pairs in $\mathcal{C}$, and by $\mathsf{scotors}\,\mathcal{C}$ (resp. $\mathsf{hcotors}\,\mathcal{C}$) the set of {\em s}-cotorsion pairs (resp. hereditary cotorsion pairs) in $\mathcal{C}$. We add a prefix "$\mathsf{bdd}\text{-}$" for bounded cotorsion pairs. 

For $x_{i}=(\mathcal{X}_{i},\mathcal{Y}_{i})\in \mathsf{cotors}\,\mathcal{C},~i=1,2$, write $(\mathcal{X}_{1},\mathcal{Y}_{1})\leq (\mathcal{X}_{2},\mathcal{Y}_{2})$ if $\mathcal{Y}_{1}\subseteq \mathcal{Y}_{2}$. Then $(\mathsf{cotors}\,\mathcal{C},\leq)$ becomes a poset. Let $x_{1},x_{2}\in \mathsf{cotors}\,\mathcal{C}$ such that $x_{1}\leq x_{2}$. We define an interval
\[\mathsf{cotors}\,[x_{1},x_{2}]:=\{x\in \mathsf{cotors}\,\mathcal{C}\,|\,x_{1}\leq x\leq x_{2}\}.\]
Similarly we define
\[\mathsf{cotors}\,(-\infty, x_{2}]:=\{x\in \mathsf{cotors}\,\mathcal{C}\,|\,x\leq x_{2}\}\]
and
\[\mathsf{cotors}\,[x_{1}, +\infty):=\{x\in \mathsf{cotors}\,\mathcal{C}\,|\,x_{1}\leq x \}.\]

\begin{lem}\label{scotorsion}
	Let $(\mathcal{X},\mathcal{Y})$ be a cotorsion pair in $\mathcal{C}$.
	
	{\rm (1)} \cite[Lemma 3.2]{AT22} If it is an s-cotorsion pair, then $\mathcal{X}$ is closed under cocones and $\mathcal{Y}$ is closed under cones.
	
	{\rm (2)} If $\mathcal{C}$ has enough projectives or injectives, then s-cotorsion pairs are hereditary cotorsion pairs.
\end{lem}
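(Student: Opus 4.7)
Since part (1) is attributed to \cite[Lemma 3.2]{AT22}, my plan focuses on part (2). Suppose $\mathcal{C}$ has enough projectives (the dual case will be symmetric). The goal is to upgrade the $n=1,2$ vanishing $\mathbb{E}(\mathcal{X},\mathcal{Y})=\mathbb{E}^{2}(\mathcal{X},\mathcal{Y})=0$ to $\mathbb{E}^{n}(\mathcal{X},\mathcal{Y})=0$ for all $n\geq 1$ by a dimension-shifting argument along syzygies coming from projective resolutions.

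First I would verify that every projective object lies in $\mathcal{X}$. Given a projective $P$, the cotorsion pair condition $\mathcal{C}=\mathrm{Cone}(\mathcal{Y},\mathcal{X})$ furnishes an $\mathbb{E}$-triangle $Y\rightarrowtail X\twoheadrightarrow P\dashrightarrow$ with $Y\in \mathcal{Y}$ and $X\in \mathcal{X}$; projectivity of $P$ forces this $\mathbb{E}$-triangle to split, so $P$ is a summand of $X$ and hence $P\in\mathcal{X}$ by closure of $\mathcal{X}$ under summands. Next, for any $X\in\mathcal{X}$ I would pick a deflation from a projective to get an $\mathbb{E}$-triangle $\Omega X\rightarrowtail P\twoheadrightarrow X\dashrightarrow$. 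Since both $P$ and $X$ belong to $\mathcal{X}$, part (1) (closure of $\mathcal{X}$ under cocones) gives $\Omega X\in\mathcal{X}$.

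With $\Omega X\in\mathcal{X}$ in hand, I would feed this $\mathbb{E}$-triangle into the long exact sequence from the excerpt applied to the second variable $Y\in\mathcal{Y}$: for $n\geq 2$ the piece
\[\mathbb{E}^{n-1}(P,Y)\longrightarrow \mathbb{E}^{n-1}(\Omega X,Y)\longrightarrow \mathbb{E}^{n}(X,Y)\longrightarrow \mathbb{E}^{n}(P,Y)\]
has both outer terms zero, because $P$ is projective and so $\mathbb{E}^{k}(P,-)=0$ for all $k\geq 1$. Hence $\mathbb{E}^{n}(X,Y)\cong \mathbb{E}^{n-1}(\Omega X,Y)$. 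Induction on $n$, with base cases $n=1,2$ given by the s-cotorsion pair hypothesis, then yields $\mathbb{E}^{n}(X,Y)=0$ for all $n\geq 1$, so the pair is hereditary. If instead $\mathcal{C}$ has enough injectives, one runs the dual argument with cosyzygies $X\rightarrowtail I\twoheadrightarrow \Sigma X$, using that $\mathcal{Y}$ is closed under cones (from part (1)) and that every injective lies in $\mathcal{Y}$.

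The main obstacle is the subtle point that the syzygy $\Omega X$ must itself lie in $\mathcal{X}$ for the dimension shift to continue indefinitely; without this one would only recover vanishing in degree~$3$. This is exactly where part (1) is indispensable, and it is also why one first needs to check that projectives sit inside $\mathcal{X}$ so that $P,X\in\mathcal{X}$ triggers the cocone-closure of $\mathcal{X}$.
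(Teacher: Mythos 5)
Your argument is correct. Note, though, that the paper does not prove (2) by hand at all: it simply observes that (2) follows from (1) together with the cited result \cite[Lemma 4.3]{LZ20}, which is precisely the statement that a cotorsion pair whose left class is closed under cocones (resp.\ whose right class is closed under cones) is hereditary when there are enough projectives (resp.\ injectives). What you have written is, in effect, a self-contained proof of that cited lemma: projectives split into $\mathcal{X}$, syzygies of objects of $\mathcal{X}$ stay in $\mathcal{X}$ by cocone-closure from (1), and the long exact sequence in the first variable gives $\mathbb{E}^{n}(X,Y)\cong\mathbb{E}^{n-1}(\Omega X,Y)$, so induction starting from the $\mathbb{E}^{2}$-vanishing of the s-cotorsion hypothesis kills all higher extensions; the dual argument handles the injective case. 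Two small points to be aware of: your dimension shift uses $\mathbb{E}^{k}(P,Y)=0$ for all $k\geq 1$ and $P$ projective, which for the higher extensions of \cite{GNP21} rests on their identification with the syzygy-defined extensions when $\mathcal{C}$ has enough projectives (the paper itself uses this identification freely, e.g.\ in the proof of Lemma \ref{silting bijection}, so this is consistent with the paper's conventions rather than a gap); and in the dual case the cosyzygy $\mathbb{E}$-triangle should be taken for an object of $\mathcal{Y}$, i.e.\ $Y\rightarrowtail I\twoheadrightarrow \Sigma Y$, which is what your invocation of cone-closure of $\mathcal{Y}$ and $\mathcal{I}\subseteq\mathcal{Y}$ indicates anyway. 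So your route trades the paper's external citation for an explicit, elementary induction, which makes the lemma self-contained at the cost of a slightly longer argument.
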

\begin{proof}
	(2) is also mentioned in \cite{AT22}. It follows from (1) and \cite[Lemma 4.3]{LZ20}.
\end{proof}

\begin{defn}
	An additive subcategory $\mathcal{X}$ of $\mathcal{C}$ is called {\em presilting} if it is self-orthogonal (i.e. $\mathbb{E}^{\geq 1}(\mathcal{X},\mathcal{X})=0$) and closed under direct summands. $\mathcal{X}$ is called {\em silting} if it is presilting and  $\mathsf{thick}\mathcal{X}=\mathcal{C}$. Denote by $\mathsf{presilt}\,\mathcal{C}$ (resp. $\mathsf{silt}\,\mathcal{C}$) the set of all presilting (resp. silting) subcategories in $\mathcal{C}$.
\end{defn}

\begin{rem}
	For $\mathcal{M},\mathcal{N}\in \mathsf{silt}\,\mathcal{C}$, write $\mathcal{M}\leq \mathcal{N}$ if $\mathbb{E}^{\geq 1}(\mathcal{N},\mathcal{M})=0$. Then $(\mathsf{silt}\,\mathcal{C},\leq)$ is a poset (see \cite[Proposition 5.12]{AT22}). Similarly, for $\mathcal{M},\mathcal{N}\in \mathsf{silt}\,\mathcal{C}$ such that $\mathcal{M}\leq \mathcal{N}$, we define  $\mathsf{silt}\,[\mathcal{M},\mathcal{N}]$, $\mathsf{silt}\,(-\infty,\mathcal{N}]$ and $\mathsf{silt}\,[\mathcal{M}, +\infty)$. 
\end{rem}

The relation between cotorsion pairs and silting subcategories is as follows.

\begin{thm}\cite[Theorem 5.7]{AT22}\label{AT22thm5.7}
	There are mutually inverse isomorphisms of posets
	\begin{align*}
		\mathsf{bdd\text{-}hcotors}\,\mathcal{C} & \leftrightarrow \mathsf{silt}\,\mathcal{C} \\
		(\mathcal{X},\mathcal{Y}) & \mapsto \mathcal{X}\cap \mathcal{Y} \\
		({^{\bot}\mathcal{M}},\mathcal{M}^{\bot})=(\mathcal{M}^{\vee},\mathcal{M}^{\wedge}) & \mapsfrom \mathcal{M}
	\end{align*}
\end{thm}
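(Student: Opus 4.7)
The plan is to construct explicit maps in both directions, verify well-definedness, and then check mutual inverseness and order preservation. The forward map will send a bounded hereditary cotorsion pair $(\mathcal{X},\mathcal{Y})$ to $\mathcal{M}:=\mathcal{X}\cap \mathcal{Y}$, and the backward map will send a silting $\mathcal{M}$ to $(\mathcal{M}^{\vee},\mathcal{M}^{\wedge})$, with the identification $(\mathcal{M}^{\vee},\mathcal{M}^{\wedge})=({}^{\bot}\mathcal{M},\mathcal{M}^{\bot})$ proved along the way.

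For the forward map, self-orthogonality $\mathbb{E}^{\geq 1}(\mathcal{M},\mathcal{M})=0$ is immediate from $\mathcal{M}\subseteq \mathcal{X}\cap \mathcal{Y}$ together with hereditariness, and summand-closure is inherited. The main task is $\mathsf{thick}\,\mathcal{M}=\mathcal{C}$; by boundedness $\mathcal{C}=\mathcal{X}^{\wedge}=\mathcal{Y}^{\vee}$ it suffices to show $\mathcal{X}\cup\mathcal{Y}\subseteq \mathsf{thick}\,\mathcal{M}$. For $X\in \mathcal{X}$ the Cocone condition yields an $\mathbb{E}$-triangle $X\rightarrowtail Y_{0}\twoheadrightarrow X_{1}$ with $Y_{0}\in \mathcal{Y}$ and $X_{1}\in \mathcal{X}$; since $\mathcal{X}={}^{\bot_{1}}\mathcal{Y}$ is closed under extensions via the long exact sequence against $\mathcal{Y}$, one concludes $Y_{0}\in \mathcal{X}\cap \mathcal{Y}=\mathcal{M}$. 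Iterating produces an $\mathcal{M}$-coresolution of $X$, whose finite length is controlled by the $\mathcal{Y}$-coresolution length of $X$ via a standard dimension shift. The argument for $\mathcal{Y}$ is dual, so $\mathsf{thick}\,\mathcal{M}=\mathcal{C}$.

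For the backward map I would proceed in three steps. Step (a): prove $\mathcal{M}^{\vee}\subseteq {}^{\bot}\mathcal{M}$ and $\mathcal{M}^{\wedge}\subseteq \mathcal{M}^{\bot}$ by induction on resolution length, using the long exact sequences of higher extensions from \cite{GNP21}. Step (b): prove $\mathcal{C}=\mathcal{M}^{\vee}=\mathcal{M}^{\wedge}$ by showing $\mathcal{M}^{\vee}$ is a thick subcategory containing $\mathcal{M}$, hence equals $\mathsf{thick}\,\mathcal{M}=\mathcal{C}$; the required closure under cones, cocones, extensions, and summands comes from extriangulated horseshoe-style arguments built on (ET4) and $3\times 3$-diagrams, splicing $\mathcal{M}$-coresolutions along a given $\mathbb{E}$-triangle of two already-resolved terms to resolve the third. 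Symmetrically for $\mathcal{M}^{\wedge}$. Step (c): identify $({}^{\bot}\mathcal{M},\mathcal{M}^{\bot})=(\mathcal{M}^{\vee},\mathcal{M}^{\wedge})$ by taking any $X\in {}^{\bot}\mathcal{M}$, applying the Cone decomposition already produced in (b) to write $Y\rightarrowtail X'\twoheadrightarrow X$ with $X'\in \mathcal{M}^{\vee}$ and $Y\in \mathcal{M}^{\wedge}$, and using $\mathbb{E}(X,Y)=0$ to split off $Y$ and realize $X$ itself as a summand of an object in $\mathcal{M}^{\vee}$. With (b) and (c) in hand, the cotorsion-pair axioms and boundedness of $(\mathcal{M}^{\vee},\mathcal{M}^{\wedge})$ follow immediately, and hereditariness is recorded in (a).

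Mutual inverseness and poset compatibility then reduce to short checks: $\mathcal{M}^{\vee}\cap \mathcal{M}^{\wedge}=\mathcal{M}$ because self-orthogonality and summand-closure force any object admitting both an $\mathcal{M}$-resolution and an $\mathcal{M}$-coresolution to be a summand of an object of $\mathcal{M}$; and $\mathcal{M}_{1}\leq \mathcal{M}_{2}\Leftrightarrow \mathcal{M}_{1}\subseteq \mathcal{M}_{2}^{\bot}$ matches the inclusion $\mathcal{Y}_{1}\subseteq \mathcal{Y}_{2}$ on the cotorsion side. The main obstacle I expect is step (b): the extriangulated horseshoe argument for thickness of $\mathcal{M}^{\vee}$ and $\mathcal{M}^{\wedge}$ is delicate, since the usual triangulated dimension-shift tricks have to be carried out through the $\mathbb{E}^{n}$-towers and balanced simultaneously against cones and cocones; everything else in the proof is an essentially formal consequence of this core technical input.
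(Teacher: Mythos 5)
First, a caveat: the paper does not prove this statement at all — it is quoted verbatim from \cite[Theorem 5.7]{AT22} — so the only comparison available is with the argument in that source (and with the paper's Lemma \ref{bddscotors}, which cites the proof of \cite[Proposition 5.9]{AT22}). Your forward direction is essentially fine: self-orthogonality and summand-closure are immediate, the iteration $X_i\rightarrowtail Y_i\twoheadrightarrow X_{i+1}$ with $Y_i\in\mathcal{X}\cap\mathcal{Y}$ is correct, and its termination really does follow by dimension shift, since hereditariness gives $\mathbb{E}(X',X_m)\cong\mathbb{E}^{m+1}(X',X)$ and boundedness ($X\in\mathcal{Y}^{\vee}$) kills the latter for $m$ large, so $X_m\in\mathcal{X}^{\bot_1}=\mathcal{Y}$; this is in effect a reproof of Lemma \ref{bddscotors}.

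The backward direction, however, contains a genuine error at its core, namely step (b). The claim $\mathcal{C}=\mathcal{M}^{\vee}=\mathcal{M}^{\wedge}$ is false, and with it the assertion that $\mathcal{M}^{\vee}$ is thick: $\mathcal{M}^{\vee}$ is closed under cocones, extensions and summands, but \emph{not} under cones. Concretely, if $\mathcal{C}$ is triangulated with silting subcategory $\mathcal{M}$ (e.g. $\mathcal{C}=K^{b}(\mathsf{proj}\Lambda)$, $\mathcal{M}=\mathsf{add}\Lambda$), the $\mathbb{E}$-triangle $M\rightarrowtail 0\twoheadrightarrow M[1]\dashrightarrow$ exhibits $M[1]$ as a cone of objects of $\mathcal{M}\subseteq\mathcal{M}^{\vee}$, yet $M[1]\notin\mathcal{M}^{\vee}$ because your own step (a) gives $\mathcal{M}^{\vee}\subseteq{}^{\bot}\mathcal{M}$ while ${\rm Hom}(M[1],M[1])\neq 0$. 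More structurally, (a) gives $\mathbb{E}(\mathcal{M}^{\vee},\mathcal{M}^{\wedge})=0$, so your (b) would force $\mathbb{E}(\mathcal{C},\mathcal{C})=0$, which is absurd; boundedness means $\mathcal{C}=(\mathcal{M}^{\vee})^{\wedge}=(\mathcal{M}^{\wedge})^{\vee}$, not $\mathcal{C}=\mathcal{M}^{\vee}$. Since step (c) obtains the approximation triangles from (b), the completeness of the pair $(\mathcal{M}^{\vee},\mathcal{M}^{\wedge})$ — the actual technical heart of the theorem — is left unproved. The repair is to run your ``thick subcategory'' argument not on $\mathcal{M}^{\vee}$ but on the subcategory of objects admitting \emph{both} approximation triangles, i.e. on ${\rm Cone}(\mathcal{M}^{\wedge},\mathcal{M}^{\vee})\cap{\rm Cocone}(\mathcal{M}^{\wedge},\mathcal{M}^{\vee})$: one shows via (ET4)-splicing that this class is closed under cones, cocones, extensions and summands and contains $\mathcal{M}$, hence equals $\mathsf{thick}\,\mathcal{M}=\mathcal{C}$; this is how Adachi–Tsukamoto argue, and your splitting trick in (c) then correctly yields ${}^{\bot}\mathcal{M}=\mathcal{M}^{\vee}$ and $\mathcal{M}^{\bot}=\mathcal{M}^{\wedge}$. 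A smaller omission: you check only the composite $\mathcal{M}\mapsto(\mathcal{M}^{\vee},\mathcal{M}^{\wedge})\mapsto\mathcal{M}^{\vee}\cap\mathcal{M}^{\wedge}=\mathcal{M}$; the other composite needs $\mathcal{X}=\mathcal{M}^{\vee}$ and $\mathcal{Y}=\mathcal{M}^{\wedge}$, i.e. Lemma \ref{bddscotors}, which your forward argument supplies only half of (the reverse inclusions use that $\mathcal{X}$ is closed under cocones and $\mathcal{Y}$ under cones, Lemma \ref{scotorsion}).
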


\section{Silting interval reduction}\label{section 3}

Let $\mathcal{C}=(\mathcal{C},\mathbb{E},\mathfrak{s})$ be an extriangulated category. 

\begin{lem}\label{bddscotors}
	Let $(\mathcal{X},\mathcal{Y})\in \mathsf{bdd\text{-}scotors}\,\mathcal{C}$ and $\mathcal{M}=\mathcal{X}\cap \mathcal{Y}$. Then $\mathcal{X}=\mathcal{M}^{\vee}$ and $\mathcal{Y}=\mathcal{M}^{\wedge}$.
\end{lem}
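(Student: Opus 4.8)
The plan is to show the two equalities $\mathcal{X}=\mathcal{M}^{\vee}$ and $\mathcal{Y}=\mathcal{M}^{\wedge}$, by symmetry it suffices to treat one of them, say $\mathcal{Y}=\mathcal{M}^{\wedge}$. One inclusion is easy: since $\mathcal{M}=\mathcal{X}\cap\mathcal{Y}\subseteq\mathcal{Y}$ and $\mathcal{Y}$ is closed under cones by Lemma \ref{scotorsion}(1) (it is an s-cotorsion pair), an induction on $n$ shows $\mathcal{M}_n^{\wedge}=\mathrm{Cone}(\mathcal{M}_{n-1}^{\wedge},\mathcal{M})\subseteq\mathrm{Cone}(\mathcal{Y},\mathcal{Y})\subseteq\mathcal{Y}$, hence $\mathcal{M}^{\wedge}\subseteq\mathcal{Y}$. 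The content is the reverse inclusion $\mathcal{Y}\subseteq\mathcal{M}^{\wedge}$.

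For the reverse inclusion, let $Y\in\mathcal{Y}$. Since $(\mathcal{X},\mathcal{Y})$ is bounded, $\mathcal{C}=\mathcal{X}^{\wedge}$, so in particular $Y\in\mathcal{X}_m^{\wedge}$ for some $m$; I would induct on this $m$. For the base case $Y\in\mathcal{X}_0^{\wedge}=\mathcal{X}$, combined with $Y\in\mathcal{Y}$ gives $Y\in\mathcal{X}\cap\mathcal{Y}=\mathcal{M}=\mathcal{M}_0^{\wedge}$. For the inductive step, write an $\mathbb{E}$-triangle $X'\rightarrowtail X\twoheadrightarrow Y\dashrightarrow$ realizing $Y\in\mathrm{Cone}(\mathcal{X}_{m-1}^{\wedge},\mathcal{X})$, with $X'\in\mathcal{X}_{m-1}^{\wedge}$ and $X\in\mathcal{X}$. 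I would also use the cotorsion pair axiom (3): $\mathcal{C}=\mathrm{Cocone}(\mathcal{Y},\mathcal{X})$ applied to $X\in\mathcal{X}$ — or better, use that $X$ already lies in $\mathcal{X}$ and resolve it against $\mathcal{M}$. The cleanest route: since $(\mathcal{X},\mathcal{Y})$ is a cotorsion pair, there is an $\mathbb{E}$-triangle $X\rightarrowtail Y_0\twoheadrightarrow X_0\dashrightarrow$ with $Y_0\in\mathcal{Y}$, $X_0\in\mathcal{X}$; then $Y_0\in\mathcal{X}\ast\mathcal{Y}$... — actually I expect the efficient argument is to build, from the pair of triangles, an $\mathbb{E}$-triangle exhibiting $Y$ as a cone of an object of $\mathcal{M}^{\wedge}$ over an object of $\mathcal{M}$, using that $X\in\mathcal{X}$ together with $X\in\mathcal{Y}^{\wedge}$ (bounded on the other side) forces $X\in\mathcal{M}^{\wedge}$ too, and then that $\mathcal{M}^{\wedge}$ is closed under the relevant cone operation because $\mathcal{M}$ is self-orthogonal and $\mathbb{E}^{\geq 2}(\mathcal{X},\mathcal{Y})=0$.

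Let me restate the core step more carefully. Having $X'\in\mathcal{X}_{m-1}^{\wedge}$ and, by the induction hypothesis applied after intersecting with $\mathcal{Y}$-closure properties, reducing $X'$ into $\mathcal{M}^{\wedge}$; and having $X\in\mathcal{X}$, I take a right $\mathcal{M}$-resolution of $X$ sitting inside $\mathcal{Y}$ (possible since $\mathcal{M}^{\wedge}\ni$ such resolutions exist by the boundedness and Theorem-type machinery), obtaining $X\in\mathcal{M}^{\wedge}$. Then from $X'\rightarrowtail X\twoheadrightarrow Y\dashrightarrow$ with both $X',X\in\mathcal{M}^{\wedge}$ and $Y\in\mathcal{Y}$, I need $\mathcal{M}^{\wedge}$ closed under cones of this shape; this uses the horseshoe/octahedral-type construction in extriangulated categories together with $\mathbb{E}^{\geq 2}(\mathcal{M},\mathcal{M})=0$ to splice the two $\mathcal{M}$-resolutions into one for $Y$, with $\mathbb{E}^{\geq 2}$-vanishing (from the s-cotorsion/hereditary hypothesis) ensuring the resolution stays inside $\mathcal{Y}$ and terminates. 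The main obstacle I anticipate is precisely this splicing: controlling that the constructed resolution of $Y$ lies in $\mathcal{M}$ at each term (not just $\mathcal{Y}$) and is finite — this is where the hypothesis $\mathbb{E}^{2}(\mathcal{X},\mathcal{Y})=0$ (s-cotorsion) is essential, and one must invoke the long exact sequences in higher extensions from \cite{GNP21} recalled above to push vanishing through the conflations. Everything else is bookkeeping with the inductive length filtrations $\mathcal{M}_n^{\wedge}$ and $\mathcal{X}_n^{\wedge}$.
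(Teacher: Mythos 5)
There is a genuine gap in the inductive step, and it is the step you flagged yourself as the main obstacle.

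Your plan is to take an $\mathcal{X}$-resolution $X'\rightarrowtail X\twoheadrightarrow Y\dashrightarrow$ of $Y\in\mathcal{Y}$ (with $X'\in\mathcal{X}_{m-1}^{\wedge}$, $X\in\mathcal{X}$) and then argue that $X$ and (a reduced form of) $X'$ land in $\mathcal{M}^{\wedge}$. But $\mathcal{M}\subseteq\mathcal{Y}$ and $\mathcal{Y}$ is closed under cones, so $\mathcal{M}^{\wedge}\subseteq\mathcal{Y}$ — this is precisely the easy inclusion you proved first. Consequently, $X\in\mathcal{X}$ together with $X\in\mathcal{M}^{\wedge}$ would force $X\in\mathcal{X}\cap\mathcal{Y}=\mathcal{M}$, which is false for a generic term of an $\mathcal{X}$-resolution of $Y$. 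The same objection applies to your attempt to put $X'$ into $\mathcal{M}^{\wedge}$: $X'$ need not lie in $\mathcal{Y}$, so the induction hypothesis (which is only about objects of $\mathcal{Y}$) does not apply to it. The appeal to ``$X\in\mathcal{Y}^{\wedge}$ (bounded on the other side)'' also misfires, since boundedness gives $\mathcal{C}=\mathcal{Y}^{\vee}$, not $\mathcal{Y}^{\wedge}$, and even so $\mathcal{Y}^{\vee}\cap\mathcal{X}\subseteq\mathcal{M}^{\wedge}$ is not something you have established. So the resolution you are trying to build does not stay inside $\mathcal{M}$ (or even $\mathcal{Y}$), and the splicing you worry about cannot get off the ground.

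The argument should instead run the Auslander--Buchweitz resolution entirely inside $\mathcal{Y}$, which is what the paper's citation to \cite[Proposition 5.9]{AT22} refers to. For $Y\in\mathcal{Y}$, take a conflation $Y_{1}\rightarrowtail M_{1}\twoheadrightarrow Y\dashrightarrow$ with $Y_{1}\in\mathcal{Y}$, $M_{1}\in\mathcal{X}$ from the cotorsion-pair axiom; extension-closure of $\mathcal{Y}$ gives $M_{1}\in\mathcal{X}\cap\mathcal{Y}=\mathcal{M}$, so this stays in $\mathcal{Y}$. To see that iterating terminates, compare with a finite $\mathcal{X}$-resolution of $Y$: forming the homotopy pullback of the two deflations $M_{1}\twoheadrightarrow Y$ and $X\twoheadrightarrow Y$ gives conflations $Y_{1}\rightarrowtail E\twoheadrightarrow X\dashrightarrow$ and $X'\rightarrowtail E\twoheadrightarrow M_{1}\dashrightarrow$; the first splits because $\mathbb{E}(X,Y_{1})=0$, and then cocone-closure of $\mathcal{X}$ (Lemma \ref{scotorsion}(1), the s-cotorsion hypothesis) is what lets one cut down the $\mathcal{X}$-resolution length of $Y_{1}$ and close the induction. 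The role of the s-cotorsion condition is the cocone/cone closure of $\mathcal{X}$ and $\mathcal{Y}$, not the $\mathbb{E}^{\geq 2}$-vanishing you invoke (which is the hereditary, not s-cotorsion, condition and is not assumed here). In short: induct on $\mathcal{X}$-resolution length, but resolve $Y$ by $\mathcal{M}$-approximations inside $\mathcal{Y}$, not by pushing the $\mathcal{X}$-resolution into $\mathcal{M}^{\wedge}$.
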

\begin{proof}
	It follows from the proof of \cite[Proposition 5.9]{AT22}.
\end{proof}

\begin{prop}\label{first prop}
	Let $x=(\mathcal{X},\mathcal{Y})\in \mathsf{cotors}\,\mathcal{C}$. Then $\mathcal{Y}$ is an extriangulated category with enough projectives $\mathcal{N}:=\mathcal{X}\cap \mathcal{Y}$.
	
	(1) If $x\in \mathsf{scotors}\,\mathcal{C}$. Then $\mathbb{E}_{\mathcal{Y}}^{2}(T,S)\cong \mathbb{E}^{2}(T,S)$ for any $T,S\in \mathcal{Y}$ and there are mutually inverse isomorphisms of posets
	\begin{align*}
		\mathsf{cotors}(-\infty,x] & \leftrightarrow \mathsf{cotors}\,\mathcal{Y} \\
		(\mathcal{U},\mathcal{V}) & \mapsto (\mathcal{U}\cap \mathcal{Y},\mathcal{V}) \\
		(\mathsf{add}(\mathcal{X}\ast \mathcal{A}),\mathcal{B}) & \mapsfrom (\mathcal{A},\mathcal{B}).
	\end{align*}
    They restrict to isomorphisms $\mathsf{scotors}(-\infty,x] \leftrightarrow \mathsf{scotors}\,\mathcal{Y}=\mathsf{hcotors}\,\mathcal{Y}$.
    
    (2) If $x\in \mathsf{hcotors}\,\mathcal{C}$, then $\mathbb{E}_{\mathcal{Y}}^{m}(T,S)\cong \mathbb{E}^{m}(T,S)$ for any $T,S\in \mathcal{Y}$ and $m\geq 1$, $\mathsf{scotors}(-\infty,x]=\mathsf{hcotors}(-\infty,x]$ and $\mathsf{presilt}\,\mathcal{Y}=\{\mathcal{T}\in \mathsf{presilt}\,\mathcal{C}\,|\,\mathcal{T}\subseteq  \mathcal{Y}\}$.
    
    (3) If $x\in \mathsf{bdd\text{-}scotors}\,\mathcal{C}$. Then each $T\in \mathcal{Y}$ satisfies ${\rm pd}_{\mathcal{Y}}T<\infty$. If $(\mathcal{U},\mathcal{V})\leq (\mathcal{X},\mathcal{Y})$ is bounded in $\mathcal{C}$, then so is $(\mathcal{U}\cap \mathcal{Y},\mathcal{V})$ in $\mathcal{Y}$.
    
    (4) If $x\in \mathsf{bdd\text{-}hcotors}\,\mathcal{C}$, then isomorphisms in (1) restrict to isomorphisms 
    \[\mathsf{bdd\text{-}hcotors}(-\infty,x] \leftrightarrow \mathsf{bdd\text{-}hcotors}\,\mathcal{Y}.\]
    
\end{prop}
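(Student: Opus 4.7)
The plan is to treat the ambient claim first and then address each of the four parts in order, reusing the arguments inductively. For the ambient claim, I would begin by noting that any cotorsion pair satisfies $\mathcal{Y}=\mathcal{X}^{\bot_{1}}$, so the long exact sequence for $\mathbb{E}$ shows that $\mathcal{Y}$ is closed under extensions in $\mathcal{C}$, hence inherits a restricted extriangulated structure by \cite[Remark 2.18]{NP}. For enough projectives, given $Y\in \mathcal{Y}$, use $\mathcal{C}={\rm Cone}(\mathcal{Y},\mathcal{X})$ to produce an $\mathbb{E}$-triangle $Y_{1}\rightarrowtail X_{0}\twoheadrightarrow Y\dashrightarrow$ with $Y_{1}\in \mathcal{Y}$, $X_{0}\in \mathcal{X}$; then for any $X\in \mathcal{X}$ the long exact sequence forces $\mathbb{E}(X,X_{0})=0$, so $X_{0}\in \mathcal{X}^{\bot_{1}}=\mathcal{Y}$ and hence $X_{0}\in \mathcal{N}$. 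Every object of $\mathcal{N}$ is clearly projective in $\mathcal{Y}$, and a standard splitting argument using this deflation identifies all projectives of $\mathcal{Y}$ with $\mathcal{N}$.

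For (1), to see $\mathbb{E}_{\mathcal{Y}}^{2}(T,S)\cong \mathbb{E}^{2}(T,S)$ take the above projective presentation $T_{1}\rightarrowtail N\twoheadrightarrow T$ in $\mathcal{Y}$ and apply the dimension-shift long exact sequence; the terms $\mathbb{E}(N,S)$ and $\mathbb{E}^{2}(N,S)$ both vanish (the latter by the s-cotorsion assumption), so both $\mathbb{E}_{\mathcal{Y}}^{2}(T,S)$ and $\mathbb{E}^{2}(T,S)$ compute as $\mathbb{E}(T_{1},S)$. For the poset bijection, observe that $(\mathcal{U},\mathcal{V})\leq x$ forces $\mathcal{V}\subseteq \mathcal{Y}$ and $\mathcal{U}={^{\bot_{1}}}\mathcal{V}\supseteq {^{\bot_{1}}}\mathcal{Y}=\mathcal{X}$. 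Then $(\mathcal{U}\cap \mathcal{Y},\mathcal{V})$ is a cotorsion pair in $\mathcal{Y}$: orthogonality is inherited, and a $\mathcal{C}$-level resolution of $Y\in \mathcal{Y}$ via $(\mathcal{U},\mathcal{V})$ has its $\mathcal{U}$-term automatically in $\mathcal{Y}$ by extension-closure of $\mathcal{Y}$. For the inverse direction, given $(\mathcal{A},\mathcal{B})\in \mathsf{cotors}\,\mathcal{Y}$, one checks $(\mathsf{add}(\mathcal{X}\ast \mathcal{A}),\mathcal{B})\in \mathsf{cotors}\,\mathcal{C}$ by combining the $(\mathcal{X},\mathcal{Y})$-resolutions in $\mathcal{C}$ with the $(\mathcal{A},\mathcal{B})$-resolutions in $\mathcal{Y}$, and the preservation of the s-cotorsion property is immediate from the $\mathbb{E}^{2}$-isomorphism.

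For (2), iterating the dimension-shift argument, now invoking $\mathbb{E}^{m}(\mathcal{X},\mathcal{Y})=0$ for all $m\geq 2$, gives $\mathbb{E}_{\mathcal{Y}}^{m}(T,S)\cong \mathbb{E}^{m}(T,S)$ by induction; the identification of $\mathsf{presilt}\,\mathcal{Y}$ is then immediate. To show $\mathsf{scotors}(-\infty,x]=\mathsf{hcotors}(-\infty,x]$, given $(\mathcal{U},\mathcal{V})$ s-cotorsion with $\mathcal{X}\subseteq \mathcal{U}$, $\mathcal{V}\subseteq \mathcal{Y}$, resolve $U\in \mathcal{U}$ by $Y\rightarrowtail X\twoheadrightarrow U$ with $X\in \mathcal{X}\subseteq \mathcal{U}$; since $\mathcal{U}$ is closed under cocones by Lemma \ref{scotorsion}(1), $Y\in \mathcal{U}\cap \mathcal{Y}$, and an induction on $m\geq 2$ together with $\mathbb{E}^{\geq 2}(\mathcal{X},\mathcal{V})=0$ yields $\mathbb{E}^{m}(\mathcal{U},\mathcal{V})=0$. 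For (3), Lemma \ref{bddscotors} gives $\mathcal{Y}=\mathcal{N}^{\wedge}$, so every $T\in \mathcal{Y}$ has a finite $\mathcal{N}$-resolution and dimension shift yields ${\rm pd}_{\mathcal{Y}}T<\infty$; for the boundedness of $(\mathcal{U}\cap \mathcal{Y},\mathcal{V})$ in $\mathcal{Y}$ one uses the finiteness of ${\rm pd}_{\mathcal{Y}}$ to terminate a $\mathcal{Y}$-internal syzygy procedure relative to the cotorsion pair $(\mathcal{U}\cap \mathcal{Y},\mathcal{V})$ provided by (1), producing finite $(\mathcal{U}\cap \mathcal{Y})^{\wedge_{\mathcal{Y}}}$- and $\mathcal{V}^{\vee_{\mathcal{Y}}}$-resolutions.

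Finally, (4) combines (1) and (3): the bijection of (1) clearly restricts to bounded cotorsion pairs, and being hereditary inside $\mathcal{Y}$ coincides with s-cotorsion in $\mathcal{Y}$ by Lemma \ref{scotorsion}(2) since $\mathcal{Y}$ has enough projectives. The main obstacle I anticipate is the boundedness transfer in part (3): lifting or descending $\wedge$-resolutions requires that the intermediate syzygies stay within $\mathcal{Y}$, and the naive $\mathcal{C}$-level resolution need not do so, so one must exploit the finite $\mathcal{Y}$-projective dimension together with the internal cotorsion pair to genuinely stay inside $\mathcal{Y}$ at each step.
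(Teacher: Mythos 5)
Your treatment of the ambient claim, of the $\mathbb{E}^{2}$ (resp.\ $\mathbb{E}^{m}$) comparisons in (1) and (2), and your direct induction for $\mathsf{scotors}(-\infty,x]=\mathsf{hcotors}(-\infty,x]$ via cocone-closure of $\mathcal{U}$ are fine (the last being a small, valid variation on the paper, which instead passes through the bijection with $\mathsf{hcotors}\,\mathcal{Y}$). However, there are genuine gaps, and they all stem from overlooking the one fact the s-cotorsion hypothesis is there to provide: by Lemma \ref{scotorsion}(1), $\mathcal{Y}$ is closed under cones. First, in (1) you justify that the completeness triangles for $(\mathcal{U}\cap\mathcal{Y},\mathcal{V})$ stay in $\mathcal{Y}$ ``by extension-closure of $\mathcal{Y}$''. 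That works for the triangle $V_{2}\rightarrowtail U_{2}\twoheadrightarrow T$, but not for $T\rightarrowtail V_{1}\twoheadrightarrow U_{1}$: there $U_{1}$ is a cone, and from the long exact sequence $\mathbb{E}(X,V_{1})\rightarrow\mathbb{E}(X,U_{1})\rightarrow\mathbb{E}^{2}(X,T)$ one sees that $U_{1}\in\mathcal{X}^{\bot_{1}}=\mathcal{Y}$ needs precisely the vanishing of $\mathbb{E}^{2}(\mathcal{X},\mathcal{Y})$, i.e.\ cone-closure of $\mathcal{Y}$; extension-closure alone does not give it, and without it your map to $\mathsf{cotors}\,\mathcal{Y}$ is not well defined.

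Second, in (3) the half $\mathcal{Y}=(\mathcal{U}\cap\mathcal{Y})^{\wedge}$ inside $\mathcal{Y}$ requires no syzygy procedure at all: $\mathcal{N}=\mathcal{X}\cap\mathcal{Y}\subseteq\mathcal{U}\cap\mathcal{Y}$ and $\mathcal{Y}=\mathcal{N}^{\wedge}$ with all syzygies in $\mathcal{Y}$ (Lemma \ref{bddscotors} and the first part of (3)). For the other half, $\mathcal{Y}=\mathcal{V}^{\vee}$ computed inside $\mathcal{Y}$, which you yourself flag as the main obstacle, your plan to ``terminate a $\mathcal{Y}$-internal cosyzygy procedure using finiteness of ${\rm pd}_{\mathcal{Y}}$'' is a hope rather than an argument: finite projective dimension does not control $\mathcal{V}$-coresolution length. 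The actual fix is again cone-closure of $\mathcal{Y}$: the finite $\mathcal{V}$-coresolution of $T\in\mathcal{Y}$ that exists in $\mathcal{C}$ (from $\mathcal{C}=\mathcal{V}^{\vee}$) has all its cosyzygies in $\mathcal{Y}$ automatically, so it is already internal. Third, in (4) the claim that ``the bijection of (1) clearly restricts to bounded cotorsion pairs'' is not clear in the direction from $\mathcal{Y}$ to $\mathcal{C}$: from $(\mathcal{U}\cap\mathcal{Y},\mathcal{V})$ bounded in $\mathcal{Y}$ one only gets $\mathcal{Y}\subseteq\mathcal{V}^{\vee}$ in $\mathcal{C}$, and to deduce $\mathcal{C}=\mathcal{Y}^{\vee}\subseteq\mathcal{V}^{\vee}$ one must show that $\mathcal{V}^{\vee}$ is closed under cocones; the paper does this by applying \cite[Proposition 4.8]{AT22} to $\mathcal{V}$ and $\mathcal{W}=\mathcal{U}\cap\mathcal{V}$ (the half $\mathcal{C}=\mathcal{X}^{\wedge}\subseteq\mathcal{U}^{\wedge}$ being easy). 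This step is the real content of (4) and is absent from your proposal.
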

\begin{proof}
	Since $(\mathcal{X},\mathcal{Y})$ is a cotorsion pair, $\mathcal{Y}$ is an extriangulated category. For $T\in \mathcal{Y}$, there is an $\mathbb{E}$-triangle $Y\rightarrowtail X\twoheadrightarrow T\dashrightarrow $ with $X\in \mathcal{X}$ and $Y\in \mathcal{Y}$. Thus $X\in \mathcal{X}\cap \mathcal{Y}=\mathcal{N}$ and it is an $\mathbb{E}_{\mathcal{Y}}$-triangle. Since $\mathbb{E}_{\mathcal{Y}}(\mathcal{N},\mathcal{Y})=\mathbb{E}(\mathcal{N},\mathcal{Y})=0$, $\mathcal{Y}$ has enough projectives $\mathcal{N}$. 
	
	(1) For $S\in \mathcal{Y}$, apply ${\rm Hom}_{\mathcal{C}}(-,S)$ to the $\mathbb{E}$-triangle above. We have $\mathbb{E}_{\mathcal{Y}}^{2}(T,S)\cong \mathbb{E}_{\mathcal{Y}}(Y,S)=\mathbb{E}(Y,S)\cong \mathbb{E}^{2}(T,S)$ since $\mathbb{E}^{2}(\mathcal{X},\mathcal{Y})=0$. Let $(\mathcal{U},\mathcal{V})$ be a cotorsion pair such that $\mathcal{V}\subseteq \mathcal{Y}$. Then $\mathbb{E}_{\mathcal{Y}}(\mathcal{U}\cap \mathcal{Y},\mathcal{V})=\mathbb{E}(\mathcal{U}\cap \mathcal{Y},\mathcal{V})=0$. For any $T\in \mathcal{Y}$, there are two $\mathbb{E}$-triangles $T\rightarrowtail V_{1}\twoheadrightarrow U_{1}\dashrightarrow $ and $V_{2}\rightarrowtail U_{2}\twoheadrightarrow T\dashrightarrow $ with $U_{1},U_{2}\in \mathcal{U}$ and $V_{1},V_{2}\in \mathcal{V}$. Since $\mathcal{Y}$ is closed under extensions and cones by Lemma \ref{scotorsion}, $U_{2}\in \mathcal{U}\cap \mathcal{Y}$ and $U_{1}\in \mathcal{U}\cap \mathcal{Y}$. They are also  $\mathbb{E}_{\mathcal{Y}}$-triangles.
	
	Let $(\mathcal{A},\mathcal{B})\in \mathsf{cotors}\,\mathcal{Y}$. Since $\mathbb{E}(\mathcal{A},\mathcal{B})=\mathbb{E}_{\mathcal{Y}}(\mathcal{A},\mathcal{B})=0$, together with $\mathbb{E}(\mathcal{X},\mathcal{B})=0$, we obtain $\mathbb{E}(\mathcal{X}\ast \mathcal{A},\mathcal{B})=0$. We have $\mathcal{C}={\rm Cone}(\mathcal{Y},\mathcal{X})\subseteq {\rm Cone}({\rm Cocone}(\mathcal{B},\mathcal{A}),\mathcal{X})\subseteq {\rm Cone}(\mathcal{B},\mathcal{X}\ast \mathcal{A})$ and $\mathcal{C}={\rm Cocone}(\mathcal{Y},\mathcal{X})\subseteq {\rm Cocone}({\rm Cocone}(\mathcal{B},\mathcal{A}),\mathcal{X})\subseteq {\rm Cocone}(\mathcal{B},\mathcal{X}\ast \mathcal{A})$ by \cite[Lemma 2.5]{AT22}.
	
	Since a cotorsion pair is uniquely determined by either of its components, the two maps are clearly mutually inverse. They are also isomorphisms of posets by definition.
	
	Let $(\mathcal{U},\mathcal{V})\in \mathsf{scotors}(-\infty,x]$. We have $\mathbb{E}_{\mathcal{Y}}^{2}(\mathcal{U}\cap \mathcal{Y},\mathcal{V})\cong \mathbb{E}^{2}(\mathcal{U}\cap \mathcal{Y},\mathcal{V})=0$. Let $(\mathcal{A},\mathcal{B})\in \mathsf{scotors}\,\mathcal{Y}$. Since $\mathbb{E}^{2}(\mathcal{A},\mathcal{B})\cong \mathbb{E}_{\mathcal{Y}}^{2}(\mathcal{A},\mathcal{B})=0$, together with $\mathbb{E}^{2}(\mathcal{X},\mathcal{B})=0$, we obtain $\mathbb{E}^{2}(\mathcal{X}\ast \mathcal{A},\mathcal{B})=0$. By Lemma \ref{scotorsion}, we have $\mathsf{scotors}\,\mathcal{Y}=\mathsf{hcotors}\,\mathcal{Y}$.
	
	(2) By a similar argument as in (1), we obtain $\mathbb{E}_{\mathcal{Y}}^{m}(T,S)\cong \mathbb{E}^{m}(T,S)$ for any $T,S\in \mathcal{Y}$ and $m\geq 1$ since $\mathbb{E}^{\geq 1}(\mathcal{X},\mathcal{Y})=0$. Let $(\mathcal{U},\mathcal{V})\in \mathsf{scotors}(-\infty,x]$. Then $(\mathcal{U},\mathcal{V})=(\mathsf{add}(\mathcal{X}\ast \mathcal{A}),\mathcal{B})$ for some $(\mathcal{A},\mathcal{B})\in \mathsf{hcotors}\,\mathcal{Y}$. Thus for any $m\geq 1$, $\mathbb{E}^{m}(\mathcal{A},\mathcal{B})\cong \mathbb{E}_{\mathcal{Y}}^{m}(\mathcal{A},\mathcal{B})=0$. Combining with $\mathbb{E}^{m}(\mathcal{X},\mathcal{B})=0$, we obtain $\mathbb{E}^{m}(\mathcal{U},\mathcal{V})=0$. The last assertion is obvious.
	
	(3) By Lemma \ref{bddscotors}, $\mathcal{Y}=\mathcal{N}^{\wedge}$. Thus for any $T\in \mathcal{Y}$, there are $\mathbb{E}$-triangles
	\[N_{n}\rightarrowtail N_{n-1}\twoheadrightarrow T_{n-1}\dashrightarrow\]
	\[T_{n-1}\rightarrowtail N_{n-2}\twoheadrightarrow T_{n-2}\dashrightarrow\]
	\[\cdots\]
	\[T_{1}\rightarrowtail N_{0}\twoheadrightarrow T_{0}\dashrightarrow\]
	with $N_{i}\in \mathcal{N}$ and $T_{0}=T$. Thus $T_{i}\in \mathcal{N}^{\wedge}=\mathcal{Y}$. These are also $\mathbb{E}_{\mathcal{Y}}$-triangles, hence ${\rm pd}_{\mathcal{Y}}T< \infty$. Let $(\mathcal{U},\mathcal{V})\in \mathsf{bdd\text{-}cotors}(-\infty,x]$. Note $\mathcal{V}^{\vee}\cap \mathcal{Y}=\mathcal{Y}$ equals to $\mathcal{V}^{\vee}$ in $\mathcal{Y}$ since $\mathcal{Y}$ is closed under cones. In $\mathcal{Y}$, $\mathcal{Y}=\mathcal{N}^{\wedge}\subseteq (\mathcal{U}\cap \mathcal{Y})^{\wedge}$. Thus $(\mathcal{U}\cap \mathcal{Y},\mathcal{V})\in \mathsf{bdd\text{-}cotors}\,\mathcal{Y}$.
	
	(4) By (1) and (2), we have bijections $\mathsf{hcotors}(-\infty,x]\leftrightarrow \mathsf{hcotors}\,\mathcal{Y}$. The map from left to right sends bounded cotorsion pairs to bounded ones by (3). It suffices to check the other map. Assume $(\mathcal{U}\cap \mathcal{Y},\mathcal{V})\in \mathsf{bdd\text{-}hcotors}\,\mathcal{Y}$, then $\mathcal{Y}\subseteq \mathcal{V}^{\vee}$ in $\mathcal{C}$. Let $\mathcal{W}:=\mathcal{U}\cap \mathcal{V}$. Then $\mathcal{V},\mathcal{W}$ satisfy the conditions in \cite[Proposition 4.8]{AT22}. Thus $\mathcal{V}^{\vee}$ is closed under cocones. Hence $\mathcal{C}=\mathcal{Y}^{\vee}\subseteq \mathcal{V}^{\vee}$. Since $\mathcal{C}=\mathcal{X}^{\wedge}\subseteq \mathcal{U}^{\wedge}$, $(\mathcal{U},\mathcal{V})$ is bounded.
\end{proof}

Dually, we have bijections between cotorsion pairs greater than $x$ and those in $\mathcal{X}$. By Proposition \ref{first prop} and its dual, we recover \cite[Theorem 3.6]{AT22} and obtain some other results on intervals of cotorsion pairs.

\begin{cor}\label{main cor}
	Let $x_{i}=(\mathcal{X}_{i},\mathcal{Y}_{i})\in \mathsf{scotors}\,\mathcal{C},i=1,2$ such that $x_{1}\leq x_{2}$. Denote by $\mathcal{C}'$ (resp. $\mathcal{M},~\mathcal{N}$) the intersection $\mathcal{X}_{1}\cap \mathcal{Y}_{2}$ (resp. $\mathcal{X}_{1}\cap \mathcal{Y}_{1},~\mathcal{X}_{2}\cap \mathcal{Y}_{2}$). 
	
	(1) $\mathcal{C}'$ is an extriangulated category with enough projectives $\mathcal{N}$ and enough injectives $\mathcal{M}$. For any $T,S\in \mathcal{C}'$, $\mathbb{E}^{2}(T,S)\cong \mathbb{E}_{\mathcal{C}'}^{2}(T,S)$. There are mutually inverse isomorphisms of posets
	\begin{align*}
		\mathsf{cotors}\,[x_{1},x_{2}] & \leftrightarrow \mathsf{cotors}\,\mathcal{C}' \\
		(\mathcal{X},\mathcal{Y}) & \mapsto (\mathcal{X}\cap \mathcal{Y}_{2},\mathcal{X}_{1}\cap \mathcal{Y}) \\
		(\mathsf{add}(\mathcal{X}_{2}\ast \mathcal{A}),\mathsf{add}(\mathcal{B}\ast \mathcal{Y}_{1})) & \mapsfrom (\mathcal{A},\mathcal{B})
	\end{align*}
    which restrict to isomorphisms $\mathsf{scotors}\,[x_{1},x_{2}] \leftrightarrow \mathsf{hcotors}\,\mathcal{C}'$. If moreover $x_{1}$ and $x_{2}$ are bounded in $\mathcal{C}$, then $\mathcal{C}'=\mathcal{M}^{\vee}\cap \mathcal{N}^{\wedge}$ and for each $T\in \mathcal{C}'$, we have ${\rm pd}_{\mathcal{C}'}T<\infty$, ${\rm id}_{\mathcal{C}'}T<\infty$.  The isomorphisms become 
    \[\mathsf{bdd\text{-}cotors}\,[x_{1},x_{2}]\leftrightarrow \mathsf{bdd\text{-}cotors}\,\mathcal{C}'\]
    and restrict to isomorphisms $\mathsf{bdd\text{-}scotors}\,[x_{1},x_{2}]\leftrightarrow \mathsf{bdd\text{-}hcotors}\,\mathcal{C}'$.
	
	(2)  Assume $x_{1},x_{2}\in \mathsf{hcotors}\,\mathcal{C}$, then $\mathbb{E}^{m}(T,S)\cong \mathbb{E}_{\mathcal{C}'}^{m}(T,S)$ for any $T,S\in \mathcal{C}'$ and $m\geq 1$, and $\mathsf{presilt}\,\mathcal{C}'=\{\mathcal{T}\in \mathsf{presilt}\,\mathcal{C}\,|\,\mathcal{T}\subseteq \mathcal{C}'\}$. The restrictions in (1) become  $\mathsf{hcotors}\,[x_{1},x_{2}]\leftrightarrow \mathsf{hcotors}\,\mathcal{C}'$. If moreover $x_{1}$ and $x_{2}$ are bounded in $\mathcal{C}$, then $\mathcal{C}'={^{\bot}\mathcal{M}}\cap \mathcal{N}^{\bot}$ and the restrictions become 
	\[\mathsf{bdd\text{-}hcotors}\,[x_{1},x_{2}]\leftrightarrow \mathsf{bdd\text{-}hcotors}\,\mathcal{C}'.\]
\end{cor}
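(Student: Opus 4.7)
The plan is to deduce Corollary \ref{main cor} by iterating Proposition \ref{first prop}. First apply Proposition \ref{first prop}(1) to $x_2 \in \mathsf{scotors}\,\mathcal{C}$ to get an isomorphism
\[
\mathsf{cotors}(-\infty,x_2]\;\longrightarrow\;\mathsf{cotors}\,\mathcal{Y}_2,\qquad (\mathcal{X},\mathcal{Y})\mapsto(\mathcal{X}\cap\mathcal{Y}_2,\mathcal{Y}).
\]
Under this map $x_1 \mapsto x_1' := (\mathcal{C}',\mathcal{Y}_1)$, and, by the $\mathsf{scotors}$-part of the same proposition, $x_1'$ lies in $\mathsf{scotors}\,\mathcal{Y}_2 = \mathsf{hcotors}\,\mathcal{Y}_2$. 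The subposet $\mathsf{cotors}[x_1,x_2]$ corresponds to $\mathsf{cotors}[x_1',+\infty)$ inside $\mathcal{Y}_2$. Now apply the dual of Proposition \ref{first prop}(1) to $x_1'$ inside the extriangulated category $\mathcal{Y}_2$ to obtain a second isomorphism
\[
\mathsf{cotors}[x_1',+\infty)_{\mathcal{Y}_2}\;\longrightarrow\;\mathsf{cotors}\,\mathcal{C}',\qquad (\mathcal{U},\mathcal{V})\mapsto(\mathcal{U},\mathcal{V}\cap\mathcal{C}').
\]
Composing and using $\mathcal{Y}\subseteq\mathcal{Y}_2$ (which forces $\mathcal{Y}\cap\mathcal{C}' = \mathcal{X}_1\cap\mathcal{Y}$) produces the announced map $(\mathcal{X},\mathcal{Y})\mapsto(\mathcal{X}\cap\mathcal{Y}_2,\mathcal{X}_1\cap\mathcal{Y})$; composing the two inverses from Proposition \ref{first prop}(1) and its dual delivers the inverse formula $(\mathcal{A},\mathcal{B})\mapsto(\mathsf{add}(\mathcal{X}_2\ast\mathcal{A}),\mathsf{add}(\mathcal{B}\ast\mathcal{Y}_1))$.

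The same two-step recipe yields the structural and refined assertions, by invoking the relevant part of Proposition \ref{first prop} (and its dual) at each of the two stages. Proposition \ref{first prop}(1) and its dual give enough projectives $\mathcal{N}$ and enough injectives $\mathcal{M}$ for $\mathcal{C}'$, the comparison $\mathbb{E}^2\cong\mathbb{E}_{\mathcal{C}'}^2$, and the $\mathsf{scotors}$/$\mathsf{hcotors}$ restriction in (1). The higher-degree comparison $\mathbb{E}^m\cong\mathbb{E}_{\mathcal{C}'}^m$ for all $m\geq 1$, together with the hereditary restriction and the characterization of $\mathsf{presilt}\,\mathcal{C}'$ in (2), follow from Proposition \ref{first prop}(2) and its dual. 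For the bounded case in (1), Proposition \ref{first prop}(3) applied twice yields finite $\mathrm{pd}_{\mathcal{C}'}$ and $\mathrm{id}_{\mathcal{C}'}$ and shows the correspondence preserves boundedness, while Lemma \ref{bddscotors} applied to $x_1$ and $x_2$ gives $\mathcal{X}_1=\mathcal{M}^{\vee}$ and $\mathcal{Y}_2=\mathcal{N}^{\wedge}$, hence $\mathcal{C}'=\mathcal{M}^{\vee}\cap\mathcal{N}^{\wedge}$. For the bounded hereditary bijection in (2), Proposition \ref{first prop}(4) and its dual supply the bijection, and Theorem \ref{AT22thm5.7} upgrades $\mathcal{M}^{\vee}$ to ${^{\bot}\mathcal{M}}$ and $\mathcal{N}^{\wedge}$ to $\mathcal{N}^{\bot}$, giving $\mathcal{C}'={^{\bot}\mathcal{M}}\cap\mathcal{N}^{\bot}$.

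The main obstacle is organizational rather than conceptual: at each refinement one must verify that the image $x_1'$ of $x_1$ in $\mathcal{Y}_2$ is itself of the required type (s-cotorsion, hereditary, bounded, or bounded hereditary) so that the dual of the appropriate part of Proposition \ref{first prop} actually applies to it, and then track carefully how the two explicit inverses compose to give $(\mathsf{add}(\mathcal{X}_2\ast\mathcal{A}),\mathsf{add}(\mathcal{B}\ast\mathcal{Y}_1))$. Once this bookkeeping is in place, each clause of Corollary \ref{main cor} is the corresponding slice of the composed bijection.
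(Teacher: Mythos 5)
Your proposal is correct and follows essentially the same route as the paper: apply Proposition \ref{first prop}(1) at $x_2$ to pass to $\mathcal{Y}_2$, note that $x_1$ is sent to $x_1'=(\mathcal{X}_1\cap\mathcal{Y}_2,\mathcal{Y}_1)\in\mathsf{scotors}\,\mathcal{Y}_2$, then apply the dual at $x_1'$ inside $\mathcal{Y}_2$ to land in $\mathcal{C}'$, with the refined clauses handled by parts (2)--(4) of the proposition, Lemma \ref{bddscotors}, and Theorem \ref{AT22thm5.7} exactly as in the paper's proof. The composition of the two explicit inverses and the identification $\mathcal{Y}\cap\mathcal{C}'=\mathcal{X}_1\cap\mathcal{Y}$ match the published argument.
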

\begin{proof}
	(1) By applying Proposition \ref{first prop} (1), then its dual and conversely, we obtain immediately the first statement. We also obtain $\mathbb{E}^{2}(T,S)\cong \mathbb{E}_{\mathcal{Y}_{2}}^{2}(T,S)\cong \mathbb{E}_{\mathcal{C}'}^{2}(T,S)$ for any $T,S\in \mathcal{C}'$. By Proposition \ref{first prop} (1), we have isomorphisms of posets $\mathsf{cotors}\,[x_{1},x_{2}]\leftrightarrow \mathsf{cotors}_{\mathcal{Y}_{2}}[x_{1}',+\infty)$ where $x_{1}'=(\mathcal{X}_{1}\cap \mathcal{Y}_{2},\mathcal{Y}_{1})\in \mathsf{scotors}\,\mathcal{Y}_{2}$ and the subscript $\mathcal{Y}_{2}$ means considering cotorsion pairs of the extriangulated category $\mathcal{Y}_{2}$. Then by its dual, we have isomorphisms of posets $\mathsf{cotors}_{\mathcal{Y}_{2}}[x_{1}',+\infty)\leftrightarrow \mathsf{cotors}\,\mathcal{C}'$. Composition of these maps are exactly the maps given. The restriction follows simultaneously. Assume moreover $x_{1}$ and $x_{2}$ are bounded. By Proposition \ref{first prop} (3), $(\mathcal{X}_{1}\cap \mathcal{Y}_{2},\mathcal{Y}_{1})\in \mathsf{bdd\text{-}scotors}\,\mathcal{Y}_{2}$. By its dual, for any $T\in \mathcal{C}'$, ${\rm id}_{\mathcal{C}'}T<\infty$. The other half is similar. It implies $\mathsf{cotors}\,\mathcal{C}'=\mathsf{bdd\text{-}cotors}\,\mathcal{C}'$. $\mathcal{C}'=\mathcal{M}^{\vee}\cap \mathcal{N}^{\wedge}$ follows from Lemma \ref{bddscotors}. Since $x_{1}$ and $x_{2}$ are bounded, each cotorsion pair in the interval is bounded automatically. Thus we finish the proof.
	
	(2) For any $T,S\in \mathcal{C}'$ and $m\geq 1$, we have $\mathbb{E}^{m}(T,S)\cong \mathbb{E}_{\mathcal{Y}_{2}}^{m}(T,S)\cong \mathbb{E}_{\mathcal{C}'}^{m}(T,S)$ by Proposition \ref{first prop} (1), (2) and its dual. By (1) and Proposition \ref{first prop} (2), we have $\mathsf{hcotors}\,[x_{1},x_{2}]\leftrightarrow \mathsf{hcotors}\,\mathcal{C}'$. Assume moreover $x_{1},x_{2}$ are bounded. Then $\mathcal{C}'={^{\bot}\mathcal{M}}\cap \mathcal{N}^{\bot}$ follows from Theorem \ref{AT22thm5.7}. The last statement follows from (1).
\end{proof}

Next we give a bijection between intervals of silting subcategories in $\mathcal{C}$ and silting subcategories in extension-closed subcategories (as extriangulated categories), which we call silting interval reduction.

\begin{thm}\label{main thm_1}
	Let $\mathcal{M},\mathcal{N}$ be silting subcategories of $\mathcal{C}$ such that $\mathcal{M}\leq \mathcal{N}$. Then the map $\mathcal{T}\mapsto \mathcal{T}$ induces isomorphisms of posets
	\[\mathsf{silt}(-\infty,\mathcal{N}]=\{\mathcal{T}\in \mathsf{silt}\,\mathcal{C}\,|\,\mathcal{T}\subseteq \mathcal{N}^{\bot}\}\cong \mathsf{silt}(\mathcal{N}^{\bot}),\]
	\[\mathsf{silt}[\mathcal{M},+\infty)=\{\mathcal{T}\in \mathsf{silt}\,\mathcal{C}\,|\,\mathcal{T}\subseteq {^{\bot}\mathcal{M}}\}\cong \mathsf{silt}({^{\bot}\mathcal{M}}),\]
	\[\mathsf{silt}\,[\mathcal{M},\mathcal{N}]=\{\mathcal{T}\in \mathsf{silt}\,\mathcal{C}\,|\,\mathcal{T}\subseteq {^{\bot}\mathcal{M}}\cap \mathcal{N}^{\bot}\}\cong \mathsf{silt}({^{\bot}\mathcal{M}}\cap \mathcal{N}^{\bot}).\]
\end{thm}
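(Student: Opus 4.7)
The plan is to reduce the statement to the theory of cotorsion pairs, combining Theorem \ref{AT22thm5.7} with the reduction results Proposition \ref{first prop} and Corollary \ref{main cor}. By Theorem \ref{AT22thm5.7}, silting subcategories in $\mathcal{C}$ correspond bijectively to bounded hereditary cotorsion pairs via $\mathcal{T} \mapsto x_{\mathcal{T}} := ({}^{\bot}\mathcal{T}, \mathcal{T}^{\bot})$, and this is an isomorphism of posets. In particular $\mathcal{M} \leq \mathcal{N}$ corresponds to $x_{\mathcal{M}} \leq x_{\mathcal{N}}$, so the silting interval $\mathsf{silt}\,[\mathcal{M},\mathcal{N}]$ corresponds to the cotorsion-pair interval $\mathsf{bdd\text{-}hcotors}\,[x_{\mathcal{M}}, x_{\mathcal{N}}]$, and likewise for the one-sided intervals. (If desired, one can check directly that $\mathcal{M} \leq \mathcal{N}$ implies $\mathcal{M}^{\bot} \subseteq \mathcal{N}^{\bot}$ by noting that $\mathcal{N} \subseteq {}^{\bot}\mathcal{M} = \mathcal{M}^{\vee}$ gives each $N \in \mathcal{N}$ a finite $\mathcal{M}$-coresolution, so dimension shifting yields $\mathbb{E}^{\geq 1}(N, T) = 0$ whenever $T \in \mathcal{M}^{\bot}$.)

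For the two-sided case, apply Corollary \ref{main cor}(2) with $x_1 = x_{\mathcal{M}}$ and $x_2 = x_{\mathcal{N}}$. Setting $\mathcal{C}' := {}^{\bot}\mathcal{M} \cap \mathcal{N}^{\bot}$, this yields an isomorphism of posets $\mathsf{bdd\text{-}hcotors}\,[x_{\mathcal{M}}, x_{\mathcal{N}}] \cong \mathsf{bdd\text{-}hcotors}\,\mathcal{C}'$, together with the key identification $\mathbb{E}^{m}_{\mathcal{C}'}(T,S) \cong \mathbb{E}^{m}(T,S)$ for all $T,S \in \mathcal{C}'$ and $m \geq 1$. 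Apply Theorem \ref{AT22thm5.7} once more, this time to the extriangulated category $\mathcal{C}'$, to obtain $\mathsf{bdd\text{-}hcotors}\,\mathcal{C}' \cong \mathsf{silt}\,\mathcal{C}'$. Composing these three isomorphisms gives the desired bijection $\mathsf{silt}\,[\mathcal{M}, \mathcal{N}] \cong \mathsf{silt}({}^{\bot}\mathcal{M} \cap \mathcal{N}^{\bot})$. For the unbounded intervals, the same scheme applies, using Proposition \ref{first prop}(4) and its dual in place of Corollary \ref{main cor}(2), with $\mathcal{C}'$ replaced by $\mathcal{N}^{\bot}$ and ${}^{\bot}\mathcal{M}$ respectively.

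Two small items then finish the theorem. First, the set-theoretic equality $\mathsf{silt}\,[\mathcal{M},\mathcal{N}] = \{\mathcal{T} \in \mathsf{silt}\,\mathcal{C} \mid \mathcal{T} \subseteq {}^{\bot}\mathcal{M} \cap \mathcal{N}^{\bot}\}$ is immediate from unwinding $\mathcal{M} \leq \mathcal{T} \leq \mathcal{N}$. Second, to see the composed bijection is literally $\mathcal{T} \mapsto \mathcal{T}$, trace the maps: $\mathcal{T}$ produces $({}^{\bot}\mathcal{T}, \mathcal{T}^{\bot})$; under Corollary \ref{main cor}(2) this becomes the cotorsion pair $({}^{\bot}\mathcal{T} \cap \mathcal{N}^{\bot},\ {}^{\bot}\mathcal{M} \cap \mathcal{T}^{\bot})$ of $\mathcal{C}'$; and its intersection is $({}^{\bot}\mathcal{T} \cap \mathcal{T}^{\bot}) \cap ({}^{\bot}\mathcal{M} \cap \mathcal{N}^{\bot}) = \mathcal{T} \cap \mathcal{C}' = \mathcal{T}$, where the first equality uses Theorem \ref{AT22thm5.7} applied to $\mathcal{C}$ and the second uses $\mathcal{T} \subseteq \mathcal{C}'$. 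The only real obstacle is the bookkeeping of orders, boundedness and higher extensions along the composition; all of this is already packaged into Proposition \ref{first prop} and Corollary \ref{main cor}, so the argument is essentially mechanical once those are in hand.
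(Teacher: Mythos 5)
Your proposal is correct and follows essentially the same route as the paper: translate silting subcategories into bounded hereditary cotorsion pairs via Theorem \ref{AT22thm5.7}, use Proposition \ref{first prop}(4) and its dual for the one-sided intervals and Corollary \ref{main cor}(2) for the two-sided one, then apply Theorem \ref{AT22thm5.7} again in the smaller category. The extra verification that the composite is literally $\mathcal{T}\mapsto\mathcal{T}$ is a correct (and welcome) unwinding of details the paper leaves implicit.
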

\begin{proof}
	By Theorem \ref{AT22thm5.7}, Proposition \ref{first prop} (4) and its dual, we obtain the first two isomorphisms. The last one follows from Corollary \ref{main cor} (2) and Theorem \ref{AT22thm5.7}.
\end{proof}

If $\mathcal{C}$ is a triangulated category, we obtain a generalization of \cite[Theorem 2.3]{IJY}, \cite[Theorem 2.2]{PZ}, \cite[Corollary 1.22]{BZ} and \cite[Corollary 5.10]{GNP23}. Denote by $\mathsf{co\text{-}t\text{-}str}\,\mathcal{C}$ the set of co-t-structures in $\mathcal{C}$ and by $\mathsf{bdd\text{-}co\text{-}t\text{-}str}\,\mathcal{C}$ the bounded ones. Note that in triangulated categories, (bounded) hereditary cotorsion pairs are precisely (bounded) co-t-structures (cf. \cite[Lemma 3.3, Corollary 5.11]{AT22}). Let $\mathcal{R}=\mathsf{add}\mathcal{R}$ be a rigid subcategory in $\mathcal{C}$, the subcategory $\mathcal{R}\ast \mathcal{R}[1]\ast \cdots \ast \mathcal{R}[n]$ is called $(n+1)$-{\em term subcategory}. If $\mathcal{R}$ is silting, then a silting subcategory $\mathcal{S}$ is called $(n+1)$-{\em term silting} if $\mathcal{S}\subseteq \mathcal{R}\ast \mathcal{R}[1]\ast \cdots \ast \mathcal{R}[n]$.

\begin{cor}\label{generalize PZ}
	Assume $\mathcal{C}$ is a triangulated category with a shift functor $[1]$. Let $(\mathcal{X},\mathcal{Y})$ be a co-t-structure, $\mathcal{S}:=\mathcal{X}\cap \mathcal{Y}$ and $\mathcal{C}':=\mathcal{X}[n]\cap \mathcal{Y}$ ($n\geq 1$).
	Then 
 
	(1) $\mathcal{C}'=\mathcal{S}\ast \mathcal{S}[1]\ast \cdots \ast \mathcal{S}[n]$ and there are mutually inverse isomorphisms of posets
	\[\{(\mathcal{U},\mathcal{V})\in \mathsf{co\text{-}t\text{-}str}\,\mathcal{C}\,|\,\mathcal{X}\subseteq \mathcal{U}\subseteq \mathcal{X}[n]\}\leftrightarrow \mathsf{hcotors}\,\mathcal{C}'\leftrightarrow \mathsf{silt}\,\mathcal{C}'\]
	where the maps are given by Corollary \ref{main cor} (1) and Theorem \ref{AT22thm5.7} respectively.
	
	(2) If $(\mathcal{X},\mathcal{Y})$ is bounded, or equivalently $\mathcal{S}$ is silting (see \cite[Corollary 5.9]{MSSS}). Then $(n+1)$-term silting subcategories of $\mathcal{C}$ are precisely silting subcategories of the extriangulated category $\mathcal{C}'$ (i.e. $\{\mathcal{T}\in \mathsf{silt}\,\mathcal{C}\,|\,\mathcal{T}\subseteq \mathcal{C}'\}\cong \mathsf{silt}\,\mathcal{C}'$). Moreover there is a commutative diagram of bijections
	\[\begin{tikzcd}
		\{(\mathcal{U},\mathcal{V})\in \mathsf{bdd\text{-}co\text{-}t\text{-}str}\,\mathcal{C}\,|\,\mathcal{X}\subseteq \mathcal{U}\subseteq \mathcal{X}[n]\} \arrow[r, leftrightarrow] \arrow[d, leftrightarrow] & \mathsf{hcotors}\,\mathcal{C}' \arrow[d, leftrightarrow] \\
		\{\mathcal{T}\in \mathsf{silt}\,\mathcal{C}\,|\,\mathcal{T}\subseteq \mathcal{C}'\} \arrow[r,equal,"\sim"] & \mathsf{silt}\,\mathcal{C}'
	\end{tikzcd}\]
    where the vertical maps are given by Theorem \ref{AT22thm5.7}.
\end{cor}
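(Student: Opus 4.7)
The plan is to apply Corollary \ref{main cor} with $x_{1}:=(\mathcal{X}[n],\mathcal{Y}[n])$ and $x_{2}:=(\mathcal{X},\mathcal{Y})$, both of which are co-t-structures (hence hereditary, and also scotorsion, cotorsion pairs in the triangulated setting) with $x_{1}\leq x_{2}$ because $\mathcal{Y}[n]\subseteq \mathcal{Y}$. With these choices $\mathcal{N}=\mathcal{X}\cap \mathcal{Y}=\mathcal{S}$ and $\mathcal{M}=\mathcal{X}[n]\cap \mathcal{Y}[n]=\mathcal{S}[n]$, so Corollary \ref{main cor}(1) asserts that $\mathcal{C}'$ is extriangulated with enough projectives $\mathcal{S}$ and enough injectives $\mathcal{S}[n]$.

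The first task is to prove $\mathcal{C}'=\mathcal{S}\ast \mathcal{S}[1]\ast \cdots \ast \mathcal{S}[n]$. The inclusion $\supseteq$ follows immediately from $\mathcal{S}[k]\subseteq \mathcal{X}[k]\subseteq \mathcal{X}[n]$ and $\mathcal{S}[k]\subseteq \mathcal{Y}[k]\subseteq \mathcal{Y}$ for $0\leq k\leq n$, together with the fact that $\mathcal{X}[n]$ and $\mathcal{Y}$ are closed under extensions. For $\subseteq$ we induct on $n$; the case $n=0$ is trivial. For $n\geq 1$ and $M\in \mathcal{C}'$, take a projective cover in $\mathcal{C}'$ to obtain an $\mathbb{E}_{\mathcal{C}'}$-triangle $K\rightarrowtail S_{0}\twoheadrightarrow M$ with $S_{0}\in \mathcal{S}$ and $K\in \mathcal{C}'$. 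Rotating the underlying triangle of $\mathcal{C}$ to $S_{0}\to M\to K[1]\to S_{0}[1]$ and using that $\mathcal{X}[n]$ is closed under extensions and contains both $M$ and $S_{0}[1]\in \mathcal{X}[n]$, we obtain $K[1]\in \mathcal{X}[n]$, i.e.\ $K\in \mathcal{X}[n-1]\cap \mathcal{Y}$. The inductive hypothesis then gives $K\in \mathcal{S}\ast \cdots \ast \mathcal{S}[n-1]$ and hence $M\in \mathcal{S}\ast \mathcal{S}[1]\ast \cdots \ast \mathcal{S}[n]$.

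The first bijection in part (1) is the restriction of Corollary \ref{main cor}(1) to scotorsion pairs in $[x_{1},x_{2}]$, together with the identifications $\mathsf{scotors}\,\mathcal{C}=\mathsf{hcotors}\,\mathcal{C}=\mathsf{co\text{-}t\text{-}str}\,\mathcal{C}$ in the triangulated case. For the second bijection, Theorem \ref{AT22thm5.7} applied to the extriangulated category $\mathcal{C}'$ gives $\mathsf{bdd\text{-}hcotors}\,\mathcal{C}'\cong \mathsf{silt}\,\mathcal{C}'$, so it remains to show that every hereditary cotorsion pair $(\mathcal{A},\mathcal{B})$ in $\mathcal{C}'$ is bounded. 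For $P\in \mathcal{S}$, the $\mathbb{E}_{\mathcal{C}'}$-triangle $B\rightarrowtail A\twoheadrightarrow P$ coming from $\mathcal{C}'={\rm Cone}(\mathcal{B},\mathcal{A})$ splits by projectivity, whence $P\in \mathcal{A}$; dually $\mathcal{S}[n]\subseteq \mathcal{B}$. Combined with the $\ast$-decomposition just proved, this yields $\mathcal{C}'=\mathcal{S}^{\wedge}_{\mathcal{C}'}\subseteq \mathcal{A}^{\wedge}_{\mathcal{C}'}$ and $\mathcal{C}'=(\mathcal{S}[n])^{\vee}_{\mathcal{C}'}\subseteq \mathcal{B}^{\vee}_{\mathcal{C}'}$, proving boundedness.

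For part (2), assume $(\mathcal{X},\mathcal{Y})$ is bounded. Then Theorem \ref{AT22thm5.7} identifies $\mathcal{X}={}^{\bot}\mathcal{S}$ and $\mathcal{Y}=\mathcal{S}^{\bot}$, so $\mathcal{C}'={}^{\bot}\mathcal{S}[n]\cap \mathcal{S}^{\bot}$. Both $\mathcal{S}$ and $\mathcal{S}[n]$ are silting in $\mathcal{C}$ with $\mathcal{S}[n]\leq \mathcal{S}$ (using self-orthogonality of $\mathcal{S}$), so Theorem \ref{main thm_1} yields $\mathsf{silt}\,[\mathcal{S}[n],\mathcal{S}]=\{\mathcal{T}\in \mathsf{silt}\,\mathcal{C}\,|\,\mathcal{T}\subseteq \mathcal{C}'\}\cong \mathsf{silt}\,\mathcal{C}'$; invoking the $\ast$-decomposition, the left side is precisely the set of $(n+1)$-term silting subcategories in $\mathcal{C}$. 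Commutativity of the diagram reduces to noting that all four maps send a cotorsion pair $(\mathcal{U},\mathcal{V})$ to $\mathcal{U}\cap \mathcal{V}$. The hardest step I expect is the $\ast$-decomposition of $\mathcal{C}'$ together with the subsequent automatic-boundedness argument, which depend on carefully aligning the iterated Cone construction inside the extriangulated category $\mathcal{C}'$ with the triangulated $\ast$-filtration in $\mathcal{C}$.
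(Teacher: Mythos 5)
Your proposal is correct and takes essentially the same route as the paper: apply Corollary \ref{main cor} to the interval between $(\mathcal{X}[n],\mathcal{Y}[n])$ and $(\mathcal{X},\mathcal{Y})$, prove the $\ast$-decomposition by a one-step induction, observe that (hereditary) cotorsion pairs in $\mathcal{C}'$ are automatically bounded since $\mathcal{S}\subseteq\mathcal{A}$, $\mathcal{S}[n]\subseteq\mathcal{B}$ and $\mathcal{C}'=\mathcal{S}^{\wedge}=\mathcal{S}[n]^{\vee}$, and then invoke Theorems \ref{AT22thm5.7} and \ref{main thm_1}. The only cosmetic difference is that you peel off a copy of $\mathcal{S}$ at the bottom via the enough-projectives conflation of $\mathcal{C}'$, whereas the paper peels off a copy of $\mathcal{S}[n]$ at the top using the truncation triangle of the shifted co-t-structure; both arguments work, and your slightly loose remark that ``all four maps send $(\mathcal{U},\mathcal{V})$ to $\mathcal{U}\cap\mathcal{V}$'' is harmless since the two composites around the square do exactly that.
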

\begin{proof}
	(1) Since $\mathcal{S}[i]=\mathcal{X}[i]\cap \mathcal{Y}[i]\subseteq \mathcal{C}'$ for $0\leq i\leq n$, $\mathcal{S}\ast \mathcal{S}[1]\ast \cdots \ast \mathcal{S}[n]\subseteq \mathcal{C}'$. For each $T\in \mathcal{C}'$, consider a triangle $X_{n-1}\rightarrow T\rightarrow Y_{n}\rightarrow X_{n-1}[1]$ with $X_{n-1}\in \mathcal{X}[n-1],Y_{n}\in \mathcal{Y}[n]$. Then $Y_{n}\in \mathcal{S}[n]$ and $X_{n-1}\in \mathcal{X}[n-1]\cap \mathcal{Y}$. By induction, we obtain $T\in \mathcal{S}\ast \mathcal{S}[1]\ast \cdots \ast \mathcal{S}[n]$. Then $\mathcal{C}'=\mathcal{S}\ast \mathcal{S}[1]\ast \cdots \ast \mathcal{S}[n]$.
	Since $\mathcal{C}'$ has enough projectives (resp. injectives) $\mathcal{S}$ (resp. $\mathcal{S}[n]$) and $\mathcal{C}'=\mathcal{S}_{n}^{\wedge}=\mathcal{S}[n]_{n}^{\vee}$, cotorsion pairs in $\mathcal{C}'$ are bounded. By Corollary \ref{main cor} (2) and Theorem \ref{AT22thm5.7}, the result follows.
	
	(2) The result follows from Corollary \ref{main cor} (2) and Theorem \ref{AT22thm5.7}. The isomorphism $\{\mathcal{T}\in \mathsf{silt}\,\mathcal{C}\,|\,\mathcal{T}\subseteq \mathcal{C}'\}\cong \mathsf{silt}\,\mathcal{C}'$ is just part of Theorem \ref{main thm_1}.
\end{proof}

\begin{rem}
	(1) If $n=1$ in Corollary \ref{generalize PZ} (1), it is just \cite[Theorem 2.2]{PZ} and \cite[Corollary 5.10]{GNP23}. Because we have  $\mathsf{cotors}\,\mathcal{C}'=\mathsf{hcotors}\,\mathcal{C}'$ in this case.
	
	(2) If $n=1$, then $\{\mathcal{T}\in \mathsf{silt}\,\mathcal{C}\,|\,\mathcal{T}\subseteq \mathcal{C}'\}\cong \mathsf{silt}\,\mathcal{C}'$ in Corollary \ref{generalize PZ} (2) generalizes \cite[Theorem 3.4(2)]{FGL} by \cite[Theorem 4.3]{GNP23}.
\end{rem}

If the extriangulated categories are exact categories, then we obtain a reduction technique for tilting subcategories. First recall the notion of tilting subcategories in exact categories \cite{Sauter}. It generalizes tilting modules over Artin algebras and unifies many existing notions in various contexts.

\begin{defn}\cite[Definition 4.1]{Sauter}
	Let $\mathcal{E}$ be an exact category and $n$ be a non-negative integer. A subcategory $\mathcal{T}$ of $\mathcal{E}$ is {\em n-tilting} if it satisfies: 
	
	(1) $\mathcal{T}^{\bot}$ has enough projectives $\mathcal{T}$,
	
	(2) $(\mathcal{T}^{\bot})_{n}^{\vee}=\mathcal{E}$.
\end{defn}

Let $\mathsf{tilt}\,\mathcal{E}:=\{\mathcal{T}\subseteq \mathcal{E}\,|\,\mathcal{T}\text{ is {\em n}-tilting for some }n\}$. For $\mathcal{T}_{1}, \mathcal{T}_{2}\in \mathsf{tilt}\,\mathcal{E}$, write $\mathcal{T}_{1}\leq \mathcal{T}_{2}$ if $\mathcal{T}_{1}\subseteq \mathcal{T}_{2}^{\bot}$. Then $(\mathsf{tilt}\,\mathcal{E},\leq)$ is a poset, see \cite[Proposition 5.7]{Sauter}. 

\begin{lem}\label{silt=tilt}
	Let $\mathcal{E}$ be an essentially small exact category and $\mathcal{P}$ be the projectives. Assume $\mathcal{E}=\mathcal{P}^{\wedge}$. Then $\mathsf{tilt}\,\mathcal{E}=\{\mathcal{T}\in \mathsf{silt}\,\mathcal{E}\,|\,\mathcal{T}\subseteq \mathcal{P}_{n}^{\wedge}\text{ for some }n\geq 0\}$ as posets.
\end{lem}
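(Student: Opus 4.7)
The plan is to funnel everything through Theorem~\ref{AT22thm5.7}, which identifies $\mathsf{silt}\,\mathcal{E}$ with $\mathsf{bdd\text{-}hcotors}\,\mathcal{E}$ via $\mathcal{T}\mapsto (\mathcal{T}^\vee,\mathcal{T}^\wedge)=({}^\bot\mathcal{T},\mathcal{T}^\bot)$. The task then splits into two matches: Sauter's condition~(1) will correspond to the right-hand class of a bounded hereditary cotorsion pair, while Sauter's condition~(2), together with the ambient assumption $\mathcal{E}=\mathcal{P}^\wedge$, will correspond to the additional bound $\mathcal{T}\subseteq\mathcal{P}_n^\wedge$. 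Since both the silting order and the tilting order are defined by $\mathcal{T}_1\subseteq\mathcal{T}_2^{\bot}$, once we have set-theoretic equality it is automatically an isomorphism of posets, so the real content is the two set inclusions.

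For $\mathsf{tilt}\,\mathcal{E}\subseteq\{\mathcal{T}\in\mathsf{silt}\,\mathcal{E}\mid\mathcal{T}\subseteq\mathcal{P}_n^\wedge\text{ for some }n\}$: given $\mathcal{T}$ that is $n$-tilting, condition~(1) supplies $\mathcal{T}\subseteq\mathcal{T}^\bot$, so $\mathcal{T}$ is self-orthogonal. Iterating the conflations $Y\rightarrowtail T\twoheadrightarrow X$ from~(1) produces $\mathcal{T}$-resolutions of every object of $\mathcal{T}^\bot$; combined with~(2) and $\mathcal{E}=\mathcal{P}^\wedge$ one checks these resolutions are finite, so $\mathcal{T}^\bot=\mathcal{T}^\wedge$ and $({}^\bot\mathcal{T},\mathcal{T}^\bot)$ is a bounded hereditary cotorsion pair. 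Theorem~\ref{AT22thm5.7} then yields $\mathcal{T}\in\mathsf{silt}\,\mathcal{E}$. To obtain a uniform projective-dimension bound, I would apply~(2) to each $T\in\mathcal{T}$ to get a $\mathcal{T}^\bot$-coresolution of length $\leq n$, then resolve its terms by projectives using $\mathcal{E}=\mathcal{P}^\wedge$ together with the approximations from the cotorsion pair; a standard splicing argument caps $\mathrm{pd}_\mathcal{E}(T)$ by some $m$ depending only on~$n$ and the cotorsion-pair data.

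For the reverse inclusion, let $\mathcal{T}\in\mathsf{silt}\,\mathcal{E}$ with $\mathcal{T}\subseteq\mathcal{P}_n^\wedge$. Theorem~\ref{AT22thm5.7} supplies the bounded hereditary cotorsion pair $({}^\bot\mathcal{T},\mathcal{T}^\bot)=(\mathcal{T}^\vee,\mathcal{T}^\wedge)$. Sauter's~(1) is immediate, since any $X\in\mathcal{T}^\bot=\mathcal{T}^\wedge$ admits a conflation $Y\rightarrowtail T\twoheadrightarrow X$ with $T\in\mathcal{T}$ and $Y\in\mathcal{T}^\wedge=\mathcal{T}^\bot$. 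For~(2), the essential step is to promote the pointwise boundedness $(\mathcal{T}^\bot)^\vee=\mathcal{E}$ coming from the cotorsion pair into a uniform bound $(\mathcal{T}^\bot)_m^\vee=\mathcal{E}$. I would do this by noting $\mathcal{P}\subseteq{}^\bot\mathcal{T}=\mathcal{T}^\vee$, so each projective has a finite $\mathcal{T}$-coresolution whose length is controlled by the hypothesis $\mathcal{T}\subseteq\mathcal{P}_n^\wedge$ via duality inside the cotorsion pair; splicing these coresolutions against a projective resolution of an arbitrary $X\in\mathcal{E}=\mathcal{P}^\wedge$ produces a $\mathcal{T}^\bot$-coresolution of $X$ of length bounded only in terms of $n$.

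The main obstacle is precisely this uniformity gap: bounded hereditary cotorsion pairs only guarantee pointwise-finite resolution lengths, whereas Sauter's definition requires a single global bound. Both transfers (uniform $n$ in~(2) $\leftrightarrow$ uniform projective dimension on $\mathcal{T}$) are mediated by the assumption $\mathcal{E}=\mathcal{P}^\wedge$ and the interplay between projective resolutions and $\mathcal{T}$-approximations supplied by the cotorsion pair; handling these splicings cleanly is where the bulk of the technical work lies.
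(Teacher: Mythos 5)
Your high-level strategy (route everything through Theorem~\ref{AT22thm5.7} and verify Sauter's Definition directly) is reasonable, and several pieces are fine: self-orthogonality from condition (1), condition (1) in the reverse direction, and the remark that set equality automatically gives the poset isomorphism. But the two steps you yourself flag as "the bulk of the technical work" are genuine gaps, and one of them, as you describe it, would not work. In the reverse inclusion, splicing the $\mathcal{T}$-coresolutions of the projectives against a projective resolution of an arbitrary $X\in\mathcal{E}=\mathcal{P}^{\wedge}$ cannot produce a $\mathcal{T}^{\bot}$-coresolution "of length bounded only in terms of $n$": the projective resolutions of objects of $\mathcal{P}^{\wedge}$ are finite but of unbounded length, and the spliced bound grows with $\operatorname{pd}X$, so uniformity is exactly what this construction fails to deliver. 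A uniform bound does hold with $m=n$, but by a different argument: either iterate the special $\mathcal{T}^{\bot}$-preenvelopes coming from completeness of $({}^{\bot}\mathcal{T},\mathcal{T}^{\bot})$ and dimension-shift using $\operatorname{pd}\mathcal{T}\leq n$ to see the $n$-th cosyzygy already lies in $\mathcal{T}^{\bot}$; or, as the paper does, avoid Definition 4.1(2) altogether by invoking the characterization in \cite[Theorem 5.3]{Sauter}, whose condition involves only the projectives, namely $\mathcal{P}\subseteq\mathcal{T}_{n}^{\vee}$. The latter is obtained by a short dimension shift: for $P\in\mathcal{P}\subseteq\mathcal{T}^{\vee}$ (which comes from \cite[Proposition 5.5]{AT22}, not from boundedness of the cotorsion pair), the $n$-th cosyzygy in a $\mathcal{T}$-coresolution lies in ${}^{\bot}\mathcal{T}\cap\mathcal{T}^{\bot}=\mathcal{T}$ because $\mathcal{T}\subseteq\mathcal{P}_{n}^{\wedge}$.

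In the forward inclusion the phrase "one checks these resolutions are finite, so $\mathcal{T}^{\bot}=\mathcal{T}^{\wedge}$ and $({}^{\bot}\mathcal{T},\mathcal{T}^{\bot})$ is a bounded hereditary cotorsion pair" conceals the entire content: termination of the $\mathcal{T}$-resolutions, completeness and heredity of the pair, boundedness, and the identification ${}^{\bot}\mathcal{T}\cap\mathcal{T}^{\bot}=\mathcal{T}$ (which you also need before Theorem~\ref{AT22thm5.7} tells you that $\mathcal{T}$ itself, rather than the core, is silting) are precisely the tilting--cotorsion correspondence, and none of it is automatic --- e.g.\ for $\mathcal{T}=\mathcal{P}$ over a self-injective algebra one has $\mathcal{T}^{\bot}=\operatorname{mod}A\neq\mathcal{T}^{\wedge}$, so the hypothesis $\mathcal{E}=\mathcal{P}^{\wedge}$ must enter in an essential way that your sketch does not supply. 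The paper sidesteps all of this by citing \cite[Theorem 5.3]{Sauter} (self-orthogonal, $\operatorname{pd}\leq n$, $\mathcal{P}\subseteq\mathcal{T}_{n}^{\vee}$) together with \cite[Proposition 5.5]{AT22}; if you want a self-contained argument along your lines, you must actually carry out the Auslander--Buchweitz-type constructions you are gesturing at, and replace the splicing step by one of the two uniformity arguments above.
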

\begin{proof}
	The inclusion "$\subseteq$" follows from \cite[Theorem 5.3]{Sauter} and \cite[Proposition 5.5]{AT22}. Let $\mathcal{T}\in \mathsf{silt}\,\mathcal{E}$ such that $\mathcal{T}\subseteq \mathcal{P}_{n}^{\wedge}$ for some $n\geq 0$. By \cite[Proposition 5.5]{AT22} and dimension shift, we obtain $\mathcal{P}\subseteq \mathcal{T}_{n}^{\vee}$. By \cite[Theorem 5.3]{Sauter}, $\mathcal{T}$ is $n$-tilting.
\end{proof}

\begin{rem}
    Under the condition of Lemma \ref{silt=tilt}, if there is a tilting object in $\mathcal{E}$ or $\mathcal{E}=\mathcal{P}_{n}^{\wedge}$ for some $n$, then $\mathsf{tilt}\,\mathcal{E}=\mathsf{silt}\,\mathcal{E}$ by \cite[Proposition 5.4, 5.5]{AT22}.
\end{rem}

\begin{prop}\label{appli in exact cat}
	Assume $\mathcal{C}$ is an exact category with enough projectives $\mathcal{P}$. Let $\mathcal{M},\mathcal{N}\in \mathsf{tilt}\,\mathcal{C}$ such that $\mathcal{M}\leq \mathcal{N}$ and $\mathcal{C}':={^{\bot}\mathcal{M}}\cap \mathcal{N}^{\bot}\cap \mathcal{P}^{\wedge}$. Then $\mathcal{T}\mapsto \mathcal{T}$ induces isomorphisms of posets $\mathsf{tilt}\,[\mathcal{M},\mathcal{N}]\cong \mathsf{tilt}\,\mathcal{C}'$.
\end{prop}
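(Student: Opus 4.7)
The strategy is to combine silting interval reduction (Theorem \ref{main thm_1}) with Lemma \ref{silt=tilt}, which identifies tilting subcategories with silting subcategories of bounded projective dimension. Any tilting subcategory of $\mathcal{C}$ has finite projective dimension, hence lies in $\mathcal{P}^{\wedge}$; in particular $\mathcal{M},\mathcal{N}\in \mathsf{silt}\,\mathcal{C}$, so Theorem \ref{main thm_1} produces a poset isomorphism $\mathsf{silt}\,[\mathcal{M},\mathcal{N}] \xrightarrow{\sim} \mathsf{silt}({^{\bot}\mathcal{M}}\cap \mathcal{N}^{\bot})$ via $\mathcal{T}\mapsto \mathcal{T}$. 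The aim is to restrict this bijection to the tilting subcategories on both sides.

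Next I would verify that $\mathcal{C}'$ is an extension-closed (hence exact) subcategory of $\mathcal{C}$, with enough projectives $\mathcal{N}$ and enough injectives $\mathcal{M}$, and that it satisfies $\mathcal{C}'=\mathcal{N}^{\wedge}$ computed inside $\mathcal{C}'$. By Corollary \ref{main cor}, the ambient subcategory ${^{\bot}\mathcal{M}}\cap \mathcal{N}^{\bot}$ is extriangulated with enough projectives $\mathcal{N}$ and enough injectives $\mathcal{M}$, and equals $\mathcal{M}^{\vee}\cap \mathcal{N}^{\wedge}$. Since $\mathcal{M},\mathcal{N}\subseteq \mathcal{P}^{\wedge}$ and $\mathcal{P}^{\wedge}$ is closed under extensions and under kernels of admissible epimorphisms between its members, the $\mathcal{N}$-approximations in ${^{\bot}\mathcal{M}}\cap \mathcal{N}^{\bot}$ restrict to $\mathcal{C}'$: starting from $X\in \mathcal{C}'$, an $\mathbb{E}$-triangle $X'\rightarrowtail N\twoheadrightarrow X$ with $N\in \mathcal{N}$ forces $X'\in \mathcal{P}^{\wedge}$. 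Iterating gives finite $\mathcal{N}$-resolutions in $\mathcal{C}'$ and the equality $\mathcal{C}'=\mathcal{N}^{\wedge}$.

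Finally, I would apply Lemma \ref{silt=tilt} on both sides: tilting subcategories of $\mathcal{C}$ are exactly silting subcategories contained in $\mathcal{P}_{n}^{\wedge}$ for some $n$, while tilting subcategories of $\mathcal{C}'$ are silting subcategories contained in $\mathcal{N}_{m}^{\wedge}$ for some $m$. For $\mathcal{T}$ in the silting bijection of the first step, the equivalence $\mathcal{T}\subseteq \mathcal{P}_{n}^{\wedge}$ for some $n$ $\Longleftrightarrow$ $\mathcal{T}\subseteq \mathcal{N}_{m}^{\wedge}$ for some $m$ follows from the boundedness of the two tilting subcategories: the inclusion $\mathcal{N}\subseteq \mathcal{P}_{k}^{\wedge}$ for some $k$ (since $\mathcal{N}$ is tilting) yields one direction, and $\mathcal{P}\subseteq \mathcal{N}^{\vee}_{\ell}$ for some $\ell$ (from the bounded hereditary cotorsion pair $({^{\bot}\mathcal{N}},\mathcal{N}^{\bot})$ via Theorem \ref{AT22thm5.7}) yields the other. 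The main obstacle I anticipate is the technical bookkeeping in the second step, specifically proving $\mathcal{C}'=\mathcal{N}^{\wedge}$ rather than merely $\mathcal{C}'\subseteq \mathcal{N}^{\wedge}$, and ensuring that the uniform bounds in the third step are compatible so that the restriction $\mathsf{tilt}\,[\mathcal{M},\mathcal{N}]\cong \mathsf{tilt}\,\mathcal{C}'$ of the silting bijection is genuinely well defined on both sides.
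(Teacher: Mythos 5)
There is a genuine gap at the very first step. You assert that, since tilting subcategories have finite projective dimension and hence lie in $\mathcal{P}^{\wedge}$, one gets ``in particular $\mathcal{M},\mathcal{N}\in \mathsf{silt}\,\mathcal{C}$''. This inference is false in general: Lemma \ref{silt=tilt} identifies tilting subcategories with silting subcategories contained in some $\mathcal{P}_{n}^{\wedge}$ only under the hypothesis that the ambient exact category equals $\mathcal{P}^{\wedge}$, and the proposition does \emph{not} assume $\mathcal{C}=\mathcal{P}^{\wedge}$. A tilting subcategory need not satisfy $\mathsf{thick}\,\mathcal{T}=\mathcal{C}$; for instance, if $\Lambda$ is a non-semisimple self-injective algebra and $\mathcal{C}=\mathsf{mod}\Lambda$, then $\mathcal{T}=\mathsf{proj}\Lambda$ is $0$-tilting but $\mathsf{thick}\,\mathcal{T}=\mathsf{proj}\Lambda\neq \mathcal{C}$, so $\mathcal{T}$ is not silting in $\mathcal{C}$. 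Consequently your application of Theorem \ref{main thm_1} inside $\mathcal{C}$, giving $\mathsf{silt}\,[\mathcal{M},\mathcal{N}]\cong \mathsf{silt}({^{\bot}\mathcal{M}}\cap \mathcal{N}^{\bot})$, is unjustified; moreover the category it would produce, ${^{\bot}\mathcal{M}}\cap \mathcal{N}^{\bot}$ computed in $\mathcal{C}$, can be strictly larger than the $\mathcal{C}'$ of the statement (in the example above ${^{\bot}\mathcal{M}}\cap \mathcal{N}^{\bot}=\mathsf{mod}\Lambda$ while $\mathcal{C}'=\mathsf{proj}\Lambda$), which is exactly why the statement intersects with $\mathcal{P}^{\wedge}$.

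The paper repairs this by first passing from $\mathcal{C}$ to the thick subcategory $\mathcal{E}:=\mathcal{P}^{\wedge}$: by \cite[Lemma 4.8, Corollary 5.10]{Sauter} the identity map gives $\mathsf{tilt}\,\mathcal{C}\cong \mathsf{tilt}\,\mathcal{E}$ as posets, and inside $\mathcal{E}$ one has $\mathcal{E}=\mathcal{P}^{\wedge}$, so Lemma \ref{silt=tilt} applies and $\mathcal{M},\mathcal{N}$ are genuinely silting in $\mathcal{E}$; since the higher extension groups of $\mathcal{E}$ agree with those of $\mathcal{C}$, one also gets $\mathcal{C}'={^{\bot_{\mathcal{E}}}\mathcal{M}}\cap \mathcal{N}^{\bot_{\mathcal{E}}}$, and only then is Theorem \ref{main thm_1} invoked (in $\mathcal{E}$). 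Your second and third steps (identifying $\mathcal{C}'$ with $\mathcal{N}^{\wedge}$ via Corollary \ref{main cor}, and matching the boundedness conditions $\mathcal{T}\subseteq \mathcal{P}_{n}^{\wedge}$ versus $\mathcal{T}\subseteq \mathcal{N}_{m}^{\wedge}$ by dimension shift using ${\rm pd}_{\mathcal{C}}\mathcal{N}<\infty$) are essentially the paper's bookkeeping and would go through once everything is set up inside $\mathcal{E}$; but as written the argument rests on a false reduction and does not prove the proposition.
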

\begin{proof}
	Let $\mathcal{E}:=\mathcal{P}^{\wedge}$. Then $\mathcal{E}$ is a thick subcategory of $\mathcal{C}$ containing $\mathcal{P}$. Thus for any $X,Y\in \mathcal{E}$ and $k\geq 1$, we have ${\rm Ext}_{\mathcal{C}}^{k}(X,Y)\cong {\rm Ext}_{\mathcal{E}}^{k}(X,Y)$. By \cite[Lemma 4.8, Corollary 5.10]{Sauter}, the map $\mathcal{T}\mapsto \mathcal{T}$ induces $\mathsf{tilt}\,\mathcal{C}\cong \mathsf{tilt}\,\mathcal{E}$ as posets. By Lemma \ref{silt=tilt}, we have 
	\[\mathsf{tilt}\,\mathcal{E}=\{\mathcal{T}\in \mathsf{silt}\,\mathcal{E}\,|\,\mathcal{T}\subseteq \mathcal{P}_{n}^{\wedge}\text{ for some }n\geq 0\}.\]
	Since $\mathcal{C}'={^{\bot}\mathcal{M}}\cap \mathcal{N}^{\bot}\cap \mathcal{P}^{\wedge}=({^{\bot}\mathcal{M}}\cap \mathcal{E})\cap (\mathcal{N}^{\bot}\cap \mathcal{E})={^{\bot_{\mathcal{E}}}\mathcal{M}}\cap \mathcal{N}^{\bot_{\mathcal{E}}}$, in $\mathcal{C}'$ we have
	\[\mathsf{tilt}\,\mathcal{C}'=\{\mathcal{T}'\in \mathsf{silt}\,\mathcal{C}'\,|\,\mathcal{T}'\subseteq \mathcal{N}_{n}^{\wedge}\text{ for some }n\geq 0\}\]
	by Corollary \ref{main cor} (1) and Lemma \ref{silt=tilt}. It suffices to show
    \[\{\mathcal{T}\in \mathsf{silt}\,\mathcal{E}\,|\,\mathcal{M}\leq \mathcal{T}\leq \mathcal{N},\mathcal{T}\subseteq \mathcal{P}_{n}^{\wedge}\text{ for some }n\geq 0\}=\{\mathcal{T}'\in \mathsf{silt}\,\mathcal{C}'\,|\,\mathcal{T}'\subseteq \mathcal{N}_{n}^{\wedge}\text{ for some }n\geq 0\},\]
    which is a restriction of an isomorphism in Theorem \ref{main thm_1}. Since the higher extensions in $\mathcal{C}'$ coincide with those in $\mathcal{E}$ by Corollary \ref{main cor} (2), the inclusion $\subseteq$ is clear. Since ${\rm pd}_{\mathcal{C}}\mathcal{N}\leq m$ for some $m$ by \cite[Theorem 5.3]{Sauter}, the inclusion $\supseteq$ follows by dimension shift.
\end{proof}

Dually we may also consider reduction of intervals of cotilting subcategories in exact categories, but we omit the statements.

\section{Silting in 0-Auslander extriangulated categories}\label{section 4}

Let $\mathcal{C}=(\mathcal{C},\mathbb{E},\mathfrak{s})$ be an extriangulated category.

\begin{defn}\cite[Definition 3.5]{GNP23}
	Let $\mathcal{C}=(\mathcal{C},\mathbb{E},\mathfrak{s})$ be an extriangulated category with enough projectives. Its {\em dominant dimension} dom.dim$\,\mathcal{C}$ is defined to be the largest integer $n$ such that for any projective object $P$, there are $n$ $\mathbb{E}$-triangles
	\[P\rightarrowtail I_{0}\twoheadrightarrow M_{1}\dashrightarrow \]
	\[M_{1}\rightarrowtail I_{1}\twoheadrightarrow M_{2}\dashrightarrow \]
	\[\cdots\]
	\[M_{n-1}\rightarrowtail I_{n-1}\twoheadrightarrow M_{n}\dashrightarrow \]
	with $I_{k}$ being projective-injective for $0\leq k\leq n-1$. If such an $n$ does not exist, then let dom.dim$\,\mathcal{C}=\infty$. Dually we define {\em codominant dimension} codom.dim$\,\mathcal{C}$.
\end{defn}

\begin{defn}\cite[Definition 3.7]{GNP23}\label{defn of 0-Aus}
	$\mathcal{C}=(\mathcal{C},\mathbb{E},\mathfrak{s})$ is {\em 0-Auslander} if it has enough projectives and ${\rm pd}~\mathcal{C}\leq 1\leq {\rm dom.dim}\,\mathcal{C}$, or equivalently, if it has enough injectives and ${\rm id}~\mathcal{C}\leq 1\leq {\rm codom.dim}\,\mathcal{C}$. It is {\em reduced} if its projective-injective objects are 0.
\end{defn}

For 0-Auslander $\mathcal{C}$, denote by $\mathcal{P}$ (resp. $\mathcal{I}$) the subcategory of projectives (resp. injectives), then the extriangulated category  $\mathcal{C}/[\mathcal{P}\cap \mathcal{I}]$ is easily checked to be reduced 0-Auslander (cf. \cite[Proposition 3.30]{NP}). For convenience we mainly consider reduced 0-Auslander extriangulated categories. We collect some properties as follows (see \cite[Section 4.5]{PPPP23}).

\begin{lem}\label{properties of 0-Aus}
	Let $\mathcal{C}$ be reduced 0-Auslander with projectives $\mathcal{P}$ and injectives $\mathcal{I}$. Then
	
	(1) ${\rm Hom}_{\mathcal{C}}(\mathcal{P},\mathcal{I})=0$;
	
	(2) For each $X\in \mathcal{C}$, there is an $\mathbb{E}$-triangle $P\rightarrowtail X\twoheadrightarrow I\dashrightarrow $ with $P\in \mathcal{P}$ and $I\in \mathcal{I}$;
	
	(3) Any morphism $P\rightarrow X$ (resp. $X\rightarrow I$) with $P\in \mathcal{P}$ (resp. $I\in \mathcal{I}$) is an inflation (resp. deflation);
	
	(4) For each $P\in \mathcal{P}$ (resp. $I\in \mathcal{I}$), fix an $\mathbb{E}$-triangle $P\rightarrowtail 0\twoheadrightarrow \Sigma P \dashrightarrow$ (resp. $\Omega I \rightarrowtail 0\twoheadrightarrow I\dashrightarrow $). Then they define mutually quasi-inverse equivalence
	\[\Sigma:\mathcal{P}\rightleftarrows \mathcal{I}:\Omega;\]
	
	(5) The functor $F: \mathcal{C}\rightarrow \mathsf{mod}\mathcal{P}~X\mapsto {\rm Hom}_{\mathcal{C}}(-,X)|_{\mathcal{P}}$ induces an equivalence $\mathcal{C}/[\mathcal{I}]\stackrel{\sim}\rightarrow \mathsf{mod}\mathcal{P}$, where $\mathsf{mod}\mathcal{P}$ is the category of finitely presented contravariant functors.
\end{lem}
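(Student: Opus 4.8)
\textbf{Proof proposal for Lemma \ref{properties of 0-Aus}.}

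The plan is to exploit the two characterizations of $0$-Auslander in Definition \ref{defn of 0-Aus}, namely $\mathrm{pd}\,\mathcal{C}\leq 1$ together with $\mathrm{dom.dim}\,\mathcal{C}\geq 1$ (and dually $\mathrm{id}\,\mathcal{C}\leq 1$, $\mathrm{codom.dim}\,\mathcal{C}\geq 1$), and to feed these into the long exact sequences for $\mathbb{E}^{n}$ recalled in Section \ref{section 2}. First, for (1): since $\mathrm{dom.dim}\,\mathcal{C}\geq 1$, every projective $P$ admits an $\mathbb{E}$-triangle $P\rightarrowtail I_{0}\twoheadrightarrow M_{1}\dashrightarrow$ with $I_{0}$ projective-injective; in the reduced case $\mathcal{P}\cap\mathcal{I}=0$, so $I_{0}=0$, which forces $P\cong\Omega M_{1}$ and, more usefully, gives for every $X$ the vanishing $\mathrm{Hom}_{\mathcal{C}}(P,I)=0$ once we know $I$ is injective: indeed applying $\mathrm{Hom}_{\mathcal{C}}(-,I)$ to $P\rightarrowtail 0\twoheadrightarrow\Sigma P\dashrightarrow$ (which exists by the same argument, dominant dimension $\geq 1$ and reducedness) and using $\mathbb{E}(\Sigma P,I)=0$ since $\mathrm{id}\,I\le 1$ is not quite it — the cleaner route is: $\mathrm{Hom}_{\mathcal{C}}(P,I)\cong \mathbb{E}(\Sigma P, I)$ from the $\mathbb{E}$-triangle $P\rightarrowtail 0\twoheadrightarrow\Sigma P\dashrightarrow$, and then $\mathbb{E}(\Sigma P, I)=0$ because $I$ is injective. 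So (1) follows once (4) is in place, so I would actually prove (4) first.

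For (4): the existence of the $\mathbb{E}$-triangle $P\rightarrowtail 0 \twoheadrightarrow \Sigma P\dashrightarrow$ with $\Sigma P$ defined up to isomorphism is exactly the statement that $\mathrm{dom.dim}\,\mathcal{C}\geq 1$ in the reduced case, since the injective object appearing must be $0$; the resulting object $\Sigma P$ is injective because $\mathbb{E}^{\geq 1}(-,\Sigma P)\cong \mathbb{E}^{\geq 1}(-,0)$ shifted appropriately — more precisely, for any $X$ the long exact sequence associated to $P\rightarrowtail 0\twoheadrightarrow\Sigma P\dashrightarrow$ gives $\mathbb{E}(X,\Sigma P)\cong \mathbb{E}^{2}(X,P)=0$ using $\mathrm{pd}\,\mathcal{C}\leq 1$, so $\Sigma P\in\mathcal{I}$. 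Dually $\Omega I\in\mathcal{P}$. Functoriality of $\Sigma$ and $\Omega$ and the quasi-inverse property follow by the standard $3\times 3$ / weak-pushout argument for the suspension-type functor on an extriangulated category with enough projectives and injectives that agree in degree shift — this is where I expect the only real bookkeeping, but it is routine and parallels \cite[Section 4.5]{PPPP23} and \cite{NP}; the key point each time is $\mathrm{pd}\,\mathcal{C}\leq 1$ (so second extensions vanish and the connecting maps become isomorphisms).

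For (2): given $X$, take a deflation $P\twoheadrightarrow X$ from a projective (enough projectives) and let $K$ be its cocone, $K\rightarrowtail P\twoheadrightarrow X\dashrightarrow$; then $\mathrm{pd}\,\mathcal{C}\leq 1$ forces $K$ projective (dimension shift: $\mathbb{E}^{\geq 1}(K,-)\cong\mathbb{E}^{\geq 2}(X,-)=0$). Now combine $K\rightarrowtail P\twoheadrightarrow X\dashrightarrow$ with $K\rightarrowtail 0\twoheadrightarrow \Sigma K\dashrightarrow$ via the extension/rotation axioms to produce an $\mathbb{E}$-triangle $P'\rightarrowtail X\twoheadrightarrow \Sigma K\dashrightarrow$ with $P'$ projective and $\Sigma K$ injective (by (4)); this is essentially the ``horseshoe'' construction. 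For (3): if $f\colon P\to X$ with $P$ projective, embed it using (2), i.e. look at $P\xrightarrow{f} X$ and the $\mathbb{E}$-triangle $P''\rightarrowtail X\twoheadrightarrow I''\dashrightarrow$; since $\mathrm{Hom}_{\mathcal{C}}(P,I'')=0$ by (1), $f$ factors through the inflation $P''\rightarrowtail X$, and a split-plus-inflation is an inflation — more directly, form the cocone of $\begin{psmallmatrix}f\end{psmallmatrix}$ and check it is projective by $\mathrm{pd}\leq 1$, hence the relevant map is an inflation by the characterization of inflations in a category where cocones of maps from projectives are projective. Dually for deflations into injectives. Finally (5): the functor $F=\mathrm{Hom}_{\mathcal{C}}(-,X)|_{\mathcal{P}}$ lands in $\mathsf{mod}\,\mathcal{P}$ because by (2) every $X$ has a $2$-term projective ``copresentation''-type resolution — more precisely, applying $\mathrm{Hom}_{\mathcal{C}}(-,X)$ to a projective deflation and its cocone (which is projective) shows $F(X)$ is finitely presented; $F$ kills $[\mathcal{I}]$ since $\mathrm{Hom}_{\mathcal{C}}(\mathcal{P},\mathcal{I})=0$ by (1); it is full and faithful on $\mathcal{C}/[\mathcal{I}]$ and essentially surjective by a Yoneda-type argument using that any object of $\mathsf{mod}\,\mathcal{P}$ is the cokernel of a map $\mathrm{Hom}_{\mathcal{C}}(-,P_{1})\to\mathrm{Hom}_{\mathcal{C}}(-,P_{0})$, which by Yoneda comes from a map $P_{1}\to P_{0}$ in $\mathcal{C}$ whose deflation-part realizes the functor. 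The main obstacle is really only step (4) — pinning down that $\Sigma,\Omega$ are well-defined functorial quasi-inverse equivalences — and once that is done, (1), (2), (3), (5) are short consequences; I would cite \cite{NP} and \cite[Section 4.5]{PPPP23} to keep the $3\times 3$-lemma manipulations brief.
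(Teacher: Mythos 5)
The paper does not actually prove this lemma; it states it with a pointer to \cite[Section 4.5]{PPPP23}, so there is no in-paper proof to compare against. Your reconstruction follows the standard route and is substantively correct: do (4) first, using $\mathrm{dom.dim}\geq 1$ plus reducedness to get $P\rightarrowtail 0\twoheadrightarrow\Sigma P\dashrightarrow$, then $\mathbb{E}(X,\Sigma P)\cong\mathbb{E}^{2}(X,P)=0$ from $\mathrm{pd}\leq 1$, and uniqueness of $\Sigma f$ from the isomorphism $\mathrm{Hom}(\Sigma P,\Sigma P')\cong\mathbb{E}(\Sigma P,P')$; then (1) is immediate; (2) from the projective presentation $K\rightarrowtail P_{0}\twoheadrightarrow X$ with $K$ projective and $\Sigma K$ injective; (3) from (2) and (1); and (5) by the usual Yoneda argument with (2)/(3). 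All of this is in line with the cited source.

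Two small points to tighten. First, in (2), what you call a ``horseshoe'' is really the pushout of $K\rightarrowtail 0\twoheadrightarrow\Sigma K\dashrightarrow$ along $K\rightarrowtail P_{0}$ (\cite[Proposition 3.15]{NP}): this produces both $P_{0}\rightarrowtail X\twoheadrightarrow\Sigma K\dashrightarrow$ and regenerates $K\rightarrowtail P_{0}\twoheadrightarrow X\dashrightarrow$, which is exactly what you want; the horseshoe metaphor is misleading since nothing is being interleaved. Second, in (3) your parenthetical ``more directly, form the cocone of $f$'' is circular: a cocone of $f$ only exists once $f$ is known to be a deflation/inflation, which is the point in question. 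Your primary argument is the right one, but it needs one more visible step: after factoring $f=jg$ with $j\colon P''\rightarrowtail X$ the inflation from (2) and $g\colon P\to P''$, you still must observe that $g$ (a morphism between projectives) is an inflation. This follows because $P\rightarrowtail 0$ is an inflation (reducedness and $\mathrm{dom.dim}\geq 1$) and hence $\begin{pmatrix} g\\ 0\end{pmatrix}\colon P\to P''\oplus 0\cong P''$ is an inflation by \cite[Corollary 3.16]{NP}; then $f=jg$ is a composite of inflations. With those clarifications your proposal is a complete proof.
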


Throughout this section, we assume $\mathcal{C}$ is reduced 0-Auslander, $k$-linear ($k$ is a field), Hom-finite, Krull-Schmidt and $\mathcal{P}=\mathsf{add}P$ unless otherwise specified. Let $I=\Sigma P$, then $\mathcal{I}=\mathsf{add}I$. Then $P$ and $I$ are silting objects. By \cite[Proposition 5.4]{AT22}, we consider silting objects instead of silting subcategories in $\mathcal{C}$. Thus $\mathsf{silt}\,\mathcal{C}$ denote the set of isomorphism classes of basic silting objects in $\mathcal{C}$. 

Let $A:={\rm End}_{\mathcal{C}}(P)$. The functor $F$ in Lemma \ref{properties of 0-Aus} (5) is as follows: 

 $$\overline{(-)}:={\rm Hom}_{\mathcal{C}}(P,-):\mathcal{C}\rightarrow \mathsf{mod}A.$$  It induces an equivalence $\mathcal{C}/[\mathcal{I}]\stackrel{\sim}\rightarrow \mathsf{mod}A$.

For an additive subcategory $\mathcal{T}$ closed under direct summands, denote by $|\mathcal{T}|$ the number of isomorphism classes of indecomposable objects in $\mathcal{T}$. For an object $X$,  $|X|:=|\mathsf{add}X|$.

\subsection{Bijections}
For any $X\in \mathcal{C}$, assume $X\cong X'\oplus I_{X}$ where $I_{X}$ is the maximal direct summand of $X$ which is injective. Then $X\mapsto (\overline{X},\overline{\Omega I_{X}})$ and the inverse map give bijections $\text{iso}\,\mathcal{C}\leftrightarrow \text{iso}(\mathsf{mod}A) \times \text{iso}(\mathsf{proj}A)$. We show the bijections restrict to bijections between silting objects and support $\tau$-tilting pairs in $\mathsf{mod}A$ (see \cite{AIR}). Note that in \cite[Remark 5.15]{GNP23}, the authors pointed out without a proof that there is a bijection between cotorsion pairs in $\mathcal{C}$ and functorially finite torsion classes in $\mathsf{mod}A$ as in \cite[Theorem 3.6]{PZ}. For completeness and convenience of the readers, we provide proofs in this subsection. First recall a condition (WIC) in an arbitrary extriangulated category $\mathcal{C}$.

{\bf (WIC)} For any morphisms $f:X\rightarrow Y$ and $g:Y\rightarrow Z$. If $gf$ is an inflation, so is $f$. Dually if $gf$ is a deflation, so is $g$.

Note that an extriangulated category satisfies (WIC) if and only if it is weakly idempotent complete (see \cite[Proposition 2.7]{Klapproth}). In particular, Krull-Schmidt categories are weakly idempotent complete.

\begin{defn}\cite[Definition 0.3]{AIR}
	Let $(M,P)$ be a pair of modules with $M\in \mathsf{mod}A$ and $P\in \mathsf{proj}A$.
	
	(1) $(M,P)$ is called a {\em $\tau$-rigid} pair if $M$ is $\tau$-rigid (i.e. ${\rm Hom}_{A}(M,\tau M)=0$) and ${\rm Hom}_{A}(P,M)=0$.
	
	(2) $(M,P)$ is called a {\em support $\tau$-tilting} pair if it is $\tau$-rigid and $|M|+|P|=|A|$. In this case, $M$ is called a support $\tau$-tilting module. If $(M,0)$ is support $\tau$-tilting, then $M$ is called a $\tau$-{\em tilting} module.

\end{defn}

Denote by $s\tau\text{-}\mathsf{tilt}A$ the set of basic support $\tau$-tilting pairs in $\mathsf{mod}A$. Sometimes $ s\tau\text{-}\mathsf{tilt}A$ also denotes the set of basic support $\tau$-tilting modules. Let $\tau\text{-}\mathsf{tilt}A$ (resp. $\mathsf{tilt}A$) denote the set of basic $\tau$-tilting (resp. 1-tilting) modules.

\begin{prop}\label{presilting-rigid}
	$X$ is presilting if and only if $(\overline{X},\overline{\Omega I_{X}})$ is a $\tau$-rigid pair.
\end{prop}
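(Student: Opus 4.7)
The plan is to split both sides along $X = X' \oplus I_X$, where $I_X$ is the maximal injective summand of $X$, and match pieces. By Lemma \ref{properties of 0-Aus}(1), $\overline{I_X}=0$, so $\overline{X}=\overline{X'}$, and therefore $(\overline{X},\overline{\Omega I_X})$ is $\tau$-rigid iff (i) $\overline{X'}$ is $\tau$-rigid and (ii) ${\rm Hom}_A(\overline{\Omega I_X},\overline{X'})=0$. On the other side, since ${\rm pd}\,\mathcal{C}\leq 1$, the condition "$X$ is presilting" reduces to $\mathbb{E}(X,X)=0$, and using that $\mathbb{E}(-,I_X)=0$ for the injective $I_X$ this further decomposes as (i') $\mathbb{E}(X',X')=0$ together with (ii') $\mathbb{E}(I_X,X')=0$. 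So it suffices to establish (i)$\Leftrightarrow$(i') and (ii)$\Leftrightarrow$(ii').

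For (ii)$\Leftrightarrow$(ii'), I would apply ${\rm Hom}_{\mathcal{C}}(-,X')$ to the $\mathbb{E}$-triangle $\Omega I_X\rightarrowtail 0\twoheadrightarrow I_X\dashrightarrow$ from Lemma \ref{properties of 0-Aus}(4), obtaining the isomorphism ${\rm Hom}_{\mathcal{C}}(\Omega I_X,X')\cong \mathbb{E}(I_X,X')$. For (i)$\Leftrightarrow$(i'), I would pick a projective presentation $P_1\rightarrowtail P_0\twoheadrightarrow X'\dashrightarrow$ in $\mathcal{C}$, which exists because ${\rm pd}\,X'\leq 1$ (so the kernel lands in $\mathcal{P}$); applying $\overline{(-)}$ and using $\mathbb{E}(P,P_1)=0$ yields a short exact sequence $0\to\overline{P_1}\to\overline{P_0}\to\overline{X'}\to 0$ in $\mathsf{mod}A$, hence a projective presentation of $\overline{X'}$. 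The long exact sequence from ${\rm Hom}_{\mathcal{C}}(-,X')$ applied to the same $\mathbb{E}$-triangle shows that $\mathbb{E}(X',X')=0$ is equivalent to surjectivity of ${\rm Hom}_{\mathcal{C}}(P_0,X')\to{\rm Hom}_{\mathcal{C}}(P_1,X')$.

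Both equivalences then reduce to the natural identification ${\rm Hom}_{\mathcal{C}}(Q,Y)\cong{\rm Hom}_A(\overline{Q},\overline{Y})$ for $Q\in\mathcal{P}$ and arbitrary $Y\in\mathcal{C}$, which follows from ${\rm Hom}_{\mathcal{C}}(\mathcal{P},\mathcal{I})=0$ (Lemma \ref{properties of 0-Aus}(1)) combined with the equivalence $\mathcal{C}/[\mathcal{I}]\xrightarrow{\sim}\mathsf{mod}A$ of Lemma \ref{properties of 0-Aus}(5). With this in hand, (ii) is immediate from the isomorphism above, while (i) follows by combining the surjectivity criterion with the classical Auslander--Reiten characterization of $\tau$-rigidity (\cite[Proposition 2.4]{AIR}): a module $M\in\mathsf{mod}A$ with projective presentation $P_1'\to P_0'\to M\to 0$ is $\tau$-rigid iff ${\rm Hom}_A(P_0',M)\to{\rm Hom}_A(P_1',M)$ is surjective. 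The main delicate point will be verifying cleanly that the $\mathbb{E}$-triangle-derived presentation yields an exact $0\to\overline{P_1}\to\overline{P_0}\to\overline{X'}\to 0$ and that the hom-identification is valid for arbitrary targets (not just projective ones)—after that, both directions of the proposition drop out by matching conditions on the module and extriangulated sides.
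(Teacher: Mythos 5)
Your overall strategy is the same as the paper's (reduce presilting to $\mathbb{E}(X,X)=0$ using ${\rm pd}\,\mathcal{C}\leq 1$, compare a projective presentation of $X'$ with the Auslander--Reiten criterion of \cite[Proposition 2.4]{AIR}, and handle the $\overline{\Omega I_X}$-condition via the $\mathbb{E}$-triangle $\Omega I_X\rightarrowtail 0\twoheadrightarrow I_X\dashrightarrow$), but there is a genuine gap in the step you treat as routine. Proposition 2.4 of \cite{AIR} does \emph{not} say that for an arbitrary projective presentation $P_1'\to P_0'\to M\to 0$ one has ``$M$ $\tau$-rigid iff ${\rm Hom}_A(P_0',M)\to{\rm Hom}_A(P_1',M)$ is surjective''; the implication ``$\tau$-rigid $\Rightarrow$ surjective'' requires the presentation to be \emph{minimal} (a non-minimal presentation can contain a summand $S\to 0$ in degree one, and surjectivity then forces ${\rm Hom}_A(S,M)=0$, which $\tau$-rigidity does not give). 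Since you take an arbitrary conflation $P_1\rightarrowtail P_0\twoheadrightarrow X'$, your argument only proves ``presilting $\Rightarrow$ $\tau$-rigid pair''; the converse direction is exactly where the paper's proof does its work: it decomposes the deflation $g$ as $(g',0)$ with $g'$ right minimal (using (WIC)/Krull--Schmidt to see $g'$ is still a deflation), and then argues that $\overline{f}$ is right minimal \emph{because $X'$ has no injective direct summand}, so that $\overline{P_1}\to\overline{P_0}\to\overline{X}\to 0$ is a minimal presentation and the converse of the AIR criterion applies. Your gap is repairable along the same lines (any summand of $P_1$ on which $\overline{f}$ vanishes is killed by $f$ itself, since ${\rm Hom}_{\mathcal C}(\mathcal P,\mathcal I)=0$, and would split off an injective summand $\Sigma Q$ of $X'$), but as written the key point is missing.

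A secondary inaccuracy: the sequence you claim, $0\to\overline{P_1}\to\overline{P_0}\to\overline{X'}\to 0$, is \emph{not} exact on the left in general; injectivity of $\overline{f}$ is essentially the extra condition that distinguishes tilting from support $\tau$-tilting (compare the proof of Corollary \ref{silt-tau-tilt}, where ``$\overline{f}$ is a monomorphism'' is shown to be equivalent to $[\mathsf{add}X](I,I)=0$). Fortunately you only need right exactness, i.e.\ the presentation $\overline{P_1}\to\overline{P_0}\to\overline{X'}\to 0$, which does follow from the long exact sequence since $\mathbb{E}(P,P_1)=0$. The remaining ingredients you use --- the splitting $\mathbb{E}(X,X)=0\Leftrightarrow\mathbb{E}(X',X')=0=\mathbb{E}(I_X,X')$, the isomorphism ${\rm Hom}_{\mathcal C}(\Omega I_X,X')\cong\mathbb{E}(I_X,X')$, and the identification ${\rm Hom}_{\mathcal C}(Q,Y)\cong{\rm Hom}_A(\overline{Q},\overline{Y})$ for $Q\in\mathcal P$ via ${\rm Hom}_{\mathcal C}(\mathcal P,\mathcal I)=0$ --- are all correct and are the same identifications the paper uses implicitly.
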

\begin{proof}
	Take an $\mathbb{E}$-triangle $P_{1}'\stackrel{f}\rightarrowtail P_{0}'\stackrel{g}\twoheadrightarrow X'\dashrightarrow$. We decompose $g$ as $(g',0)$ such that $g'$ is right minimal. By (WIC), $g'$ is a deflation. Thus we may assume in the beginning that $g$ is right minimal. We claim $\overline{P_{1}'}\stackrel{\overline{f}}\rightarrow \overline{P_{0}'}\stackrel{\overline{g}}\rightarrow \overline{X}\rightarrow 0$ is a minimal projective presentation. Clearly $\overline{g}$ is right minimal. Since $X'$ has no injective direct summand, $\overline{f}$ is right minimal. ${\rm Hom}_{A}(\overline{P_{0}'},\overline{X})\stackrel{\overline{f}^{\ast}}\rightarrow {\rm Hom}_{A}(\overline{P_{1}'},\overline{X})$ is surjective if and only if ${\rm Hom}_{\mathcal{C}}(P_{0}',X)\stackrel{f^{\ast}}\rightarrow {\rm Hom}_{\mathcal{C}}(P_{1}',X)$ is surjective if and only if $\mathbb{E}(X',X)=0$. By \cite[Proposition 2.4]{AIR}, $\overline{X}$ is $\tau$-rigid if and only if $\mathbb{E}(X',X)=0$. Considering the $\mathbb{E}$-triangle $\Omega I_{X}\rightarrowtail 0 \twoheadrightarrow I_{X}\dashrightarrow$, we have ${\rm Hom}_{A}(\overline{\Omega I_{X}},\overline{X})=0$ if and only if ${\rm Hom}_{\mathcal{C}}(\Omega I_{X},X)=0$ if and only if $\mathbb{E}(I_{X},X)=0$. The proof is complete.
\end{proof}

Denote by $\mathsf{silt}^{\mathcal{I}}\mathcal{C}$ the subset of $\mathsf{silt}\,\mathcal{C}$ whose elements have no nonzero direct summands in $\mathcal{I}$. Let $\mathsf{silt}^{\mathcal{I}}_{0}\mathcal{C}:=\{X\in \mathsf{silt}\,\mathcal{C}\,|\,[\mathsf{add}X](I,I)=0\}$. Note that we use the superscript $\mathcal{I}$ in order to distinguish between $\mathsf{silt}^{\mathcal{I}}\mathcal{C}$ and $\mathsf{silt}_{U}\mathcal{C}$ that appears later.

\begin{cor}\label{silt-tau-tilt}
	$\overline{(-)}$ induces a bijection  $\mathsf{silt}\,\mathcal{C}\rightarrow s\tau\text{-}\mathsf{tilt}A$, which restricts to bijections $\mathsf{silt}^{\mathcal{I}}\mathcal{C}\rightarrow \tau\text{-}\mathsf{tilt}A$ and $\mathsf{silt}^{\mathcal{I}}_{0}\mathcal{C}\rightarrow \mathsf{tilt}A$.
\end{cor}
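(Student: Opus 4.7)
My strategy starts from Proposition \ref{presilting-rigid}, which already shows that $X\mapsto(\overline{X},\overline{\Omega I_X})$ sends presilting objects to $\tau$-rigid pairs. To upgrade this to a bijection between silting and support $\tau$-tilting pairs, I would invoke the Adachi--Iyama--Reiten characterization (\cite[Proposition 2.3]{AIR}) that a $\tau$-rigid pair $(M,P')$ in $\mathsf{mod}A$ is support $\tau$-tilting precisely when $|M|+|P'|=|A|$. Combined with the numerical identity $|\overline{X}|+|\overline{\Omega I_X}|=|X|$ (to be proved below) and $|A|=|P|$, this will reduce the problem to showing that a presilting $X\in\mathcal{C}$ is silting if and only if $|X|=|P|$.

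To prove the numerical identity, write $X=X'\oplus I_X$ with $I_X$ the maximal injective summand. Lemma \ref{properties of 0-Aus}(1) gives $\overline{I_X}=0$, so $\overline{X}=\overline{X'}$; the equivalence $\mathcal{C}/[\mathcal{I}]\xrightarrow{\sim}\mathsf{mod}A$ from Lemma \ref{properties of 0-Aus}(5) then yields $|\overline{X'}|=|X'|$, and the composite equivalence $\mathcal{I}\xrightarrow{\Omega}\mathcal{P}\xrightarrow{\overline{(-)}}\mathsf{proj}A$ delivers $|\overline{\Omega I_X}|=|I_X|$. For the characterization ``silting iff $|X|=|P|$'', I would use the Bongartz completion of Lemma \ref{two completions}: any presilting $X$ is a summand of some silting $\widetilde X$, and if $|X|=|P|$ then applying the numerical identity to $\widetilde X$ (which yields a support $\tau$-tilting pair of total size $|A|$) forces $\widetilde X=X$, hence $X$ is silting. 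For the converse, I would attach to a silting $X$ the bounded hereditary cotorsion pair $({}^{\bot}X, X^{\bot})$ via Theorem \ref{AT22thm5.7} and use its ``rank'' to pin down $|X|=|P|$, again via AIR's count on the module side. To avoid circularity it is cleanest to set up $X\mapsto(\overline{X},\overline{\Omega I_X})$ and the set-theoretic inverse from $\mathrm{iso}\,\mathcal{C}\leftrightarrow\mathrm{iso}(\mathsf{mod}A)\times\mathrm{iso}(\mathsf{proj}A)$ simultaneously, then verify each map respects the silting/support-$\tau$-tilting conditions.

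The restricted bijections are then quick. $X\in\mathsf{silt}^{\mathcal{I}}\mathcal{C}$ means $I_X=0$, equivalently $\overline{\Omega I_X}=0$, which is precisely the $\tau$-tilting condition. For $\mathsf{silt}^{\mathcal{I}}_0\mathcal{C}\to\mathsf{tilt}A$, I would use that a $\tau$-tilting module is classical $1$-tilting iff its projective dimension is at most one; from the $\mathbb{E}$-triangle $P_1'\rightarrowtail P_0'\twoheadrightarrow X\dashrightarrow$ used in the proof of Proposition \ref{presilting-rigid}, $\mathrm{pd}_A\overline{X}\leq 1$ is equivalent to injectivity of $\overline{f}:\overline{P_1'}\to\overline{P_0'}$. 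The main obstacle I anticipate is identifying $\ker\overline{f}$ with the ideal $[\mathsf{add}X](I,I)$: a nonzero $h\in\ker\overline{f}$, read as an inflation $h:P\to P_1'$ by Lemma \ref{properties of 0-Aus}(3) with $fh=0$, should produce via the equivalence $\Sigma:\mathcal{P}\xrightarrow{\sim}\mathcal{I}$ and the cofiber sequence of $h$ a nonzero factorization $I\to X\to I'$, and conversely any such factorization should manufacture an element of $\ker\overline{f}$; pinning down this correspondence is where I expect the actual work to lie.
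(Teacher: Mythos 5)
The crux of your argument is the claim that a presilting $X$ is silting if and only if $|X|=|P|$, and this is exactly where the proposal has a genuine gap. Your ``$\Leftarrow$'' direction is circular as sketched: to conclude that the Bongartz completion $\widetilde{X}$ of $X$ (Lemma \ref{two completions}) ``yields a support $\tau$-tilting pair of total size $|A|$'' you are already using that silting objects go to support $\tau$-tilting pairs, i.e.\ the very direction of the bijection you are trying to establish. And your ``$\Rightarrow$'' direction -- attaching to a silting $X$ the bounded hereditary cotorsion pair $({^{\bot}X},X^{\bot})$ via Theorem \ref{AT22thm5.7} and ``using its rank'' -- is not an argument: nothing in that theorem counts indecomposable summands, and in a general extriangulated category there is no Grothendieck-group device available here to pin down $|X|$. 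The paper closes precisely this gap by citing \cite[Theorem 4.3]{GNP23}, which says that in a 0-Auslander category silting equals maximal rigid; combined with Proposition \ref{presilting-rigid} (presilting $\leftrightarrow$ $\tau$-rigid pairs, compatibly with the additive bijection $\mathrm{iso}\,\mathcal{C}\leftrightarrow \mathrm{iso}(\mathsf{mod}A)\times\mathrm{iso}(\mathsf{proj}A)$) and \cite{AIR}'s identification of support $\tau$-tilting pairs with maximal $\tau$-rigid pairs, the first bijection is immediate, and your counting criterion then becomes a corollary rather than an input. Without some substitute for that maximality statement, your route does not go through.

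Your numerical identity $|\overline{X}|+|\overline{\Omega I_X}|=|X|$ and the restriction $\mathsf{silt}^{\mathcal{I}}\mathcal{C}\to\tau\text{-}\mathsf{tilt}A$ are fine and match the paper. For $\mathsf{silt}^{\mathcal{I}}_{0}\mathcal{C}\to\mathsf{tilt}A$ you are on the right track (reduce to injectivity of $\overline{f}$ in a minimal presentation), but the step you yourself flag as the real work -- relating $\ker\overline{f}$ to $[\mathsf{add}X](I,I)$ -- is left open. The paper does this by showing that $[\mathsf{add}X](I,I)=0$ is equivalent to every morphism $I\to X$ factoring through the inflation $g\colon P_0\rightarrowtail X$ (using the $\mathbb{E}$-triangle $P_0\rightarrowtail X\twoheadrightarrow \Sigma P_1$), and then comparing the $\mathbb{E}$-triangles $P\rightarrowtail 0\twoheadrightarrow I$ and $P_1\rightarrowtail P_0\twoheadrightarrow X$ via a morphism of $\mathbb{E}$-triangles $(a,0,c)$ to show that this factorization property is equivalent to $\overline{f}$ being a monomorphism; you would need to carry out an argument of this kind to complete your sketch.
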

\begin{proof}
	By \cite[Theorem 4.3]{GNP23}, silting is equivalent to maximal rigid in 0-Auslander categories. Thus the first one is obvious by Proposition \ref{presilting-rigid}. The second one follows from construction. For a silting object $X\in \mathcal{C}$, consider an $\mathbb{E}$-triangle $P_{1}\stackrel{f}\rightarrowtail P_{0}\stackrel{g}\twoheadrightarrow X\stackrel{\delta}\dashrightarrow $ with $g$ right minimal, then $\overline{X}$ is tilting if and only if $\overline{f}$ is a monomorphism. Since $g$ is an inflation, there is an $\mathbb{E}$-triangle $P_{0}\stackrel{g}\rightarrowtail X\stackrel{h}\twoheadrightarrow I_{0}\dashrightarrow$ with $I_{0}=\Sigma P_{1}\in \mathcal{I}$. One can check $[\mathsf{add}X](I,I)=0$ if and only if any morphism $I\rightarrow X$ factors through $g$. Consider a morphism of $\mathbb{E}$-triangles
	\[\begin{tikzcd}
		P \arrow[r,tail] \arrow[d,"a"] & 0 \arrow[r,two heads] \arrow[d,"0"] & I \arrow[r,dashed,"\alpha"] \arrow[d,"c"] & {} \\
		P_{1} \arrow[r,tail,"f"] & P_{0} \arrow[r,two heads,"g"] & X \arrow[r,dashed,"\delta"] & {}
	\end{tikzcd}.\]
    Assume $\overline{f}$ is a monomorphism. For any $c:I\rightarrow X$, there is $a:P\rightarrow P_{1}$ such that $(a,0,c)$ is a morphism of $\mathbb{E}$-triangles. Then $a=0$. Hence $c^{\ast}\delta=a_{\ast}\alpha=0$ and $c$ factors through $g$. On the other hand, assume any morphism $I\rightarrow X$ factors through $g$. Let $a:P\rightarrow P_{1}$ such that $fa=0$, then there is $c:I\rightarrow X$ such that $(a,0,c)$ is a morphism of $\mathbb{E}$-triangles. Then $a_{\ast}\alpha=c^{\ast}\delta=0$. Hence $a$ factors through 0 and $a=0$. The third one now follows.
\end{proof}

Corollary \ref{silt-tau-tilt} generalizes many well-known results (e.g. \cite{AIR}, \cite{IJY}, \cite{YZ}, \cite{FGL}). Next we show there are bijections between  cotorsion pairs in $\mathcal{C}$ and left weak cotorsion-torsion triples in $\mathsf{mod}A$, which generalize \cite{PZ} and \cite{BZ}. Recall that for an additive subcategory $\mathcal{X}\subseteq \mathsf{mod}A$, define
\[\mathsf{Fac}\mathcal{X}:=\{M\in \mathsf{mod}A \,|\,\exists \text{ epimorphism } X\rightarrow M, X\in \mathcal{X}\}.\]
We say $\mathcal{X}$ is {\em factor closed} if $\mathsf{Fac}\mathcal{X}\subseteq \mathcal{X}$. For an object $M$, $\mathsf{Fac}M:=\mathsf{Fac}(\mathsf{add}M)$.

\begin{defn}\cite[Definition 0.2]{BZ}
    A pair $(\mathcal{D},\mathcal{T})$ of subcategories of $\mathsf{mod}A$ is called a {\em left weak cotorsion pair} if the subcategories $\mathcal{D}$ and $\mathcal{T}$ are closed under  direct summands, ${\rm Ext}_{A}^{1}(\mathcal{D},\mathcal{T})=0$ and for each $M\in \mathsf{mod}A$, there are exact sequences
    \[0\rightarrow Y\rightarrow X\rightarrow M\rightarrow 0\text{ and }M\stackrel{f}\rightarrow Y'\rightarrow X'\rightarrow 0\]
    with $X,X'\in \mathcal{D}$ and $Y,Y'\in \mathcal{T}$ and $f$ a left $\mathcal{T}$-approximation.
\end{defn}

A triple $(\mathcal{D},\mathcal{T},\mathcal{F})$ of subcategories of $\mathsf{mod}A$ is called a {\em left weak cotorsion-torsion triple} if $(\mathcal{D},\mathcal{T})$ is a left weak cotorsion pair and $(\mathcal{T},\mathcal{F})$ is a torsion pair. Denote by $\mathsf{lw\text{-}cotors\text{-}tors}A$ the set of left weak cotorsion-torsion triples in $\mathsf{mod}A$. It is known that there is a bijection $s\tau\text{-}\mathsf{tilt}A\leftrightarrow \mathsf{lw\text{-}cotors\text{-}tors}A$, see \cite[Theorem 0.4]{BZ}.

\begin{rem}\label{uniquely determined}
    Note that a left weak cotorsion-torsion triple $(\mathcal{D},\mathcal{T},\mathcal{F})$ is uniquely determined by its mid term: the torsion class $\mathcal{T}$, which is functorially finite. 
\end{rem}

\begin{prop}\label{cotors-lwcotorstors}
	There are mutually inverse bijections 
	\begin{align*}
		\mathsf{cotors}\,\mathcal{C} & \leftrightarrow \mathsf{lw\text{-}cotors\text{-}tors}A \\
		(\mathcal{X},\mathcal{Y}) & \mapsto (\overline{\mathcal{X}},\overline{\mathcal{Y}},\overline{\mathcal{Y}}^{\bot_{0}}) \\
		({^{\bot_{1}}F^{-1}(\mathcal{T})},F^{-1}(\mathcal{T})) & \mapsfrom (\mathcal{D},\mathcal{T},\mathcal{F}).
	\end{align*}
\end{prop}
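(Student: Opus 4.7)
The plan is to derive the bijection by composing three known ones:
\[\mathsf{cotors}\,\mathcal{C}\;=\;\mathsf{bdd\text{-}hcotors}\,\mathcal{C}\;\cong\;\mathsf{silt}\,\mathcal{C}\;\cong\;s\tau\text{-}\mathsf{tilt}\,A\;\cong\;\mathsf{lw\text{-}cotors\text{-}tors}\,A,\]
coming respectively from the 0-Auslander hypothesis, Theorem \ref{AT22thm5.7}, Corollary \ref{silt-tau-tilt}, and \cite[Theorem 0.4]{BZ}, and then to identify this composition with the explicit map $(\mathcal{X},\mathcal{Y})\mapsto(\overline{\mathcal{X}},\overline{\mathcal{Y}},\overline{\mathcal{Y}}^{\bot_{0}})$.

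The first step is to show $\mathsf{cotors}\,\mathcal{C}=\mathsf{bdd\text{-}hcotors}\,\mathcal{C}$. Heredity is automatic since ${\rm pd}\,\mathcal{C}\leq 1$ forces $\mathbb{E}^{\geq 2}=0$. For boundedness, every object of $\mathcal{C}$ admits a length-one projective resolution, so $\mathcal{C}=\mathcal{P}_{1}^{\wedge}$. For any cotorsion pair $(\mathcal{X},\mathcal{Y})$, one has $\mathcal{P}\subseteq \mathcal{X}$, because the Cone condition supplies an $\mathbb{E}$-triangle $Y\rightarrowtail X'\twoheadrightarrow P\dashrightarrow$ that splits by projectivity of $P$; thus $\mathcal{C}\subseteq\mathcal{X}^{\wedge}$, and dually $\mathcal{I}\subseteq\mathcal{Y}$ with $\mathcal{C}\subseteq \mathcal{Y}^{\vee}$.

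Next, given $(\mathcal{X},\mathcal{Y})\in\mathsf{cotors}\,\mathcal{C}$, set $N=\mathcal{X}\cap \mathcal{Y}$. By Lemma \ref{bddscotors}, $\mathcal{Y}=\mathcal{N}^{\wedge}$; by Corollary \ref{silt-tau-tilt}, the associated support $\tau$-tilting pair is $(\overline{N},\overline{\Omega I_{N}})$. Under \cite[Theorem 0.4]{BZ}, this pair yields the unique left weak cotorsion-torsion triple with middle term $\mathsf{Fac}\,\overline{N}$ (Remark \ref{uniquely determined}). It therefore suffices to prove $\overline{\mathcal{Y}}=\mathsf{Fac}\,\overline{N}$. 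The inclusion $\subseteq$ is shown by induction along $\mathcal{Y}=\bigcup_{n}\mathcal{N}_{n}^{\wedge}$: each defining $\mathbb{E}$-triangle $Y'\rightarrowtail N_{0}\twoheadrightarrow Y\dashrightarrow$ with $N_{0}\in\mathcal{N}$ and $Y'\in\mathcal{N}_{n-1}^{\wedge}$ yields, after applying $\overline{(-)}$ and using $\mathbb{E}(P,-)=0$, a surjection $\overline{N_{0}}\twoheadrightarrow\overline{Y}$. The reverse inclusion reduces to showing that $\overline{\mathcal{Y}}$ is a torsion class in $\mathsf{mod}\,A$; its closure under extensions and factors is obtained by lifting the relevant short exact sequences through the equivalence $\mathcal{C}/[\mathcal{I}]\simeq\mathsf{mod}\,A$, adjusting by injective summands using Lemma \ref{properties of 0-Aus}, and invoking the closure of $\mathcal{Y}$ under cones and extensions (Lemma \ref{scotorsion}(1)).

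The main obstacle is the factor-closure step in $\overline{\mathcal{Y}}\supseteq\mathsf{Fac}\,\overline{N}$: given an epimorphism $\overline{Y}\twoheadrightarrow M$ in $\mathsf{mod}\,A$ with $Y\in\mathcal{Y}$, one must realize $M$ as $\overline{Y''}$ for some $Y''\in\mathcal{Y}$. This requires lifting the epimorphism to a morphism in $\mathcal{C}$ that can be completed to a genuine $\mathbb{E}$-triangle whose cofiber represents $M$, rather than merely working in $\mathcal{C}/[\mathcal{I}]$; here reducedness and Lemma \ref{properties of 0-Aus}(2)(3) are essential for handling the ambiguity by injective summands. Once $\overline{\mathcal{Y}}=\mathsf{Fac}\,\overline{N}$ is established, the identification of $\overline{\mathcal{X}}$ and $\overline{\mathcal{Y}}^{\bot_{0}}$ with the remaining components of the triple follows from the uniqueness in Remark \ref{uniquely determined}, and the inverse formula $({^{\bot_{1}}F^{-1}(\mathcal{T})},F^{-1}(\mathcal{T}))$ is forced by bijectivity together with the fact that either component of a cotorsion pair determines the other via Ext-orthogonality.
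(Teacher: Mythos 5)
Your overall plan (compose $\mathsf{cotors}\,\mathcal{C}=\mathsf{bdd\text{-}hcotors}\,\mathcal{C}\cong\mathsf{silt}\,\mathcal{C}\cong s\tau\text{-}\mathsf{tilt}A\cong\mathsf{lw\text{-}cotors\text{-}tors}A$ and then match the composite with the explicit formula) is sound in outline, and several ingredients are correct: heredity and boundedness of all cotorsion pairs in the 0-Auslander setting, the inclusion $\overline{\mathcal{Y}}\subseteq\mathsf{Fac}\overline{N}$, and the factor-closure argument via adding an injective summand to lift an epimorphism to an inflation (this is essentially the paper's own trick). But there is a genuine gap at the final identification step. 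Knowing that the composite bijection sends $(\mathcal{X},\mathcal{Y})$ to \emph{the} unique triple with middle term $\mathsf{Fac}\overline{N}=\overline{\mathcal{Y}}$ (Remark \ref{uniquely determined}) identifies the middle and third components, but it does \emph{not} identify the first component with $\overline{\mathcal{X}}$. Uniqueness can only be invoked for that purpose after you know that $(\overline{\mathcal{X}},\overline{\mathcal{Y}},\overline{\mathcal{Y}}^{\bot_{0}})$ is itself a left weak cotorsion-torsion triple (or after you compute the first component of the triple produced by \cite[Theorem 0.4]{BZ} explicitly and match it with $\overline{\mathcal{X}}$). Neither is done in your proposal, so the stated map is not even shown to take values in $\mathsf{lw\text{-}cotors\text{-}tors}A$, which is part of the assertion.

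Filling this gap is exactly the substantive verification in the paper's proof: one must show ${\rm Ext}_{A}^{1}(\overline{\mathcal{X}},\overline{\mathcal{Y}})=0$ (by applying $\overline{(-)}={\rm Hom}_{\mathcal{C}}(P,-)$ to a projective presentation $P_{1}\rightarrowtail P_{0}\twoheadrightarrow X$ of $X\in\mathcal{X}$ and using $\mathbb{E}(\mathcal{X},\mathcal{Y})=0$), and produce, for every $A$-module $\overline{T}$, the two exact sequences $0\to Y\to X\to \overline{T}\to 0$ and $\overline{T}\to Y'\to X'\to 0$ with the required approximation property, by applying $\overline{(-)}$ to the conflations coming from completeness of $(\mathcal{X},\mathcal{Y})$ and using factor-closedness of $\overline{\mathcal{Y}}$ to see that the kernel of $\overline{X_{1}}\to\overline{T}$ lies in $\overline{\mathcal{Y}}$. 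Without these checks your argument only yields a bijection between $\mathsf{cotors}\,\mathcal{C}$ and the set of triples (equivalently, of functorially finite torsion classes), not the explicit description $(\mathcal{X},\mathcal{Y})\mapsto(\overline{\mathcal{X}},\overline{\mathcal{Y}},\overline{\mathcal{Y}}^{\bot_{0}})$ claimed in the proposition; the inverse formula is then fine, as you say, once the forward map is correctly identified.
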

\begin{proof}
	Let $(\mathcal{X},\mathcal{Y})\in \mathsf{cotors}\,\mathcal{C}$. First show that $\overline{\mathcal{Y}}$ is factor closed. Let $Y\in \mathcal{Y}$ and $X\in \mathcal{C}$, assume that there is an epimorphism $\overline{Y}\stackrel{h}\rightarrow \overline{X}$. Choose an $f:Y\rightarrow X$ such that $\overline{f}=h$. Add a summand $I_{0}\in \mathcal{I}$ to $X$ such that $f'=\begin{bmatrix}
		f\\
		\ast
	\end{bmatrix}: Y\rightarrow X\oplus I_{0}$ is an inflation. Consider an $\mathbb{E}$-triangle $Y\stackrel{f'}\rightarrowtail X\oplus I_{0}\twoheadrightarrow Z\dashrightarrow$. Applying $\overline{(-)}$, we obtain $\overline{Z}=0$, hence $Z\in \mathcal{I}\subseteq \mathcal{Y}$. It implies $X\oplus I_{0}\in \mathcal{Y}$ and hence $X\in \mathcal{Y}$. 

    Clearly $\overline{\mathcal{X}}$ is closed under direct summands. For any $X\in \mathcal{X}$, take an $\mathbb{E}$-triangle $P_{1}\stackrel{a}\rightarrowtail P_{0}\twoheadrightarrow X\dashrightarrow$ with $P_{0},P_{1}\in \mathcal{P}$. For any $Y\in \mathcal{Y}$, then ${\rm Hom}_{\mathcal{C}}(a,Y)$ is surjective. Thus ${\rm Hom}_{A}(\overline{a},\overline{Y})$ is also surjective and hence ${\rm Ext}_{A}^{1}(\overline{X},\overline{Y})=0$.
    
    For any $T\in \mathcal{C}$, by definition, there are $\mathbb{E}$-triangles $Y_{1}\rightarrowtail X_{1}\stackrel{f}\twoheadrightarrow T\dashrightarrow$ and $T\stackrel{g}\rightarrowtail Y_{2}\twoheadrightarrow X_{2}\dashrightarrow$ with $X_{1},X_{2}\in \mathcal{X}$ and $Y_{1},Y_{2}\in \mathcal{Y}$. Applying $\overline{(-)}$, we obtain two exact sequences $\overline{Y_{1}}\rightarrow \overline{X_{1}}\stackrel{\overline{f}}\rightarrow \overline{T}\rightarrow 0$ and $\overline{T}\stackrel{\overline{g}}\rightarrow \overline{Y_{2}}\rightarrow \overline{X_{2}}\rightarrow 0$ such that $\overline{f},\overline{g}$ are approximations. Since ${\rm ker}\overline{f}\in \mathsf{Fac}\overline{Y_{1}}$ and $\overline{\mathcal{Y}}$ is factor closed, we have ${\rm ker}\overline{f}\in \overline{\mathcal{Y}}$.
    
    By the previous discussion, we obtain $\overline{\mathcal{Y}}=\mathsf{Fac}\overline{\mathcal{X}\cap \mathcal{Y}}$. Thus by the Horseshoe lemma, $\overline{\mathcal{Y}}$ is extension closed. By now, we have proved $(\overline{\mathcal{X}},\overline{\mathcal{Y}},\overline{\mathcal{Y}}^{\bot_{0}})$ is a left weak cotorsion-torsion triple. Since $\mathcal{I}\subseteq \mathcal{Y}$, $\mathcal{Y}$ is the preimage of $\overline{\mathcal{Y}}$. Thus the map $(\mathcal{X},\mathcal{Y})\mapsto (\overline{\mathcal{X}},\overline{\mathcal{Y}},\overline{\mathcal{Y}}^{\bot_{0}})$ is injective. 
    
    For a triple $(\mathcal{D},\mathcal{T},\mathcal{F})\in \mathsf{lw\text{-}cotors\text{-}tors}A$, there is a cotorsion pair $(\mathcal{U},\mathcal{V})$ in $\mathcal{C}$ such that the corresponding silting object $T\in \mathcal{C}$ satisfies $\mathcal{T}=\mathsf{Fac}\overline{T}$. Since $\overline{\mathcal{V}}=\mathsf{Fac}\,\overline{\mathcal{U}\cap \mathcal{V}}=\mathsf{Fac}\overline{T}=\mathcal{T}$, we obtain $(\overline{\mathcal{U}},\overline{\mathcal{V}},\overline{\mathcal{V}}^{\bot_{0}})=(\mathcal{D},\mathcal{T},\mathcal{F})$. Thus the map is also surjective. This completes the proof.
\end{proof}

\begin{rem}\label{commutative square}
	For $(\mathcal{D},\mathcal{T},\mathcal{F}),(\mathcal{D}',\mathcal{T}',\mathcal{F}')\in \mathsf{lw\text{-}cotors\text{-}tors}A$, we write $(\mathcal{D},\mathcal{T},\mathcal{F})\leq (\mathcal{D}',\mathcal{T}',\mathcal{F}')$ if $\mathcal{T}\leq \mathcal{T}'$. Then by Remark \ref{uniquely determined}, $\mathsf{lw\text{-}cotors\text{-}tors}A$ becomes a poset. Thus the bijections in Proposition \ref{cotors-lwcotorstors} are also isomorphisms of posets. They are compatible with $\mathsf{silt}\,\mathcal{C}\leftrightarrow s\tau\text{-}\mathsf{tilt}\,A$, which makes the latter ones isomorphisms of posets. In \cite[Theorem 1.3]{GNP23}, the authors proved a mutation theory on silting objects in $\mathcal{C}$. By their result, $X,Y\in \mathsf{silt}\,\mathcal{C}$ are mutations of each other if and only if $(\overline{X},\overline{\Omega I_{X}})$ and $(\overline{Y},\overline{\Omega I_{Y}})$ are mutations of each other.
\end{rem}

\subsection{Reduction}\label{Reduction}
In this subsection, we prove a reduction theorem (Theorem \ref{reduction in 0-Aus}) in the reduced 0-Auslander extriangulated category $\mathcal{C}$ as an application of silting interval reduction. Note that any presilting subcategory of $\mathcal{C}$ has an additive generator. Given a presilting object $U\in \mathcal{C}$ which is not silting. We define a subset $\mathsf{silt}_{U}\mathcal{C}$ of silting objects in $\mathcal{C}$ as $\mathsf{silt}_{U}\mathcal{C}:=\{T\in \mathsf{silt}\,\mathcal{C}\,|\,U\in \mathsf{add}T\}$. First recall the construction of Bongartz and co-Bongartz completions of $U$.

\begin{lem}\cite[Proposition 4.6]{GNP23}\label{two completions}
	Let $f:U'\rightarrow I$ (resp. $g:P\rightarrow U''$) be the right (resp. left) minimal $\mathsf{add}U$-approximation. It is a deflation (resp. inflation). Consider the $\mathbb{E}$-triangle $X\stackrel{a}\rightarrowtail U'\stackrel{f}\twoheadrightarrow I\dashrightarrow$ (resp. $P\stackrel{g}\rightarrowtail U''\stackrel{b}\twoheadrightarrow Y\dashrightarrow$). Then $X\oplus U$ (resp. $Y\oplus U$) is a silting object.
\end{lem}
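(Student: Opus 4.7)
The plan is to treat the two halves in parallel; by duality it suffices to prove the assertion about $X\oplus U$, where $X\rightarrowtail U'\twoheadrightarrow I\dashrightarrow$ arises from a right minimal $\mathsf{add}U$-approximation $f:U'\to I$. Before producing the $\mathbb{E}$-triangle, one must check that $f$ is actually a deflation, but this is immediate from Lemma \ref{properties of 0-Aus}(3): any morphism into an injective object is automatically a deflation. So $X$ is well-defined as the required cocone.

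Next I would verify that $X\oplus U$ is presilting. Because ${\rm pd}\,\mathcal{C}\leq 1$, all higher extensions $\mathbb{E}^{\geq 2}$ vanish identically, so only $\mathbb{E}$ needs to be checked on the four combinations. The diagonal $\mathbb{E}(U,U)=0$ is given. For $\mathbb{E}(U,X)$, apply ${\rm Hom}_{\mathcal{C}}(U,-)$ to the defining triangle and read off ${\rm Hom}_{\mathcal{C}}(U,U')\stackrel{f_{\ast}}\rightarrow {\rm Hom}_{\mathcal{C}}(U,I)\rightarrow \mathbb{E}(U,X)\rightarrow \mathbb{E}(U,U')=0$; here $f_{\ast}$ is surjective precisely because $f$ is a right $\mathsf{add}U$-approximation of $I$. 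For $\mathbb{E}(X,U)$ and $\mathbb{E}(X,X)$, apply ${\rm Hom}_{\mathcal{C}}(-,U)$ and ${\rm Hom}_{\mathcal{C}}(-,X)$ respectively to the defining triangle; in each case the relevant three-term piece has $\mathbb{E}(U',-)=0$ on the left (using $U$ presilting, or the vanishing just established) and $\mathbb{E}^{2}(I,-)=0$ on the right by the bound on projective dimension.

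It remains to show $\mathsf{thick}(X\oplus U)=\mathcal{C}$. The defining $\mathbb{E}$-triangle $X\rightarrowtail U'\twoheadrightarrow I\dashrightarrow$ yields $I\in\mathsf{thick}(X\oplus U)$ by closure under cones. Since $\mathcal{C}$ is reduced 0-Auslander, the equivalence $\Sigma:\mathcal{P}\rightleftarrows\mathcal{I}$ of Lemma \ref{properties of 0-Aus}(4) is implemented by the $\mathbb{E}$-triangle $P\rightarrowtail 0\twoheadrightarrow I\dashrightarrow$, which exhibits $P$ as the cocone of a morphism between objects of $\mathsf{thick}(X\oplus U)$; hence $P\in\mathsf{thick}(X\oplus U)$ by closure under cocones. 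Finally, any $Z\in\mathcal{C}$ admits a projective presentation, i.e.\ an $\mathbb{E}$-triangle $P_{1}\rightarrowtail P_{0}\twoheadrightarrow Z\dashrightarrow$ with $P_{0},P_{1}\in\mathcal{P}=\mathsf{add}P$, so $Z$ lies in $\mathsf{thick}(X\oplus U)$. This proves silting; the half about $Y\oplus U$ follows by exactly dual steps, swapping $f$ with $g$, $I$ with $P$, and cones with cocones.

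The main obstacle I anticipate is purely bookkeeping: one must sequence the four $\mathbb{E}$-vanishings so that the inputs needed to compute $\mathbb{E}(X,X)=0$ (namely $\mathbb{E}(U,X)=0$, coming from the approximation property, together with $\mathbb{E}^{2}=0$) are already available and are not invoked circularly. Once that dependency graph is laid out, each vanishing becomes a single application of the long exact sequence recalled from \cite[Theorem 3.5]{GNP21}, combined with the reduced 0-Auslander package of Lemma \ref{properties of 0-Aus}.
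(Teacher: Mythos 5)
Your proof is correct, but it takes a genuinely different route from the paper. The paper's own proof is essentially a citation: it verifies, exactly as you do, that $f$ (resp.\ $g$) is a deflation (resp.\ inflation) via Lemma \ref{properties of 0-Aus}(3), and then refers the silting conclusion to \cite[Proposition 4.6]{GNP23} and its dual. You instead re-prove that statement from scratch inside the paper's framework: presilting is reduced to four $\mathbb{E}$-vanishings (higher extensions dying because ${\rm pd}\,\mathcal{C}\leq 1$), with $\mathbb{E}(U,X)=0$ extracted from surjectivity of $f_{\ast}$ on ${\rm Hom}(U,-)$ thanks to the approximation property, and $\mathbb{E}(X,U)=\mathbb{E}(X,X)=0$ then following from the long exact sequences with $\mathbb{E}^{2}(I,-)=0$; your ordering avoids the circularity you flag. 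For thick generation you use cone closure to get $I$, the triangle $P\rightarrowtail 0\twoheadrightarrow I\dashrightarrow$ (valid since $\mathcal{C}$ is reduced and $I=\Sigma P$ is the additive generator of $\mathcal{I}$) to get $P$, and two-term projective presentations (again from ${\rm pd}\leq 1$) to get everything; the dual half is, as you say, even shorter since $P$ is a cocone of the defining triangle directly. What your approach buys is a self-contained argument matching this paper's definition of silting (presilting plus $\mathsf{thick}=\mathcal{C}$), at the cost of length; what the paper's approach buys is brevity, at the cost of leaning on the external result of Gorsky--Nakaoka--Palu, whose formulation is in terms of their characterization of silting objects in 0-Auslander categories.
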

\begin{proof}
	By Lemma \ref{properties of 0-Aus} (3), $f$ (resp. $g$) is a deflation (resp. inflation). The rest follows from \cite[Proposition 4.6]{GNP23} and its dual.
\end{proof}

We call $X\oplus U$ (resp. $Y\oplus U$) the {\em Bongartz} (resp. {\em co-Bongartz}) {\em completion} of $U$ and denote it by $N$ (resp. $M$).

\begin{lem}\label{proj-inj}
	$\mathsf{add}M\cap \mathsf{add}N=\mathsf{add}U$.
\end{lem}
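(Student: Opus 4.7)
The plan is to prove the two inclusions separately. The inclusion $\mathsf{add}\,U\subseteq \mathsf{add}\,M\cap \mathsf{add}\,N$ is immediate from the constructions in Lemma \ref{two completions}: $U$ is a direct summand of both $N=X\oplus U$ and $M=Y\oplus U$ by definition.

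For the reverse inclusion, I would take an indecomposable object $W\in \mathsf{add}\,M\cap \mathsf{add}\,N$ and argue that $W\in \mathsf{add}\,U$. Since $\mathcal{C}$ is Krull--Schmidt, either $W\in \mathsf{add}\,U$ already (done), or $W$ is an indecomposable summand of both $X$ and $Y$ with $W\notin \mathsf{add}\,U$; my goal is to rule out this second case. Writing $X=W\oplus X''$ and $Y=W\oplus Y''$ with splittings $\iota_{W}^{X},\pi_{W}^{X},\iota_{W}^{Y},\pi_{W}^{Y}$, applying $\mathrm{Hom}_{\mathcal{C}}(-,W)$ to the $\mathbb{E}$-triangle $X\stackrel{a}\rightarrowtail U'\stackrel{f}\twoheadrightarrow I\dashrightarrow$ yields a connecting map $\partial\colon \mathrm{Hom}_{\mathcal{C}}(X,W)\to \mathbb{E}(I,W)$. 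If $\partial(\pi_{W}^{X})=0$, then $\pi_{W}^{X}=\gamma\circ a$ for some $\gamma\colon U'\to W$, and $\gamma\circ(a\iota_{W}^{X})=\pi_{W}^{X}\iota_{W}^{X}=\mathrm{id}_{W}$ exhibits $W$ as a direct summand of $U'\in\mathsf{add}\,U$, contradicting $W\notin\mathsf{add}\,U$. So the essential task is to force $\partial(\pi_{W}^{X})=0$. The plan is to extract this from the hypothesis $W\mid Y$: the composite $c:=(a\iota_{W}^{X})\circ(\pi_{W}^{Y}\circ b)\colon U''\to U'$ satisfies $fc=0$ (since $fa=0$) and $cg=0$ (since $bg=0$), and one uses the left minimality of $g$ together with a perturbation of the form $\mathrm{id}_{U''}-\alpha\circ c$ for a suitably chosen $\alpha\colon U'\to U''$ to derive the required vanishing.

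As a conceptually cleaner alternative, I would invoke the bijection $\mathsf{silt}\,\mathcal{C}\leftrightarrow s\tau\text{-}\mathsf{tilt}\,A$ of Corollary \ref{silt-tau-tilt}. Indecomposable direct summands of $T\in \mathsf{silt}\,\mathcal{C}$ correspond bijectively to indecomposable summands of the support $\tau$-tilting pair $(\overline{T},\overline{\Omega I_{T}})$: non-injective summands of $T$ are recorded in $\overline{T}$, while injective ones are recorded via $\Omega$ in $\overline{\Omega I_{T}}$. Since $N$ and $M$ correspond to the Bongartz and co-Bongartz completions of the $\tau$-rigid pair $(\overline{U},\overline{\Omega I_{U}})$ (one checks that the minimal $\mathsf{add}\,U$-approximations in $\mathcal{C}$ descend to the minimal $\mathsf{add}\,\overline{U}$-approximations in $\mathsf{mod}\,A$), the standard fact from $\tau$-tilting theory \cite{G.Jasso}---that the common additive part of the Bongartz and co-Bongartz completions of a $\tau$-rigid pair is precisely that pair---transports back along this dictionary to yield $W\in \mathsf{add}\,U$.

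The hard part will be the direct route's minimality step: identifying the perturbation $\alpha$ so that $\mathrm{id}_{U''}-\alpha c$ is an isomorphism precisely when $\partial(\pi_{W}^{X})=0$ and fails to be one otherwise. If this becomes unwieldy, I would present the argument via the $\tau$-tilting translation, which bypasses the minimality gymnastics entirely.
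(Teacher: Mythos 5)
Your reduction to ``an indecomposable $W$ dividing both $X$ and $Y$ must lie in $\mathsf{add}U$'' and the easy inclusion are fine, but the direct route has a genuine gap exactly at the step you flag as essential. Since $cg=0$, the perturbation $\mathrm{id}_{U''}-\alpha c$ satisfies $(\mathrm{id}_{U''}-\alpha c)g=g$, so left minimality of $g$ makes it invertible for \emph{every} $\alpha$; this only says that $c$ is a radical morphism (which already follows from minimality of $f$, since $a$ is radical) and produces no vanishing of $\partial(\pi_{W}^{X})\in\mathbb{E}(I,W)$. Moreover the target is too one-sided: nothing forces the contradiction to come from $W\mid U'$. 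The paper's proof instead rotates the two defining triangles via \cite[Proposition 3.15]{NP} to get $\mathbb{E}$-triangles $P\rightarrowtail X\stackrel{a}\twoheadrightarrow U'$ and $U''\stackrel{b}\rightarrowtail Y\stackrel{d}\twoheadrightarrow I$, and shows that \emph{every} $h\colon X\to Y$ is radical: $dhc=0$ because $\mathrm{Hom}_{\mathcal{C}}(\mathcal{P},\mathcal{I})=0$ (Lemma \ref{properties of 0-Aus}(1)), so $dh=sa$; then $\mathbb{E}(U',U'')=0$ (presilting of $U$) lifts $s$ to $u$ with $du=s$, giving $h=ua+bv$ with $a,b$ radical. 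With $h=\iota_{W}^{Y}\pi_{W}^{X}$ and $\mathrm{End}(W)$ local this yields $W\mid U'$ or $W\mid U''$, either way a contradiction. Your sketch never uses the two facts that drive this argument ($\mathrm{Hom}(\mathcal{P},\mathcal{I})=0$ and rigidity of $U$), so as written it does not close.

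The ``cleaner alternative'' also has unproven steps. First, the identification of $N$ and $M$ with the Bongartz and co-Bongartz completions of the $\tau$-rigid pair $(\overline{U},\overline{\Omega I_{U}})$ is not obtained by ``descent of minimal approximations'': under $\overline{(-)}$ the injective $I$ goes to $0$, so the triangle $X\rightarrowtail U'\twoheadrightarrow I$ does not descend to the $\operatorname{mod}A$-construction of the Bongartz completion (the co-Bongartz side does descend, but not the Bongartz side). In the paper this identification is proved only after the present lemma, via the interval description (Lemma \ref{interval=summand completion}) and \cite[Proposition 2.9]{AIR}; that route is non-circular and you could use it, but you have not, and your stated justification fails. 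Second, the ``standard fact'' that $\mathsf{add}$(Bongartz)$\cap\mathsf{add}$(co-Bongartz) of a $\tau$-rigid pair equals the pair is not an off-the-shelf statement in \cite{AIR} or \cite{G.Jasso}; it is essentially the $\operatorname{mod}A$-shadow of the lemma you are proving (the interval property from \cite[Proposition 2.9]{AIR} does not imply it), so invoking it without a precise reference or proof leaves the main content unestablished.
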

\begin{proof}
	It suffices to show $X$ and $Y$ have no nonzero direct summands in common. Since there is an $\mathbb{E}$-triangle $P\rightarrowtail 0 \twoheadrightarrow I \dashrightarrow$, we obtain the following commutative diagrams by \cite[Proposition 3.15]{NP} and its dual.
	\[\begin{tikzcd}
		& P \arrow[r,equal] \arrow[d,tail,"c"] & P \arrow[d,tail] & \\
		X \arrow[r,tail,"1"] \arrow[d,equal] & X \arrow[r,two heads] \arrow[d,two heads,"a"] & 0 \arrow[r,dashed] \arrow[d,two heads] & {} \\
		X \arrow[r,tail,"a"] & U' \arrow[r,two heads,"f"] \arrow[d,dashed] & I \arrow[r,dashed] \arrow[d,dashed] & {} \\
		& {} & {} &
	\end{tikzcd} \qquad
      \begin{tikzcd}
      	P \arrow[r,tail,"g"] \arrow[d,tail] & U'' \arrow[r,two heads,"b"] \arrow[d,tail,"b"] & Y \arrow[r,dashed] \arrow[d,equal] & {} \\
      	0 \arrow[r,tail] \arrow[d,two heads] & Y \arrow[r,two heads,"1"] \arrow[d,two heads,"d"] & Y \arrow[r,dashed] & {} \\
      	I \arrow[r,equal] \arrow[d,dashed] & I \arrow[d,dashed] & & \\
      	{} & {} & &
      \end{tikzcd}\]
    For any $h:X\rightarrow Y$, we obtain a diagram
    \[\begin{tikzcd}
    	P \arrow[r,tail,"c"] \arrow[d,"t",swap] & X \arrow[r,two heads,"a"] \arrow[dl,dashed,"v",swap] \arrow[d,"h",swap] & U' \arrow[r,dashed] \arrow[d,"s"] \arrow[dl,dashed,"u",swap] & {} \\
    	U'' \arrow[r,tail,"b"] & Y \arrow[r,two heads,"d"] & I \arrow[r,dashed] & {}
    \end{tikzcd}\]
    where $s$ exists by Lemma \ref{properties of 0-Aus} (1) and $t$ by ${\rm (ET3)^{op}}$. Since $U$ is presilting, there exists $u$ such that $du=s$. Then $d(h-ua)=0$, there exists $v$ such that $h=bv+ua$. Since $f$ (resp. $g$) is right (resp. left) minimal, $a,b$ are in the radical, hence so is $h$. This completes the proof.
\end{proof}

\begin{lem}\label{interval=summand completion}
	$M\leq N$ and $\mathsf{silt}\,[M,N]=\mathsf{silt}_{U}\mathcal{C}$.
\end{lem}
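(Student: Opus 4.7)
The plan is to prove $M\leq N$ by a direct extension computation, then establish $\mathsf{silt}[M,N]=\mathsf{silt}_{U}\mathcal{C}$ by proving both inclusions: the forward one by analogous extension calculations, and the reverse by invoking silting interval reduction.

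For $M\leq N$: since ${\rm pd}\,\mathcal{C}\leq 1$, it suffices to show $\mathbb{E}(N,M)=\mathbb{E}(X\oplus U,Y\oplus U)=0$. The three summands $\mathbb{E}(U,U)$, $\mathbb{E}(X,U)$, and $\mathbb{E}(U,Y)$ vanish by the presilting property of $U$ together with the silting of $N$ and $M$. For the remaining $\mathbb{E}(X,Y)$, apply ${\rm Hom}_{\mathcal{C}}(X,-)$ to the co-Bongartz $\mathbb{E}$-triangle $P\rightarrowtail U''\twoheadrightarrow Y\dashrightarrow$; the fragment $\mathbb{E}(X,U'')\to \mathbb{E}(X,Y)\to \mathbb{E}^{2}(X,P)$ has both outer terms zero, the former because $U''\in \mathsf{add}U\subseteq \mathsf{add}N$ and $N$ is silting, the latter because ${\rm pd}\,\mathcal{C}\leq 1$.

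For $\mathsf{silt}_{U}\mathcal{C}\subseteq \mathsf{silt}[M,N]$: take $T\in \mathsf{silt}_{U}\mathcal{C}$; then $\mathbb{E}(N,T)=0$ reduces via $\mathbb{E}(U,T)=0$ to $\mathbb{E}(X,T)=0$, which I extract from ${\rm Hom}_{\mathcal{C}}(-,T)$ applied to the Bongartz $\mathbb{E}$-triangle $X\rightarrowtail U'\twoheadrightarrow I\dashrightarrow$ in the same manner (using $U'\in \mathsf{add}U\subseteq \mathsf{add}T$ and ${\rm pd}\,\mathcal{C}\leq 1$). Dually, $\mathbb{E}(T,M)=0$, so $M\leq T\leq N$.

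For $\mathsf{silt}[M,N]\subseteq \mathsf{silt}_{U}\mathcal{C}$: by Theorem \ref{main thm_1}, $T\mapsto T$ yields an isomorphism $\mathsf{silt}[M,N]\cong \mathsf{silt}\,\mathcal{C}'$ with $\mathcal{C}':={^{\bot}M}\cap N^{\bot}$. By Corollary \ref{main cor}(1)--(2), $\mathcal{C}'$ has enough projectives $\mathsf{add}N$ and enough injectives $\mathsf{add}M$, and its higher extensions agree with those of $\mathcal{C}$, so $\mathbb{E}_{\mathcal{C}'}^{\geq 2}=0$ as well. Hence $U\in \mathsf{add}N\cap \mathsf{add}M$ is projective-injective in $\mathcal{C}'$. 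For any $T\in \mathsf{silt}\,\mathcal{C}'$, Theorem \ref{AT22thm5.7} applied to $\mathcal{C}'$ gives $\mathsf{add}T={^{\bot_{\mathcal{C}'}}T}\cap T^{\bot_{\mathcal{C}'}}$; the projective-injective $U$ lies in both components, so $U\in \mathsf{add}T$.

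The main obstacle is the final step, verifying that $U$ remains projective-injective after restricting the extriangulated structure to $\mathcal{C}'$. This rests on the higher-extension transfer of Corollary \ref{main cor}(2), which propagates the ${\rm pd}\,\mathcal{C}\leq 1$ condition into $\mathcal{C}'$; Lemma \ref{proj-inj} sharpens this by identifying $\mathsf{add}U$ as precisely the class of projective-injective objects of the reduction.
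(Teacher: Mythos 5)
Your proposal is correct, and for $M\leq N$ and the inclusion $\mathsf{silt}_{U}\mathcal{C}\subseteq \mathsf{silt}\,[M,N]$ it is essentially the paper's argument: the same long-exact-sequence computations applied to the Bongartz triangle $X\rightarrowtail U'\twoheadrightarrow I\dashrightarrow$ and the co-Bongartz triangle $P\rightarrowtail U''\twoheadrightarrow Y\dashrightarrow$, with $\mathbb{E}^{2}=0$ killing the outer terms (the paper gets $M\leq N$ as the special case $T=M$ of the forward inclusion rather than by your summand-by-summand check, but the computation is identical). Where you genuinely diverge is the reverse inclusion: the paper settles it in one line inside $\mathcal{C}$, namely for $T\in \mathsf{silt}\,[M,N]$ the order-preserving correspondence of Theorem \ref{AT22thm5.7} gives $U\in {^{\bot}N}\cap M^{\bot}\subseteq {^{\bot}T}\cap T^{\bot}=\mathsf{add}T$, whereas you route through silting interval reduction (Theorem \ref{main thm_1}) and work inside $\mathcal{C}'={^{\bot}M}\cap N^{\bot}$, using Corollary \ref{main cor} to see that $U$ is projective-injective there and that $\mathbb{E}_{\mathcal{C}'}^{\geq 2}=0$, and then applying Theorem \ref{AT22thm5.7} in $\mathcal{C}'$. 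Your route is valid and non-circular (you only invoke Section \ref{section 3} results, not Theorem \ref{reduction in 0-Aus}, which itself relies on this lemma), but it is heavier than necessary: checking $U\in {^{\bot_{\mathcal{C}'}}T}\cap T^{\bot_{\mathcal{C}'}}$ amounts to $\mathbb{E}(U,T)=0=\mathbb{E}(T,U)$, which the paper reads off directly in $\mathcal{C}$ without transporting extensions to $\mathcal{C}'$; what the detour buys is a formulation that sits naturally next to the later reduction theorem. One minor imprecision in your closing remark: Lemma \ref{proj-inj} only says $\mathsf{add}M\cap \mathsf{add}N=\mathsf{add}U$; identifying $\mathsf{add}U$ as \emph{precisely} the projective-injectives of $\mathcal{C}'$ needs the additional (easy, Krull-Schmidt) observation that the projectives of $\mathcal{C}'$ are exactly $\mathsf{add}N$ and the injectives exactly $\mathsf{add}M$ — but your argument only uses that $U$ is projective-injective in $\mathcal{C}'$, which you do establish, so this does not affect correctness.
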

\begin{proof}
	Let $T\in \mathsf{silt}\,\mathcal{C}$ such that $U\in \mathsf{add}T$. Applying the functor ${\rm Hom}_{\mathcal{C}}(-,T)$ to the $\mathbb{E}$-triangle $X\stackrel{a}\rightarrowtail U'\stackrel{f}\twoheadrightarrow I\dashrightarrow$, we obtain an exact sequence $\mathbb{E}(U',T)\rightarrow \mathbb{E}(X,T)\rightarrow \mathbb{E}^{2}(I,T)$. Since $\mathbb{E}(U',T)=0=\mathbb{E}^{2}(I,T)$, $\mathbb{E}(X,T)=0$. Thus $\mathbb{E}(N,T)=0$, $T\leq N$. In particular, $M\leq N$. Similarly applying ${\rm Hom}_{\mathcal{C}}(T,-)$ to $P\stackrel{g}\rightarrowtail U''\stackrel{b}\twoheadrightarrow Y\dashrightarrow$, we obtain $\mathbb{E}(T,M)=0$ and hence $M\leq T$.
	
	For the other direction, let $T\in \mathsf{silt}\,[M,N]$, then $U\in {^{\bot}N}\cap M^{\bot}\subseteq {^{\bot}T}\cap T^{\bot}=\mathsf{add}T$ by Theorem \ref{AT22thm5.7}.
\end{proof}

In Lemma \ref{interval=summand completion}, we abuse the notation $\mathsf{silt}\,[M,N]$ since $U$ and $M,N$ are not necessarily basic. As the subset $\mathsf{silt}_{U}\mathcal{C}$, for a $\tau$-rigid pair $(M,P)$, we define
\[s\tau\text{-}\mathsf{tilt}_{(M,P)}A:=\{(T,Q)\in s\tau\text{-}\mathsf{tilt}\,A\,|\,M\in \mathsf{add}T,\,P\in \mathsf{add}Q\}.\]

\begin{cor}
	The bijections $\mathsf{silt}\,\mathcal{C}\leftrightarrow s\tau\text{-}\mathsf{tilt}A$ restrict to bijections $\mathsf{silt}_{U}\mathcal{C}\leftrightarrow s\tau\text{-}\mathsf{tilt}_{(\overline{U},\overline{\Omega I_{U}})}A$. In particular, $N$ (resp. $M$) corresponds to Bongartz (resp. co-Bongartz) completion of the $\tau$-rigid pair $(\overline{U},\overline{\Omega I_{U}})$ in $\mathsf{mod}A$.
\end{cor}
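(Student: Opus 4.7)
The plan is to establish the restricted bijection in both directions using the injective/non-injective decomposition of each object, and then deduce the ``in particular'' statement from the already-established interval description of $\mathsf{silt}_U\mathcal{C}$.

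For the forward direction, given $T\in\mathsf{silt}_U\mathcal{C}$, I decompose $U=U'\oplus I_U$ and $T=T'\oplus I_T$ into non-injective and maximal injective summands. Since $U\in\mathsf{add}\,T$ and the maximal injective part of a summand of $T$ must lie in $\mathsf{add}\,I_T$, I obtain $I_U\in\mathsf{add}\,I_T$ and $U'\in\mathsf{add}\,T'\subseteq\mathsf{add}\,T$. Applying $\overline{(-)}$ gives $\overline{U}=\overline{U'}\in\mathsf{add}\,\overline{T}$, and applying the equivalence $\Omega:\mathcal{I}\to\mathcal{P}$ of Lemma \ref{properties of 0-Aus}(4) followed by $\overline{(-)}$ gives $\overline{\Omega I_U}\in\mathsf{add}\,\overline{\Omega I_T}$, so the image of $T$ lies in $s\tau\text{-}\mathsf{tilt}_{(\overline{U},\overline{\Omega I_U})}A$.

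For the backward direction, suppose $T\in\mathsf{silt}\,\mathcal{C}$ with $\overline{U}\in\mathsf{add}\,\overline{T}$ and $\overline{\Omega I_U}\in\mathsf{add}\,\overline{\Omega I_T}$, and decompose $U$ and $T$ as before. Since $A={\rm End}_{\mathcal{C}}(P)$, the restriction $\overline{(-)}|_\mathcal{P}:\mathcal{P}\to\mathsf{proj}\,A$ is an equivalence, so $\overline{\Omega I_U}\in\mathsf{add}\,\overline{\Omega I_T}$ lifts to $\Omega I_U\in\mathsf{add}\,\Omega I_T$; applying $\Sigma$ recovers $I_U\in\mathsf{add}\,I_T\subseteq\mathsf{add}\,T$. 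For the non-injective part, $\overline{U}=\overline{U'}$ and $\overline{T}=\overline{T'}$, so $\overline{U'}\in\mathsf{add}\,\overline{T'}$; by the equivalence $\mathcal{C}/[\mathcal{I}]\xrightarrow{\sim}\mathsf{mod}\,A$ of Lemma \ref{properties of 0-Aus}(5) together with Krull-Schmidt, summand relations in $\mathsf{mod}\,A$ reflect to summand relations in $\mathcal{C}/[\mathcal{I}]$, and since $U'$ and $T'$ contain no injective summands this lifts to $U'\in\mathsf{add}\,T'$ in $\mathcal{C}$. Combining, $U=U'\oplus I_U\in\mathsf{add}\,T$.

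For the ``in particular'' statement, Lemma \ref{interval=summand completion} identifies $\mathsf{silt}_U\mathcal{C}$ with the interval $\mathsf{silt}\,[M,N]$, so $N$ and $M$ are respectively the maximum and minimum of this poset. Since the bijection $\mathsf{silt}\,\mathcal{C}\leftrightarrow s\tau\text{-}\mathsf{tilt}\,A$ is an isomorphism of posets by Remark \ref{commutative square}, the images $(\overline{N},\overline{\Omega I_N})$ and $(\overline{M},\overline{\Omega I_M})$ are the maximum and minimum of $s\tau\text{-}\mathsf{tilt}_{(\overline{U},\overline{\Omega I_U})}A$, which is precisely the defining property of the Bongartz and co-Bongartz completions of the $\tau$-rigid pair $(\overline{U},\overline{\Omega I_U})$. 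The main technical obstacle is the backward direction, where $\overline{(-)}$ annihilates $\mathcal{I}$ and hence cannot be used to transport summand relations directly; the fix is to handle injective and non-injective parts separately using the two distinct equivalences provided by Lemma \ref{properties of 0-Aus}.
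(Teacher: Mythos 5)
Your proposal is correct and follows essentially the same route as the paper: the restriction of the bijection is exactly the paper's ``follows by the construction'' (you simply spell out the summand-splitting into injective and non-injective parts and the two equivalences from Lemma \ref{properties of 0-Aus}), and the ``in particular'' part is obtained, as in the paper, from Lemma \ref{interval=summand completion} plus the fact that the bijection is a poset isomorphism. The only small imprecision is calling the max/min property of the Bongartz and co-Bongartz completions of a $\tau$-rigid pair their ``defining property'': this is a known characterization rather than the definition, and is precisely the content of \cite[Proposition 2.9]{AIR} which the paper cites at this point.
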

\begin{proof}
	The restrictions follow by the construction. By \cite[Proposition 2.9]{AIR}, $s\tau\text{-}\mathsf{tilt}_{(\overline{U},\overline{\Omega I_{U}})}A$ is an interval with upper (resp. lower) bound the Bongartz (resp. co-Bongartz) completion of $(\overline{U},\overline{\Omega I_{U}})$. Thus the result follows by Corollary \ref{silt-tau-tilt} and Lemma \ref{interval=summand completion}.
\end{proof}

Next we state a lemma without assuming $\mathcal{C}$ being 0-Auslander.

\begin{lem}\label{silting bijection}
	Let $\mathcal{C}$ be an extriangulated category with enough projectives or injectives and $\mathcal{U}=\mathsf{add}\,\mathcal{U}$ be a subcategory whose objects are projective-injective. Denote by $\widetilde{(-)}:\mathcal{C}\rightarrow \widetilde{\mathcal{C}}:=\mathcal{C}/[\mathcal{U}]$ the quotient functor. Then there is a commutative diagram of bijections
	\[\begin{tikzcd}
		\mathsf{bdd\text{-}hcotors}\,\mathcal{C} \arrow[r,"\widetilde{(-)}"] \arrow[d,leftrightarrow] & \mathsf{bdd\text{-}hcotors}\,\widetilde{\mathcal{C}} \arrow[d,leftrightarrow] \\
		\mathsf{silt}\,\mathcal{C} \arrow[r,"\widetilde{(-)}"] & \mathsf{silt}\,\widetilde{\mathcal{C}}
	\end{tikzcd}\]
    where the vertical maps are given in Theorem \ref{AT22thm5.7}.
\end{lem}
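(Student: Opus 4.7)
The plan is first to put an extriangulated structure on $\widetilde{\mathcal{C}}$ and identify the higher extensions with those of $\mathcal{C}$, then show both the top and bottom rows are well-defined bijections, and finally verify commutativity of the square.

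\emph{Step 1: Extriangulated structure and higher extensions.} Since $\mathcal{U}$ consists of projective-injective objects, by \cite[Proposition 3.30]{NP} the ideal quotient $\widetilde{\mathcal{C}}=\mathcal{C}/[\mathcal{U}]$ carries an induced extriangulated structure whose conflations are images of those of $\mathcal{C}$. The projection $\widetilde{(-)}$ sends projectives (resp. injectives) of $\mathcal{C}$ to projectives (resp. injectives) of $\widetilde{\mathcal{C}}$, so $\widetilde{\mathcal{C}}$ still has enough projectives or enough injectives. Using the two long exact sequences of $\mathbb{E}^{n}$ recalled in Section \ref{section 2}, together with the fact that ${\rm Hom}_{\widetilde{\mathcal{C}}}(X,Y)={\rm Hom}_{\mathcal{C}}(X,Y)/[\mathcal{U}](X,Y)$ and that objects in $\mathcal{U}$ kill every positive $\mathbb{E}^{n}$, one shows by induction on $n$ that $\mathbb{E}_{\widetilde{\mathcal{C}}}^{n}(X,Y)\cong \mathbb{E}_{\mathcal{C}}^{n}(X,Y)$ for all $X,Y\in \mathcal{C}$ and $n\geq 1$ (this is the principal technical computation, and is the main obstacle; it amounts to comparing the syzygies taken in $\mathcal{C}$ and in $\widetilde{\mathcal{C}}$, using that summands of $\mathcal{U}$ contribute nothing).

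\emph{Step 2: Well-definedness of the horizontal maps.} If $\mathcal{M}\in \mathsf{silt}\,\mathcal{C}$, then $\mathcal{U}\subseteq {^{\bot}\mathcal{M}}\cap \mathcal{M}^{\bot}=\mathcal{M}$ by Theorem \ref{AT22thm5.7}, so $\widetilde{\mathcal{M}}\neq \emptyset$. Self-orthogonality $\mathbb{E}_{\widetilde{\mathcal{C}}}^{\geq 1}(\widetilde{\mathcal{M}},\widetilde{\mathcal{M}})=0$ follows from Step 1, and $\mathsf{thick}_{\widetilde{\mathcal{C}}}\widetilde{\mathcal{M}}=\widetilde{\mathcal{C}}$ follows by applying $\widetilde{(-)}$ to the (co)resolutions witnessing $\mathcal{M}^{\wedge}=\mathcal{M}^{\vee}=\mathcal{C}$. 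Hence $\widetilde{\mathcal{M}}\in \mathsf{silt}\,\widetilde{\mathcal{C}}$. Similarly, for $(\mathcal{X},\mathcal{Y})\in \mathsf{bdd\text{-}hcotors}\,\mathcal{C}$ we have $\mathcal{U}\subseteq \mathcal{X}\cap \mathcal{Y}$, so $(\widetilde{\mathcal{X}},\widetilde{\mathcal{Y}})$ inherits $\mathbb{E}_{\widetilde{\mathcal{C}}}^{\geq 1}(\widetilde{\mathcal{X}},\widetilde{\mathcal{Y}})=0$ from Step 1, and the approximation $\mathbb{E}$-triangles descend to $\widetilde{\mathcal{C}}$; boundedness is preserved by the same descent of resolutions.

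\emph{Step 3: Commutativity and bijectivity.} Since $\mathcal{U}\subseteq \mathcal{X}\cap \mathcal{Y}$, one has $\widetilde{\mathcal{X}\cap \mathcal{Y}}=\widetilde{\mathcal{X}}\cap \widetilde{\mathcal{Y}}$, so the square commutes on the nose. The two vertical maps are bijections by Theorem \ref{AT22thm5.7} applied in $\mathcal{C}$ and $\widetilde{\mathcal{C}}$ respectively. To finish, I would exhibit an inverse to the top row: given $(\mathcal{A},\mathcal{B})\in \mathsf{bdd\text{-}hcotors}\,\widetilde{\mathcal{C}}$, define $\mathcal{X}:=\{X\in \mathcal{C}\,|\,\widetilde{X}\in \mathcal{A}\}$ and $\mathcal{Y}:=\{Y\in \mathcal{C}\,|\,\widetilde{Y}\in \mathcal{B}\}$; both contain $\mathcal{U}$. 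Using Step 1, $(\mathcal{X},\mathcal{Y})$ is a hereditary cotorsion pair in $\mathcal{C}$, and lifting the bounded resolutions from $\widetilde{\mathcal{C}}$ (again using that $\mathcal{U}$ consists of projective-injectives, so resolutions lift up to direct summands in $\mathcal{U}$) yields boundedness in $\mathcal{C}$. One checks $(\widetilde{\mathcal{X}},\widetilde{\mathcal{Y}})=(\mathcal{A},\mathcal{B})$ and that this assignment inverts the top map. Hence the top row is a bijection, and by commutativity the bottom row is a bijection as well.
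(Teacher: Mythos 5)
Your proposal is correct and follows essentially the same route as the paper: endow $\mathcal{C}/[\mathcal{U}]$ with the induced extriangulated structure via \cite[Proposition 3.30]{NP}, identify $\widetilde{\mathbb{E}}^{n}$ with $\mathbb{E}^{n}$ by dimension shift (the same projective resolutions work in both categories since projectives remain projective in the quotient, which is exactly the syzygy comparison you sketch), and transfer bounded hereditary cotorsion pairs back and forth, with the vertical maps and commutativity supplied by Theorem \ref{AT22thm5.7}. The only difference is cosmetic: the paper does not verify the bottom row directly but deduces it from the top bijection and the vertical ones, which also sidesteps the one point you leave implicit, namely that $\widetilde{\mathcal{M}}$ is closed under direct summands and isomorphisms in $\widetilde{\mathcal{C}}$ (this follows from $\mathcal{M}={^{\bot}\mathcal{M}}\cap\mathcal{M}^{\bot}$ together with the invariance of the orthogonals under the identification of extension groups).
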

\begin{proof}
	The ideal quotient $\widetilde{\mathcal{C}}$ has an induced extriangulated structure $(\widetilde{\mathcal{C}},\widetilde{\mathbb{E}},\widetilde{\mathfrak{s}})$, see \cite[Proposition 3.30]{NP}. Assume $\mathcal{C}$ has enough projectives for example. For $X,Y\in \mathcal{C}$, then $\widetilde{\mathbb{E}}(X,Y)=\mathbb{E}(X,Y)$. For $n\geq 2$, consider $\mathbb{E}$-triangles $X_{i+1}\stackrel{a_{i}}\rightarrowtail P_{i}\stackrel{b_{i}}\twoheadrightarrow X_{i}\dashrightarrow,~ 0\leq i\leq n-2$ with $X=X_{0}$ and $P_{i}$ projective. Then $\mathbb{E}^{n}(X,Y)=\mathbb{E}^{n}(X_{0},Y)\cong \mathbb{E}^{n-1}(X_{1},Y)\cong \cdots \cong \mathbb{E}(X_{n-1},Y)$. Since $X_{i+1}\stackrel{\widetilde{a_{i}}}\rightarrowtail P_{i}\stackrel{\widetilde{b_{i}}}\twoheadrightarrow X_{i}\dashrightarrow,~ 0\leq i\leq n-2$ are $\widetilde{\mathbb{E}}$-triangles, we have $\widetilde{\mathbb{E}}^{n}(X,Y)\cong \widetilde{\mathbb{E}}(X_{n-1},Y)$. Thus $\mathbb{E}^{n}(X,Y)\cong \widetilde{\mathbb{E}}^{n}(X,Y)$. Hence for a subcategory $\mathcal{T}$, $\mathcal{T}^{\bot}$ in $\mathcal{C}$ coincides with $\mathcal{T}^{\bot}$ in $\widetilde{\mathcal{C}}$.
	
	Let $(\mathcal{X},\mathcal{Y})\in \mathsf{bdd\text{-}hcotors}\,\mathcal{C}$, then there exists $\mathcal{T}\in \mathsf{silt}\,\mathcal{C}$ such that $\mathcal{X}={^{\bot}\mathcal{T}}$ and $\mathcal{Y}=\mathcal{T}^{\bot}$ in $\mathcal{C}$ by Theorem \ref{AT22thm5.7}. Thus $\mathcal{X}$ and $\mathcal{Y}$ are isomorphism closed and direct summand closed in $\widetilde{\mathcal{C}}$. It is easy to check that $(\mathcal{X},\mathcal{Y})\in \mathsf{bdd\text{-}hcotors}\,\widetilde{\mathcal{C}}$ by definition. On the other hand, assume $(\mathcal{X},\mathcal{Y})\in \mathsf{bdd\text{-}hcotors}\,\widetilde{\mathcal{C}}$, then $\mathcal{X},\mathcal{Y}$ are isomorphism closed and direct summand closed in $\mathcal{C}$ and $\mathbb{E}^{\geq 1}(\mathcal{X},\mathcal{Y})=0$. For any $T\in \widetilde{\mathcal{C}}$, there is an $\mathbb{E}$-triangle $Y\rightarrowtail X\twoheadrightarrow T\dashrightarrow$ with $Y\in \mathcal{Y}$ such that $X\cong X'$ in $\widetilde{\mathcal{C}}$ for some $X'\in \mathcal{X}$. Then $X\in \mathcal{X}$. By such fact, we obtain $(\mathcal{X},\mathcal{Y})$ is a cotorsion pair in $\mathcal{C}$ and is bounded.
	
	By Theorem \ref{AT22thm5.7}, the result follows.
\end{proof}

Clearly all bijections in Lemma \ref{silting bijection} are isomorphisms of posets.

\begin{thm}\label{reduction in 0-Aus} Let $\mathcal{C}$ be a 0-Auslander extriangulated category as before and $U$ a presilting object which is not silting. Let $\mathcal{C}':={^{\bot}M}\cap N^{\bot}$ where $N$ and $M$ are Bongartz and co-Bongartz completions of $U$ respectively. Consider the functor $\widetilde{(-)}:\mathcal{C}'\rightarrow \widetilde{\mathcal{C}'}:=\mathcal{C}'/[\mathsf{add}U]$. Then $\widetilde{(-)}$ induces an isomorphism of posets
\[{\rm red}:\mathsf{silt}_{U}\mathcal{C}\rightarrow \mathsf{silt}\,\widetilde{\mathcal{C}'}.\]
\end{thm}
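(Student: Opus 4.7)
The plan is to factor ${\rm red}$ as the composition of the silting interval reduction from Theorem \ref{main thm_1} with the silting bijection under ideal quotient by projective-injectives from Lemma \ref{silting bijection}.

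First I would invoke Lemma \ref{interval=summand completion}, which gives $M\leq N$ in $\mathsf{silt}\,\mathcal{C}$ and $\mathsf{silt}\,[M,N]=\mathsf{silt}_{U}\mathcal{C}$. Applying Theorem \ref{main thm_1} to the interval $[M,N]$ then yields an isomorphism of posets
\[\mathsf{silt}_{U}\mathcal{C}=\mathsf{silt}\,[M,N]\xrightarrow{\sim}\mathsf{silt}\,({^{\bot}M}\cap N^{\bot})=\mathsf{silt}\,\mathcal{C}',\]
sending $T\mapsto T$. Here the identification uses that the higher extensions in $\mathcal{C}'$ agree with those in $\mathcal{C}$ (Corollary \ref{main cor}(2)), so silting subcategories of $\mathcal{C}'$ inside the interval are exactly silting subcategories of $\mathcal{C}$ contained in $\mathcal{C}'$.

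Next I would identify the projective and injective objects of $\mathcal{C}'$ with respect to its inherited extriangulated structure. By Corollary \ref{main cor}(1) applied to the bounded hereditary cotorsion pairs $x_{1}=({^{\bot}M},M^{\bot})$ and $x_{2}=({^{\bot}N},N^{\bot})$ associated to $M$ and $N$ via Theorem \ref{AT22thm5.7}, the category $\mathcal{C}'$ has enough projectives $\mathsf{add}N$ and enough injectives $\mathsf{add}M$. Consequently the subcategory of projective-injective objects of $\mathcal{C}'$ equals $\mathsf{add}M\cap \mathsf{add}N$, which by Lemma \ref{proj-inj} is precisely $\mathsf{add}U$. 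Thus Lemma \ref{silting bijection} applies with $\mathcal{U}=\mathsf{add}U$ and yields an isomorphism of posets
\[\mathsf{silt}\,\mathcal{C}'\xrightarrow{\sim}\mathsf{silt}\,\widetilde{\mathcal{C}'},\qquad T\mapsto \widetilde{T}.\]

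Composing the two isomorphisms produces an isomorphism of posets $\mathsf{silt}_{U}\mathcal{C}\to \mathsf{silt}\,\widetilde{\mathcal{C}'}$ sending $T$ first to $T\in \mathcal{C}'$ and then to $\widetilde{T}$, which is exactly the map ${\rm red}$ induced by the quotient functor $\widetilde{(-)}:\mathcal{C}'\to \widetilde{\mathcal{C}'}$. The only delicate point in the argument is verifying that the projectives and injectives of $\mathcal{C}'$ are really $\mathsf{add}N$ and $\mathsf{add}M$, so that $\mathsf{add}U$ is indeed a projective-injective subcategory to which Lemma \ref{silting bijection} applies; once this identification is made, the remainder of the proof is a direct stitching together of results already established in Sections \ref{section 2} and \ref{section 3}.
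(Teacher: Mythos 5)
Your proposal is correct and follows essentially the same route as the paper's proof: Lemma \ref{interval=summand completion} to identify $\mathsf{silt}_{U}\mathcal{C}$ with the interval $\mathsf{silt}\,[M,N]$, Theorem \ref{main thm_1} for the interval reduction to $\mathsf{silt}\,\mathcal{C}'$, and Corollary \ref{main cor}(1) together with Lemma \ref{silting bijection} for the passage to $\widetilde{\mathcal{C}'}$. The only difference is that you make explicit (via Lemma \ref{proj-inj}) that the objects of $\mathsf{add}U$ are projective-injective in $\mathcal{C}'$, a point the paper leaves implicit; this is a welcome clarification rather than a divergence.
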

\begin{proof}
	We have $\mathsf{silt}\,[M,N]=\mathsf{silt}_{U}\mathcal{C}$ by Lemma \ref{interval=summand completion}, and $\mathsf{silt}\,[M,N]\cong \mathsf{silt}\,\mathcal{C}'$ by Theorem \ref{main thm_1}. By Corollary \ref{main cor} (1) and Lemma \ref{silting bijection}, there is an isomorphism of posets $\mathsf{silt}\,\mathcal{C}'\rightarrow \mathsf{silt}\,\widetilde{\mathcal{C}'}$. Thus the result follows.
\end{proof}

\subsection{Reduction implies mutation}\label{Reduction implies mutation}

In this subsection, we recover the mutation theory in $\mathcal{C}$ (\cite[Theorem 1.3]{GNP23}) using reduction theory. All notations are the same as before.

\begin{lem}\label{C'is 0-Aus}
	$\mathcal{C}'$ is 0-Auslander and $\widetilde{\mathcal{C}'}$ is reduced 0-Auslander.
\end{lem}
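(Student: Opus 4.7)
The plan is to establish that $\mathcal{C}'$ is 0-Auslander directly, and then deduce the reduced 0-Auslander property of $\widetilde{\mathcal{C}'}$ by passing to the quotient by the projective-injectives.

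For $\mathcal{C}'$, I would start by invoking Theorem \ref{AT22thm5.7} to see that $M\leq N$ correspond to bounded hereditary cotorsion pairs $x_{1}=({^\bot M},M^\bot)$ and $x_{2}=({^\bot N},N^\bot)$ with $x_{1}\leq x_{2}$. Applying Corollary \ref{main cor}(1) and (2), the subcategory $\mathcal{C}'={^\bot M}\cap N^\bot$ inherits an extriangulated structure with enough projectives $\mathsf{add}N$, enough injectives $\mathsf{add}M$, and $\mathbb{E}^{m}_{\mathcal{C}'}(T,S)\cong \mathbb{E}^{m}(T,S)$ for all $m\geq 1$ and $T,S\in \mathcal{C}'$. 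Since $\mathcal{C}$ is 0-Auslander, pd$\,\mathcal{C}\leq 1$ gives pd$\,\mathcal{C}'\leq 1$ (and dually id$\,\mathcal{C}'\leq 1$). By Lemma \ref{proj-inj}, the projective-injectives of $\mathcal{C}'$ are exactly $\mathsf{add}U$.

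The crux is verifying dom.dim$\,\mathcal{C}'\geq 1$: for each indecomposable summand $X$ of $N$, I need an $\mathbb{E}$-triangle $X\rightarrowtail U_{0}\twoheadrightarrow Y\dashrightarrow$ in $\mathcal{C}'$ with $U_{0}\in \mathsf{add}U$. The inputs are the two auxiliary $\mathbb{E}$-triangles $P\stackrel{c}\rightarrowtail X\stackrel{a}\twoheadrightarrow U'\dashrightarrow$ and $P\stackrel{g}\rightarrowtail U''\stackrel{b}\twoheadrightarrow Y\dashrightarrow$ produced inside the proof of Lemma \ref{proj-inj}. Forming the pushout of these two triangles along their shared object $P$ (via the $3\times 3$-lemma of \cite[Proposition 3.15]{NP}) yields a third $\mathbb{E}$-triangle $X\rightarrowtail Z\twoheadrightarrow Y\dashrightarrow$ fitting in a $3\times 3$ diagram whose middle column is $U''\rightarrowtail Z\twoheadrightarrow U'\dashrightarrow$. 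Because $U$ is presilting, $\mathbb{E}(U',U'')=0$, so this column splits and $Z\cong U'\oplus U''\in \mathsf{add}U$; all three terms of $X\rightarrowtail Z\twoheadrightarrow Y\dashrightarrow$ already lie in $\mathcal{C}'$. For a general projective $N'=N'_{X}\oplus N'_{U}$ with $N'_{U}\in \mathsf{add}U$, take a direct sum with the split triangle $N'_{U}\rightarrowtail N'_{U}\twoheadrightarrow 0\dashrightarrow$. This verifies dom.dim$\,\mathcal{C}'\geq 1$ and completes the proof that $\mathcal{C}'$ is 0-Auslander.

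For $\widetilde{\mathcal{C}'}=\mathcal{C}'/[\mathsf{add}U]$, I would use \cite[Proposition 3.30]{NP} for the extriangulated structure, note that the quotient is reduced because $\mathsf{add}U$ already exhausts the projective-injectives of $\mathcal{C}'$, and observe that the images of projectives (resp.\ injectives) of $\mathcal{C}'$ remain projective (resp.\ injective) in $\widetilde{\mathcal{C}'}$ since $\widetilde{\mathbb{E}}(\widetilde{N'},-)=\mathbb{E}(N',-)$ on $\mathcal{C}'$. The argument opening the proof of Lemma \ref{silting bijection} then gives $\widetilde{\mathbb{E}}^{m}\cong \mathbb{E}^{m}_{\mathcal{C}'}$ for $m\geq 1$, whence pd$\,\widetilde{\mathcal{C}'}\leq 1$; and the dom.dim triangle $X\rightarrowtail U'\oplus U''\twoheadrightarrow Y\dashrightarrow$ in $\mathcal{C}'$ descends to $\widetilde{X}\rightarrowtail 0\twoheadrightarrow \widetilde{Y}\dashrightarrow$, confirming dom.dim$\,\widetilde{\mathcal{C}'}\geq 1$. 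The main obstacle is the pushout/splitting construction in the middle paragraph; everything else is bookkeeping with the bijections of Corollary \ref{main cor} and the quotient structure of Lemma \ref{silting bijection}.
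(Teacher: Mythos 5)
Your proposal is correct and matches the paper's own argument essentially step for step: enough projectives $\mathsf{add}N$, injectives $\mathsf{add}M$ and heredity from Corollary \ref{main cor}, then the $3\times 3$ diagram built from $P\rightarrowtail X\twoheadrightarrow U'$ and $P\rightarrowtail U''\twoheadrightarrow Y$ whose middle column $U''\rightarrowtail Z\twoheadrightarrow U'$ splits by presiltingness of $U$, giving the dominant-dimension triangle, followed by the quotient by the projective-injectives $\mathsf{add}U$ for the reduced statement. The only (harmless) looseness is that you announce the construction for an indecomposable summand of $N$ but actually run it for the full Bongartz part $X$; like the paper, the passage to arbitrary objects of $\mathsf{add}N$ is then a routine Krull--Schmidt/direct-summand argument.
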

\begin{proof}
	By Corollary \ref{main cor}, $\mathcal{C}'$ has enough projectives $\mathsf{add}N$, enough injectives $\mathsf{add}M$ and is hereditary (i.e. $\mathbb{E}_{\mathcal{C}'}^{2}=0$). By Lemma \ref{proj-inj}, there is an $\mathbb{E}$-triangles $P\stackrel{c}\rightarrowtail X\stackrel{a}\twoheadrightarrow U'\dashrightarrow $. Consider a commutative diagram
	\[\begin{tikzcd}
		P \arrow[r,tail,"g"] \arrow[d,tail,"c",swap] & U'' \arrow[r,two heads,"b"] \arrow[d,tail] & Y \arrow[r,dashed] \arrow[d,equal] & {} \\
		X \arrow[r,tail] \arrow[d,two heads,"a",swap] & Z \arrow[r,two heads] \arrow[d,two heads] & Y \arrow[r,dashed] & {} \\
		U' \arrow[r,equal] \arrow[d,dashed] & U' \arrow[d,dashed] & & \\
		{} & {} & &
	\end{tikzcd},\]
    then the second column splits since $U$ is presilting. By adding two $\mathbb{E}$-triangles $U\stackrel{1}\rightarrowtail U\twoheadrightarrow 0 \dashrightarrow $ and $0\rightarrowtail U\stackrel{1}\twoheadrightarrow U \dashrightarrow $ to the second row, we obtain an $\mathbb{E}$-triangle $N\rightarrowtail U_{0}\twoheadrightarrow M\dashrightarrow $. It is an $\mathbb{E}_{\mathcal{C}'}$-triangle with $U_{0}$ projective-injective. Since $\mathcal{C}'$ is Krull-Schmidt, it is easy to see for any $N'\in \mathsf{add}N$, there is an $\mathbb{E}_{\mathcal{C}'}$-triangle $N'\rightarrowtail M^{0}\twoheadrightarrow M^{1}\dashrightarrow $ with $M^{0}\in \mathsf{add}U$. Thus ${\rm dom.dim}\,\mathcal{C}\geq 1$, hence $\mathcal{C}'$ is 0-Auslander. By Lemma \ref{proj-inj}, $\widetilde{\mathcal{C}'}$ is reduced 0-Auslander.
\end{proof}

As in \cite[Corollary 3.18]{G.Jasso}, we can use our reduction theory to prove "two-completion property" for almost complete presilting objects in $\mathcal{C}$. Moreover we also obtain the $\mathbb{E}$-triangle connecting the two completions. Thus we actually recover the mutation theory in $\mathcal{C}$ as follows. In other words, reduction implies mutation.

\begin{thm}\label{another proof for GNP1.3}
	Let $\mathcal{C}$ be a 0-Auslander category as before and $U$ be a basic presilting (not silting) object. There exists objects $X'$ and $Y'$ which are not isomorphic, such that $N:=X'\oplus U$ and $M:=Y'\oplus U$ are basic silting objects and $M\leq N$. Moreover  $\mathsf{silt}\,[M,N]=\mathsf{silt}_{U}\mathcal{C}$ and there is an $\mathbb{E}$-triangle
	\[X'\stackrel{f}\rightarrowtail U_{1}\stackrel{g}\twoheadrightarrow Y'\dashrightarrow \]
	with $f$ (resp. $g$) a minimal left (resp. right) $\mathsf{add}U$-approximation. In particular, if $U$ is almost complete (i.e. $|U|=|P|-1$), then $\mathsf{silt}_{U}\mathcal{C}=\{M,N\}$ and this is just \cite[Theorem 1.3]{GNP23}.
\end{thm}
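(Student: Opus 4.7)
The plan is to combine Lemma~\ref{two completions} with the reduction in Theorem~\ref{reduction in 0-Aus} and to harvest the triangle from the intrinsic structure of the reduced 0-Auslander quotient $\widetilde{\mathcal{C}'}$. First, I would apply Lemma~\ref{two completions} to obtain the Bongartz completion $X \oplus U$ and co-Bongartz completion $Y \oplus U$; absorbing any $\mathsf{add}U$-summands of $X$ and $Y$ into the fixed copy of $U$ gives basic silting objects $N = X' \oplus U$ and $M = Y' \oplus U$ in which neither $X'$ nor $Y'$ has a direct summand in $\mathsf{add}U$. Lemma~\ref{interval=summand completion} then yields $M \leq N$ and $\mathsf{silt}\,[M,N] = \mathsf{silt}_{U}\mathcal{C}$, which is the second conclusion of the theorem.

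Next, set $\mathcal{C}':={^{\bot}M}\cap N^{\bot}$. By Lemma~\ref{C'is 0-Aus}, $\widetilde{\mathcal{C}'}$ is reduced 0-Auslander, and combining Corollary~\ref{main cor} with Theorem~\ref{AT22thm5.7} identifies its projectives as $\mathsf{add}\widetilde{X'}$ and its injectives as $\mathsf{add}\widetilde{Y'}$. Lemma~\ref{properties of 0-Aus}(4) supplies an equivalence $\Sigma:\mathsf{add}\widetilde{X'}\rightarrow\mathsf{add}\widetilde{Y'}$, and since both $\widetilde{Y'}$ and $\Sigma\widetilde{X'}$ are basic injective cogenerators of $\widetilde{\mathcal{C}'}$ we have $\Sigma(\widetilde{X'})\cong \widetilde{Y'}$, yielding an $\widetilde{\mathbb{E}}$-triangle $\widetilde{X'}\rightarrowtail 0\twoheadrightarrow \widetilde{Y'}$ in $\widetilde{\mathcal{C}'}$. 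Because passage to the quotient by projective-injectives preserves extensions (Lemma~\ref{silting bijection}), this $\widetilde{\mathbb{E}}$-triangle lifts to an $\mathbb{E}$-triangle
\[X' \stackrel{f}\rightarrowtail U_{1} \stackrel{g}\twoheadrightarrow Y' \dashrightarrow\]
in $\mathcal{C}'$ (hence in $\mathcal{C}$) whose middle term satisfies $\widetilde{U_{1}}=0$; Krull-Schmidtness then forces $U_{1}\in \mathsf{add}U$.

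I would then verify the approximation and minimality properties. Applying $\mathrm{Hom}_{\mathcal{C}}(U,-)$ to the triangle and using $\mathbb{E}(U,X')=0$ (since $U\oplus X'=N$ is silting) shows that $\mathrm{Hom}(U,U_{1})\to \mathrm{Hom}(U,Y')$ is surjective, so $g$ is a right $\mathsf{add}U$-approximation; dually, $\mathbb{E}(Y',U)=0$ from $M=Y'\oplus U$ being silting makes $f$ a left $\mathsf{add}U$-approximation. For minimality of $g$: if $U_{1}=U_{1}'\oplus K$ with $g|_{K}=0$, the inclusion $K\hookrightarrow U_{1}$ is killed by $g$ and hence factors through $f$, making $K$ split off as a direct summand of $X'$; since $K\in \mathsf{add}U$ and $X'$ has no such summand, $K=0$. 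The symmetric argument gives left-minimality of $f$. Finally, $U$ not being silting forces $X'\neq 0$; in the reduced 0-Auslander $\widetilde{\mathcal{C}'}$ no nonzero object is both projective and injective, so the indecomposable summands of $\widetilde{X'}$ and $\widetilde{Y'}$ are disjoint, giving $\widetilde{X'}\not\cong \widetilde{Y'}$ and therefore $X'\not\cong Y'$.

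For the almost complete case $|U|=|P|-1$, we have $|N|=|P|$ forcing $|X'|=1$, and similarly $|Y'|=1$. Then $\widetilde{X'}$ is the unique indecomposable projective of $\widetilde{\mathcal{C}'}$, so $B:=\mathrm{End}_{\widetilde{\mathcal{C}'}}(\widetilde{X'})$ is local and $\mathsf{mod}\,B$ has a unique simple module; hence $|s\tau\text{-}\mathsf{tilt}\,B|=2$. Combining Corollary~\ref{silt-tau-tilt} applied to $\widetilde{\mathcal{C}'}$ with Theorem~\ref{reduction in 0-Aus} gives $|\mathsf{silt}_{U}\mathcal{C}|=|\mathsf{silt}\,\widetilde{\mathcal{C}'}|=2$, so $\mathsf{silt}_{U}\mathcal{C}=\{M,N\}$. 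The delicate point I anticipate is the clean lifting of the $\widetilde{\mathbb{E}}$-triangle to one in $\mathcal{C}'$ with middle term actually in $\mathsf{add}U$; this rests on the identification $\widetilde{\mathbb{E}}=\mathbb{E}$ for ideal quotients by projective-injectives (Lemma~\ref{silting bijection}) together with the Krull-Schmidt decomposition used to pull back the trivial middle term.
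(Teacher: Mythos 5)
Your proposal is correct and, for the main assertions, follows the same route as the paper: both obtain $M\leq N$ and $\mathsf{silt}\,[M,N]=\mathsf{silt}_U\mathcal{C}$ from Lemmas \ref{two completions} and \ref{interval=summand completion}, and both produce the connecting $\mathbb{E}$-triangle by passing to the reduced 0-Auslander quotient $\widetilde{\mathcal{C}'}$ (Lemma \ref{C'is 0-Aus}), applying the equivalence $\Sigma$ of Lemma \ref{properties of 0-Aus}(4) to identify $\Sigma\widetilde{X'}\cong\widetilde{Y'}$, realizing the resulting class in $\mathcal{C}'$ (legitimate since $\widetilde{\mathbb{E}}=\mathbb{E}$ for the ideal quotient by projective-injectives), and deducing $U_1\in\mathsf{add}U$ from Krull--Schmidtness; your minimality argument via trivial direct summands of $U_1$ is just a spelled-out version of the paper's "radical, hence minimal" step, both ultimately resting on $N$ and $M$ being basic. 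The only genuine divergence is the almost complete case: the paper argues inside $\widetilde{\mathcal{C}'}$ with a minimal projective copresentation of $X'$ (using \cite[Proposition 5.5]{AT22} and \cite[Lemma 4.23]{GNP23}) to show any basic silting object is $X'$ or $Y'$, whereas you transport the problem to $\mathsf{mod}B$ via Corollary \ref{silt-tau-tilt} and Theorem \ref{reduction in 0-Aus} and invoke the standard fact that a local algebra has exactly two basic support $\tau$-tilting modules; this is valid (note $|X'|=1$ does need the rank count $|N|=|P|$, which again comes from Corollary \ref{silt-tau-tilt}), and it trades the paper's self-contained categorical argument for a reliance on the $\tau$-tilting dictionary already established in Section \ref{section 4}.
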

\begin{proof}
	By Lemma \ref{two completions} and \ref{interval=summand completion}, it suffices to show the existence of the $\mathbb{E}$-triangle. By Lemma \ref{C'is 0-Aus}, the subcategory $\mathcal{C}'={^{\bot}M}\cap N^{\bot}$ is 0-Auslander and $\widetilde{\mathcal{C}'}=\mathcal{C}'/[\mathsf{add}U]$ is reduced 0-Auslander. Since $X'$ and $Y'$ are basic, by Lemma \ref{properties of 0-Aus} (4), there is an $\mathbb{E}$-triangle $X'\stackrel{f}\rightarrowtail U_{1}\stackrel{g}\twoheadrightarrow Y'\dashrightarrow $ such that $U_{1}$ is zero in $\widetilde{\mathcal{C}'}$. Thus $U_{1}\in \mathsf{add}U$. Since $M$ and $N$ are basic silting, $f$ and $g$ are approximations and in the Jacobson radical. Hence they are minimal approximations. If $U$ is almost complete, let $T$ be a basic silting object in $\widetilde{\mathcal{C}'}$. There is an $\mathbb{E}_{\widetilde{\mathcal{C}'}}$-triangle $X'\stackrel{\widetilde{h}}\rightarrowtail T^{0}\twoheadrightarrow T^{1}\dashrightarrow$ with $\widetilde{h}$ left minimal and $T^{0},T^{1}\in \mathsf{add}T$ by \cite[Proposition 5.5]{AT22}. By \cite[Lemma 4.23]{GNP23}, $\mathsf{add}T^{0}\cap \mathsf{add}T^{1}=0$. Since $|T|=1$ in $\widetilde{\mathcal{C}'}$, $T^{1}=0$ or $T^{0}=0$. Thus $T=X'$ or $T=Y'$ and hence $\mathsf{silt}\,\widetilde{\mathcal{C}'}=\{X',Y'\}$. The results follow from Theorem \ref{reduction in 0-Aus}.
\end{proof}

\begin{rem}\label{rem on mutaion vs 0-Aus}
	We give some remarks on Theorem \ref{another proof for GNP1.3}.
	
	(1) Our approach is different from that of \cite[Theorem 4.26]{GNP23}. The key difference is that when reducing to a smaller category, we consider $\mathcal{C}'={^{\bot}M}\cap N^{\bot}$ and $\widetilde{\mathcal{C}'}=\mathcal{C}'/[\mathsf{add}U]$ for any presilting $U$ while they consider $\overline{\mathcal{C}}_{U}=\frac{{^{\bot_{1}}U}\cap U^{\bot_{1}}}{[\mathsf{add}U]}$ for almost complete presilting $U$. In \cite[Theorem 4.26]{GNP23}, the authors prove that $\overline{\mathcal{C}}_{U}$ is hereditary and there is a triangle connecting its projective and injective, which is close to being 0-Auslander. In fact the two subquotients are actually equal for any presilting object $U$. We give a simple proof as follows: Let $\mathcal{C}'':={^{\bot_{1}}U}\cap U^{\bot_{1}}$. Then the $\mathbb{E}$-triangle $X\rightarrowtail U'\oplus U''\twoheadrightarrow Y\dashrightarrow$ in Lemma \ref{C'is 0-Aus} implies $X$ (resp. $Y$) is projective (resp. injective) in $\mathcal{C}''$. Thus $N$ (resp. $M$) is projective (resp. injective) in $\mathcal{C}''$. We obtain $\mathcal{C}''\subseteq \mathcal{C}'$, the reverse inclusion is obvious. Hence $\widetilde{\mathcal{C}'}=\overline{\mathcal{C}}_{U}$.
	
	(2) In the proof, we see that the fact $\mathcal{C}'={^{\bot}M}\cap N^{\bot}$ is 0-Auslander is related to an $\mathbb{E}$-triangle connecting $M$ and $N$. We will discuss this feature in general in Section \ref{section 5}.
\end{rem}

\subsection{Compatibility with $\tau$-tilting reduction}
In this subsection, we investigate the relation between our reduction (Theorem \ref{reduction in 0-Aus}) in the reduced 0-Auslander extriangulated category $\mathcal{C}$ and $\tau$-tilting reduction \cite{G.Jasso} in $\mathsf{mod}A$. All notations are the same as before. Let $B:=\frac{{\rm End}_{A}(\overline{N},\overline{N})}{[\mathsf{add}\overline{U}](\overline{N},\overline{N})}$. Next we associate $\widetilde{\mathcal{C}'}$ with $\mathsf{mod}B$.

\begin{lem}\label{func iso}
	There is a functorial isomorphism ${\rm Hom}_{\widetilde{\mathcal{C}'}}(N,T)\cong \frac{{\rm Hom}_{A}(\overline{N},\overline{T})}{[\mathsf{add}\overline{U}](\overline{N},\overline{T})}$ for each $T\in \mathcal{C}'$.
\end{lem}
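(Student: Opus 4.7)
The plan is to use the equivalence $\overline{(-)}\colon \mathcal{C}/[\mathcal{I}] \xrightarrow{\sim} \mathsf{mod}\,A$ from Lemma \ref{properties of 0-Aus}(5) and then ``quotient one more time'' by $[\mathsf{add}\,U]$ on the left and $[\mathsf{add}\,\overline{U}]$ on the right. Concretely, I would consider the composition
\[
\Phi\colon {\rm Hom}_{\mathcal{C}}(N,T) \xrightarrow{\overline{(-)}} {\rm Hom}_{A}(\overline{N},\overline{T}) \twoheadrightarrow \frac{{\rm Hom}_{A}(\overline{N},\overline{T})}{[\mathsf{add}\,\overline{U}](\overline{N},\overline{T})},
\]
and show that $\Phi$ is surjective with $\ker\Phi = [\mathsf{add}\,U](N,T)$; functoriality in $T$ is then automatic. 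Surjectivity of $\Phi$ is immediate from the surjectivity of $\overline{(-)}$ on Hom-sets, which follows from fullness of the quotient equivalence.

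For the kernel, one inclusion is trivial: any $f = \beta\alpha$ with $\alpha\colon N\to U'$, $\beta\colon U'\to T$ and $U'\in \mathsf{add}\,U$ has $\overline{f} = \overline{\beta}\,\overline{\alpha}$ factoring through $\overline{U'}\in \mathsf{add}\,\overline{U}$. For the reverse inclusion, suppose $\overline{f}$ factors as $\overline{f} = \overline{\beta}\,\overline{\alpha}$ in $\mathsf{mod}\,A$ through some $\overline{U'}$; using fullness of $\overline{(-)}$ I lift $\overline{\alpha},\overline{\beta}$ to honest morphisms $\alpha,\beta$ in $\mathcal{C}$, so that $f - \beta\alpha$ lies in the kernel of $\overline{(-)}$, which is $[\mathcal{I}](N,T)$. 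Thus it suffices to establish the key claim
\[
[\mathcal{I}](N,T)\subseteq [\mathsf{add}\,U](N,T).
\]

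The main obstacle, and the only nontrivial step, is this key claim. To prove it, I would decompose $N = X\oplus U$ (recall $X$ comes from the Bongartz defining triangle $X\rightarrowtail U'\twoheadrightarrow I\dashrightarrow$) and argue componentwise for a morphism $N\xrightarrow{i_1} I_0 \xrightarrow{i_2} T$ with $I_0\in\mathcal{I}$. On the $U$-summand, $U\xrightarrow{i_2 i_1|_U} T$ factors trivially through $U\in\mathsf{add}\,U$. On the $X$-summand, apply ${\rm Hom}_{\mathcal{C}}(-,I_0)$ to the defining $\mathbb{E}$-triangle $X\rightarrowtail U'\twoheadrightarrow I\dashrightarrow$; since $I$ is projective, $\mathbb{E}(I,I_0)=0$, and the resulting long exact sequence shows that every morphism $X\to I_0$ lifts along $X\to U'$. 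Post-composing with $i_2$, the $X$-component $X\to T$ factors through $U'\in\mathsf{add}\,U$.

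Combining the two cases gives a factorization of $i_2 i_1$ through $U'\oplus U\in \mathsf{add}\,U$, establishing the key claim. With this in hand, both inclusions yield $\ker\Phi = [\mathsf{add}\,U](N,T)$, and the induced map
\[
\frac{{\rm Hom}_{\mathcal{C}'}(N,T)}{[\mathsf{add}\,U](N,T)} = {\rm Hom}_{\widetilde{\mathcal{C}'}}(N,T) \xrightarrow{\sim} \frac{{\rm Hom}_{A}(\overline{N},\overline{T})}{[\mathsf{add}\,\overline{U}](\overline{N},\overline{T})}
\]
is the desired functorial isomorphism, where I use that ${\rm Hom}_{\mathcal{C}'}(N,T) = {\rm Hom}_{\mathcal{C}}(N,T)$ because $\mathcal{C}'$ is a full subcategory of $\mathcal{C}$.
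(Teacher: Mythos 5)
Your proposal is correct and follows essentially the same route as the paper: both reduce, via the equivalence $\mathcal{C}/[\mathcal{I}]\simeq \mathsf{mod}A$, to the key inclusion $[\mathcal{I}](N,T)\subseteq[\mathsf{add}U](N,T)$, and prove it by splitting $N=X\oplus U$ and using the Bongartz triangle $X\rightarrowtail U'\twoheadrightarrow I\dashrightarrow$. One small correction: the vanishing $\mathbb{E}(I,I_{0})=0$ holds because $I_{0}$ is injective (equivalently, because $\mathcal{I}$ is rigid), not because $I$ is projective --- in the reduced 0-Auslander category $I=\Sigma P$ is injective and not projective --- but this does not affect the argument.
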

\begin{proof}
	By Lemma \ref{properties of 0-Aus}, we have a functorial isomorphism ${\rm Hom}_{A}(\overline{N},\overline{T})\cong \frac{{\rm Hom}_{\mathcal{C}}(N,T)}{[\mathcal{I}](N,T)}$. This isomorphism restricts to $[\mathsf{add}\overline{U}](\overline{N},\overline{T})\cong \frac{[\mathsf{add}U](N,T)+[\mathcal{I}](N,T)}{[\mathcal{I}](N,T)}$. Thus it suffices to show $[\mathcal{I}](N,T)\subseteq [\mathsf{add}U](N,T)$. Since $N=X\oplus U$, we only need to show $[\mathcal{I}](X,T)\subseteq [\mathsf{add}U](X,T)$. Consider the $\mathbb{E}$-triangle $X\stackrel{a}\rightarrowtail U'\stackrel{f}\twoheadrightarrow I\dashrightarrow$, then any morphism from $X$ to an injective object factors through $a:X\rightarrow U'$. Thus the result follows.
\end{proof}

By Lemma \ref{properties of 0-Aus}, \ref{C'is 0-Aus} and \ref{func iso}, the functor ${\rm Hom}_{\widetilde{\mathcal{C}'}}(N,-):\widetilde{\mathcal{C}'}\rightarrow \mathsf{mod}B$ induces an equivalence $\widetilde{\mathcal{C}'}/[\mathsf{add}M]\stackrel{\sim}\rightarrow \mathsf{mod}B$. By Corollary \ref{silt-tau-tilt} it also induces a bijection $\mathsf{silt}\,\widetilde{\mathcal{C}'}\rightarrow s\tau\text{-}\mathsf{tilt}B$. Next we show the map "red" in Theorem \ref{reduction in 0-Aus} is compatible with $\tau$-tilting reduction (cf. \cite{G.Jasso}).

\begin{thm}\label{main thm2}
	There is a commutative diagram
	\[\begin{tikzcd}
		\mathsf{silt}_{U}\mathcal{C} \arrow[rr,"\overline{(-)}={\rm Hom}_{\mathcal{C}}(P{,}-)"] \arrow[d,"{\rm red}"] & {} & s\tau\text{-}\mathsf{tilt}_{(\overline{U},\overline{\Omega I_{U}})}A \arrow[d,"{\rm red}"] \\
		\mathsf{silt}\,\widetilde{\mathcal{C}'} \arrow[rr,"{\rm Hom}_{\widetilde{\mathcal{C}'}}(N{,}-)"] & {} & s\tau\text{-}\mathsf{tilt}B
	\end{tikzcd}\]
	in which all maps are isomorphisms of posets. The left vertical map is given by Theorem \ref{reduction in 0-Aus} and the right one is $\tau$-tilting reduction given by \cite[Theorem 3.16]{G.Jasso}.
\end{thm}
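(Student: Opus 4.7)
The plan is to observe that each of the four maps in the diagram has already been shown (individually) to be an isomorphism of posets, so the only real content is commutativity. The top horizontal map is the restriction of the bijection $\mathsf{silt}\,\mathcal{C}\to s\tau\text{-}\mathsf{tilt}\,A$ of Corollary~\ref{silt-tau-tilt}, which restricts to $\mathsf{silt}_U\mathcal{C}\cong s\tau\text{-}\mathsf{tilt}_{(\overline{U},\overline{\Omega I_U})}A$ by construction together with Remark~\ref{commutative square}. The left vertical map is exactly Theorem~\ref{reduction in 0-Aus}. Since $\widetilde{\mathcal{C}'}$ is reduced 0-Auslander (Lemma~\ref{C'is 0-Aus}) with projective generator $N$ and ${\rm End}_{\widetilde{\mathcal{C}'}}(N)\cong B$ (Lemma~\ref{func iso}), applying Corollary~\ref{silt-tau-tilt} to $\widetilde{\mathcal{C}'}$ gives that the bottom horizontal map is an isomorphism of posets. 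The right vertical map is Jasso's reduction \cite[Theorem~3.16]{G.Jasso}. So the main task is to verify that the diagram commutes.

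Fix $T\in\mathsf{silt}_U\mathcal{C}$ and chase it two ways. Going down then right yields the $B$-module $\mathrm{Hom}_{\widetilde{\mathcal{C}'}}(N,\widetilde{T})$. By Lemma~\ref{func iso}, this is functorially isomorphic to
\[
\frac{\mathrm{Hom}_A(\overline{N},\overline{T})}{[\mathsf{add}\,\overline{U}](\overline{N},\overline{T})},
\]
together with its natural $B$-module structure. Going right then down, we first send $T$ to the support $\tau$-tilting pair $(\overline{T},\overline{\Omega I_T})$ in $\mathsf{mod}A$, and then apply Jasso's reduction. The key identification I will invoke is that Jasso's reduction functor is, up to canonical isomorphism, given by the same formula: namely, $\mathrm{Hom}_A(\overline{N},-)$ restricted to the appropriate subcategory of $\mathsf{mod}A$ descends to an equivalence with $\mathsf{mod}B$ after killing morphisms that factor through $\mathsf{add}\,\overline{U}$, which is \cite[Theorem~3.8, Theorem~3.16]{G.Jasso}. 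Hence the two $B$-modules coincide.

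A small residual subtlety is the projective-part bookkeeping on both sides (the summands of the form $\overline{\Omega I_T}$ on the $A$-side versus their images under $\mathrm{Hom}_{\widetilde{\mathcal{C}'}}(N,-)$ on the $B$-side, and similarly the $\mathsf{add}\,\overline{U}$ summands that must be deleted before descending). I will handle this by decomposing $T=T'\oplus U$ with $T'$ having no $U$-summand, noting that under $\overline{(-)}$ the injective summands of $T$ are exactly the $\mathcal{I}$-part that gets collapsed into the support $\tau$-tilting projective component, and then tracking which summands remain after passing to $\widetilde{\mathcal{C}'}$ and to $B$. Once this identification is made, commutativity holds on every silting object, hence on the whole poset.

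The main obstacle is the precise matching of Jasso's formula with Lemma~\ref{func iso}; once one writes out Jasso's construction explicitly (the Bongartz completion of $\overline{U}$ in $\mathsf{mod}A$ is $\overline{N}$, and his algebra $B$ agrees with ours on the nose), the commutativity is essentially a restatement of Lemma~\ref{func iso}. The rest of the statement (that all four maps are order-isomorphisms) requires no new work beyond citing the results assembled in the previous subsections.
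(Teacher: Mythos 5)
Your overall skeleton matches the paper's: all four maps are order isomorphisms by results already in hand, and the whole content is the commutativity, which one checks on a fixed $T\in\mathsf{silt}_U\mathcal{C}$ (noting $T\in\mathcal{C}'$ by Lemma \ref{interval=summand completion}, so the left vertical map is the identity on objects) by comparing $B$-modules via Lemma \ref{func iso}. However, there is a genuine gap at the decisive step. You assert that Jasso's reduction is ``given by the same formula,'' i.e. that it sends $\overline{T}$ to ${\rm Hom}_A(\overline{N},\overline{T})/[\mathsf{add}\,\overline{U}](\overline{N},\overline{T})$, citing \cite[Theorem 3.8, Theorem 3.16]{G.Jasso}. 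That is not what those theorems say: Jasso's bijection is expressed through the equivalence ${\rm Hom}_A(\overline{N},-)$ on the perpendicular-type subcategory and, on support $\tau$-tilting modules, produces ${\rm Hom}_A(\overline{N},{\rm f}\overline{T})$, where ${\rm f}\overline{T}$ is the torsion-free part of $\overline{T}$ with respect to the torsion pair $(\mathsf{Fac}\,\overline{U},\overline{U}^{\bot_0})$. Identifying ${\rm Hom}_A(\overline{N},{\rm f}\overline{T})$ with the ideal quotient ${\rm Hom}_A(\overline{N},\overline{T})/[\mathsf{add}\,\overline{U}](\overline{N},\overline{T})$ is precisely the nontrivial point, and it is not a restatement of Lemma \ref{func iso}, which only identifies the ideal quotient with ${\rm Hom}_{\widetilde{\mathcal{C}'}}(N,T)$.

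Concretely, what is missing is the following two-part argument, which the paper supplies. First, applying ${\rm Hom}_A(\overline{N},-)$ to $0\to{\rm t}\overline{T}\xrightarrow{i}\overline{T}\to{\rm f}\overline{T}\to 0$ gives a surjection ${\rm Hom}_A(\overline{N},\overline{T})\to{\rm Hom}_A(\overline{N},{\rm f}\overline{T})$ because ${\rm Ext}^1_A(\overline{N},{\rm t}\overline{T})=0$ (using ${\rm t}\overline{T}\in\mathsf{Fac}\,\overline{U}\subseteq\mathsf{Fac}\,\overline{N}$ and $\tau$-rigidity of $\overline{N}$). Second, one must show ${\rm Im}(i_\ast)=[\mathsf{add}\,\overline{U}](\overline{N},\overline{T})$; the inclusion $\supseteq$ is easy, but the inclusion $\subseteq$ (every map $\overline{N}\to{\rm t}\overline{T}$ factors through $\mathsf{add}\,\overline{U}$) requires taking a right $\mathsf{add}\,\overline{U}$-approximation $\overline{U_1}\to{\rm t}\overline{T}$ and showing its kernel lies in ${}^{\bot_0}(\tau\overline{U})\cap(\overline{\Omega I_U})^{\bot_0}=\mathsf{Fac}\,\overline{N}$ via \cite[Lemma 2.6]{AIR}, so that the map lifts. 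Without this, your ``projective-part bookkeeping'' paragraph does not rescue the argument, since the issue is not about summands but about the kernel of the comparison map. Finally, one also has to observe that the resulting isomorphisms are isomorphisms of right $B$-modules, not merely of vector spaces, so that they genuinely compare the two images in $s\tau\text{-}\mathsf{tilt}\,B$.
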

\begin{proof}
	Let $T\in \mathsf{silt}_{U}\mathcal{C}$. Then $T\in \mathcal{C}'$ by Lemma \ref{interval=summand completion} and the left vertical map is identity on objects in the domain. By \cite[Theorem 3.16]{G.Jasso}, it suffices to show ${\rm Hom}_{A}(\overline{N},{\rm f}\overline{T})\cong {\rm Hom}_{\widetilde{\mathcal{C}'}}(N,T)$ as right $B$-module, where ${\rm f}\overline{T}$ is the torsion free part of $\overline{T}$ with respect to the torsion pair $(\mathsf{Fac}\overline{U},\overline{U}^{\bot_{0}})$. Applying ${\rm Hom}_{A}(\overline{N},-)$ to the canonical sequence $0\rightarrow {\rm t}\overline{T}\stackrel{i}\rightarrow \overline{T}\rightarrow {\rm f}\overline{T}\rightarrow 0$, we obtain an exact sequence 
	\[0\rightarrow {\rm Hom}_{A}(\overline{N},{\rm t}\overline{T})\stackrel{i_{\ast}}\rightarrow {\rm Hom}_{A}(\overline{N},\overline{T})\rightarrow {\rm Hom}_{A}(\overline{N},{\rm f}\overline{T})\rightarrow {\rm Ext}_{A}^{1}(\overline{N},{\rm t}\overline{T})=0\]
	since $\mathsf{Fac}\overline{U}\subseteq \mathsf{Fac}\overline{N}$ and $\overline{N}$ is $\tau$-rigid. By Lemma \ref{func iso}, it suffices to show ${\rm Im}(i_{\ast})=[\mathsf{add}\overline{U}](\overline{N},\overline{T})$. The inclusion $\supseteq$ is clear since every morphism $\overline{U}\rightarrow \overline{T}$ factors through $i$. For the other inclusion, consider an exact sequence $0\rightarrow K\rightarrow \overline{U_{1}}\stackrel{s}\rightarrow {\rm t}\overline{T}\rightarrow 0$ with $s$ a right $\mathsf{add}\overline{U}$-approximation. By \cite[Lemma 2.6]{AIR}, $K\in {^{\bot_{0}}(\tau \overline{U})}\cap (\overline{\Omega I_{U}})^{\bot_{0}}=\mathsf{Fac}\overline{N}$. Applying ${\rm Hom}_{A}(\overline{N},-)$, we find that any morphism $\overline{N}\rightarrow {\rm t}\overline{T}$ factors through $\overline{U_{1}}$. Thus the inclusion $\subseteq $ follows. It is easy to see the induced isomorphisms ${\rm Hom}_{A}(\overline{N},{\rm f}\overline{T})\cong \frac{{\rm Hom}_{A}(\overline{N},\overline{T})}{[\mathsf{add}\overline{U}](\overline{N},\overline{T})}\cong {\rm Hom}_{\widetilde{\mathcal{C}'}}(N,T)$ are right $B$-module isomorphisms.
\end{proof}

In Theorem \ref{main thm2}, the left vertical reduction is "simpler" than the right one.

\begin{rem}
	If we consider a $k$-linear ($k$ is a field), Hom-finite, Krull-Schmidt triangulated category $\mathcal{T}$ with a silting object $S$. Let $\mathcal{C}:=\mathsf{add}S\ast \mathsf{add}S[1]$ and $U$ be a presilting object in $\mathcal{T}$ contained in $\mathcal{C}$. Then our reduction ${\rm red}:\mathsf{silt}_{U}\mathcal{C}\rightarrow \mathsf{silt}\,\widetilde{\mathcal{C}'}$ is the same as \cite[Corollary 4.16]{G.Jasso} by Corollary \ref{generalize PZ} (2). Indeed, our "$\widetilde{\mathcal{C}'}$" is exactly the subcategory "$T_{U}\ast T_{U}[1]$" of "$\mathcal{U}$" in \cite[Proposition 4.11]{G.Jasso}. Thus Theorem \ref{main thm2} is a generalization of \cite[Theorem 4.12]{G.Jasso} but a bit simpler on the left vertical map in the commutative square, so that we don't need the commutativity to prove the bijectivity of it.
\end{rem}

We state another version of Theorem \ref{main thm2} and show the compatibility between them. Recall that $N$ (resp. $M$) is the  Bongartz (resp. co-Bongartz) completion of the presilting object $U$. 
 Let $m$ (resp. $n$) be the cotorsion pair $(\mathcal{X}_{1},\mathcal{Y}_{1})$ (resp. $(\mathcal{X}_{2},\mathcal{Y}_{2})$) corresponding to $M$ (resp. $N$) and $\overline{m}$ (resp. $\overline{n}$) be the left weak cotorsion-torsion triple $(\overline{\mathcal{X}_{1}},\overline{\mathcal{Y}_{1}},\overline{\mathcal{Y}_{1}}^{\bot_{0}})$ (resp. $(\overline{\mathcal{X}_{2}},\overline{\mathcal{Y}_{2}},\overline{\mathcal{Y}_{2}}^{\bot_{0}})$).

\begin{cor}
	There is a commutative diagram
	\[\begin{tikzcd}
		& \mathsf{cotors}\,[m,n] \arrow[rr] \arrow[dd] \arrow[dl,"{\rm red'}"] & & \mathsf{lw\text{-}\mathsf{cotors}\text{-}\mathsf{tors}}[\overline{m},\overline{n}] \arrow[dl,"{\rm red'}"] \arrow[dd] \\
		\mathsf{cotors}\,\widetilde{\mathcal{C}'} \arrow[rr] \arrow[dd] & & \mathsf{lw\text{-}\mathsf{cotors}\text{-}\mathsf{tors}}B \arrow[dd] & \\
		& \mathsf{silt}_{U}\mathcal{C} \arrow[dl,"{\rm red}"] \arrow[rr] & & s\tau\text{-}\mathsf{tilt}_{(\overline{U},\overline{\Omega I_{U}})}A \arrow[dl,"{\rm red}"] \\
		\mathsf{silt}\,\widetilde{\mathcal{C}'} \arrow[rr] & & s\tau\text{-}\mathsf{tilt}B &
	\end{tikzcd}\]
    in which all maps are isomorphisms of posets. The top left reduction is given by Corollary \ref{main cor} (1) and Lemma \ref{silting bijection}. The top right reduction is given by \cite[Theorem 3.14]{G.Jasso} (see Remark \ref{uniquely determined}).
\end{cor}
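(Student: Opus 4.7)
The plan is to reduce everything to results already established in the excerpt, so the work amounts to (a) identifying each arrow with a map we already know is an isomorphism of posets and (b) checking that each of the three independent square faces of the cube commutes; the fourth face will then commute automatically by cancellation.

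First, I would catalogue the arrows. The vertical maps are the silting/cotorsion bijections. On the left of the cube, $\mathsf{cotors}\,[m,n]\to \mathsf{silt}_{U}\mathcal{C}$ is the restriction of Theorem \ref{AT22thm5.7} (which is legitimate because $[m,n]=\mathsf{bdd\text{-}hcotors}\,[m,n]$ by Corollary \ref{main cor}(1) applied to $x_{1}=m,x_{2}=n$, and it restricts to the interval by construction), and similarly $\mathsf{cotors}\,\widetilde{\mathcal{C}'}\to \mathsf{silt}\,\widetilde{\mathcal{C}'}$ is Theorem \ref{AT22thm5.7} applied to the reduced $0$-Auslander category $\widetilde{\mathcal{C}'}$ (Lemma \ref{C'is 0-Aus}). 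On the right, the two vertical maps are Proposition \ref{cotors-lwcotorstors} for $\mathsf{mod}A$ (restricted to the interval $[\overline{m},\overline{n}]$) and for $\mathsf{mod}B$. The horizontal arrows on the front face are given by $\overline{(-)}={\rm Hom}_{\mathcal{C}}(P,-)$ and ${\rm Hom}_{\widetilde{\mathcal{C}'}}(N,-)$ (Corollary \ref{silt-tau-tilt} applied to $\mathcal{C}$ and to $\widetilde{\mathcal{C}'}$). On the back face, the two horizontal arrows are the cotorsion-pair versions of these bijections from Proposition \ref{cotors-lwcotorstors}. The left reduction arrows come from Corollary \ref{main cor}(1) and Lemma \ref{silting bijection} (top) and from Theorem \ref{reduction in 0-Aus} (bottom). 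The right reduction arrows are $\tau$-tilting reduction and its cotorsion-triple version from \cite[Theorem 3.14, Theorem 3.16]{G.Jasso}. Each arrow is therefore already known to be an isomorphism of posets.

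Next I would verify commutativity of three faces.
\begin{itemize}
\item \emph{Bottom face.} This is exactly Theorem \ref{main thm2}, so nothing new is required.
\item \emph{Right vertical face.} For a support $\tau$-tilting pair $(\overline{T},\overline{Q})\in s\tau\text{-}\mathsf{tilt}_{(\overline{U},\overline{\Omega I_U})}A$, the associated torsion class is $\mathsf{Fac}\,\overline{T}$, and Proposition \ref{cotors-lwcotorstors} identifies it with the middle term of the triple $(\overline{\mathcal{X}},\overline{\mathcal{Y}},\overline{\mathcal{Y}}^{\bot_0})$ where $(\mathcal{X},\mathcal{Y})$ is the cotorsion pair with $\mathcal{X}\cap\mathcal{Y}=\mathsf{add}\,T$. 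Since $\tau$-tilting reduction sends $\overline{T}$ to ${\rm Hom}_{A}(\overline{N},{\rm f}\overline{T})$ and sends the associated torsion class to $\mathsf{Fac}\,{\rm Hom}_{A}(\overline{N},{\rm f}\overline{T})$, and since Proposition \ref{cotors-lwcotorstors} applied in $\mathsf{mod}B$ is again "take $\mathsf{Fac}$ of the silting object", commutativity follows from Remark \ref{commutative square} applied both to $A$ and to $B$.
\item \emph{Left vertical face.} Take a cotorsion pair $(\mathcal{X},\mathcal{Y})\in\mathsf{cotors}\,[m,n]$ with $\mathcal{X}\cap\mathcal{Y}=\mathsf{add}\,T$; by the descriptions in Corollary \ref{main cor}(1) and Lemma \ref{silting bijection}, the top reduction sends it to $(\mathcal{X}\cap\mathcal{Y}_{2},\mathcal{X}_{1}\cap\mathcal{Y})$ viewed in $\widetilde{\mathcal{C}'}$, whose intersection is $\mathsf{add}\,T$ in $\widetilde{\mathcal{C}'}$, matching the image of $T$ under the bottom reduction of Theorem \ref{reduction in 0-Aus}. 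Thus both routes produce the cotorsion pair of $\widetilde{\mathcal{C}'}$ whose core is $\mathsf{add}\,T$, so the square commutes.
\end{itemize}

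Finally, the top (back) face commutes because the other three faces commute and all arrows are bijections: starting from $(\mathcal{X},\mathcal{Y})\in\mathsf{cotors}\,[m,n]$, the two compositions going around the top face both agree after applying the isomorphism $\mathsf{lw\text{-}cotors\text{-}tors}B\to s\tau\text{-}\mathsf{tilt}B$, hence they agree. I expect the main obstacle to be the bookkeeping of the left vertical face, since the reductions on cotorsion pairs are described via $\mathsf{add}(\mathcal{X}_{2}\ast\mathcal{A})$ and $\mathsf{add}(\mathcal{B}\ast\mathcal{Y}_{1})$ rather than directly by the silting core; the cleanest way around this is the observation, already contained in the cited results, that each of the two reductions on cotorsion pairs is uniquely determined by its action on the core $\mathcal{X}\cap\mathcal{Y}$, which is exactly the silting object, so compatibility with Theorem \ref{reduction in 0-Aus} becomes a tautology.
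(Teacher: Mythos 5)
Your overall strategy is the same as the paper's (check the faces of the cube one by one, then get the last face for free because all maps are bijections), but there is a concrete gap: you have miscounted the faces. The cube has six square faces, and the statement asserts commutativity of the whole diagram, so besides the bottom, left and right faces you treat, you must also verify the \emph{back} face (the square $\mathsf{cotors}\,[m,n]\to\mathsf{lw\text{-}cotors\text{-}tors}[\overline{m},\overline{n}]\to s\tau\text{-}\mathsf{tilt}_{(\overline{U},\overline{\Omega I_U})}A\leftarrow \mathsf{silt}_U\mathcal{C}$) and the \emph{front} face (the analogous square for $\widetilde{\mathcal{C}'}$ and $B$). These are exactly the compatibilities of Proposition \ref{cotors-lwcotorstors} with Corollary \ref{silt-tau-tilt}, i.e.\ Remark \ref{commutative square}, applied to $\mathcal{C}$ and to $\widetilde{\mathcal{C}'}$; the paper invokes them explicitly. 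Your final step is also affected: tracing your own cancellation argument (compose both routes around the top face with the isomorphism $\mathsf{lw\text{-}cotors\text{-}tors}B\to s\tau\text{-}\mathsf{tilt}B$ and push everything down to the bottom face) requires precisely these two unverified faces, so as written the deduction of the top face does not go through; with the front and back faces added it does, and then it matches the paper's argument (the paper moreover gives an independent direct proof of the top face via ${\rm f}\overline{T}$, which you do not need but which your argument does not replace).

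A smaller point: for the right face you attribute commutativity to ``Remark \ref{commutative square} applied to $A$ and to $B$'', but that remark concerns the passage from the extriangulated category to the module category (the front/back faces), not the compatibility of Jasso's two reductions inside $\mathsf{mod}A$ and $\mathsf{mod}B$. The fact you actually use --- that the torsion-class reduction of \cite[Theorem 3.14]{G.Jasso} sends $\mathsf{Fac}\,\overline{T}$ to the torsion class of ${\rm Hom}_A(\overline{N},{\rm f}\overline{T})$ --- is the content of (the proof of) \cite[Theorem 3.16]{G.Jasso}, which is what the paper cites; you should cite it rather than assert it. Your treatment of the bottom and left faces is correct and agrees with the paper.
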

\begin{proof}
	The bottom square is the diagram in Theorem \ref{main thm2}. The left square is commutative by the proof of Theorem \ref{main thm_1} and Lemma \ref{silting bijection}. The front and back squares are commutative by Remark \ref{commutative square}. The right square is commutative by the proof of \cite[Theorem 3.16]{G.Jasso}. Since all maps are isomorphisms of posets, the top square is also commutative. But we  give a direct proof here. Let $(\mathcal{X},\mathcal{Y})\in \mathsf{cotors}\,[m,n]$, it suffices to show the two images of $\mathcal{Y}$ coincide. That is, ${\rm Hom}_{\widetilde{\mathcal{C}'}}(N,\mathcal{X}_{1}\cap \mathcal{Y})={\rm Hom}_{A}(\overline{N},\overline{\mathcal{Y}}\cap {\overline{U}}^{\bot_{0}})$. For each $T\in \mathcal{X}_{1}\cap \mathcal{Y}$, by the proof of Theorem \ref{main thm2}, we have ${\rm Hom}_{\widetilde{\mathcal{C}'}}(N,T)\cong {\rm Hom}_{A}(\overline{N},{\rm f}\overline{T})$ where ${\rm f}\overline{T}\in \overline{\mathcal{Y}}\cap {\overline{U}}^{\bot_{0}}$. Let $Z\in \mathcal{Y}$ such that $\overline{Z}\in \overline{\mathcal{Y}}\cap {\overline{U}}^{\bot_{0}}$. Then ${\rm Hom}_{A}(\overline{N},\overline{Z})\in \mathsf{mod}B$. Thus there is $T\in \widetilde{\mathcal{C}'}$ such that ${\rm Hom}_{\widetilde{\mathcal{C}'}}(N,T)\cong {\rm Hom}_{A}(\overline{N},\overline{Z})$, then ${\rm Hom}_{A}(\overline{N},{\rm f}\overline{T})\cong {\rm Hom}_{A}(\overline{N},\overline{Z})$. Since $T,Z\in \mathcal{Y}_{2}$, we have ${\rm f}\overline{T},\overline{Z}\in \overline{\mathcal{Y}_{2}}=\mathsf{Fac}\overline{N}$. Hence ${\rm f}\overline{T}\cong \overline{Z}$. In the exact sequence $0\rightarrow {\rm t}\overline{T}\rightarrow \overline{T}\rightarrow {\rm f}\overline{T}\rightarrow 0$, ${\rm t}\overline{T}\in \mathsf{Fac}\overline{U}=\overline{\mathcal{Y}_{1}}\subseteq \overline{\mathcal{Y}}$ and ${\rm f}\overline{T}\cong \overline{Z}\in \overline{\mathcal{Y}}$, hence $\overline{T}\in \overline{\mathcal{Y}}$. Thus $T\in \mathcal{Y}\cap \mathcal{X}_{1}$. We complete the proof.
\end{proof}

\section{Mutation and 0-Auslander extriangulated categories}\label{section 5}

Let $\mathcal{C}=(\mathcal{C},\mathbb{E},\mathfrak{s})$ be an extriangulated category. A natural question is when the subcategory $\mathcal{C}'={^{\bot}\mathcal{M}}\cap \mathcal{N}^{\bot}$ in Corollary \ref{main cor} (2) has good properties, for example, being 0-Auslander. Lemma \ref{C'is 0-Aus} gives us such an example $\mathcal{C}$ and in Remark \ref{rem on mutaion vs 0-Aus} we pointed out a feature of the example. In this section, we discuss it in a general setting. First we recall silting mutation in extriangulated categories introduced in \cite{AT23}, which is a direct generalization of that in triangulated categories \cite{AI}.

\begin{defn}\label{silt mutation in extri}
	Let $\mathcal{C}=(\mathcal{C},\mathbb{E},\mathfrak{s})$ be an extriangulated category.
	
	(1) \cite[Definition 4.7]{AT23} Let $\mathcal{N}$ be a subcategory of $\mathcal{C}$, and $\mathcal{D}$ be a covariantly (resp. contravariantly) finite subcategory of $\mathcal{N}$. $\mathcal{D}$ is {\em good} if each $N\in \mathcal{N}$ has a left (resp. right) $\mathcal{D}$-approximation which is an $\mathfrak{s}$-inflation (resp. $\mathfrak{s}$-deflation).
	
	(2) \cite[Definition 4.10]{AT23} Let $\mathcal{N}$ be a presilting subcategory of $\mathcal{C}$ and $\mathcal{D}$ be a good covariantly finite subcategory of $\mathcal{N}$. For each $N\in \mathcal{N}$ take an $\mathbb{E}$-triangle
	\[N\stackrel{f}\rightarrowtail D\twoheadrightarrow M_{N}\dashrightarrow \]
	with $f$ a left $\mathcal{D}$-approximation. Define a subcategory $\mu^{L}(\mathcal{N};\mathcal{D})$ of $\mathcal{C}$ to be
	\[\mu^{L}(\mathcal{N};\mathcal{D}):=\mathsf{add}(\mathcal{D}\cup \{M_{N}\,|\,N\in \mathcal{N}\}),\]
	and call it a {\em left mutation} of $\mathcal{N}$ with respect to $\mathcal{D}$. Dually we define a {\em right mutation} $\mu^{R}(\mathcal{N};\mathcal{D})$ of $\mathcal{N}$ with respect to $\mathcal{D}$.
\end{defn}

\begin{rem}
    (1) In some literature (e.g. \cite{ZZ}, \cite{FMP}), they use "strong" instead of "good" in Definition \ref{silt mutation in extri} (1).

    (2) By \cite[Remark 4.11]{AT23}, left and right mutations are well-defined (that is, independent of the choices of approximations). Indeed, if there are two $\mathbb{E}$-triangles $N\stackrel{f}\rightarrowtail D\twoheadrightarrow M_{N}\dashrightarrow$ and $N\stackrel{f'}\rightarrowtail D'\twoheadrightarrow M_{N}'\dashrightarrow$ such that $f,f'$ are left $\mathcal{D}$-approximations, then we have $M_{N}\oplus D'\cong M_{N}'\oplus D$. Moreover $\mathcal{D}$ can be assumed that $\mathcal{D}=\mathsf{add}\mathcal{D}$.
    
    (3) The silting objects $M$ and $N$ in Theorem \ref{another proof for GNP1.3} are mutations of each other in the sense of Definition \ref{silt mutation in extri}.
\end{rem}

\begin{lem}\cite[Theorem 4.12, Proposition 4.13]{AT23}\label{basic lem on mutation}
	Let $\mathcal{N}$ be a presilting subcategory of $\mathcal{C}$ and $\mathcal{D}$ be a good covariantly finite subcategory of $\mathcal{N}$. Define $\mathcal{M}:=\mu^{L}(\mathcal{N};\mathcal{D})$.
	
	(1) $\mathcal{M}$ is a presilting subcategory of $\mathcal{C}$ and $\mathbb{E}^{\geq 1}(\mathcal{N},\mathcal{M})=0$.
	
	(2) $\mathcal{M}\cap \mathcal{N}=\mathsf{add}\mathcal{D}$, and $\mathcal{M}=\mathcal{N}$ if and only if $\mathcal{N}=\mathsf{add}\mathcal{D}$.
	
	(3) If $\mathcal{N}$ is silting then so is $\mathcal{M}$.
	
	(4) $\mathcal{D}$ is a good contravariantly finite subcategory of $\mathcal{M}$ and $\mu^{R}(\mathcal{M};\mathcal{D})=\mathcal{N}$.
\end{lem}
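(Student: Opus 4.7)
My plan is to prove all four parts by repeated application of the long exact sequences attached to the defining $\mathbb{E}$-triangles
\[\xi_{N}:\, N \stackrel{f_{N}}\rightarrowtail D_{N} \twoheadrightarrow M_{N} \dashrightarrow \qquad (N\in\mathcal{N}),\]
where $f_{N}$ is the chosen left $\mathcal{D}$-approximation, which is an inflation by goodness of $\mathcal{D}$. I would attack (1) first, since the conclusion that $\mathcal{M}$ is presilting is what drives the proof of (2); parts (3) and (4) then fall out quickly.

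For (1), I begin with $\mathbb{E}^{\geq 1}(\mathcal{N},\mathcal{M})=0$: applying $\mathrm{Hom}_{\mathcal{C}}(N,-)$ to $\xi_{N'}$ sandwiches $\mathbb{E}^{i}(N,M_{N'})$ between $\mathbb{E}^{i}(N,D_{N'})$ and $\mathbb{E}^{i+1}(N,N')$, both vanishing for $i\geq 1$ since $\mathcal{N}$ is presilting and $D_{N'}\in \mathcal{D}\subseteq \mathcal{N}$. For the self-orthogonality $\mathbb{E}^{\geq 1}(\mathcal{M},\mathcal{M})=0$ I first deduce $\mathbb{E}^{\geq 1}(\mathcal{M},\mathcal{D})=0$ by applying $\mathrm{Hom}_{\mathcal{C}}(-,D')$ to $\xi_{N}$: the sandwich kills $\mathbb{E}^{\geq 2}$ automatically, and the $\mathbb{E}^{1}$-level boundary becomes the cokernel of $\mathrm{Hom}(D_{N},D')\to \mathrm{Hom}(N,D')$, which is surjective precisely because $f_{N}$ is a left $\mathcal{D}$-approximation and $D'\in\mathcal{D}$. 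An analogous sandwich via $\xi_{N}$ with $\mathrm{Hom}_{\mathcal{C}}(-,N')$ produces $\mathbb{E}^{\geq 2}(\mathcal{M},\mathcal{N})=0$. A final sandwich through $\xi_{N'}$ applied to $\mathrm{Hom}_{\mathcal{C}}(X,-)$ for $X\in \mathcal{M}$ combines the two vanishings just obtained (the outer terms $\mathbb{E}^{i}(X,D_{N'})$ and $\mathbb{E}^{i+1}(X,N')$ both die for $i\geq 1$) to give $\mathbb{E}^{\geq 1}(\mathcal{M},\mathcal{M})=0$, and since $\mathcal{M}$ is summand-closed by construction it is presilting.

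For (2), the inclusion $\mathsf{add}\mathcal{D}\subseteq \mathcal{M}\cap \mathcal{N}$ is clear. Conversely, for $X\in \mathcal{M}\cap \mathcal{N}$ the triangle $\xi_{X}$ exists (since $X\in \mathcal{N}$), and applying $\mathrm{Hom}_{\mathcal{C}}(-,X)$ yields the exact sequence
\[\mathrm{Hom}(D_{X},X)\to \mathrm{Hom}(X,X)\to \mathbb{E}(M_{X},X);\]
the right-hand term vanishes by (1) since $M_{X},X\in \mathcal{M}$, so $1_{X}$ factors through $f_{X}$, making $f_{X}$ a split monomorphism and $X$ a direct summand of $D_{X}\in \mathsf{add}\mathcal{D}$. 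The equivalence $\mathcal{M}=\mathcal{N}\Leftrightarrow \mathcal{N}=\mathsf{add}\mathcal{D}$ then follows: the forward direction gives $\mathcal{N}=\mathcal{M}\cap \mathcal{N}=\mathsf{add}\mathcal{D}$, and the reverse allows choosing $f_{N}=1_{N}$ so that $M_{N}=0$ and $\mathcal{M}=\mathsf{add}\mathcal{D}=\mathcal{N}$.

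For (3), the triangles $\xi_{N}$ place $\mathcal{N}\subseteq \mathsf{thick}\,\mathcal{M}$ (since $D_{N},M_{N}\in \mathcal{M}$ and thick subcategories are closed under cocones), so $\mathsf{thick}\,\mathcal{N}=\mathcal{C}$ forces $\mathsf{thick}\,\mathcal{M}=\mathcal{C}$, and $\mathcal{M}$ is silting by (1). For (4), the same $\xi_{N}$ exhibit $D_{N}\twoheadrightarrow M_{N}$ as a deflation which is a right $\mathcal{D}$-approximation, since applying $\mathrm{Hom}_{\mathcal{C}}(D',-)$ gives surjectivity with obstruction $\mathbb{E}(D',N)=0$ from presilting-ness of $\mathcal{N}$; together with identities on objects of $\mathcal{D}$ this makes $\mathcal{D}$ a good contravariantly finite subcategory of $\mathcal{M}$, and the cocone of each right $\mathcal{D}$-approximation $D_{N}\twoheadrightarrow M_{N}$ is $N\in \mathcal{N}$, so $\mu^{R}(\mathcal{M};\mathcal{D})=\mathsf{add}(\mathcal{D}\cup \mathcal{N})=\mathcal{N}$ (using that $\mathcal{N}$ is already summand-closed). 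I expect the main technical obstacle to be the bookkeeping needed to verify well-definedness of all these constructions under passage to arbitrary finite direct sums and summands of the generators $\{M_{N}\}\cup \mathcal{D}$, which is the content of the well-definedness remark following Definition 5.1 and must be threaded carefully through each step.
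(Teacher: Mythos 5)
Your proposal is correct. Note, however, that the paper itself proves only the first half of (2) and cites \cite[Theorem 4.12, Proposition 4.13]{AT23} for everything else; for that half your argument is essentially the paper's (the defining $\mathbb{E}$-triangle of $X\in\mathcal{M}\cap\mathcal{N}$ splits because its extension class lies in $\mathbb{E}(\mathcal{M},\mathcal{M})=0$ by (1), so $X$ is a summand of an object of $\mathcal{D}$). The remainder of your write-up is a self-contained reconstruction of the cited material: the three long-exact-sequence ``sandwiches'' in (1) do give $\mathbb{E}^{\geq 1}(\mathcal{N},\mathcal{M})=0$, $\mathbb{E}^{\geq 1}(\mathcal{M},\mathcal{D})=0$, $\mathbb{E}^{\geq 2}(\mathcal{M},\mathcal{N})=0$ and hence $\mathbb{E}^{\geq 1}(\mathcal{M},\mathcal{M})=0$, and your arguments for (3) and (4) are sound. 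The only point you leave implicit --- extending goodness of $\mathcal{D}$ and the identification of cocones from the generators $\{M_N\}\cup\mathcal{D}$ to arbitrary objects of $\mathcal{M}=\mathsf{add}(\mathcal{D}\cup\{M_N\})$ --- is indeed routine: deflations compose with split epimorphisms, approximations pass to finite sums and summands, and the independence-of-choices remark (objects $N_M\oplus D'\cong N_M'\oplus D$) together with summand-closedness of $\mathcal{N}$ handles $\mu^R(\mathcal{M};\mathcal{D})=\mathcal{N}$; you correctly flag this as the bookkeeping to be carried out, so there is no genuine gap.
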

\begin{proof}
	We only prove the first half of (2), the rest can be found in \cite{AT23}. For each $X\in \mathcal{M}\cap \mathcal{N}$, take an $\mathbb{E}$-triangle $X\stackrel{f}\rightarrowtail D\twoheadrightarrow Y\stackrel{\delta}\dashrightarrow $ with $D\in \mathcal{D}$ and $f$ a left $\mathcal{D}$-approximation. Then $Y\in \mathcal{M}$. Since $\mathcal{M}$ is presilting by (1), $\delta=0$ and hence $X\in \mathsf{add}\mathcal{D}$.
\end{proof}

\begin{thm}\label{main thm: new 0-Aus}
	Let $\mathcal{M},\mathcal{N}\in \mathsf{silt}\,\mathcal{C}$ and $\mathcal{C}':={^{\bot}\mathcal{M}}\cap \mathcal{N}^{\bot}$. Then $\mathcal{M}\leq \mathcal{N}$ and $\mathcal{C}'$ is 0-Auslander if and only if $\mathcal{M}=\mu^{L}(\mathcal{N};\mathcal{D})$ for some good covariantly finite subcategory $\mathcal{D}$ of $\mathcal{N}$.
\end{thm}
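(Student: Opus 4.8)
The plan is to reduce the statement to silting interval reduction (Corollary~\ref{main cor}) and to the formal properties of silting mutation (Lemma~\ref{basic lem on mutation}). Set $x_{1}=({^{\bot}\mathcal{M}},\mathcal{M}^{\bot})$ and $x_{2}=({^{\bot}\mathcal{N}},\mathcal{N}^{\bot})$; by Theorem~\ref{AT22thm5.7} these lie in $\mathsf{bdd\text{-}hcotors}\,\mathcal{C}$, and $\mathcal{M}\leq\mathcal{N}$ is equivalent to $x_{1}\leq x_{2}$. Granting $\mathcal{M}\leq\mathcal{N}$, Corollary~\ref{main cor}(2) identifies $\mathcal{C}'={^{\bot}\mathcal{M}}\cap\mathcal{N}^{\bot}$ with an extriangulated category having enough projectives $\mathcal{N}$ and enough injectives $\mathcal{M}$, with $\mathbb{E}^{m}_{\mathcal{C}'}\cong\mathbb{E}^{m}$ on $\mathcal{C}'$ for all $m\geq1$. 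A routine splitting argument (a projective object of $\mathcal{C}'$ is a summand of an object of $\mathcal{N}$) shows $\mathcal{N}$, $\mathcal{M}$ and $\mathcal{D}_{0}:=\mathcal{N}\cap\mathcal{M}$ are precisely the subcategories of projectives, injectives, and projective-injectives of $\mathcal{C}'$. So, after securing $\mathcal{M}\leq\mathcal{N}$, the theorem becomes: $\mathcal{C}'$ satisfies ${\rm pd}\,\mathcal{C}'\leq1$ and ${\rm dom.dim}\,\mathcal{C}'\geq1$ if and only if $\mathcal{M}=\mu^{L}(\mathcal{N};\mathcal{D})$ for some good covariantly finite $\mathcal{D}\subseteq\mathcal{N}$.

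For the direction ``$\Leftarrow$'', suppose $\mathcal{M}=\mu^{L}(\mathcal{N};\mathcal{D})$. Lemma~\ref{basic lem on mutation} gives $\mathcal{M}\leq\mathcal{N}$ and (taking $\mathcal{D}=\mathsf{add}\mathcal{D}$) $\mathcal{D}=\mathcal{N}\cap\mathcal{M}=\mathcal{D}_{0}$. For each $N\in\mathcal{N}$ the defining $\mathbb{E}$-triangle $N\stackrel{f}\rightarrowtail D_{N}\twoheadrightarrow M_{N}\dashrightarrow$ has all three terms in the extension-closed subcategory $\mathcal{C}'$ (indeed $N,D_{N}\in\mathcal{C}'$), so it is an $\mathbb{E}_{\mathcal{C}'}$-triangle with $D_{N}$ projective-injective; this yields ${\rm dom.dim}\,\mathcal{C}'\geq1$ at once. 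For ${\rm pd}\,\mathcal{C}'\leq1$ I would apply ${\rm Hom}_{\mathcal{C}}(T,-)$ with $T\in\mathcal{C}'$ to these triangles: since $T\in{^{\bot}\mathcal{M}}$ while $D_{N},M_{N}\in\mathcal{M}$, the long exact sequence forces $\mathbb{E}^{\geq2}(\mathcal{C}',\mathcal{N})=0$. Then, for a syzygy conflation $\Omega T\rightarrowtail N_{0}\twoheadrightarrow T\dashrightarrow$ in $\mathcal{C}'$ with $N_{0}\in\mathcal{N}$, a dimension shift against $\mathcal{N}$ (which is presilting) gives $\mathbb{E}^{\geq1}(\Omega T,\mathcal{N})=0$; combined with $\Omega T\in\mathcal{C}'\subseteq\mathcal{N}^{\bot}$ this gives $\Omega T\in{^{\bot}\mathcal{N}}\cap\mathcal{N}^{\bot}=\mathcal{N}$ by Theorem~\ref{AT22thm5.7}, i.e.\ $\Omega T$ is projective in $\mathcal{C}'$. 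Hence $\mathcal{C}'$ is $0$-Auslander.

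For ``$\Rightarrow$'', assume $\mathcal{M}\leq\mathcal{N}$ and $\mathcal{C}'$ is $0$-Auslander, and take $\mathcal{D}:=\mathcal{D}_{0}=\mathcal{N}\cap\mathcal{M}$, the projective-injectives of $\mathcal{C}'$. Since ${\rm dom.dim}\,\mathcal{C}'\geq1$, every $N\in\mathcal{N}$ (a projective of $\mathcal{C}'$) fits in an $\mathbb{E}_{\mathcal{C}'}$-triangle $N\stackrel{f}\rightarrowtail D_{N}\twoheadrightarrow M_{N}\dashrightarrow$ with $D_{N}\in\mathcal{D}$; because every object of $\mathcal{D}$ is injective in $\mathcal{C}'$, applying ${\rm Hom}_{\mathcal{C}'}(-,D')$ shows $f$ is a left $\mathcal{D}$-approximation, and $f$ is an $\mathfrak{s}$-inflation, so $\mathcal{D}$ is good covariantly finite in $\mathcal{N}$. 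Moreover ${\rm id}\,\mathcal{C}'\leq1$ forces $M_{N}$ to be injective, so $M_{N}\in\mathcal{M}$ and $\mu^{L}(\mathcal{N};\mathcal{D})=\mathsf{add}(\mathcal{D}\cup\{M_{N}\mid N\in\mathcal{N}\})\subseteq\mathcal{M}$. Conversely, for $M'\in\mathcal{M}$ I would use ${\rm codom.dim}\,\mathcal{C}'\geq1$ to get $\Omega M'\rightarrowtail D'\twoheadrightarrow M'\dashrightarrow$ with $D'\in\mathcal{D}$; then ${\rm pd}\,\mathcal{C}'\leq1$ makes $\Omega M'$ projective, hence $\Omega M'\in\mathcal{N}$, and the left-hand map is a left $\mathcal{D}$-approximation as before. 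Comparing with the mutation triangle of $\Omega M'$ via the well-definedness of mutation (the remark after Definition~\ref{silt mutation in extri}), one gets $M'\oplus D''\cong M_{\Omega M'}\oplus D'$ for some $D''\in\mathcal{D}$, so $M'$ is a summand of $M_{\Omega M'}\oplus D'\in\mu^{L}(\mathcal{N};\mathcal{D})$ and thus $M'\in\mu^{L}(\mathcal{N};\mathcal{D})$. Therefore $\mathcal{M}=\mu^{L}(\mathcal{N};\mathcal{D})$.

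I expect the main obstacle to be the hereditariness of $\mathcal{C}'$ in the ``$\Leftarrow$'' direction: unlike in Lemma~\ref{C'is 0-Aus}, the ambient $\mathcal{C}$ is no longer assumed hereditary, so ${\rm pd}\,\mathcal{C}'\leq1$ must be squeezed out of the mutation $\mathbb{E}$-triangles themselves, by transporting the vanishing of higher extensions from $\mathcal{M}$ to $\mathcal{N}$ and then to syzygies --- exactly the phenomenon flagged in Remark~\ref{rem on mutaion vs 0-Aus}(2). A secondary point to handle carefully is the bookkeeping in ``$\Rightarrow$'': checking that the projective-injectives of $\mathcal{C}'$ are exactly $\mathcal{N}\cap\mathcal{M}$, and that the conflations produced by ${\rm dom.dim}$ and ${\rm codom.dim}$ are genuine left/right mutation conflations, so that Lemma~\ref{basic lem on mutation} and the well-definedness of $\mu^{L}$ can legitimately be invoked.
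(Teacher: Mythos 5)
Your proposal is correct and follows essentially the same route as the paper: in the forward direction you take $\mathcal{D}=\mathcal{M}\cap\mathcal{N}$ and convert the dominant/codominant-dimension conflations of the $0$-Auslander category $\mathcal{C}'$ into mutation conflations exactly as the paper does (using well-definedness of $\mu^{L}$ for the inclusion $\mathcal{M}\subseteq\mu^{L}(\mathcal{N};\mathcal{D})$), and in the converse direction you use Corollary~\ref{main cor} together with the mutation triangles to get ${\rm dom.dim}\,\mathcal{C}'\geq1$ and ${\rm pd}\,\mathcal{C}'\leq1$. The only (harmless) deviation is in the hereditariness step: the paper cites $\mathbb{E}^{\geq2}(\mathcal{M},\mathcal{N})=0$ from Adachi--Tsukamoto and dimension-shifts along $\mathcal{M}^{\vee}={^{\bot}\mathcal{M}}$ and $\mathcal{N}^{\wedge}=\mathcal{N}^{\bot}$, whereas you extract $\mathbb{E}^{\geq2}(\mathcal{C}',\mathcal{N})=0$ directly from the mutation conflations and then show each syzygy $\Omega T$ lies in ${^{\bot}\mathcal{N}}\cap\mathcal{N}^{\bot}=\mathcal{N}$ --- an equivalent dimension-shift computation.
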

\begin{proof}
	Assume $\mathcal{M}\leq \mathcal{N}$ and $\mathcal{C}'$ is 0-Auslander. By Corollary \ref{main cor}, $\mathcal{C}'$ has enough projectives $\mathcal{N}$ and enough injectives $\mathcal{M}$. Let $\mathcal{D}:=\mathcal{M}\cap \mathcal{N}$. Then by definition for each $N\in \mathcal{N}$ there is an $\mathbb{E}$-triangle $N\stackrel{f}\rightarrowtail D\twoheadrightarrow M_{N}\dashrightarrow $ with $D\in \mathcal{D},M_{N}\in \mathcal{M}$. Since $\mathcal{M}$ is silting, $f$ is a left $\mathcal{D}$-approximation. Hence $\mathcal{D}$ is good covariantly finite in $\mathcal{N}$ and $\mu^{L}(\mathcal{N};\mathcal{D})\subseteq \mathcal{M}$. For each $M\in \mathcal{M}$, there is an $\mathbb{E}$-triangle $N_{M}\stackrel{f'}\rightarrowtail D'\twoheadrightarrow M\dashrightarrow $ with $D'\in \mathcal{D},N_{M}\in \mathcal{N}$ and $f'$ a left $\mathcal{D}$-approximation. Thus $\mathcal{M}=\mu^{L}(\mathcal{N};\mathcal{D})$.
	
	On the other hand, assume $\mathcal{M}=\mu^{L}(\mathcal{N};\mathcal{D})$ for some $\mathcal{D}$. Then $\mathcal{M}\leq \mathcal{N}$ by Lemma \ref{basic lem on mutation}. We claim that $\mathcal{C}'$ is hereditary. By \cite[Lemma 4.9(2)]{AT23}, we have $\mathbb{E}^{\geq 2}(\mathcal{M},\mathcal{N})=0$. For each $N_{1}\in \mathcal{N}_{1}^{\wedge}$, there is an $\mathbb{E}$-triangle $N'\rightarrowtail N\twoheadrightarrow N_{1}\dashrightarrow $ with $N',N\in \mathcal{N}$. Applying ${\rm Hom}_{\mathcal{C}}(\mathcal{M},-)$, we obtain $\mathbb{E}^{\geq 2}(\mathcal{M},\mathcal{N}_{1}^{\wedge})=0$. By similar arguments, $\mathbb{E}^{\geq 2}(\mathcal{M},\mathcal{N}^{\wedge})=0$. Dually, we obtain $\mathbb{E}^{\geq 2}(\mathcal{M}^{\vee},\mathcal{N}^{\wedge})=0$. Since $\mathcal{M}^{\vee}={^{\bot}\mathcal{M}}$ and $\mathcal{N}^{\wedge}=\mathcal{N}^{\bot}$ by Theorem \ref{AT22thm5.7}, we have $\mathbb{E}^{\geq 2}(\mathcal{C}',\mathcal{C}')=0$. By Corollary \ref{main cor} (2), $\mathbb{E}_{\mathcal{C}'}^{\geq 2}=0$. Thus $\mathcal{C}'$ is hereditary. For each $N\in \mathcal{N}$, there is an $\mathbb{E}$-triangle $N\stackrel{f}\rightarrowtail D\twoheadrightarrow M_{N}\dashrightarrow $ with $D\in \mathcal{D},M_{N}\in \mathcal{M}$ and $f$ a left $\mathcal{D}$-approximation. Since $\mathcal{D}\subseteq \mathcal{M}\cap \mathcal{N}$, by Corollary \ref{main cor} (1), this is an $\mathbb{E}_{\mathcal{C}'}$-triangle with $D$ projective-injective. Thus ${\rm dom.dim}\,\mathcal{C}\geq 1$ and hence $\mathcal{C}'$ is 0-Auslander.
\end{proof}

Theorem \ref{main thm: new 0-Aus} provides a new construction of 0-Auslander extriangulated categories, that is, 0-Auslander extension closed subcategory $\mathcal{C}'$ arising from silting mutation in a larger extriangulated category $\mathcal{C}$. Note that any 0-Auslander extriangulated category $\mathcal{C}$ arises in this way: its projectives $\mathcal{P}$ and injectives $\mathcal{I}$ are silting subcategories. We have $\mathcal{I}=\mu^{L}(\mathcal{P};\mathcal{P}\cap \mathcal{I})$ and $\mathcal{C}={^{\bot}\mathcal{I}}\cap \mathcal{P}^{\bot}$. 

Theorem \ref{main thm: new 0-Aus} can be regarded as an alternative way to define silting mutation.

\begin{exam}\label{examples} Some well-known examples of 0-Auslander extriangulated categories can also arise from silting mutation. 
	
	(1) Let $\mathcal{T}$ be a triangulated category and $\mathcal{S}$ be a silting subcategory. The triangle $S\rightarrow 0\rightarrow S[1]\rightarrow S[1]$ connects silting subcategories $\mathcal{S}$ and $\mathcal{S}[1]=\mu^{L}(\mathcal{S};0)$ for each $S\in \mathcal{S}$. Thus the two-term category $\mathcal{S}\ast \mathcal{S}[1]={^{\bot}\mathcal{S}[1]}\cap \mathcal{S}^{\bot}$ is 0-Auslander.
	
	(2) The 0-Auslander subcategory $\mathcal{C}'$ in Lemma \ref{C'is 0-Aus} also arises from silting mutation.
	
	(3) Assume $\mathcal{C}$ has enough injectives $\mathcal{I}$. Let $\mathcal{R}=\mathsf{add}\mathcal{R}$ be a rigid subcategory of $\mathcal{C}$ containing $\mathcal{I}$. Consider the relative substructure $\mathbb{E}_{\mathcal{R}}(C,A)=\{\delta\in \mathbb{E}(C,A)\,|\,\forall f:R\rightarrow C\text{ with }R\in \mathcal{R},f^{\ast}\delta=0\}$. Then $(\mathcal{C},\mathbb{E}_{\mathcal{R}},\mathfrak{s}|_{\mathbb{E}_{\mathcal{R}}})$ is extriangulated and all objects in $\mathcal{R}$ are projective (see \cite{HLN}, \cite{GNP23}). Under this substructure, consider $\mathcal{C}'=\mathsf{thick}\mathcal{R}=\mathcal{R}^{\wedge}$ (see \cite[Proposition 4.10]{AT22}), clearly positive extensions in $\mathcal{C}'$ coincide with those in $\mathcal{C}$. Then $\mathcal{R}\in \mathsf{silt}\,\mathcal{C}'$. For each $N\in \mathcal{R}$, there is an $\mathbb{E}$-triangle $N\rightarrowtail I\twoheadrightarrow M\dashrightarrow$ with $I\in \mathcal{I}$. It is an $\mathbb{E}_{\mathcal{R}}$-triangle with $M\in \mathcal{C}'$. Then in $\mathcal{C}'$, $\mu^{L}(\mathcal{R};\mathcal{I})$ exists. By Theorem \ref{main thm: new 0-Aus}, under the substructure, $\mathcal{D}:={^{\bot}\mu^{L}(\mathcal{R};\mathcal{I})}\cap \mathcal{R}^{\bot}\cap \mathcal{C}'={^{\bot_{\mathcal{C}'}}\mu^{L}(\mathcal{R};\mathcal{I})}\cap \mathcal{R}^{\bot_{\mathcal{C}'}}$ is 0-Auslander. We claim $\mathcal{D}={\rm Cone}_{(\mathcal{C},\mathbb{E},\mathfrak{s})}(\mathcal{R},\mathcal{R})$. Clearly the inclusion $\subseteq$ holds. For the other direction, it suffices to show for each $X\in {\rm Cone}_{(\mathcal{C},\mathbb{E},\mathfrak{s})}(\mathcal{R},\mathcal{R})$, $\mathbb{E}_{\mathcal{R}}^{\geq 1}(X,M)=0$. Since $\mathbb{E}_{\mathcal{R}}^{\geq 2}(X,-)=0$ and $I$ is injective under the substructure, the result follows. In particular, the two-term subcategory $\mathcal{R}\ast \mathcal{R}[1]$ of a triangulated category is a 0-Auslander extriangulated category for a rigid subcategory $\mathcal{R}.$
	
	This example is mentioned in 3.3.10 of \cite{GNP23} and it unifies many well-known examples (see \cite{GNP23}).
\end{exam}

\begin{lem}\label{relation between two mutations}
	Let $\mathcal{N}\in \mathsf{silt}\,\mathcal{C}$. If $\mathcal{D}_{1}\subseteq \mathcal{D}_{2}\subseteq \mathcal{N}$ such that $\mathcal{D}_{1},\mathcal{D}_{2}$ are good covariantly finite subcategories of $\mathcal{N}$. Then $\mathcal{N}\geq \mu^{L}(\mathcal{N};\mathcal{D}_{2})\geq \mu^{L}(\mathcal{N};\mathcal{D}_{1})$. Moreover we have $\mathsf{silt}\,[\mu^{L}(\mathcal{N};\mathcal{D}_{2}),\mathcal{N}]=\mathsf{silt}_{\mathcal{D}_{2}}[\mu^{L}(\mathcal{N};\mathcal{D}_{1}),\mathcal{N}]:=\{\mathcal{T}\in \mathsf{silt}\,\mathcal{C}\,|\,\mu^{L}(\mathcal{N};\mathcal{D}_{1})\leq \mathcal{T} \leq \mathcal{N},~\mathcal{D}_{2}\subseteq \mathcal{T}\}$.
\end{lem}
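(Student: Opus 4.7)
The plan is to handle the three-way chain and the interval equality separately, using Lemma \ref{basic lem on mutation} together with the long exact sequences from higher extensions.

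For the chain $\mathcal{N}\geq \mu^{L}(\mathcal{N};\mathcal{D}_{2})\geq \mu^{L}(\mathcal{N};\mathcal{D}_{1})$, the inequalities $\mathcal{N}\geq \mu^{L}(\mathcal{N};\mathcal{D}_{i})$ are immediate from Lemma \ref{basic lem on mutation}(1). The nontrivial comparison is $\mu^{L}(\mathcal{N};\mathcal{D}_{1})\leq \mu^{L}(\mathcal{N};\mathcal{D}_{2})$. I would check this on generators: for $N,N'\in \mathcal{N}$ take defining $\mathbb{E}$-triangles $N\rightarrowtail D_{1}\twoheadrightarrow M_{N}^{(1)}\dashrightarrow$ (left $\mathcal{D}_{1}$-approximation) and $N'\rightarrowtail D_{2}'\twoheadrightarrow M_{N'}^{(2)}\dashrightarrow$ (left $\mathcal{D}_{2}$-approximation), then apply $\mathrm{Hom}_{\mathcal{C}}(-,M_{N}^{(1)})$ to the second triangle. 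Since $\mathbb{E}^{\geq 1}(\mathcal{N},\mu^{L}(\mathcal{N};\mathcal{D}_{1}))=0$, the long exact sequence immediately gives $\mathbb{E}^{\geq 2}(M_{N'}^{(2)},M_{N}^{(1)})=0$, and for $\mathbb{E}^{1}$ I get a cokernel of $\mathrm{Hom}(D_{2}',M_{N}^{(1)})\to \mathrm{Hom}(N',M_{N}^{(1)})$. To kill this, I would apply $\mathrm{Hom}(N',-)$ to the first triangle: because $\mathbb{E}(N',N)=0$, any morphism $N'\to M_{N}^{(1)}$ lifts to $N'\to D_{1}$; but $D_{1}\in \mathcal{D}_{1}\subseteq \mathcal{D}_{2}$, so this lift factors through the left $\mathcal{D}_{2}$-approximation $N'\to D_{2}'$, which proves surjectivity. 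The mixed cases (one factor in $\mathcal{D}_{i}$) are easier via the same lemma plus the silting property of $\mathcal{N}$.

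For the equality $\mathsf{silt}\,[\mu^{L}(\mathcal{N};\mathcal{D}_{2}),\mathcal{N}]=\mathsf{silt}_{\mathcal{D}_{2}}[\mu^{L}(\mathcal{N};\mathcal{D}_{1}),\mathcal{N}]$, the inclusion $\subseteq$ is formal: the chain already established implies any $\mathcal{T}$ in the left-hand interval lies in the right-hand interval, and to see $\mathcal{D}_{2}\subseteq \mathcal{T}$ I would use Theorem \ref{AT22thm5.7} (so $\mathcal{T}={^{\bot}\mathcal{T}}\cap \mathcal{T}^{\bot}$) together with $\mathcal{D}_{2}\subseteq \mathcal{N}$ on the left side and $\mathcal{D}_{2}\subseteq \mu^{L}(\mathcal{N};\mathcal{D}_{2})\leq \mathcal{T}$ on the right side.

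For the reverse inclusion, given $\mathcal{T}$ with $\mu^{L}(\mathcal{N};\mathcal{D}_{1})\leq \mathcal{T}\leq \mathcal{N}$ and $\mathcal{D}_{2}\subseteq \mathcal{T}$, I need $\mathbb{E}^{\geq 1}(\mathcal{T},\mu^{L}(\mathcal{N};\mathcal{D}_{2}))=0$. Pick $T\in \mathcal{T}$ and a generator $M_{N}^{(2)}$ of $\mu^{L}(\mathcal{N};\mathcal{D}_{2})$ with defining triangle $N\rightarrowtail D_{2}\twoheadrightarrow M_{N}^{(2)}\dashrightarrow$; applying $\mathrm{Hom}_{\mathcal{C}}(T,-)$, the outer terms $\mathbb{E}^{k}(T,D_{2})$ vanish because $D_{2}\in \mathcal{D}_{2}\subseteq \mathcal{T}$ is presilting against $\mathcal{T}$, and $\mathbb{E}^{k+1}(T,N)$ vanishes because $\mathcal{T}\leq \mathcal{N}$. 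This squeezes $\mathbb{E}^{\geq 1}(T,M_{N}^{(2)})=0$, concluding the proof.

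The main obstacle is the $k=0$ step in establishing $\mu^{L}(\mathcal{N};\mathcal{D}_{1})\leq \mu^{L}(\mathcal{N};\mathcal{D}_{2})$, where I must verify a factorization of $N'\to M_{N}^{(1)}$ through the left $\mathcal{D}_{2}$-approximation; the trick is the two-step lift, first up to $D_{1}$ using the triangle defining $M_{N}^{(1)}$ and then across to $D_{2}'$ using $\mathcal{D}_{1}\subseteq \mathcal{D}_{2}$. Everything else is bookkeeping with the long exact sequences of higher extensions in \cite{GNP21} and the silting relations recorded in Lemma \ref{basic lem on mutation}.
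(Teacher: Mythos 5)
Your treatment of the chain $\mathcal{N}\geq \mu^{L}(\mathcal{N};\mathcal{D}_{2})\geq \mu^{L}(\mathcal{N};\mathcal{D}_{1})$ and of the inclusion $\subseteq$ of the interval equality is correct and essentially the paper's own argument: the two-step lift (first lifting $N'\to M_{N}^{(1)}$ to $D_{1}$ using $\mathbb{E}(\mathcal{N},\mathcal{N})=0$, then factoring through the left $\mathcal{D}_{2}$-approximation because $\mathcal{D}_{1}\subseteq \mathcal{D}_{2}$) is exactly the paper's trick, up to relabelling which object carries which approximation triangle, and your derivation of $\mathcal{D}_{2}\subseteq {^{\bot}\mathcal{T}}\cap \mathcal{T}^{\bot}=\mathcal{T}$ is a slightly cleaner version of the paper's.

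The reverse inclusion, however, has a genuine gap. To squeeze $\mathbb{E}^{\geq 1}(T,M_{N}^{(2)})=0$ out of $N\rightarrowtail D_{2}\twoheadrightarrow M_{N}^{(2)}\dashrightarrow$ you need $\mathbb{E}^{\geq 2}(T,N)=0$, and you justify this ``because $\mathcal{T}\leq \mathcal{N}$''. But by the paper's convention $\mathcal{T}\leq \mathcal{N}$ means $\mathbb{E}^{\geq 1}(\mathcal{N},\mathcal{T})=0$, i.e.\ vanishing in the opposite direction; it does not give $\mathbb{E}^{\geq 2}(\mathcal{T},\mathcal{N})=0$ (in a triangulated category $\mathcal{T}=\mathcal{N}[2]$ satisfies $\mathcal{T}\leq \mathcal{N}$ while $\mathbb{E}^{2}(\mathcal{T},\mathcal{N})\neq 0$). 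Relatedly, your argument never uses the hypothesis $\mu^{L}(\mathcal{N};\mathcal{D}_{1})\leq \mathcal{T}$, which is essential: taking $\mathcal{D}_{2}=0$ in a triangulated category (so $\mu^{L}(\mathcal{N};0)=\mathcal{N}[1]$ as in Example \ref{examples}(1)), the subcategory $\mathcal{T}=\mathcal{N}[2]$ satisfies $\mathcal{D}_{2}\subseteq \mathcal{T}\leq \mathcal{N}$ but $\mathcal{T}\not\geq \mathcal{N}[1]$, so $\mathcal{D}_{2}\subseteq \mathcal{T}\leq \mathcal{N}$ alone cannot suffice. The vanishing $\mathbb{E}^{\geq 2}(\mathcal{T},\mathcal{N})=0$ is in fact true in your setting, but only via the lower bound: for instance, $\mathcal{T},\mathcal{N}\subseteq {^{\bot}\mu^{L}(\mathcal{N};\mathcal{D}_{1})}\cap \mathcal{N}^{\bot}$, which is 0-Auslander (hence has $\mathbb{E}^{\geq 2}=0$) by Theorem \ref{main thm: new 0-Aus}, and its higher extensions agree with those of $\mathcal{C}$ by Corollary \ref{main cor}(2). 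The paper instead avoids the problem with an (ET4) diagram combining the $\mathcal{D}_{1}$- and $\mathcal{D}_{2}$-approximation triangles of $N$: after observing that $D_{1}\rightarrowtail M\twoheadrightarrow N_{2}\dashrightarrow$ splits (as $\mathbb{E}(N_{2},D_{1})=0$), one reads $\mathbb{E}^{\geq 1}(\mathcal{T},N_{2})=0$ off the conflation $D_{2}\rightarrowtail D_{1}\oplus N_{2}\twoheadrightarrow N_{1}\dashrightarrow$, using $D_{2}\in \mathcal{T}$ and $\mathbb{E}^{\geq 1}(\mathcal{T},N_{1})=0$; the latter is precisely where $\mu^{L}(\mathcal{N};\mathcal{D}_{1})\leq \mathcal{T}$ enters. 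Either repair closes the gap; as written, the step fails.
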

\begin{proof}
	For any $N,N'\in \mathcal{N}$, take $\mathbb{E}$-triangles $N\stackrel{f}\rightarrowtail D_{2}\stackrel{a}\twoheadrightarrow M\stackrel{\delta_{2}}\dashrightarrow $ and $N'\stackrel{g}\rightarrowtail D_{1}\stackrel{b}\twoheadrightarrow M'\stackrel{\delta_{1}}\dashrightarrow $ where $D_{1}\in \mathcal{D}_{1}, D_{2}\in \mathcal{D}_{2}$, $f$ is a left $\mathcal{D}_{2}$-approximation and $g$ is a left $\mathcal{D}_{1}$-approximation. Since $\mathbb{E}^{\geq 1}(\mathcal{D}_{2},\mathcal{D}_{1})=0$ and $\mathbb{E}^{\geq 1}(\mathcal{D}_{2},M')=0$, it suffices to show $\mathbb{E}^{\geq 1}(M,\mathcal{D}_{1})=0$ and $\mathbb{E}^{\geq 1}(M,M')=0$. Since $f$ is an approximation and $\mathcal{D}_{1}\subseteq \mathcal{D}_{2}$, by applying ${\rm Hom}_{\mathcal{C}}(-,\mathcal{D}_{1})$ to the first $\mathbb{E}$-triangle, we obtain $\mathbb{E}^{\geq 1}(M,\mathcal{D}_{1})=0$. For any morphism $h:N\rightarrow M'$, consider the following commutative diagram
	\[\begin{tikzcd}
		& N \arrow[r,tail,"f"] \arrow[dr,"h",near start] \arrow[d,dashed,"c",swap] & D_{2} \arrow[r,two heads,"a"] \arrow[dl,dashed,"d",near start] & M \arrow[r,dashed] & {} \\
		N' \arrow[r,tail,"g",swap] & D_{1} \arrow[r,two heads,"b",swap] & M' \arrow[r,dashed] & {} & .
	\end{tikzcd}\]
	Since $\mathcal{N}$ is silting, there is $c:N\rightarrow D_{1}$ such that $h=bc$. Since $\mathcal{D}_{1}\subseteq \mathcal{D}_{2}$ and $f$ is an approximation, there is $d:D_{2}\rightarrow D_{1}$ such that $c=df$. Thus $h$ factors through $f$. Applying ${\rm Hom}_{\mathcal{C}}(-,M')$ to the first row, we obtain $\mathbb{E}^{\geq 1}(M,M')=0$.
	
	For any silting $\mathcal{T}$ such that $\mu^{L}(\mathcal{N};\mathcal{D}_{2})\leq \mathcal{T}\leq \mathcal{N}$, as in Lemma \ref{interval=summand completion}, we have $\mathcal{D}_{2}\subseteq {^{\bot}\mathcal{N}}\cap \mu^{L}(\mathcal{N};\mathcal{D}_{2})^{\bot} \subseteq {^{\bot}\mathcal{T}}\cap \mathcal{T}^{\bot}=\mathcal{T}$. For the other inclusion, let $\mathcal{T}\in \mathsf{silt}_{\mathcal{D}_{2}}[\mu^{L}(\mathcal{N};\mathcal{D}_{1}),\mathcal{N}]$. For each $N\in \mathcal{N}$, there is a commutative diagram
	\[\begin{tikzcd}
		N \arrow[r,tail,"f"] \arrow[d,tail,"g",swap] & D_{2} \arrow[r,two heads] \arrow[d,tail] & N_{2} \arrow[r,dashed] \arrow[d,equal] & {} \\
		D_{1} \arrow[d,two heads] \arrow[r,tail] & M \arrow[r,two heads] \arrow[d,two heads] & N_{2} \arrow[r,dashed] & {} \\
		N_{1} \arrow[d,dashed] \arrow[r,equal] & N_{1} \arrow[d,dashed] & & \\
		{} & {} & &
	\end{tikzcd}\]
    with $D_{1}\in \mathcal{D}_{1}$ (resp. $D_{2}\in \mathcal{D}_{2}$) and $g$ (resp. $f$) a left $\mathcal{D}_{1}$ (resp. $\mathcal{D}_{2}$)-approximation. Then $\mathbb{E}(N_{2},D_{1})=0$, hence $M\cong D_{1}\oplus N_{2}$. Applying ${\rm Hom}_{\mathcal{C}}(\mathcal{T},-)$ to the middle column, we obtain $\mathbb{E}^{\geq 1}(\mathcal{T},N_{2})=0$. Thus $\mathcal{T}\geq \mu^{L}(\mathcal{N};\mathcal{D}_{2})$.
\end{proof}

\begin{cor}
	Let $\mathcal{N}\in \mathsf{silt}\,\mathcal{C}$. If there is a sequence $\mathcal{N}=\mathcal{N}_{0}\geq \mathcal{N}_{1}\geq \cdots \geq \mathcal{N}_{r},r\geq 2$ such that $\mathcal{N}_{i}=\mu^{L}(\mathcal{N}_{i-1};\mathcal{D}),1\leq i\leq r$ for a subcategory $\mathcal{D}=\mathsf{add}\mathcal{D}\subsetneq \mathcal{N}$. Then there is no subcategory $\mathcal{D}'$ such that $\mathcal{N}_{r}\geq \mu^{L}(\mathcal{N};\mathcal{D}')$.
\end{cor}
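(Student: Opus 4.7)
The plan is to argue by contradiction and force $\mathcal{N}_{r}$ to lie in the interval $[\mathcal{N}_{1},\mathcal{N}]$, which will conflict with the strict descent along the given chain. So suppose $\mu^{L}(\mathcal{N};\mathcal{D}')\leq \mathcal{N}_{r}$ for some (necessarily good covariantly finite) subcategory $\mathcal{D}'$ of $\mathcal{N}$.

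First I would verify that the chain is genuinely strict, i.e.\ $\mathcal{N}_{i-1}\neq \mathcal{N}_{i}$ for $1\leq i\leq r$. By Lemma \ref{basic lem on mutation}(2), equality would force $\mathcal{N}_{i-1}=\mathsf{add}\mathcal{D}=\mathcal{D}$, and each $\mathcal{N}_{i-1}$ is silting by Lemma \ref{basic lem on mutation}(3); hence it suffices to rule out that $\mathcal{D}$ itself is silting. But if it were, the inclusion $\mathcal{D}\subseteq \mathcal{N}$ together with self-orthogonality would give $\mathcal{N}\subseteq {^{\bot}\mathcal{D}}\cap \mathcal{D}^{\bot}=\mathcal{D}$ by Theorem \ref{AT22thm5.7}, contradicting $\mathcal{D}\subsetneq \mathcal{N}$. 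In particular $\mathcal{N}_{r}\neq \mathcal{N}_{1}$ since $r\geq 2$.

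Next I would compare $\mathcal{D}$ and $\mathcal{D}'$. From $\mu^{L}(\mathcal{N};\mathcal{D}')\leq \mathcal{N}_{r}\leq \mathcal{N}_{1}=\mu^{L}(\mathcal{N};\mathcal{D})$ I deduce $\mathcal{D}'\subseteq \mathcal{D}$ via the following observation: whenever $\mathcal{T}_{1}\leq \mathcal{T}_{2}\leq \mathcal{N}$ are silting, any $X\in \mathcal{T}_{1}\cap \mathcal{N}$ lies in ${^{\bot}\mathcal{T}_{2}}\cap \mathcal{T}_{2}^{\bot}=\mathcal{T}_{2}$ by Theorem \ref{AT22thm5.7}, so $\mathcal{T}_{1}\cap \mathcal{N}\subseteq \mathcal{T}_{2}\cap \mathcal{N}$. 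Applying this to $\mathcal{T}_{1}=\mu^{L}(\mathcal{N};\mathcal{D}')$ and $\mathcal{T}_{2}=\mu^{L}(\mathcal{N};\mathcal{D})$, and then invoking Lemma \ref{basic lem on mutation}(2), yields $\mathcal{D}'\subseteq \mathcal{D}$.

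With $\mathcal{D}'\subseteq \mathcal{D}$ in hand, Lemma \ref{relation between two mutations} applies and identifies
\[\mathsf{silt}\,[\mathcal{N}_{1},\mathcal{N}]=\{\mathcal{T}\in \mathsf{silt}\,\mathcal{C}\,|\,\mu^{L}(\mathcal{N};\mathcal{D}')\leq \mathcal{T}\leq \mathcal{N},\,\mathcal{D}\subseteq \mathcal{T}\}.\]
Since $\mathcal{D}\subseteq \mathcal{N}_{i}$ for every $i$ by the construction of $\mu^{L}$, the silting subcategory $\mathcal{N}_{r}$ satisfies all three conditions on the right, whence $\mathcal{N}_{1}\leq \mathcal{N}_{r}$; combined with $\mathcal{N}_{r}\leq \mathcal{N}_{1}$ and $\mathcal{N}_{r}\neq \mathcal{N}_{1}$ this is the desired contradiction. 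The only part of the argument that needs any real care is the strict-descent step at the start; past that, the conclusion is essentially a direct bookkeeping application of Lemma \ref{relation between two mutations}.
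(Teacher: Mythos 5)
Your proof is correct and follows essentially the same route as the paper's: assuming $\mu^{L}(\mathcal{N};\mathcal{D}')\leq\mathcal{N}_{r}$, deduce $\mathcal{D}'\subseteq\mathcal{D}$ via Theorem \ref{AT22thm5.7} and Lemma \ref{basic lem on mutation}(2), place $\mathcal{N}_{r}$ in $\mathsf{silt}\,[\mathcal{N}_{1},\mathcal{N}]$ by Lemma \ref{relation between two mutations}, and contradict $\mathcal{D}\subsetneq\mathcal{N}$. The only cosmetic difference is that you establish strictness of the chain up front, whereas the paper extracts the same contradiction ($\mathcal{D}$ silting, hence $\mathcal{D}=\mathcal{N}$) at the end after collapsing $\mathcal{N}_{1}\geq\cdots\geq\mathcal{N}_{r}\geq\mathcal{N}_{1}$.
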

\begin{proof}
	Suppose there is a $\mathcal{D}'\subseteq \mathcal{N}$ such that $\mathcal{N}_{r}\geq \mu^{L}(\mathcal{N};\mathcal{D}')$. For any silting $\mathcal{T}$ such that $\mu^{L}(\mathcal{N};\mathcal{D}')\leq \mathcal{T}\leq \mathcal{N}$, as usual, we have $\mathcal{D}'\subseteq \mathcal{T}$. Then $\mathcal{D}'\subseteq \mathcal{N}_{i}$ for all $i$ and hence $\mathcal{D}'\subseteq \mathcal{D}$ by Lemma \ref{basic lem on mutation}(2). By Lemma \ref{relation between two mutations}, $\mathcal{N}_{i}$ lies in the interval $\mathsf{silt}[\mathcal{N}_{1},\mathcal{N}]$. Thus $\mathcal{N}_{1}\geq \cdots \geq \mathcal{N}_{r}\geq \mathcal{N}_{1}$. Then $\mathcal{D}=\mathcal{N}$ by Lemma \ref{basic lem on mutation}(2), a contradiction.
\end{proof}

\begin{lem}\label{mutation invariant}
	Assume $\mathcal{C}$ has enough projectives or injectives. Let $\mathcal{U}=\mathsf{add}\,\mathcal{U}$ be a subcategory whose objects are projective-injective, $\mathcal{M},\mathcal{N}\in \mathsf{silt}\,\mathcal{C}$ such that $\mathcal{M}\leq \mathcal{N}$ and $\mathcal{D}:=\mathcal{M}\cap \mathcal{N}$. Then $\mathcal{M}=\mu^{L}(\mathcal{N};\mathcal{D})$ in $\mathcal{C}$ if and only if $\mathcal{M}=\mu^{L}(\mathcal{N};\mathcal{D})$ in $\widetilde{\mathcal{C}}:=\mathcal{C}/[\mathcal{U}]$. In particular, if $\mathcal{C}$ is Krull-Schmidt, $\mathcal{M}$ is an irreducible left mutation (see the definition below) of $\mathcal{N}$ in $\mathcal{C}$ if and only if it is in $\widetilde{\mathcal{C}}$.
\end{lem}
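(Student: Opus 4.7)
The plan is to reduce the biconditional to a single comparison of left $\mathcal{D}$-approximations, after showing that the relevant $\mathbb{E}$-triangles and the objects involved transfer freely between $\mathcal{C}$ and $\widetilde{\mathcal{C}}$. First I would observe that $\mathcal{U}\subseteq\mathcal{D}$: every $U\in\mathcal{U}$ satisfies $\mathbb{E}^{\geq 1}(U,-)=0=\mathbb{E}^{\geq 1}(-,U)$, so by Theorem~\ref{AT22thm5.7} we have $U\in\mathcal{M}={}^{\bot}\mathcal{M}\cap\mathcal{M}^{\bot}$ and $U\in\mathcal{N}={}^{\bot}\mathcal{N}\cap\mathcal{N}^{\bot}$, whence $U\in\mathcal{M}\cap\mathcal{N}=\mathcal{D}$. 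As recorded in the proof of Lemma~\ref{silting bijection}, $\widetilde{\mathbb{E}}$ agrees with $\mathbb{E}$ on objects of $\mathcal{C}$ and every $\widetilde{\mathbb{E}}$-triangle in $\widetilde{\mathcal{C}}$ lifts to an $\mathbb{E}$-triangle in $\mathcal{C}$. Hence the candidate defining triangles $N\stackrel{f}{\rightarrowtail}D\twoheadrightarrow M_{N}\dashrightarrow$ used in the two mutations involve the same objects and realize the same extensions; only the approximation property of $f$ versus $\widetilde{f}$ has to be compared.

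The core step is the claim: for any $\mathbb{E}$-triangle $N\stackrel{f}{\rightarrowtail}D\twoheadrightarrow M_{N}\dashrightarrow$ with $N\in\mathcal{N}$ and $D\in\mathcal{D}$, the map $f$ is a left $\mathcal{D}$-approximation in $\mathcal{C}$ if and only if $\widetilde{f}$ is one in $\widetilde{\mathcal{C}}$. The ``only if'' direction is immediate by projecting to the quotient. For the converse, given $g\colon N\to D'$ with $D'\in\mathcal{D}$, the hypothesis produces $h\colon D\to D'$ such that $g-hf=\beta\alpha$ for some $\alpha\colon N\to U$ and $\beta\colon U\to D'$ with $U\in\mathcal{U}$. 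Since $U$ is injective in $\mathcal{C}$ we have $\mathbb{E}(M_{N},U)=0$, and the long exact sequence attached to the $\mathbb{E}$-triangle forces $\alpha$ to factor through $f$; therefore so does $g-hf$, and hence $g$ itself. The same argument shows that $\mathcal{D}$ is good covariantly finite in $\mathcal{N}$ inside $\mathcal{C}$ if and only if it is inside $\widetilde{\mathcal{C}}$, and that the cones $M_{N}$ agree in both settings.

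Combining these, the subcategory $\mathsf{add}(\mathcal{D}\cup\{M_{N}\,|\,N\in\mathcal{N}\})$ is described by the same collection of objects in $\mathcal{C}$ and in $\widetilde{\mathcal{C}}$, so it equals $\mu^{L}(\mathcal{N};\mathcal{D})$ on both sides. Under the bijection $\mathsf{silt}\,\mathcal{C}\leftrightarrow\mathsf{silt}\,\widetilde{\mathcal{C}}$ of Lemma~\ref{silting bijection} (which sends $\mathcal{M}$ to its image, and preserves it as a set of objects because $\mathcal{U}\subseteq\mathcal{M}$), the equality $\mathcal{M}=\mu^{L}(\mathcal{N};\mathcal{D})$ thus transfers back and forth. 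For the ``in particular'' part, when $\mathcal{C}$ is Krull--Schmidt the quotient $\widetilde{\mathcal{C}}$ is again Krull--Schmidt and the above bijection sets up a correspondence between the indecomposable summands of $\mathcal{N}$ (respectively $\mathcal{D}$) lying outside $\mathcal{U}$ and those of the images; hence ``$\mathcal{D}$ equals $\mathcal{N}$ with exactly one indecomposable removed'' is an invariant condition, giving the irreducible version.

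The main obstacle will be the nontrivial direction of the approximation comparison: passage to $\widetilde{\mathcal{C}}$ a priori only delivers factorizations up to $[\mathcal{U}]$, and closing the gap requires lifting a morphism $\alpha\colon N\to U$ through $f$. This is precisely where one exploits that objects of $\mathcal{U}$ are honestly injective in $\mathcal{C}$ (so $\mathbb{E}(M_{N},U)=0$); dually one could use their projectivity if $\mathcal{C}$ has enough projectives. The hypothesis that $\mathcal{U}$ is projective-\emph{and}-injective, rather than merely one of the two, is exactly what the argument needs.
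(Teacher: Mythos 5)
Your proposal is correct and follows essentially the same route as the paper, whose proof is just a terse version of your argument: note $\mathcal{U}\subseteq\mathcal{D}$, use $\mathbb{E}=\widetilde{\mathbb{E}}$ together with Lemma \ref{silting bijection}, and check from the definition that approximation triangles transfer. Your explicit lifting of left $\mathcal{D}$-approximations through the quotient (using $\mathbb{E}(M_N,U)=0$ for $U\in\mathcal{U}$) is exactly the detail the paper leaves implicit in "follow immediately by definition."
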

\begin{proof}
	Since $\mathcal{U}\subseteq \mathcal{D}$ and $\mathbb{E}(X,Y)=\widetilde{\mathbb{E}}(X,Y)$ for any $X,Y$, the results follow immediately by definition and Lemma \ref{silting bijection}.
\end{proof}

We give some simple applications of Theorem \ref{main thm: new 0-Aus} in the rest of this section.

If $\mathcal{C}$ is Krull-Schmidt. Let $\mathcal{N}$ be a silting subcategory and $\mathcal{X}=\mathsf{add}\mathcal{X}$ be a subcategory of $\mathcal{N}$. Define $\mathcal{N}_{\mathcal{X}}:=\mathsf{add}(\mathsf{ind}\mathcal{N}\setminus \mathsf{ind}\mathcal{X})$ and $\mu^{L}_{\mathcal{X}}(\mathcal{N}):=\mu^{L}(\mathcal{N};\mathcal{N}_{\mathcal{X}})$ (if $\mathcal{N}_{\mathcal{X}}$ is a good covariantly finite subcategory of $\mathcal{N}$). If $\mathcal{X}=\mathsf{add}X$, then we denote by $\mathcal{N}_{X}$ and $\mu_{X}^{L}(\mathcal{N})$ instead. If $X$ is indecomposable, we call it an {\em irreducible} left mutation. The following is an immediate corollary. 

\begin{cor}\cite[Theorem 4.16]{AT23}
	Assume $\mathcal{C}$ is Krull-Schmidt. Let $\mathcal{N}\in \mathsf{slit}\,\mathcal{C}$ and $\mathcal{M}$ is an irreducible left mutation of $\mathcal{N}$. Then $\mathsf{silt}\,[\mathcal{M},\mathcal{N}]=\{\mathcal{M},\mathcal{N}\}$.
\end{cor}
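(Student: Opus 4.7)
The plan is to apply silting interval reduction together with Theorem \ref{main thm: new 0-Aus} to reduce the statement to counting silting objects in a reduced 0-Auslander extriangulated category with exactly one indecomposable projective. Set $\mathcal{D}:=\mathcal{N}_X$, so by hypothesis $\mathcal{M}=\mu^L(\mathcal{N};\mathcal{D})$ is the irreducible left mutation. By Lemma \ref{basic lem on mutation} we have $\mathcal{M}\in\mathsf{silt}\,\mathcal{C}$, $\mathcal{M}\leq\mathcal{N}$, and $\mathcal{M}\cap\mathcal{N}=\mathcal{D}$. Put $\mathcal{C}':={^{\bot}\mathcal{M}}\cap\mathcal{N}^{\bot}$. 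Theorem \ref{main thm: new 0-Aus} then ensures $\mathcal{C}'$ is 0-Auslander, and Corollary \ref{main cor}(2) identifies its projectives, injectives, and projective-injectives as $\mathcal{N}$, $\mathcal{M}$, and $\mathcal{D}$ respectively.

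Next I would chain two poset isomorphisms. Theorem \ref{main thm_1} gives $\mathsf{silt}\,[\mathcal{M},\mathcal{N}]\cong\mathsf{silt}\,\mathcal{C}'$. Since $\mathcal{D}$ consists of projective-injectives of $\mathcal{C}'$, Lemma \ref{silting bijection} applied to the ideal quotient $\widetilde{\mathcal{C}'}:=\mathcal{C}'/[\mathcal{D}]$ yields $\mathsf{silt}\,\mathcal{C}'\cong\mathsf{silt}\,\widetilde{\mathcal{C}'}$. The extriangulated category $\widetilde{\mathcal{C}'}$ is Krull-Schmidt (inherited from $\mathcal{C}$, hence from $\mathcal{C}'$) and is reduced 0-Auslander. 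Crucially, the irreducibility of the mutation means that exactly one indecomposable summand of $\mathcal{N}$, namely $X$, is missing from $\mathcal{D}$, so $\widetilde{\mathcal{C}'}$ has precisely one indecomposable projective $\widetilde{X}$ (up to isomorphism).

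To conclude, I would invoke the closing argument of Theorem \ref{another proof for GNP1.3}: in a Krull-Schmidt reduced 0-Auslander category with only one indecomposable projective $\widetilde{X}$, any basic silting object $T$ admits an $\mathbb{E}_{\widetilde{\mathcal{C}'}}$-triangle $\widetilde{X}\rightarrowtail T^{0}\twoheadrightarrow T^{1}\dashrightarrow$ with $T^{0},T^{1}\in\mathsf{add}T$, and by \cite[Lemma 4.23]{GNP23} we have $\mathsf{add}T^{0}\cap\mathsf{add}T^{1}=0$; since the ambient category has a unique indecomposable projective, $T$ has at most one indecomposable summand, forcing $T^{0}=0$ or $T^{1}=0$. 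Thus $T$ is either the projective $\widetilde{X}$ or the injective $\widetilde{I}_X=\Sigma\widetilde{X}$, and so $|\mathsf{silt}\,\widetilde{\mathcal{C}'}|=2$. Pulling this count back through the two poset isomorphisms yields $\mathsf{silt}\,[\mathcal{M},\mathcal{N}]=\{\mathcal{M},\mathcal{N}\}$. There is no deep obstacle here; the main care is in verifying that all structural ingredients (Krull-Schmidt, reduced 0-Auslander, unique indecomposable projective) transfer correctly to $\widetilde{\mathcal{C}'}$ so that the single-projective argument of Theorem \ref{another proof for GNP1.3} applies verbatim.
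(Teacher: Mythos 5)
Your argument is correct and follows essentially the same route as the paper: pass to $\mathcal{C}'={^{\bot}\mathcal{M}}\cap\mathcal{N}^{\bot}$ via Theorem \ref{main thm: new 0-Aus}, use Theorem \ref{main thm_1} and Lemma \ref{silting bijection} to identify $\mathsf{silt}\,[\mathcal{M},\mathcal{N}]\cong\mathsf{silt}\,\widetilde{\mathcal{C}'}$ with $\widetilde{\mathcal{C}'}=\mathcal{C}'/[\mathcal{M}\cap\mathcal{N}]$ reduced 0-Auslander, and then rerun the one-indecomposable-projective argument from the end of Theorem \ref{another proof for GNP1.3}. No substantive difference from the paper's proof.
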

\begin{proof}
	Let $\mathcal{C}'={^{\bot}\mathcal{M}}\cap \mathcal{N}^{\bot}$ and $\widetilde{\mathcal{C}'}=\mathcal{C}'/[\mathcal{M}\cap \mathcal{N}]$. Then $\mathcal{C}'$ is 0-Auslander and $\widetilde{\mathcal{C}'}$ is reduced 0-Auslander by Theorem \ref{main thm: new 0-Aus}. By Theorem \ref{main thm_1} and Lemma \ref{silting bijection},  $\mathsf{silt}\,[\mathcal{M},\mathcal{N}]\cong \mathsf{silt}\,\mathcal{C}'\cong \mathsf{silt}\,\widetilde{\mathcal{C}'}$. Thus the result follows from the same proof as that of Theorem \ref{another proof for GNP1.3}.
\end{proof}

Assume for the rest of this section that $\mathcal{C}$ is $k$-linear ($k$ is a field), Hom-finite, Krull-Schmidt. The next result is a generalization of \cite[Proposition 3.3]{AIR}, which concerns two-term silting complex.

\begin{cor}
Assume $\mathcal{C}$ admits a silting object $N$ and $X\in \mathsf{add}N$ satisfies that $M:=\mu^{L}_{X}(N)$ exists. Let $\mathcal{T}$ be a presilting subcategory such that $N\geq \mathcal{T}\geq M$ (i.e. $\mathcal{T}\subseteq \mathcal{C}':={^{\bot}M}\cap N^{\bot}$). Then $|\mathcal{T}|\leq |N|$ and $|\mathcal{T}|=|N|$ if and only if $\mathcal{T}$ is silting.
\end{cor}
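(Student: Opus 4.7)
The plan is to use Theorem~\ref{main thm: new 0-Aus} to embed the problem inside a 0-Auslander extriangulated subcategory and then cash it in via the $\tau$-rigid pair dictionary of Section~\ref{section 4}. Since $M = \mu^{L}_{X}(N) = \mu^{L}(N;N_{X})$, Theorem~\ref{main thm: new 0-Aus} says $\mathcal{C}' := {^{\bot}M}\cap N^{\bot}$ is 0-Auslander with projectives $\mathsf{add}N$, injectives $\mathsf{add}M$, and projective-injective part exactly $\mathcal{N}_{X}$ by Lemma~\ref{basic lem on mutation}(2). By Corollary~\ref{main cor}(2), the presilting hypothesis on $\mathcal{T}$ is preserved when regarded inside $\mathcal{C}'$, so the question reduces to bounding $|\mathcal{T}|$ in a 0-Auslander category whose projective ``rank'' is $|N|$.

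To invoke $\tau$-tilting I would pass to the reduced quotient $\widetilde{\mathcal{C}'} := \mathcal{C}'/[\mathcal{N}_{X}]$, which is reduced 0-Auslander, $k$-linear, Hom-finite and Krull-Schmidt, with projective generator $\widetilde{X}$ and auxiliary algebra $A' := \mathrm{End}_{\widetilde{\mathcal{C}'}}(\widetilde{X})$ of size $|A'| = |X|$. The argument in the proof of Lemma~\ref{silting bijection} gives $\widetilde{\mathbb{E}}^{\geq 1} \cong \mathbb{E}^{\geq 1}$, so $\widetilde{\mathcal{T}}$ is presilting in $\widetilde{\mathcal{C}'}$, and Proposition~\ref{presilting-rigid} then attaches to $\widetilde{\mathcal{T}}$ a $\tau$-rigid pair $(\overline{\widetilde{\mathcal{T}}},\overline{\Omega I_{\widetilde{\mathcal{T}}}})$ over $A'$. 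Using the equivalence $\widetilde{\mathcal{C}'}/[\widetilde{\mathcal{I}}] \simeq \mathsf{mod}A'$ together with $\Omega: \widetilde{\mathcal{I}} \simeq \mathsf{add}\widetilde{X}$ (Lemma~\ref{properties of 0-Aus}), the total number of indecomposable summands of this pair equals $|\widetilde{\mathcal{T}}|$; the classical Auslander-Iyama-Reiten inequality from \cite{AIR} therefore yields $|\widetilde{\mathcal{T}}| \leq |A'| = |X|$, with equality iff the pair is support $\tau$-tilting iff $\widetilde{\mathcal{T}}$ is silting in $\widetilde{\mathcal{C}'}$ (Corollary~\ref{silt-tau-tilt}).

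To descend back to $\mathcal{C}'$ I would combine the above with the splitting $|\mathcal{T}| = |\widetilde{\mathcal{T}}| + |\mathcal{T} \cap \mathsf{add}\mathcal{N}_{X}|$, which is valid because in the Krull-Schmidt setting $[\mathcal{N}_{X}](Y,Y) \subseteq \mathrm{rad}\,\mathrm{End}_{\mathcal{C}'}(Y)$ for every indecomposable $Y \notin \mathsf{add}\mathcal{N}_{X}$, so the quotient by $[\mathcal{N}_{X}]$ kills nothing outside $\mathsf{add}\mathcal{N}_{X}$ and creates no new isomorphisms there. Since trivially $|\mathcal{T} \cap \mathsf{add}\mathcal{N}_{X}| \leq |\mathcal{N}_{X}| = |N| - |X|$, I conclude $|\mathcal{T}| \leq |N|$. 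Equality in the corollary forces saturation of both inputs, namely $\widetilde{\mathcal{T}}$ silting in $\widetilde{\mathcal{C}'}$ and $\mathcal{N}_{X} \subseteq \mathsf{add}\mathcal{T}$; by Lemma~\ref{silting bijection} this amounts to $\mathcal{T}$ being silting in $\mathcal{C}'$, and by Theorem~\ref{main thm_1} to $\mathcal{T}$ being silting in $\mathcal{C}$ within the interval $[M,N]$. The converse implication then follows from the same chain.

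The step I expect to be the main obstacle is the counting identity $|\mathcal{T}| = |\widetilde{\mathcal{T}}| + |\mathcal{T} \cap \mathsf{add}\mathcal{N}_{X}|$: one must verify that indecomposable summands of $\mathcal{T}$ lying outside $\mathsf{add}\mathcal{N}_{X}$ remain indecomposable, nonzero and pairwise non-isomorphic after quotienting by $[\mathcal{N}_{X}]$. This is handled by the local-endomorphism-ring observation above, but it is the only place where the reduction needs genuine categorical care beyond the results already established in Sections~\ref{section 3}-\ref{section 4}.
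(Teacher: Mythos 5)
Your proposal is correct and follows essentially the same route as the paper: reduce to the 0-Auslander subcategory $\mathcal{C}'$ via Corollary \ref{main cor}(2) and Theorem \ref{main thm: new 0-Aus}, pass to the reduced quotient $\widetilde{\mathcal{C}'}=\mathcal{C}'/[(\mathsf{add}N)_{X}]$, bound the number of indecomposable summands there by $|X|$ through the $\tau$-rigid/support-$\tau$-tilting dictionary (Proposition \ref{presilting-rigid}, Corollary \ref{silt-tau-tilt}), and transfer back using Lemma \ref{silting bijection} and Theorem \ref{main thm_1}. The only difference is one of detail: you make explicit the counting identity $|\mathcal{T}|=|\mathcal{T}|_{\widetilde{\mathcal{C}'}}+|\mathcal{T}\cap(\mathsf{add}N)_{X}|$ and its Krull--Schmidt justification, which the paper leaves implicit.
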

\begin{proof}
	By Corollary \ref{main cor}(2), $\mathcal{T}$ is a presilting subcategory of $\mathcal{C}'$. Let $\widetilde{\mathcal{C}'}=\mathcal{C}'/[(\mathsf{add}N)_{X}]$, then by Theorem \ref{main thm: new 0-Aus} and Lemma \ref{basic lem on mutation}, it is reduced 0-Auslander. Consider $\mathcal{T}$ in the subquotient $\widetilde{\mathcal{C}'}$, denote by $|\mathcal{T}|_{\widetilde{\mathcal{C}'}}$ the number of indecomposable objects in $\mathcal{T}$. By Corollary \ref{silt-tau-tilt}, $|\mathcal{T}|_{\widetilde{\mathcal{C}'}}\leq |X|$ and the equality holds if and only if $\mathcal{T}$ is silting in $\widetilde{\mathcal{C}'}$. Hence the results follow from Lemma \ref{silting bijection} and Theorem \ref{main thm_1}.
\end{proof}

\begin{cor}\label{silting between mutation}
	Let $\mathcal{N}$ be a silting subcategory of $\mathcal{C}$ and let $X,Y\in \mathcal{N}$ such that $\mu^{L}_{X}(\mathcal{N})$ exists, $Y\in \mathsf{add}X$. Then the following statements hold.
	
	(1) $\mu^{L}_{Y}(\mathcal{N})$ exists.
	
	(2) $\mathsf{silt}\,[\mu^{L}_{X}(\mathcal{N}),\mathcal{N}]$ is a finite set if and only if $A=\frac{{\rm End}_{\mathcal{C}}(X)}{[\mathcal{N}_{X}](X,X)}$ is $\tau$-tilting finite (see \cite[Definition 1.1]{DIJ19}).
	
	(3) The Hasse quiver of $\mathsf{silt}\,[\mu^{L}_{X}(\mathcal{N}),\mathcal{N}]$ is $|X|$-regular (i.e. for each vertex, there are $|X|$ arrows attached to it) and $\mathsf{silt}\,[\mu^{L}_{Y}(\mathcal{N}),\mathcal{N}]=\mathsf{silt}_{\mathcal{N}_{Y}}[\mu^{L}_{X}(\mathcal{N}),\mathcal{N}]$ is a $|Y|$-regular subquiver.
\end{cor}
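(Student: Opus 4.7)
The plan is to reduce each part of the corollary to statements about support $\tau$-tilting modules over an explicit algebra, by passing through the reduced $0$-Auslander category associated to the mutation $\mu^{L}_{X}(\mathcal{N})$.

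I would start by setting $\mathcal{M} := \mu^{L}_{X}(\mathcal{N})$, $\mathcal{C}' := {}^{\bot}\mathcal{M}\cap \mathcal{N}^{\bot}$, and $\widetilde{\mathcal{C}'} := \mathcal{C}'/[\mathcal{N}_{X}]$. By Theorem \ref{main thm: new 0-Aus} together with Lemma \ref{basic lem on mutation}(2) (which gives $\mathcal{N}_{X} = \mathcal{N}\cap \mathcal{M}$), the category $\mathcal{C}'$ is $0$-Auslander with projective generator $\mathcal{N}$, so $\widetilde{\mathcal{C}'}$ is a reduced $0$-Auslander, Hom-finite, Krull--Schmidt $k$-linear category whose projective generator is the image of $X$ and whose endomorphism algebra, by an argument parallel to Lemma \ref{func iso}, is exactly $A$. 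Composing Theorem \ref{main thm_1}, Lemma \ref{silting bijection} and Corollary \ref{silt-tau-tilt} then provides a chain of poset isomorphisms
\[\mathsf{silt}\,[\mu^{L}_{X}(\mathcal{N}),\mathcal{N}]\ \cong\ \mathsf{silt}\,\widetilde{\mathcal{C}'}\ \cong\ s\tau\text{-}\mathsf{tilt}\,A,\]
from which (2) is immediate, since finiteness is preserved.

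For (1), assuming $Y\neq 0$ (the case $Y=0$ being vacuous as $\mu^{L}_{0}(\mathcal{N}) = \mathcal{N}$), I would write $X = Y\oplus X'$ and regard $X'$ as a presilting but not silting object of the reduced $0$-Auslander $\widetilde{\mathcal{C}'}$. Theorem \ref{another proof for GNP1.3} applied to $X'$ in $\widetilde{\mathcal{C}'}$ produces a silting co-Bongartz completion $M^{*} = Y^{*}\oplus X'$ together with a mutation $\mathbb{E}$-triangle $X\rightarrowtail X''\twoheadrightarrow Y^{*}\dashrightarrow$ with $X''\in \mathsf{add}X'$. Pulling this silting back along the poset isomorphism of the previous paragraph produces a silting subcategory $\mathcal{M}_{Y}\in \mathsf{silt}\,[\mu^{L}_{X}(\mathcal{N}),\mathcal{N}]$ with $\mathcal{M}_{Y}\cap \mathcal{N} = \mathcal{N}_{Y}$. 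To identify $\mathcal{M}_{Y}$ with $\mu^{L}_{Y}(\mathcal{N})$, I would invoke the backward direction of Theorem \ref{main thm: new 0-Aus}, which reduces the task to verifying that $\mathcal{C}'' := {}^{\bot}\mathcal{M}_{Y}\cap \mathcal{N}^{\bot}$ is itself $0$-Auslander. Since $\mathcal{C}''\subseteq \mathcal{C}'$, heredity of $\mathcal{C}''$ follows from Corollary \ref{main cor}(2), while the required projective-injective layer of dominant dimension one is obtained by lifting the above mutation $\mathbb{E}$-triangle along the quotient $\mathcal{C}'\to \widetilde{\mathcal{C}'}$.

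For (3), the Adachi--Iyama--Reiten theorem gives that each vertex of the Hasse quiver of $s\tau\text{-}\mathsf{tilt}\,A$ has exactly $|A|$ incident arrows, one per indecomposable summand of the algebra; since $|A| = |X|$ and our poset isomorphism preserves Hasse arrows, the Hasse quiver of $\mathsf{silt}\,[\mu^{L}_{X}(\mathcal{N}),\mathcal{N}]$ is $|X|$-regular. Lemma \ref{relation between two mutations} then identifies $\mathsf{silt}\,[\mu^{L}_{Y}(\mathcal{N}),\mathcal{N}]$ with the interval $\mathsf{silt}_{\mathcal{N}_{Y}}[\mu^{L}_{X}(\mathcal{N}),\mathcal{N}]$, which is a convex subposet of $\mathsf{silt}\,[\mu^{L}_{X}(\mathcal{N}),\mathcal{N}]$ and hence inherits its Hasse arrows as the induced subquiver. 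A second application of the reduction procedure, this time to the presilting object $X'$ inside $\widetilde{\mathcal{C}'}$, identifies this subposet with $s\tau\text{-}\mathsf{tilt}\,B$ for an algebra $B$ of rank $|X|-|X'| = |Y|$, giving $|Y|$-regularity. The principal technical step I anticipate is the $0$-Auslander verification inside (1): carefully transporting the mutation $\mathbb{E}$-triangle from $\widetilde{\mathcal{C}'}$ back to $\mathcal{C}'$ and confirming that the lifted triangle realises the dominant-dimension-one condition for $\mathcal{C}''$.
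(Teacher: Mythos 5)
Your proposal is correct and, for parts (2) and (3), follows essentially the paper's route: pass to $\widetilde{\mathcal{C}'}=\mathcal{C}'/[\mathcal{N}_X]$, which is reduced $0$-Auslander with projective generator the image of $X$ and endomorphism algebra $A$, compose Theorem \ref{main thm_1}, Lemma \ref{silting bijection} and Corollary \ref{silt-tau-tilt} to get the poset isomorphism $\mathsf{silt}\,[\mu^{L}_{X}(\mathcal{N}),\mathcal{N}]\cong s\tau\text{-}\mathsf{tilt}A$, and use Lemma \ref{relation between two mutations} together with convexity for the subquiver statement (your second reduction to an algebra $B$ of rank $|Y|$ amounts to the paper's implicit application of the first half of (3) with $Y$ in place of $X$). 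The genuine divergence is the final step of (1). The paper also applies Theorem \ref{another proof for GNP1.3} to the image of $\mathcal{N}_{Y}$ (your $X'$) in $\widetilde{\mathcal{C}'}$, but then concludes in one line via Lemma \ref{mutation invariant}: the co-Bongartz completion is a left mutation of the projectives of $\widetilde{\mathcal{C}'}$, this mutation transfers to $\mathcal{C}'$, and since $\mathcal{C}'$ is a full extension-closed subcategory the approximation conflations and approximations are already ones in $\mathcal{C}$, so $\mu^{L}_{Y}(\mathcal{N})$ exists. You instead pull the new silting object back to $\mathcal{M}_{Y}\in\mathsf{silt}\,[\mu^{L}_{X}(\mathcal{N}),\mathcal{N}]$ and invoke the backward direction of Theorem \ref{main thm: new 0-Aus}, which obliges you to verify that ${}^{\bot}\mathcal{M}_{Y}\cap\mathcal{N}^{\bot}$ is $0$-Auslander; this does work (heredity via Corollary \ref{main cor}(2), the dominant-dimension conflation by lifting the mutation triangle, identifying its middle term up to $\mathcal{N}_{X}$-summands using Krull--Schmidtness, and passing from the basic object to arbitrary projective objects by (ET4) and extension-closedness), but it is strictly more work than the paper's appeal to Lemma \ref{mutation invariant}, and these are precisely the details you flag as outstanding. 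Two harmless slips: the mutation triangle from Theorem \ref{another proof for GNP1.3} starts at the image of $Y$ (the non-$X'$ summand of the Bongartz completion, which here is the projective generator), not at $X$; and no analogue of Lemma \ref{func iso} is needed to see that the endomorphism algebra of the projective generator of $\widetilde{\mathcal{C}'}$ is $A$ --- this is immediate from the definition of the ideal quotient since $\mathcal{C}'$ is full in $\mathcal{C}$.
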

\begin{proof}
	(1) We can prove it directly using \cite[Corollary 3.16]{NP} but we give another proof here. Let $\mathcal{C}'={^{\bot}\mu^{L}_{X}(\mathcal{N})}\cap \mathcal{N}^{\bot}$ and $\widetilde{\mathcal{C}'}=\mathcal{C}'/[\mathcal{N}_{X}]$. Then $\widetilde{\mathcal{C}'}$ is reduced 0-Auslander. Since $|\mathcal{N}_{Y}|_{\widetilde{\mathcal{C}'}}$ is finite, it is a presilting object. By Theorem \ref{another proof for GNP1.3}, there are Bongartz and co-Bongartz completions and they are mutations of each other. By Lemma \ref{mutation invariant}, the result follows.
	
	(2) By Lemma \ref{properties of 0-Aus} and Corollary \ref{silt-tau-tilt}, there is a functor $\widetilde{\mathcal{C}'}\rightarrow \mathsf{mod}A$ and it induces bijections $\mathsf{silt}\,\widetilde{\mathcal{C}'}\leftrightarrow s\tau\text{-}\mathsf{tilt}\,A$. Since $\mathsf{silt}\,[\mu^{L}_{X}(\mathcal{N}),\mathcal{N}]\cong \mathsf{silt}\,\mathcal{C}'\cong \mathsf{silt}\,\widetilde{\mathcal{C}'}$ by Theorem \ref{main thm_1} and Lemma \ref{silting bijection}, the result follows.
	
	(3) Since the bijections in (2) are all isomorphisms of posets and $|A|=|X|$, the first half follows. The other half follows from Lemma \ref{relation between two mutations}.
\end{proof}

For a finite dimensional $k$-algebra $A$, we recall the definition of a maximal green sequence.

\begin{defn}\cite[Definition 4.8]{BST19}
	$A$ has a {\em maximal green sequence} if there is a finite sequence of torsion classes $0=\mathcal{T}_{0}\subsetneq \mathcal{T}_{1}\subsetneq \cdots \subsetneq \mathcal{T}_{r-1}\subsetneq \mathcal{T}_{r}=\mathsf{mod}A$ such that for all $i\in \{1,\cdots,r\}$, $\mathcal{T}_{i}$ is a {\em cover} of $\mathcal{T}_{i-1}$ (i.e. if a torsion class $\mathcal{T}$ satisfies $\mathcal{T}_{i-1}\subseteq \mathcal{T}\subseteq \mathcal{T}_{i}$, then $\mathcal{T}=\mathcal{T}_{i-1}$ or $\mathcal{T}=\mathcal{T}_{i}$).
\end{defn}

\begin{cor}\label{mutation vs maximal green sequence}
	Let $\mathcal{N}$ be a silting subcategory of $\mathcal{C}$ and let $X\in \mathcal{N}$ such that $\mu^{L}_{X}(\mathcal{N})$ exists. The following are equivalent.
	
	(1) There is a finite sequence of irreducible left mutations from $\mathcal{N}$ to $\mu^{L}_{X}(\mathcal{N})$.
	
	(2) The algebra $A=\frac{{\rm End}_{\mathcal{C}}(X)}{[\mathcal{N}_{X}](X,X)}$ has a maximal green sequence.
\end{cor}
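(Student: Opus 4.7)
The plan is to transport the equivalence through the silting interval reduction so that both conditions become statements about maximal chains of covers in $s\tau\text{-}\mathsf{tilt}\,A$. First I would set $\mathcal{C}':={^{\bot}\mu_{X}^{L}(\mathcal{N})}\cap \mathcal{N}^{\bot}$ and $\widetilde{\mathcal{C}'}:=\mathcal{C}'/[\mathcal{N}_{X}]$. Theorem~\ref{main thm: new 0-Aus} shows $\mathcal{C}'$ is $0$-Auslander with projectives $\mathsf{add}\,\mathcal{N}$ and injectives $\mathsf{add}\,\mu_{X}^{L}(\mathcal{N})$; together with Lemma~\ref{basic lem on mutation}(2) this gives $\mathcal{N}\cap \mu_{X}^{L}(\mathcal{N}) = \mathsf{add}\,\mathcal{N}_{X}$, so $\widetilde{\mathcal{C}'}$ is reduced $0$-Auslander and Lemma~\ref{properties of 0-Aus}(5) identifies $A$ with $\mathrm{End}_{\widetilde{\mathcal{C}'}}(X)$. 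Chaining Theorem~\ref{main thm_1}, Corollary~\ref{main cor}(2), Lemma~\ref{silting bijection} and Corollary~\ref{silt-tau-tilt} then yields an isomorphism of posets
\[
\Phi\colon \mathsf{silt}\,[\mu_{X}^{L}(\mathcal{N}),\mathcal{N}]\cong \mathsf{silt}\,\mathcal{C}'\cong \mathsf{silt}\,\widetilde{\mathcal{C}'}\cong s\tau\text{-}\mathsf{tilt}\,A
\]
sending $\mathcal{N}$ to $(A,0)$ and $\mu_{X}^{L}(\mathcal{N})$ to $(0,A)$.

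Next I would verify that $\Phi$ exchanges irreducible left mutations on the two sides. By Corollary~\ref{silting between mutation}(3) the Hasse quiver of $\mathsf{silt}\,\widetilde{\mathcal{C}'}$ is $|X|$-regular, so covers in $\mathsf{silt}\,\widetilde{\mathcal{C}'}$ coincide with irreducible left mutations; by Lemma~\ref{mutation invariant} these correspond to irreducible left mutations in $\mathcal{C}'$, hence in $\mathcal{C}$ between elements of the interval. On the $\tau$-tilting side, the Adachi--Iyama--Reiten theory from \cite{AIR} identifies covers in $s\tau\text{-}\mathsf{tilt}\,A$ with mutations of support $\tau$-tilting pairs. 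Since $\Phi$ is a poset isomorphism it preserves covers, so irreducible left mutations in the interval correspond exactly to mutations in $s\tau\text{-}\mathsf{tilt}\,A$.

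Condition (1) then translates into the existence of a maximal chain of covers from $(A,0)$ down to $(0,A)$ in $s\tau\text{-}\mathsf{tilt}\,A$. Applying the AIR bijection between $s\tau\text{-}\mathsf{tilt}\,A$ and functorially finite torsion classes, this is equivalent to a maximal chain $\mathsf{mod}\,A = \mathcal{T}_{0} \supsetneq \mathcal{T}_{1} \supsetneq \cdots \supsetneq \mathcal{T}_{r} = 0$ in which every consecutive inclusion is a cover, i.e.\ a maximal green sequence for $A$; this is condition (2). Reversing the chain of implications is immediate, completing the plan.

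The main obstacle I anticipate is showing that every intermediate silting subcategory produced by a sequence of irreducible left mutations in $\mathcal{C}$ from $\mathcal{N}$ to $\mu_{X}^{L}(\mathcal{N})$ automatically lies in the interval $[\mu_{X}^{L}(\mathcal{N}),\mathcal{N}]$, so that $\Phi$ can be applied at every step. I would handle this inductively using Lemma~\ref{basic lem on mutation}(2) and Lemma~\ref{relation between two mutations}: since $\mathcal{N}_{X}$ is a common summand of $\mathcal{N}$ and $\mu_{X}^{L}(\mathcal{N})$ and each irreducible left mutation exchanges exactly one indecomposable summand, any intermediate $\mathcal{T}_{i}$ that lost a summand in $\mathcal{N}_{X}$ could not recover it by further irreducible left mutations before reaching the terminal $\mu_{X}^{L}(\mathcal{N})\supseteq \mathcal{N}_{X}$, forcing each $\mathcal{T}_{i}$ to contain $\mathcal{N}_{X}$ and hence to lie in the interval.
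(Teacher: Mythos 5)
Your outline follows essentially the paper's own route: the same reduction $\mathsf{silt}\,[\mu^{L}_{X}(\mathcal{N}),\mathcal{N}]\cong \mathsf{silt}\,\mathcal{C}'\cong \mathsf{silt}\,\widetilde{\mathcal{C}'}\cong s\tau\text{-}\mathsf{tilt}A$, with $\mathcal{N}\mapsto (A,0)$ and $\mu^{L}_{X}(\mathcal{N})\mapsto (0,A)$, and the same transfer of mutation sequences through it. However, two of your justifications do not hold as stated, even though the facts they are meant to establish are true and available in the paper. First, the inference that the Hasse quiver of $\mathsf{silt}\,\widetilde{\mathcal{C}'}$ being $|X|$-regular (Corollary \ref{silting between mutation}(3)) forces covers to coincide with irreducible left mutations is not valid: regularity says nothing about which cover relations are realized by mutations. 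What is actually needed is the mutation theory of reduced $0$-Auslander categories, namely that the silting--support-$\tau$-tilting bijection of Corollary \ref{silt-tau-tilt} intertwines silting mutation with mutation of support $\tau$-tilting pairs (Remark \ref{commutative square}), combined with the Adachi--Iyama--Reiten fact that Hasse arrows in $s\tau\text{-}\mathsf{tilt}A$ are exactly mutations. Second, a maximal green sequence is by definition a chain of covers in the lattice of \emph{all} torsion classes, not merely of functorially finite ones, so your translation through the AIR bijection silently uses the nontrivial fact that a mutation of support $\tau$-tilting pairs yields a cover in the full torsion lattice; this is precisely what \cite[Proposition 4.9]{BST19} supplies, and the paper simply cites it rather than rederiving it. With those two repairs your argument closes up, and Lemma \ref{mutation invariant} (as in the paper) lets you pass irreducible mutations between $\mathcal{C}'$ and $\widetilde{\mathcal{C}'}$.

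The obstacle you single out at the end is not actually an obstacle, and your proposed fix is the shakiest part of the write-up. Since every left mutation strictly decreases the silting order (Lemma \ref{basic lem on mutation}), any finite sequence of irreducible left mutations from $\mathcal{N}$ to $\mu^{L}_{X}(\mathcal{N})$ is a descending chain, so each intermediate term lies in $\mathsf{silt}\,[\mu^{L}_{X}(\mathcal{N}),\mathcal{N}]$ by transitivity of $\leq$; then $\mathcal{N}_{X}\subseteq {^{\bot}\mathcal{N}}\cap \mu^{L}_{X}(\mathcal{N})^{\bot}\subseteq {^{\bot}\mathcal{T}}\cap \mathcal{T}^{\bot}=\mathcal{T}$ (the argument of Lemmas \ref{interval=summand completion} and \ref{relation between two mutations}, via Theorem \ref{AT22thm5.7}) shows each term contains $\mathcal{N}_{X}$. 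Your summand-tracking induction ("once a summand of $\mathcal{N}_{X}$ is lost it cannot be recovered") is unnecessary and, as written, not self-evident without exactly this kind of orthogonality argument; replace it by the one-line poset argument above.
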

\begin{proof}
	Let $\mathcal{C}'={^{\bot}\mu^{L}_{X}(\mathcal{N})}\cap \mathcal{N}^{\bot}$ and $\widetilde{\mathcal{C}'}=\mathcal{C}'/[\mathcal{N}_{X}]$. As in the previous corollary, we have isomorphisms of posets $\mathsf{silt}\,[\mu^{L}_{X}(\mathcal{N}),\mathcal{N}]\cong \mathsf{silt}\,\mathcal{C}'\cong \mathsf{silt}\,\widetilde{\mathcal{C}'}\leftrightarrow s\tau\text{-}\mathsf{tilt}\,A$. By \cite[Proposition 4.9]{BST19}, $A$ has a maximal green sequence if and only if there is a finite path of left mutations of support $\tau$-tilting pairs from $(A,0)$ to $(0,A)$. It is equivalent to a finite sequence of irreducible left mutations of silting objects in $\widetilde{\mathcal{C}'}$. Thus the result follows by Lemma \ref{mutation invariant}.
\end{proof}

\section{Relation with 2-Calabi-Yau reduction}\label{section 6}

In this section we study the relation between reduction in 0-Auslander categories and 2-Calabi-Yau reduction. To be more general, we consider the reduction technique introduced by E. Faber, B. R. Marsh and M. Pressland recently in \cite{FMP} (Theorem \ref{main thm in FMP}), which generalizes the classical reduction for 2-Calabi-Yau triangulated categories (see \cite{IY08}). Let $\mathcal{C}=(\mathcal{C},\mathbb{E},\mathfrak{s})$ be an extriangulated category.
A subcategory $\mathcal{T}$ of $\mathcal{C}$ is {\em rigid} if  $\mathbb{E}(\mathcal{T},\mathcal{T})=0$. It is {\em cluster-tilting} if ${^{\bot_{1}}\mathcal{T}}=\mathcal{T}=\mathcal{T}^{\bot_{1}}$ and it is functorially finite in $\mathcal{C}$ (see \cite{CZZ}, \cite{FMP}). Denote by $\mathsf{c\text{-}tilt}\,\mathcal{C}$ the set of all cluster-tilting subcategories of $\mathcal{C}$. For a subcategory $\mathcal{X}$,  denote by $\mathsf{c\text{-}tilt}_{\mathcal{X}}\mathcal{C}$ the set of all cluster-tilting subcategories of $\mathcal{C}$ containing $\mathcal{X}$.

\begin{defn}
	(1) \cite[Definition 7.1]{NP} $\mathcal{C}$ is called {\em Frobenius} if it has enough projectives $\mathcal{P}$ and enough injectives $\mathcal{I}$ and $\mathcal{P}=\mathcal{I}$.
	
	(2) (\cite[Definition 2.10]{CZZ}, \cite{FMP}) A Frobenius extriangulated category $\mathcal{C}$ is called {\em stably 2-Calabi-Yau} if it is $k$-linear ($k$ is a field) and the stable category $\mathcal{C}/[\mathcal{P}]$ is 2-Calabi-Yau triangulated category (it is triangulated, see \cite[Corollary 7.4]{NP}).
\end{defn}

Note that if $\mathcal{C}$ is stably 2-Calabi-Yau Frobenius, then for any subcategory $\mathcal{X}\subseteq \mathcal{C}$ we have ${^{\bot_{1}}\mathcal{X}}=\mathcal{X}^{\bot_{1}}$. For the rest of this section, we assume $\mathcal{C}$ is stably 2-Calabi-Yau Frobenius with projective-injectives $\mathcal{P}$ and $\mathcal{T}$ is a contravariantly finite subcategory of $\mathcal{C}$ such that $\mathcal{T}=\mathcal{T}^{\bot_{1}}$. Let $\mathcal{X}$ be a functorially finite rigid subcategory of $\mathcal{C}$. We recall the reduction theorem in \cite{FMP}.

\begin{thm}\cite[Theorem 3.21]{FMP}\label{main thm in FMP}
	The subcategory $\mathcal{X}^{\bot_{1}}$ in $\mathcal{C}$ is functorially finite and stably 2-Calabi-Yau Frobenius with projective-injectives $\mathsf{add}(\mathcal{X}\cup \mathcal{P})$. Moreover the cluster-tilting subcategories of $\mathcal{X}^{\bot_{1}}$ are precisely those cluster-tilting subcategories of $\mathcal{C}$ containing $\mathcal{X}$, that is,  $\mathsf{c\text{-}tilt}_{\mathcal{X}}\mathcal{C}\cong \mathsf{c\text{-}tilt}(\mathcal{X}^{\bot_{1}})$.
\end{thm}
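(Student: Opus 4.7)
The plan is to recover this result by applying our silting interval reduction (Theorem \ref{main thm_1}) inside the $0$-Auslander extriangulated category $(\mathcal{C},\mathbb{E}_{\mathcal{T}})$ supplied by Proposition \ref{relative struc on stably 2-CY}. First I would switch to the relative substructure $\mathbb{E}_{\mathcal{T}}$: under it $(\mathcal{C},\mathbb{E}_{\mathcal{T}})$ is $0$-Auslander, and cluster-tilting subcategories of $(\mathcal{C},\mathbb{E})$ are exactly the functorially finite silting subcategories of $(\mathcal{C},\mathbb{E}_{\mathcal{T}})$, so the rigid functorially finite subcategory $\mathcal{X}$ becomes functorially finite presilting. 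Applying Lemma \ref{two completions} (in its subcategory version) to $\mathcal{X}$ produces Bongartz and co-Bongartz completions $\mathcal{N}$ and $\mathcal{M}$; both are silting in $(\mathcal{C},\mathbb{E}_{\mathcal{T}})$, functorially finite, and contain $\mathcal{X}$, hence are cluster-tilting in $(\mathcal{C},\mathbb{E})$.

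Next, Lemma \ref{interval=summand completion} gives $\mathsf{silt}[\mathcal{M},\mathcal{N}]=\{\mathcal{S}\in\mathsf{silt}(\mathcal{C},\mathbb{E}_{\mathcal{T}})\mid\mathcal{X}\subseteq\mathcal{S}\}$, and Theorem \ref{main thm_1} yields a poset isomorphism with $\mathsf{silt}({}^{\bot}\mathcal{M}\cap\mathcal{N}^{\bot})$, where all perpendiculars and the silting set on the right are taken with respect to $\mathbb{E}_{\mathcal{T}}$. The crucial geometric claim is that, as a full subcategory of $\mathcal{C}$, one has ${}^{\bot}\mathcal{M}\cap\mathcal{N}^{\bot}=\mathcal{X}^{\bot_{1}}$, with the right-hand side computed in the original $(\mathcal{C},\mathbb{E})$. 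One direction uses that the defining conflations of $\mathcal{N}$ and $\mathcal{M}$ realise every $\mathbb{E}$-extension involving $\mathcal{X}$ through an $\mathbb{E}_{\mathcal{T}}$-extension; the reverse uses the $2$-Calabi-Yau duality ${}^{\bot_{1}}\mathcal{X}=\mathcal{X}^{\bot_{1}}$ together with the fact that $\mathcal{M}$ and $\mathcal{N}$ are assembled from $\mathcal{X}$ by one step of $\mathbb{E}_{\mathcal{T}}$-conflations.

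I would then show that $\mathcal{X}^{\bot_{1}}$ with the structure restricted from $(\mathcal{C},\mathbb{E})$ is again stably $2$-Calabi-Yau Frobenius, with projective-injectives $\mathsf{add}(\mathcal{X}\cup\mathcal{P})$: the subcategory $\mathcal{X}$ is $\mathbb{E}$-projective in $\mathcal{X}^{\bot_{1}}$ by definition and injective by $2$-CY duality combined with rigidity, while $\mathcal{P}$ remains projective-injective for free; functorial finiteness of $\mathcal{X}^{\bot_{1}}$ in $\mathcal{C}$ and the existence of enough $\mathsf{add}(\mathcal{X}\cup\mathcal{P})$-projectives and injectives inside $\mathcal{X}^{\bot_{1}}$ are both extracted from the Bongartz/co-Bongartz conflations built above. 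Applying Proposition \ref{relative struc on stably 2-CY} inside $\mathcal{X}^{\bot_{1}}$ with contravariantly finite subcategory $\mathcal{T}\cap\mathcal{X}^{\bot_{1}}$ then identifies $\mathsf{silt}({}^{\bot}\mathcal{M}\cap\mathcal{N}^{\bot})$ with $\mathsf{c\text{-}tilt}(\mathcal{X}^{\bot_{1}})$, delivering the bijection $\mathsf{c\text{-}tilt}_{\mathcal{X}}\mathcal{C}\cong\mathsf{c\text{-}tilt}(\mathcal{X}^{\bot_{1}})$.

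The main obstacle will be the reconciliation of the two extriangulated structures present on $\mathcal{X}^{\bot_{1}}$ — the restriction of $\mathbb{E}_{\mathcal{T}}$ (through which Theorem \ref{main thm_1} is applied) and the restriction of $\mathbb{E}$ (under which $\mathcal{X}^{\bot_{1}}$ is stably $2$-CY Frobenius) — and the verification that their respective notions of functorially finite silting subcategory agree. This ultimately reduces to showing that an $\mathbb{E}$-extension with both endpoints in $\mathcal{X}^{\bot_{1}}$ automatically lies in $\mathbb{E}_{\mathcal{T}}$, which can be done by resolving the morphisms out of $\mathcal{T}$ appearing in its defining property through the approximation sequences given by the functorial finiteness of $\mathcal{X}$ and the contravariant finiteness of $\mathcal{T}$.
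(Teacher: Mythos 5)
Two things up front: the paper itself does not prove this statement --- it is imported from \cite[Theorem 3.21]{FMP} and is then \emph{used} as an input (in Lemma \ref{two 0-Aus struc coincide} and Theorem \ref{our reduction vs 2-CY reduction}) to show that silting interval reduction recovers the FMP reduction. Your plan is essentially to run Section \ref{section 6} backwards, which is legitimate only if you independently establish the first half of the statement: functorial finiteness of $\mathcal{X}^{\bot_{1}}$ in $\mathcal{C}$, the Frobenius property of $(\mathcal{X}^{\bot_{1}},\mathbb{E}|_{\mathcal{X}^{\bot_{1}}})$ with projective-injectives exactly $\mathsf{add}(\mathcal{X}\cup\mathcal{P})$, and the $2$-Calabi-Yau property of its stable category. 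This is where your sketch is thinnest: the Bongartz/co-Bongartz conflations only produce approximation conflations for objects of $\mathcal{T}$ and $\mathcal{T}'$ (equivalently of $\mathcal{N},\mathcal{M}$), so they do not by themselves yield right/left $\mathcal{X}^{\bot_{1}}$-approximations of arbitrary objects of $\mathcal{C}$, nor $\mathsf{add}(\mathcal{X}\cup\mathcal{P})$-resolutions of objects of $\mathcal{X}^{\bot_{1}}$; one must argue directly with right/left $\mathcal{X}$-approximations combined with projective deflations and injective inflations (this is the real content of FMP's theorem), and you never address why the stable category $\mathcal{X}^{\bot_{1}}/[\mathsf{add}(\mathcal{X}\cup\mathcal{P})]$ is $2$-CY.

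Beyond that, two concrete steps fail as written. (a) You apply Proposition \ref{relative struc on stably 2-CY} inside $\mathcal{X}^{\bot_{1}}$ with the subcategory $\mathcal{T}\cap\mathcal{X}^{\bot_{1}}$, but this does not satisfy the hypothesis $\mathcal{T}'={\mathcal{T}'}^{\bot_{1}}$ in $\mathcal{X}^{\bot_{1}}$: by the stable $2$-CY duality every object of $\mathcal{X}$ lies in $(\mathcal{T}\cap\mathcal{X}^{\bot_{1}})^{\bot_{1}}$, whereas $\mathcal{X}\subseteq\mathcal{T}$ is not assumed, so in general the new projective-injectives $\mathsf{add}(\mathcal{X}\cup\mathcal{P})$ are not even contained in your $\mathcal{T}'$. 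The correct choice is a cluster-tilting subcategory of $\mathcal{C}$ containing $\mathcal{X}$, e.g.\ the Bongartz completion $\mathcal{N}$, exactly as in Lemma \ref{two 0-Aus struc coincide}. (b) Your final claim, that any $\mathbb{E}$-extension with both end terms in $\mathcal{X}^{\bot_{1}}$ automatically lies in $\mathbb{E}_{\mathcal{T}}$, is false: for $\mathcal{X}=0$ it would assert $\mathbb{E}=\mathbb{E}_{\mathcal{T}}$, which already fails for a $2$-CY triangulated category with a cluster-tilting subcategory $\mathcal{T}$. What the argument actually needs is that the relative structure $(\mathbb{E}|_{\mathcal{X}^{\bot_{1}}})_{\mathcal{N}}$ obtained by forcing $\mathcal{N}$ to be projective coincides with $\mathbb{E}_{\mathcal{T}}|_{\mathcal{C}'}$, and proving this without circularly invoking the statement itself is genuine work. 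The remaining ingredients of your plan --- the identification ${}^{\bot}\mathcal{M}\cap\mathcal{N}^{\bot}=\mathcal{X}^{\bot_{1}}$, the equality $\mathsf{silt}[\mathcal{M},\mathcal{N}]=\mathsf{silt}_{\mathcal{X}}(\mathcal{C},\mathbb{E}_{\mathcal{T}})$, and the passage through Theorem \ref{main thm_1} --- are sound and agree with Lemma \ref{silt contain X} and Corollary \ref{reduction in 0-Aus_2}, and none of those results depends on the cited theorem, so there is no circularity there; but as it stands the proposal does not prove the statement.
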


\begin{prop}\label{relative struc on stably 2-CY}
	Let $\mathbb{E}_{\mathcal{T}}(C,A):=\{\delta\in \mathbb{E}(C,A)\,|\,\forall f:T\rightarrow C\text{ with }T\in \mathcal{T},f^{\ast}\delta=0\}$. Then the following holds.
	
	(1) $(\mathcal{C},\mathbb{E}_{\mathcal{T}})$ is  0-Auslander with projectives $\mathcal{T}$ and injectives $\mathcal{T}'=\mu^{L}(\mathcal{T};\mathcal{P})$ (mutation in $(\mathcal{C},\mathbb{E}_{\mathcal{T}})$).
	
	(2) For any $X,Y\in \mathcal{C}$, $\mathbb{E}(X,Y)=0=\mathbb{E}(Y,X)$ if and only if $\mathbb{E}_{\mathcal{T}}(X,Y)=0=\mathbb{E}_{\mathcal{T}}(Y,X)$.
	
	(3) A subcategory $\mathcal{N}$ is rigid (resp. cluster-tilting) in $(\mathcal{C},\mathbb{E})$ if and only if it is presilting (resp. functorially finite silting) in $(\mathcal{C},\mathbb{E}_{\mathcal{T}})$.
\end{prop}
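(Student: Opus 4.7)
The plan is to tackle the three assertions in order, with most of the real work concentrated in (1) and (2); part (3) then falls out as a formal consequence. The core of (1) is to verify that $(\mathcal{C},\mathbb{E}_{\mathcal{T}})$ is extriangulated with enough projectives $\mathcal{T}$, has projective dimension at most $1$, and has dominant dimension at least $1$. The extriangulated structure follows from the general relative substructure machinery (cf.\ \cite{HLN}, \cite{GNP23}), and every $T\in\mathcal{T}$ is $\mathbb{E}_{\mathcal{T}}$-projective by definition: a class $\delta\in\mathbb{E}_{\mathcal{T}}(T,A)$ must satisfy $\mathrm{id}_{T}^{\ast}\delta=0$.

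For enough $\mathbb{E}_{\mathcal{T}}$-projectives I would take a right $\mathcal{T}$-approximation $g\colon T_{0}\to X$ of an arbitrary object $X$; since $\mathcal{P}\subseteq\mathcal{T}^{\bot_{1}}=\mathcal{T}$, the map $g$ can be enlarged by a projective summand to be a deflation, giving a conflation $K\rightarrowtail T_{0}\twoheadrightarrow X$. Applying $\mathrm{Hom}_{\mathcal{C}}(T,-)$ for any $T\in\mathcal{T}$, the approximation property of $g$ together with rigidity of $\mathcal{T}$ forces $\mathbb{E}(T,K)=0$, whence $K\in\mathcal{T}^{\bot_{1}}=\mathcal{T}$; moreover the conflation lies in $\mathbb{E}_{\mathcal{T}}$ because any $f\colon T\to X$ factors through $g$. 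This simultaneously yields enough projectives $\mathcal{T}$ and $\mathrm{pd}(\mathcal{C},\mathbb{E}_{\mathcal{T}})\le 1$. For the dominant dimension, given $T\in\mathcal{T}$ I embed $T\rightarrowtail I$ with $I\in\mathcal{P}$ using the Frobenius structure; the resulting class in $\mathbb{E}(M_{T},T)$ pulls back through any $h\colon T'\to M_{T}$ to an element of $\mathbb{E}(T',T)=0$, so the conflation lies in $\mathbb{E}_{\mathcal{T}}$, and $I$ is both $\mathbb{E}_{\mathcal{T}}$-projective and $\mathbb{E}_{\mathcal{T}}$-injective. Thus $(\mathcal{C},\mathbb{E}_{\mathcal{T}})$ is $0$-Auslander, $\mathcal{P}$ is good covariantly finite in $\mathcal{T}$, and $\mathcal{T}'=\mu^{L}(\mathcal{T};\mathcal{P})$ is well defined; its objects are $\mathbb{E}_{\mathcal{T}}$-injective via the long exact sequence of $T\rightarrowtail I\twoheadrightarrow M$ (using $\mathrm{pd}\le 1$). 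Conversely, any $\mathbb{E}_{\mathcal{T}}$-injective $J$ fits, via codominant dimension $\ge 1$, in a conflation $\Omega J\rightarrowtail I\twoheadrightarrow J$ with $I\in\mathcal{P}$ and $\Omega J$ projective (hence in $\mathcal{T}$); injectivity of $I$ in $\mathcal{C}$ makes $\Omega J\to I$ a left $\mathcal{P}$-approximation, so $J\in\mathcal{T}'$.

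The principal obstacle is (2). The forward direction is trivial because $\mathbb{E}_{\mathcal{T}}\subseteq\mathbb{E}$. For the converse I exploit the $2$-Calabi-Yau Serre pairing $\mathbb{E}(A,B)\times\mathbb{E}(B,A)\to k$ induced on the stable category, which is non-degenerate and natural in both variables. Assume $0\neq\delta\in\mathbb{E}(X,Y)$. Since $\mathbb{E}_{\mathcal{T}}(X,Y)=0$ there must exist $f\colon T\to X$ with $T\in\mathcal{T}$ such that $f^{\ast}\delta\neq 0$; by non-degeneracy pick $\eta\in\mathbb{E}(Y,T)$ with $\langle f^{\ast}\delta,\eta\rangle\neq 0$. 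Naturality of the pairing rewrites this as $\langle\delta,f_{\ast}\eta\rangle\neq 0$, so $f_{\ast}\eta\in\mathbb{E}(Y,X)$ is nonzero. On the other hand, for any $h\colon T'\to Y$ with $T'\in\mathcal{T}$, functoriality gives $h^{\ast}(f_{\ast}\eta)=f_{\ast}(h^{\ast}\eta)$, and $h^{\ast}\eta\in\mathbb{E}(T',T)=0$ by rigidity of $\mathcal{T}$; hence $f_{\ast}\eta\in\mathbb{E}_{\mathcal{T}}(Y,X)=0$, a contradiction. The delicate point is to make the Serre pairing concrete on $\mathbb{E}$ via the identification $\mathbb{E}(A,B)=\mathrm{Hom}_{\underline{\mathcal{C}}}(A,B[1])$ and to check the adjunction formula $\langle f^{\ast}\alpha,\beta\rangle=\langle\alpha,f_{\ast}\beta\rangle$; this is standard but is where care is needed.

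Finally, (3) is a formal consequence. Rigidity in $\mathbb{E}$ and presilting in $\mathbb{E}_{\mathcal{T}}$ coincide by (2) together with the vanishing $\mathbb{E}_{\mathcal{T}}^{\ge 2}=0$ supplied by (1). For the cluster-tilting statement, \cite[Theorem 4.3]{GNP23} identifies silting objects in a $0$-Auslander category with maximal rigid ones; so functorially finite silting in $(\mathcal{C},\mathbb{E}_{\mathcal{T}})$ means functorially finite, rigid, and maximal among rigid subcategories, which by (2) and the $2$-Calabi-Yau symmetry $\mathbb{E}(X,\mathcal{N})=0\Longleftrightarrow\mathbb{E}(\mathcal{N},X)=0$ is exactly functorially finite with ${^{\bot_{1}}\mathcal{N}}=\mathcal{N}=\mathcal{N}^{\bot_{1}}$, i.e.\ cluster-tilting in $(\mathcal{C},\mathbb{E})$.
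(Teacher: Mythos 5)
Your treatment of (1) and (2) is correct and, in substance, follows the same lines as the paper, just packaged differently. For (1) you verify the 0-Auslander axioms directly (right $\mathcal{T}$-approximation completed to a deflation with kernel in $\mathcal{T}^{\bot_{1}}=\mathcal{T}$; the conflations $T\rightarrowtail I\twoheadrightarrow M_{T}$ lie in $\mathbb{E}_{\mathcal{T}}$ by rigidity), whereas the paper routes this through Example \ref{examples} (3) and the identity ${\rm Cone}(\mathcal{T},\mathcal{T})=\mathcal{C}$; the ingredients are identical. Two small points there deserve a sentence each: the relative projective-injectives are exactly $\mathcal{P}$ (an $\mathbb{E}_{\mathcal{T}}$-injective $T\in\mathcal{T}$ splits the conflation $T\rightarrowtail I\twoheadrightarrow M_{T}$, hence lies in $\mathcal{P}$), and $\Omega J$ is relative projective because $\mathbb{E}_{\mathcal{T}}^{2}(J,-)=0$; both are the ${\rm pd}\le 1$ arguments you gesture at. For (2) the paper applies ${\rm Hom}(-,Y)$ to the approximation conflation to get injectivity of $f^{\ast}$ and then dualizes to a surjection $f_{\ast}\colon\mathbb{E}(Y,T^{0})\to\mathbb{E}(Y,X)$ whose image lies in $\mathbb{E}_{\mathcal{T}}(Y,X)=0$; your element-wise contradiction with the Serre pairing uses exactly the same duality and the same observation that classes pushed forward along a map from $\mathcal{T}$ are automatically relative, and it even avoids the approximation conflation. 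The bifunctoriality of the pairing you flag is indeed standard (via $\mathbb{E}(A,B)\cong\mathbb{E}_{\mathcal{C}/[\mathcal{P}]}(A,B)$ and Serre duality in the stable category).

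The one step that does not stand as written is in (3), in the direction ``functorially finite silting in $(\mathcal{C},\mathbb{E}_{\mathcal{T}})$ implies cluster-tilting in $(\mathcal{C},\mathbb{E})$''. You replace silting by ``maximal among rigid subcategories'' and then claim that, together with (2) and 2-Calabi-Yau symmetry, this is exactly ${^{\bot_{1}}\mathcal{N}}=\mathcal{N}=\mathcal{N}^{\bot_{1}}$. Maximal rigidity plus the symmetry $\mathbb{E}(X,\mathcal{N})=0\Leftrightarrow\mathbb{E}(\mathcal{N},X)=0$ does not yield $\mathcal{N}^{\bot_{1}}\subseteq\mathcal{N}$: an object $X\in{^{\bot_{1}}\mathcal{N}}\cap\mathcal{N}^{\bot_{1}}$ need not satisfy $\mathbb{E}(X,X)=0$, so maximality cannot be invoked (in 2-Calabi-Yau triangulated categories maximal rigid objects are in general not cluster-tilting, e.g.\ in cluster tubes). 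The correct justification, and the one the paper uses, is to exploit the silting hypothesis directly: by Theorem \ref{AT22thm5.7}, a silting $\mathcal{N}$ satisfies $\mathcal{N}={^{\bot}\mathcal{N}}\cap\mathcal{N}^{\bot}$ in $(\mathcal{C},\mathbb{E}_{\mathcal{T}})$, which by ${\rm pd}\le 1$ equals ${^{\bot_{1}}\mathcal{N}}\cap\mathcal{N}^{\bot_{1}}$; then (2) transfers this equality to $(\mathcal{C},\mathbb{E})$, where the stably 2-Calabi-Yau symmetry gives ${^{\bot_{1}}\mathcal{N}}=\mathcal{N}=\mathcal{N}^{\bot_{1}}$, i.e.\ cluster-tilting. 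With that substitution your (3) is complete; the converse direction, using (2) and \cite[Theorem 4.3]{GNP23} to pass from the bi-perpendicular description to silting, is fine as you state it.
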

\begin{proof}
	(1) Clearly we have $\mathcal{P}\subseteq \mathcal{T}$. By Example \ref{examples} (3), ${\rm Cone}_{(\mathcal{C},\mathbb{E},\mathfrak{s})}(\mathcal{T},\mathcal{T})$ is extension closed in $(\mathcal{C},\mathbb{E}_{\mathcal{T}})$ and is 0-Auslander. It suffices to show ${\rm Cone}_{(\mathcal{C},\mathbb{E},\mathfrak{s})}(\mathcal{T},\mathcal{T})=\mathcal{C}$.  
	Since $\mathcal{T}$ is contravariantly finite and contains $\mathcal{P}$, by \cite[Corollary 3.16]{NP}, for each $X\in \mathcal{C}$, there is an $\mathbb{E}$-triangle
	\begin{equation}\label{eq1}
		T^{1}\rightarrowtail T^{0}\stackrel{f}\twoheadrightarrow X\stackrel{\delta}\dashrightarrow
	\end{equation}
	with $f$ a right $\mathcal{T}$-approximation. Applying ${\rm Hom}_{(\mathcal{C},\mathbb{E})}(\mathcal{T},-)$ to \eqref{eq1}, we obtain $T^{1}\in \mathcal{T}$. By Example \ref{examples} (3), $\mathcal{T}$ and $\mathcal{T}'$ are projectives and injectives in $(\mathcal{C},\mathbb{E}_{\mathcal{T}})$.
	
	(2) Assume $\mathbb{E}_{\mathcal{T}}(X,Y)=0=\mathbb{E}_{\mathcal{T}}(Y,X)$. Applying the functor ${\rm Hom}_{(\mathcal{C},\mathbb{E})}(-,Y)$ to \eqref{eq1}, we obtain an exact sequence ${\rm Hom}_{\mathcal{C}}(T^{1},Y)\stackrel{\delta^{\sharp}}\rightarrow \mathbb{E}(X,Y)\stackrel{f^{\ast}}\rightarrow \mathbb{E}(T^{0},Y)$. Since $\delta \in \mathbb{E}_{\mathcal{T}}(X,T^{1})$, the image of $\delta^{\sharp}$ lies in $\mathbb{E}_{\mathcal{T}}(X,Y)=0$. Thus $f^{\ast}$ is injective. Let $\mathbb{D}:={\rm Hom}_{k}(-,k)$. Consider the following commutative diagram
	\[\begin{tikzcd}
		\mathbb{D}\mathbb{E}(T^{0},Y) \arrow[r,"\mathbb{D}f^{\ast}"] \arrow[d,"\simeq"] & \mathbb{D}\mathbb{E}(X,Y) \arrow[d,"\simeq"] \\
		\mathbb{E}(Y,T^{0}) \arrow[r,"f_{\ast}"] & \mathbb{E}(Y,X).
	\end{tikzcd}\]
	Since $T^{0}\in \mathcal{T}$, we have $\mathbb{E}(Y,T^{0})=\mathbb{E}_{\mathcal{T}}(Y,T^{0})$ by definition. The image of the surjection $f_{\ast}$ lies in $\mathbb{E}_{\mathcal{T}}(Y,X)=0$. Thus $\mathbb{E}(Y,X)=0$. Similarly we have $\mathbb{E}(X,Y)=0$. The other direction is obvious.
	
	(3) By (2), $\mathcal{N}$ is rigid in $(\mathcal{C},\mathbb{E})$ if and only if it is rigid in $(\mathcal{C},\mathbb{E}_{\mathcal{T}})$, which is equivalent to being presilting by (1). If $\mathcal{N}$ is cluster-tilting in $(\mathcal{C},\mathbb{E})$, then $\mathcal{N}={^{\bot_{1}}\mathcal{N}}\cap \mathcal{N}^{\bot_{1}}$ in $(\mathcal{C},\mathbb{E})$. By (2), it equals to ${^{\bot_{1}}\mathcal{N}}\cap \mathcal{N}^{\bot_{1}}$ in $(\mathcal{C},\mathbb{E}_{\mathcal{T}})$. Since $\mathcal{N}$ is functorially finite, by \cite[Theorem 4.3]{GNP23}, $\mathcal{N}$ is silting. If $\mathcal{N}$ is functorially finite silting in $(\mathcal{C},\mathbb{E}_{\mathcal{T}})$, then $\mathcal{N}={^{\bot_{1}}\mathcal{N}}\cap \mathcal{N}^{\bot_{1}}$ in $(\mathcal{C},\mathbb{E}_{\mathcal{T}})$ by Theorem \ref{AT22thm5.7}. Since $(\mathcal{C},\mathbb{E})$ is stably 2-Calabi-Yau, $\mathcal{N}={^{\bot_{1}}\mathcal{N}}=\mathcal{N}^{\bot_{1}}$ in $(\mathcal{C},\mathbb{E})$ by (2). Thus $\mathcal{N}$ is cluster-tilting in $(\mathcal{C},\mathbb{E})$.
\end{proof}

In the 0-Auslander category $(\mathcal{C},\mathbb{E}_{\mathcal{T}})$, $\mathcal{X}$ is presilting by Proposition \ref{relative struc on stably 2-CY} (3). As in Lemma \ref{two completions}, we define the Bongartz and co-Bongartz completions of $\mathcal{X}$ in $(\mathcal{C},\mathbb{E}_{\mathcal{T}})$. Indeed, for each $T\in \mathcal{T}$, choose an $\mathbb{E}_{\mathcal{T}}$-triangle $T\stackrel{f}\rightarrowtail X\twoheadrightarrow M_{T}\dashrightarrow $ with $f$ a left $\mathsf{add}(\mathcal{X}\cup \mathcal{P})$-approximation. Define $\mathcal{M}:=\mathsf{add}(\mathcal{X}\cup \mathcal{P}\cup \{M_{T}\,|\,T\in \mathcal{T}\})$. Then $\mathcal{M}$ is silting in $(\mathcal{C},\mathbb{E}_{\mathcal{T}})$ (the co-Bongartz completion of $\mathcal{X}$) and it is independent of the choice of $f$ as in Definition \ref{silt mutation in extri}. Dually for each $T'\in \mathcal{T}'$, choose an $\mathbb{E}_{\mathcal{T}}$-triangle $N_{T'}\rightarrowtail X'\stackrel{g}\twoheadrightarrow T'\dashrightarrow $ with $g$ a right $\mathsf{add}(\mathcal{X}\cup \mathcal{P})$-approximation. Define $\mathcal{N}:=\mathsf{add}(\mathcal{X}\cup \mathcal{P}\cup \{N_{T'}\,|\,T'\in \mathcal{T}'\})$. Then $\mathcal{N}$ is silting in $(\mathcal{C},\mathbb{E}_{\mathcal{T}})$ (the Bongartz completion of $\mathcal{X}$) and it is independent of the choice of $g$.

Similar to Lemma \ref{interval=summand completion}, \ref{C'is 0-Aus} and Remark \ref{rem on mutaion vs 0-Aus}, we have the following results. For convenience, we provide proofs.

\begin{lem}\label{silt contain X}
	In the 0-Auslander category $(\mathcal{C},\mathbb{E}_{\mathcal{T}})$, the following hold.
	
	(1) $\mathcal{M}\leq \mathcal{N}$ and $\mathsf{silt}\,[\mathcal{M},\mathcal{N}]=\mathsf{silt}_{\mathsf{add}(\mathcal{X}\cup \mathcal{P})}(\mathcal{C},\mathbb{E}_{\mathcal{T}})=\mathsf{silt}_{\mathcal{X}}(\mathcal{C},\mathbb{E}_{\mathcal{T}})$.
	
	(2) Let $\mathcal{C}':={^{\bot}\mathcal{M}}\cap \mathcal{N}^{\bot}$. Then it is 0-Auslander and $\mathcal{M}\cap \mathcal{N}=\mathsf{add}(\mathcal{X}\cup \mathcal{P})$.
	
	(3) ${^{\bot_{1}}\mathcal{X}}\cap \mathcal{X}^{\bot_{1}}=\mathcal{C}'$.
\end{lem}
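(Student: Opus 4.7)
The plan is to prove the three parts in order, in each case mirroring the corresponding proofs in Section~\ref{section 4} and exploiting the 0-Auslander structure of $(\mathcal{C},\mathbb{E}_{\mathcal{T}})$ supplied by Proposition~\ref{relative struc on stably 2-CY}. For part~(1), I adapt Lemma~\ref{interval=summand completion}: applying $\mathrm{Hom}_{(\mathcal{C},\mathbb{E}_{\mathcal{T}})}(-,\mathcal{M})$ to the Bongartz triangle $N_{T'}\rightarrowtail X'\twoheadrightarrow T'\dashrightarrow$ yields a long exact sequence whose outer terms $\mathbb{E}_{\mathcal{T}}(X',\mathcal{M})$ and $\mathbb{E}_{\mathcal{T}}^{2}(T',\mathcal{M})$ both vanish (the former because $X'\in\mathsf{add}(\mathcal{X}\cup\mathcal{P})\subseteq\mathcal{M}$ with $\mathcal{M}$ silting, the latter by Proposition~\ref{relative struc on stably 2-CY}(1)), forcing $\mathbb{E}_{\mathcal{T}}(\mathcal{N},\mathcal{M})=0$ and so $\mathcal{M}\leq\mathcal{N}$. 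The identity $\mathsf{silt}_{\mathsf{add}(\mathcal{X}\cup\mathcal{P})}=\mathsf{silt}_{\mathcal{X}}$ is automatic because $\mathcal{P}$ is projective-injective in $(\mathcal{C},\mathbb{E}_{\mathcal{T}})$ and therefore lies in ${^{\bot}\mathcal{S}}\cap\mathcal{S}^{\bot}=\mathcal{S}$ for every silting $\mathcal{S}$ via Theorem~\ref{AT22thm5.7}; and $\mathsf{silt}\,[\mathcal{M},\mathcal{N}]=\mathsf{silt}_{\mathcal{X}}$ follows because $\mathcal{X}\subseteq\mathcal{M}\cap\mathcal{N}\subseteq{^{\bot}\mathcal{S}}\cap\mathcal{S}^{\bot}=\mathcal{S}$ (for $\subseteq$) together with symmetric long-exact-sequence arguments on the Bongartz and co-Bongartz triangles (for $\supseteq$).

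For part~(2), I invoke Theorem~\ref{main thm: new 0-Aus}: the 0-Auslander property of $\mathcal{C}'$ is equivalent to $\mathcal{M}=\mu^{L}(\mathcal{N};\mathcal{D})$ for some good covariantly finite $\mathcal{D}\subseteq\mathcal{N}$. The natural candidate is $\mathcal{D}=\mathsf{add}(\mathcal{X}\cup\mathcal{P})$, and I will produce the required $\mathbb{E}_{\mathcal{T}}$-triangle $N_{T'}\rightarrowtail Z\twoheadrightarrow M_{T}$ with $Z\in\mathcal{D}$ by a $3\times 3$-diagram construction patterned on the proof of Lemma~\ref{C'is 0-Aus}. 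The inputs are the co-Bongartz triangle $T\rightarrowtail X\twoheadrightarrow M_{T}\dashrightarrow$, the Bongartz triangle $N_{T'}\rightarrowtail X'\twoheadrightarrow T'\dashrightarrow$, and the defining triangle $T\rightarrowtail P\twoheadrightarrow T'\dashrightarrow$ coming from $T'=\Sigma T\in\mathcal{T}'$ via Lemma~\ref{properties of 0-Aus}(4); two successive applications of \cite[Proposition~3.15]{NP} splice these into the desired triangle, after a splitting argument using rigidity of $\mathcal{D}$ in $(\mathcal{C},\mathbb{E}_{\mathcal{T}})$ to force the middle row/column to decompose through $\mathcal{D}$. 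Once the mutation relation is established, Theorem~\ref{main thm: new 0-Aus} delivers $\mathcal{C}'$ 0-Auslander, and Lemma~\ref{basic lem on mutation}(2) gives $\mathcal{M}\cap\mathcal{N}=\mathsf{add}\mathcal{D}=\mathsf{add}(\mathcal{X}\cup\mathcal{P})$.

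For part~(3), the bridge is Proposition~\ref{relative struc on stably 2-CY}(2). If $Y\in\mathcal{C}'$, then $\mathcal{X}\subseteq\mathcal{M}\cap\mathcal{N}$ yields $\mathbb{E}_{\mathcal{T}}(\mathcal{X},Y)=\mathbb{E}_{\mathcal{T}}(Y,\mathcal{X})=0$, which Proposition~\ref{relative struc on stably 2-CY}(2) upgrades to $\mathbb{E}(\mathcal{X},Y)=\mathbb{E}(Y,\mathcal{X})=0$, establishing $(\supseteq)$. Conversely, for $Y\in{^{\bot_{1}}\mathcal{X}}\cap\mathcal{X}^{\bot_{1}}$, applying $\mathrm{Hom}(Y,-)$ to the co-Bongartz triangle gives $\mathbb{E}_{\mathcal{T}}(Y,X)\to\mathbb{E}_{\mathcal{T}}(Y,M_{T})\to\mathbb{E}_{\mathcal{T}}^{2}(Y,T)=0$, and $\mathbb{E}_{\mathcal{T}}(Y,X)$ vanishes because $X\in\mathsf{add}(\mathcal{X}\cup\mathcal{P})$ with $\mathbb{E}_{\mathcal{T}}(Y,\mathcal{P})=0$ (injectivity of $\mathcal{P}$ in $(\mathcal{C},\mathbb{E}_{\mathcal{T}})$) and $\mathbb{E}_{\mathcal{T}}(Y,\mathcal{X})\subseteq\mathbb{E}(Y,\mathcal{X})=0$; hence $\mathbb{E}_{\mathcal{T}}(Y,\mathcal{M})=0$, and dually $\mathbb{E}_{\mathcal{T}}(\mathcal{N},Y)=0$, so $Y\in\mathcal{C}'$.

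The main obstacle is the $3\times 3$ diagram chase in part~(2): the left $\mathcal{D}$-approximation property is not immediate from injectivity of $T'$, because injectivity in $(\mathcal{C},\mathbb{E}_{\mathcal{T}})$ yields $\mathbb{E}_{\mathcal{T}}(-,T')=0$ rather than $\mathbb{E}_{\mathcal{T}}(T',-)=0$. The Frobenius hypothesis on $(\mathcal{C},\mathbb{E})$, which makes the auxiliary triangle $T\rightarrowtail P\twoheadrightarrow T'$ available, is precisely what allows the splicing to pass through an object in $\mathcal{D}$ and to bridge the Bongartz and co-Bongartz sides.
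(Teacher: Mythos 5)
Your proposal is correct and follows essentially the same route as the paper: part (1) via Theorem~\ref{AT22thm5.7} together with long exact sequences on the Bongartz and co-Bongartz triangles, part (2) by splicing those triangles with $T\rightarrowtail P\twoheadrightarrow T'$ to exhibit $\mathcal{M}=\mu^{L}(\mathcal{N};\mathsf{add}(\mathcal{X}\cup\mathcal{P}))$ and then invoking Theorem~\ref{main thm: new 0-Aus} and Lemma~\ref{basic lem on mutation}(2), and part (3) by the same orthogonality computations using $\mathbb{E}_{\mathcal{T}}^{\geq 2}=0$ and Proposition~\ref{relative struc on stably 2-CY}(2). One cosmetic remark: in the paper's splicing the middle term is already $X\oplus X'\in\mathsf{add}(\mathcal{X}\cup\mathcal{P})$ with cone $P\oplus M_{T}$, so no extra splitting is required, and the left-approximation property you flag as the main obstacle follows simply because that cone lies in the presilting subcategory $\mathcal{M}\supseteq\mathsf{add}(\mathcal{X}\cup\mathcal{P})$, the Frobenius hypothesis entering only through the existence of the triangle $T\rightarrowtail P\twoheadrightarrow T'$.
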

\begin{proof}
	(1) Since $\mathcal{P}$ is projective-injective in $(\mathcal{C},\mathbb{E}_{\mathcal{T}})$, the second equality is obvious. For silting $\mathcal{T}$ such that $\mathcal{M}\leq \mathcal{T}\leq \mathcal{N}$, we have $\mathcal{X}\subseteq {^{\bot}\mathcal{N}}\cap \mathcal{M}^{\bot}\subseteq {^{\bot}\mathcal{T}}\cap \mathcal{T}^{\bot}=\mathcal{T}$. Let $\mathcal{T}$ be a silting subcategory that contains $\mathcal{X}$. Applying ${\rm Hom}_{(\mathcal{C},\mathbb{E}_{\mathcal{T}})}(\mathcal{T},-)$ to the $\mathbb{E}_{\mathcal{T}}$-triangle $T\stackrel{f}\rightarrowtail X\twoheadrightarrow M_{T}\dashrightarrow $, we obtain $\mathcal{T}\geq \mathcal{M}$. Dually we have $\mathcal{T}\leq \mathcal{N}$.
	
	(2) Let $T\rightarrowtail P\twoheadrightarrow T'\dashrightarrow$ be an $\mathbb{E}_{\mathcal{T}}$-triangle with $T\in \mathcal{T},\, T'\in \mathcal{T}'$ and $P\in \mathcal{P}$. Consider the following commutative diagram
	\[\begin{tikzcd}
		T \arrow[r,tail,"f"] \arrow[d,tail] & X \arrow[r,two heads] \arrow[d,tail] & M_{T} \arrow[r,dashed] \arrow[d,equal] & {} \\
		P \arrow[r,tail] \arrow[d,two heads] & P\oplus M_{T} \arrow[r,two heads] \arrow[d,two heads] & M_{T} \arrow[r,dashed] & {} \\
		T' \arrow[r,equal] \arrow[d,dashed] & T' \arrow[d,dashed] &  & \\
		{} & {} &  &
	\end{tikzcd}\]
	Then we obtain the commutative diagram
	\[\begin{tikzcd}
		& X \arrow[r,equal] \arrow[d,tail] & X \arrow[d,tail] & \\
		N_{T'} \arrow[r,tail] \arrow[d,equal] & X\oplus X' \arrow[r,two heads] \arrow[d,two heads] & P\oplus M_{T} \arrow[r,dashed] \arrow[d,two heads] & {} \\
		N_{T'} \arrow[r,tail] & X' \arrow[r,two heads,"g"] \arrow[d,dashed] & T' \arrow[r,dashed] \arrow[d,dashed] & {} \\
		& {} & {} & 
	\end{tikzcd}\]
	The second row implies that $\mathsf{add}(\mathcal{X}\cup \mathcal{P})$ is good covariantly finite in $\mathcal{N}$ (thus $\mu^{L}(\mathcal{N};\mathsf{add}(\mathcal{X}\cup \mathcal{P}))$ exists) and $\mathcal{M}\subseteq \mu^{L}(\mathcal{N};\mathsf{add}(\mathcal{X}\cup \mathcal{P}))$. By \cite[Lemma 5.3]{AT22}, $\mathcal{M}=\mu^{L}(\mathcal{N};\mathsf{add}(\mathcal{X}\cup \mathcal{P}))$. The results follow from Theorem \ref{main thm: new 0-Aus} and Lemma \ref{basic lem on mutation}.
	
	(3) Clearly $\mathcal{C}'\subseteq {^{\bot_{1}}\mathcal{X}}\cap \mathcal{X}^{\bot_{1}}$, it suffices to show the other inclusion. Applying ${\rm Hom}_{(\mathcal{C},\mathbb{E}_{\mathcal{T}})}({^{\bot_{1}}\mathcal{X}}\cap \mathcal{X}^{\bot_{1}},-)$ to the $\mathbb{E}_{\mathcal{T}}$-triangle $N_{T'}\rightarrowtail X\oplus X'\twoheadrightarrow P\oplus M_{T}\dashrightarrow $, we obtain ${^{\bot_{1}}\mathcal{X}}\cap \mathcal{X}^{\bot_{1}}\subseteq {^{\bot}\mathcal{M}}$. Dually we have ${^{\bot_{1}}\mathcal{X}}\cap \mathcal{X}^{\bot_{1}}\subseteq \mathcal{N}^{\bot}$. The result follows.
\end{proof}

\begin{cor}\label{reduction in 0-Aus_2}
	The map $\mathcal{S}\mapsto \mathcal{S}$ induces isomorphism of posets
	\[\mathsf{silt}_{\mathcal{X}}(\mathcal{C},\mathbb{E}_{\mathcal{T}})\cong \mathsf{silt}(\mathcal{C}',\mathbb{E}_{\mathcal{T}}|_{\mathcal{C}'}).\]
\end{cor}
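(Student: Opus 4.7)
The plan is to derive the corollary essentially as a direct composition of Lemma~\ref{silt contain X}(1) with the silting interval reduction (Theorem~\ref{main thm_1}), applied inside the 0-Auslander extriangulated category $(\mathcal{C},\mathbb{E}_{\mathcal{T}})$. All the hard work has already been done in setting up the Bongartz and co-Bongartz completions $\mathcal{N}$ and $\mathcal{M}$ of $\mathcal{X}$, and in verifying (Lemma~\ref{silt contain X}(2)) that $\mathcal{M}\leq\mathcal{N}$ in $\mathsf{silt}(\mathcal{C},\mathbb{E}_{\mathcal{T}})$ and that $\mathcal{C}'={^{\bot}\mathcal{M}}\cap\mathcal{N}^{\bot}$.

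First, I would invoke Lemma~\ref{silt contain X}(1) to rewrite the left-hand side as the closed interval
\[\mathsf{silt}_{\mathcal{X}}(\mathcal{C},\mathbb{E}_{\mathcal{T}})=\mathsf{silt}\,[\mathcal{M},\mathcal{N}]\]
inside the poset $\mathsf{silt}(\mathcal{C},\mathbb{E}_{\mathcal{T}})$. Here I use that $(\mathcal{C},\mathbb{E}_{\mathcal{T}})$ is 0-Auslander (Proposition~\ref{relative struc on stably 2-CY}(1)), so in particular it has enough projectives and Theorem~\ref{main thm_1} is available.

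Second, since $\mathcal{M},\mathcal{N}\in\mathsf{silt}(\mathcal{C},\mathbb{E}_{\mathcal{T}})$ with $\mathcal{M}\leq\mathcal{N}$, Theorem~\ref{main thm_1} yields an isomorphism of posets
\[\mathsf{silt}\,[\mathcal{M},\mathcal{N}]\;\cong\;\mathsf{silt}\bigl({^{\bot}\mathcal{M}}\cap\mathcal{N}^{\bot}\bigr)=\mathsf{silt}\bigl(\mathcal{C}',\mathbb{E}_{\mathcal{T}}|_{\mathcal{C}'}\bigr),\]
with the map being $\mathcal{S}\mapsto\mathcal{S}$; the inverse is also the identity on underlying subcategories, since by Theorem~\ref{main thm_1} silting subcategories of $\mathcal{C}'$ are precisely those silting subcategories of $(\mathcal{C},\mathbb{E}_{\mathcal{T}})$ contained in ${^{\bot}\mathcal{M}}\cap\mathcal{N}^{\bot}$. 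Composing the two identifications gives the desired isomorphism, expressed by $\mathcal{S}\mapsto\mathcal{S}$.

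There is no real obstacle here; the corollary is a formal consequence of what has already been proved. The only point worth being careful about is the bookkeeping: one must verify that the equality of sets in Lemma~\ref{silt contain X}(1) and the bijection in Theorem~\ref{main thm_1} both respect the partial order (which is clear, as both are literally $\mathcal{S}\mapsto\mathcal{S}$ and the order on silting subcategories depends only on $\mathbb{E}_{\mathcal{T}}$-vanishing, which agrees in $\mathcal{C}$ and in $\mathcal{C}'$ by Corollary~\ref{main cor}(2)). This confirms the isomorphism is an isomorphism of posets, completing the proof.
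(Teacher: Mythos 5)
Your proof is correct and is essentially the paper's own argument: the paper also deduces the corollary by combining Lemma \ref{silt contain X} (which identifies $\mathsf{silt}_{\mathcal{X}}(\mathcal{C},\mathbb{E}_{\mathcal{T}})$ with the interval $\mathsf{silt}\,[\mathcal{M},\mathcal{N}]$ and gives $\mathcal{M}\leq\mathcal{N}$) with the silting interval reduction of Theorem \ref{main thm_1} applied to $\mathcal{C}'={^{\bot}\mathcal{M}}\cap\mathcal{N}^{\bot}$. Only a minor bookkeeping slip: $\mathcal{M}\leq\mathcal{N}$ is stated in part (1), not part (2), of Lemma \ref{silt contain X}, and $\mathcal{C}'$ is defined (not proved) to be ${^{\bot}\mathcal{M}}\cap\mathcal{N}^{\bot}$ there; this does not affect the argument.
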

\begin{proof}
	It follows from Lemma \ref{silt contain X} and Theorem \ref{main thm_1}.
\end{proof}

\begin{rem}
	Lemma \ref{silt contain X} and Corollary \ref{reduction in 0-Aus_2} hold in any 0-Auslander category $\mathcal{C}$ with functorially finite rigid subcategory $\mathcal{X}$. They are subcategory version of those results in Section \ref{Reduction} and \ref{Reduction implies mutation}. We refer to all these results as reduction theory in 0-Auslander categories.
\end{rem}

Since $\mathcal{X}^{\bot_{1}}={^{\bot_{1}}\mathcal{X}}\cap \mathcal{X}^{\bot_{1}}$ in $(\mathcal{C},\mathbb{E})$, which equals to ${^{\bot_{1}}\mathcal{X}}\cap \mathcal{X}^{\bot_{1}}=\mathcal{C}'$ in $(\mathcal{C},\mathbb{E}_{\mathcal{T}})$ by Proposition \ref{relative struc on stably 2-CY}, we study the relation between the two extriangulated structures on $\mathcal{X}^{\bot_{1}}$, namely $(\mathcal{X}^{\bot_{1}},\mathbb{E}|_{\mathcal{X}^{\bot_{1}}})$ and $(\mathcal{C}',\mathbb{E}_{\mathcal{T}}|_{\mathcal{C}'})$.

\begin{lem}\label{two 0-Aus struc coincide}
	The subcategory $\mathcal{N}$ is contravariantly finite and satisfies $\mathcal{N}=\mathcal{N}^{\bot_{1}}$ in $(\mathcal{X}^{\bot_{1}},\mathbb{E}|_{\mathcal{X}^{\bot_{1}}})$. Moreover $(\mathbb{E}|_{\mathcal{X}^{\bot_{1}}})_{\mathcal{N}}=\mathbb{E}_{\mathcal{T}}|_{\mathcal{C}'}$.
\end{lem}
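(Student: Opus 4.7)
The plan is to proceed in three steps, each exploiting the interplay between the ambient $\mathbb{E}$-structure and its substructure $\mathbb{E}_{\mathcal{T}}$.

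First, I would observe that $\mathcal{N} \subseteq \mathcal{X}^{\bot_{1}}$: since $\mathcal{N}$ is silting (hence rigid) in $(\mathcal{C}, \mathbb{E}_{\mathcal{T}})$ and contains $\mathcal{X}$, both $\mathbb{E}_{\mathcal{T}}(\mathcal{X}, \mathcal{N})$ and $\mathbb{E}_{\mathcal{T}}(\mathcal{N}, \mathcal{X})$ vanish, and Proposition~\ref{relative struc on stably 2-CY}(2) transfers this symmetric vanishing to $\mathbb{E}$. For contravariant finiteness of $\mathcal{N}$ in $(\mathcal{X}^{\bot_{1}}, \mathbb{E}|_{\mathcal{X}^{\bot_{1}}})$, I would invoke Lemma~\ref{silt contain X}(2), which (via Theorem~\ref{main thm: new 0-Aus}) exhibits $(\mathcal{C}', \mathbb{E}_{\mathcal{T}}|_{\mathcal{C}'})$ as a $0$-Auslander extriangulated category with projectives $\mathcal{N}$. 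Thus for any $Z \in \mathcal{X}^{\bot_{1}} = \mathcal{C}'$ there is an $\mathbb{E}_{\mathcal{T}}|_{\mathcal{C}'}$-conflation $K \rightarrowtail N_{0} \stackrel{p}{\twoheadrightarrow} Z \dashrightarrow$ with $K, N_{0} \in \mathcal{N}$; applying $\mathrm{Hom}(N, -)$ for $N \in \mathcal{N}$ and using $\mathbb{E}_{\mathcal{T}}(\mathcal{N}, \mathcal{N}) = 0$ shows that $p$ is the desired right $\mathcal{N}$-approximation of $Z$ in $\mathcal{X}^{\bot_{1}}$.

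Next, for $\mathcal{N} = \mathcal{N}^{\bot_{1}}$ inside $(\mathcal{X}^{\bot_{1}}, \mathbb{E}|_{\mathcal{X}^{\bot_{1}}})$, the inclusion $\mathcal{N} \subseteq \mathcal{N}^{\bot_{1}}$ follows again from the first paragraph together with Proposition~\ref{relative struc on stably 2-CY}(2) applied to the rigidity of $\mathcal{N}$ in $(\mathcal{C}, \mathbb{E}_{\mathcal{T}})$. For the reverse, given $Y \in \mathcal{X}^{\bot_{1}}$ with $\mathbb{E}(\mathcal{N}, Y) = 0$, I would exploit the stably $2$-Calabi--Yau structure on $\mathcal{X}^{\bot_{1}}$ supplied by Theorem~\ref{main thm in FMP} to deduce $\mathbb{E}(Y, \mathcal{N}) = 0$ as well; Proposition~\ref{relative struc on stably 2-CY}(2) then transfers both vanishings to $\mathbb{E}_{\mathcal{T}}$. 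Since $(\mathcal{C}, \mathbb{E}_{\mathcal{T}})$ is $0$-Auslander and hence has $\mathbb{E}_{\mathcal{T}}^{\geq 2} = 0$, this forces $Y \in {^{\bot}\mathcal{N}} \cap \mathcal{N}^{\bot}$, and by Theorem~\ref{AT22thm5.7} applied to the silting subcategory $\mathcal{N}$ we conclude $Y \in \mathcal{N}$.

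Finally, for the equality of substructures, fix $A, C \in \mathcal{C}'$ and $\delta \in \mathbb{E}(C, A)$. If $\delta \in \mathbb{E}_{\mathcal{T}}(C, A)$, then for every $f : N \to C$ with $N \in \mathcal{N}$ the pullback $f^{\ast}\delta$ lies in $\mathbb{E}_{\mathcal{T}}(N, A)$; but $A \in \mathcal{C}' \subseteq \mathcal{N}^{\bot}$ in $(\mathcal{C}, \mathbb{E}_{\mathcal{T}})$, so $\mathbb{E}_{\mathcal{T}}(N, A) = 0$ and therefore $f^{\ast}\delta = 0$. Conversely, suppose $\delta \in (\mathbb{E}|_{\mathcal{X}^{\bot_{1}}})_{\mathcal{N}}(C, A)$ and take any $f : T \to C$ with $T \in \mathcal{T}$; using that $T$ is projective in $(\mathcal{C}, \mathbb{E}_{\mathcal{T}})$ and that $p : N_{0} \twoheadrightarrow C$ from the first paragraph is an $\mathbb{E}_{\mathcal{T}}$-deflation, I can factor $f = p \circ g$ for some $g : T \to N_{0}$, whence $f^{\ast}\delta = g^{\ast}(p^{\ast}\delta) = 0$ by hypothesis on $\delta$. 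The main obstacle is orchestrating the passage between the two substructures; the symmetric criterion of Proposition~\ref{relative struc on stably 2-CY}(2), the $2$-Calabi--Yau duality of Theorem~\ref{main thm in FMP}, and the $0$-Auslander content of Lemma~\ref{silt contain X} together translate orthogonalities and approximations across the two structures, and once these translations are in place each step is essentially a one-line deduction.
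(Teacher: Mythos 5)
Your proof is correct, and for the first two assertions it follows essentially the paper's route: contravariant finiteness of $\mathcal{N}$ comes from its being the projectives of the $0$-Auslander category $(\mathcal{C}',\mathbb{E}_{\mathcal{T}}|_{\mathcal{C}'})$, and the identity $\mathcal{N}=\mathcal{N}^{\bot_{1}}$ rests on the same ingredients (the $2$-Calabi--Yau symmetry, Proposition \ref{relative struc on stably 2-CY}(2), Theorem \ref{AT22thm5.7}) that the paper invokes in packaged form through the proof of Proposition \ref{relative struc on stably 2-CY}(3), where $\mathcal{N}$ is shown to satisfy ${^{\bot_{1}}\mathcal{N}}=\mathcal{N}=\mathcal{N}^{\bot_{1}}$ in $(\mathcal{C},\mathbb{E})$. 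Where you genuinely diverge is the equality of substructures: the paper realizes $\delta\in\mathbb{E}_{\mathcal{T}}(C,A)$ by an $\mathbb{E}$-triangle $A\rightarrowtail B\twoheadrightarrow C$ and runs an explicit diagram chase with the defining conflations $N_{T'}\rightarrowtail X'\twoheadrightarrow T'$ of $\mathcal{N}$ to show that every morphism $N_{T'}\oplus P\to C$ lifts along the deflation, leaving the reverse inclusion as ``similar''; you instead argue both inclusions functorially --- one from the fact that $\mathbb{E}_{\mathcal{T}}$ is closed under pullback (so $f^{\ast}\delta\in\mathbb{E}_{\mathcal{T}}(N,A)=0$ because $A\in\mathcal{C}'\subseteq\mathcal{N}^{\bot}$), the other by lifting any $T\to C$ with $T\in\mathcal{T}$ projective in $(\mathcal{C},\mathbb{E}_{\mathcal{T}})$ along the $\mathbb{E}_{\mathcal{T}}$-deflation $N_{0}\twoheadrightarrow C$ supplied by the $0$-Auslander structure of $\mathcal{C}'$. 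Your version is shorter, avoids the (ET4)-type diagrams, and makes both inclusions explicit, at the cost of invoking the (standard, and implicitly used in the paper) subfunctor property of $\mathbb{E}_{\mathcal{T}}$. One small economy: in your second paragraph you do not need Theorem \ref{main thm in FMP} for the symmetric vanishing, since $(\mathcal{C},\mathbb{E})$ is already stably $2$-Calabi--Yau and $\mathbb{E}|_{\mathcal{X}^{\bot_{1}}}$ is just the restriction of $\mathbb{E}$, so $\mathbb{E}(\mathcal{N},Y)=0$ gives $\mathbb{E}(Y,\mathcal{N})=0$ directly.
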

\begin{proof}
	Since $(\mathcal{C}',\mathbb{E}_{\mathcal{T}}|_{\mathcal{C}'})$ is 0-Auslander, $\mathcal{N}$ is contravariantly finite in $(\mathcal{X}^{\bot_{1}},\mathbb{E}|_{\mathcal{X}^{\bot_{1}}})$. By the proof of Proposition \ref{relative struc on stably 2-CY} (3), we have $\mathcal{N}={^{\bot_{1}}\mathcal{N}}=\mathcal{N}^{\bot_{1}}$ in $(\mathcal{C},\mathbb{E})$. Thus $\mathcal{N}=\mathcal{N}^{\bot_{1}}$ in $(\mathcal{X}^{\bot_{1}},\mathbb{E}|_{\mathcal{X}^{\bot_{1}}})$. By Theorem \ref{main thm in FMP} and Proposition \ref{relative struc on stably 2-CY} (1), $(\mathcal{X}^{\bot_{1}},(\mathbb{E}|_{\mathcal{X}^{\bot_{1}}})_{\mathcal{N}})$ is 0-Auslander. We claim that it is just $(\mathcal{C}',\mathbb{E}_{\mathcal{T}}|_{\mathcal{C}'})$. For any $\mathbb{E}$-triangle $A\rightarrowtail B\stackrel{b}\twoheadrightarrow C\stackrel{\delta}\dashrightarrow $ with $A,C\in \mathcal{X}^{\bot_{1}}$. Assume $\delta\in \mathbb{E}_{\mathcal{T}}(C,A)$. Then for each $N_{T'}\in \mathcal{N}$ which is given by an $\mathbb{E}_{\mathcal{T}}$-triangle $N_{T'}\rightarrowtail X'\stackrel{g}\twoheadrightarrow T'\dashrightarrow $ as before, consider the following commutative diagram
	\[\begin{tikzcd}
		& T \arrow[r,equal] \arrow[d,tail,"a"] & T \arrow[d,tail] & \\
		N_{T'} \arrow[r,tail] \arrow[d,equal] & N_{T'}\oplus P \arrow[r,two heads] \arrow[d,two heads,"c"] & P \arrow[r,dashed] \arrow[d,two heads] & {} \\
		N_{T'} \arrow[r,tail] & X' \arrow[r,two heads,"g"] \arrow[d,dashed] & T' \arrow[r,dashed] \arrow[d,dashed] & {} \\
		& {} & {} &
	\end{tikzcd}\]
	where the third column is an $\mathbb{E}_{\mathcal{T}}$-triangle with $T\in \mathcal{T}$ and $P\in \mathcal{P}$. For any morphism $h:N_{T'}\oplus P\rightarrow C$, consider the following diagram
	\[\begin{tikzcd}
		& T \arrow[r,tail,"a"] \arrow[d,dashed,"d",swap] & N_{T'}\oplus P \arrow[r,two heads,"c"] \arrow[d,"h",near start,swap] \arrow[dl,dashed,"r",swap] & X' \arrow[r,dashed] \arrow[dl,dashed,"s"] \arrow[dll,dashed,"t",swap,near start] & {} \\
		A \arrow[r,tail] & B \arrow[r,two heads,"b",swap] & C \arrow[r,dashed,"\delta",swap] & {} &.
	\end{tikzcd}\]
	Since $\delta\in \mathbb{E}_{\mathcal{T}}(C,A)$, there exists $d$ such that $ha=bd$. Since $B\in \mathcal{X}^{\bot_{1}}$ and $X'\in \mathsf{add}(\mathcal{X}\cup \mathcal{P})$, there exists $r$ such that $ra=d$. Then $(h-br)a=0$, hence there exists $s$ such that $h=br+sc$. Since $A\in \mathcal{X}^{\bot_{1}}$, there exists $t$ such that $bt=s$. Thus $b(r+tc)=h$, which implies $\delta\in (\mathbb{E}|_{\mathcal{X}^{\bot_{1}}})_{\mathcal{N}}(C,A)$. The other inclusion is similar.
\end{proof}

Next, we recover the reduction in Theorem \ref{main thm in FMP} with Corollary \ref{reduction in 0-Aus_2}.

\begin{thm}\label{our reduction vs 2-CY reduction}
	There is a commutative diagram
	\[\begin{tikzcd}
		\mathsf{f\text{-}silt}_{\mathcal{X}}(\mathcal{C},\mathbb{E}_{\mathcal{T}}) \arrow[r,"\cong"] \arrow[d,equal] & \mathsf{f\text{-}silt}(\mathcal{C}',\mathbb{E}_{\mathcal{T}}|_{\mathcal{C}'}) \arrow[d,equal] \\
		\mathsf{c\text{-}tilt}_{\mathcal{X}}(\mathcal{C},\mathbb{E}) \arrow[r,"\cong"] & \mathsf{c\text{-}tilt}(\mathcal{X}^{\bot_{1}},\mathbb{E}|_{\mathcal{X}^{\bot_{1}}})
	\end{tikzcd}\]
	where $\mathsf{f\text{-}silt}(-)$ denote the functorially finite silting subcategories.
\end{thm}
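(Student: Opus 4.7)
The strategy is to establish the two vertical equalities and the two horizontal bijections separately, then observe that commutativity is automatic because every map in sight is the identity on underlying subcategories. The reduction machinery developed in the preceding lemmas does almost all of the work.

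\textbf{Left vertical equality.} I would apply Proposition \ref{relative struc on stably 2-CY}(3) directly to the stably 2-Calabi-Yau Frobenius category $(\mathcal{C},\mathbb{E})$ together with the contravariantly finite subcategory $\mathcal{T}=\mathcal{T}^{\bot_{1}}$. This identifies $\mathsf{c\text{-}tilt}(\mathcal{C},\mathbb{E})$ with $\mathsf{f\text{-}silt}(\mathcal{C},\mathbb{E}_{\mathcal{T}})$; restricting to those subcategories containing $\mathcal{X}$ gives the left vertical equality (note $\mathcal{X}$ is simultaneously rigid in $(\mathcal{C},\mathbb{E})$ and presilting in $(\mathcal{C},\mathbb{E}_{\mathcal{T}})$ by Proposition \ref{relative struc on stably 2-CY}(2)(3), so the ``contains $\mathcal{X}$'' condition makes sense on both sides).

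\textbf{Right vertical equality.} Here the plan is to apply Proposition \ref{relative struc on stably 2-CY}(3) a second time, this time with $(\mathcal{X}^{\bot_{1}},\mathbb{E}|_{\mathcal{X}^{\bot_{1}}})$ playing the role of the ambient stably 2-Calabi-Yau Frobenius category and with the silting subcategory $\mathcal{N}$ playing the role of $\mathcal{T}$. The hypotheses needed are supplied in order by: Theorem \ref{main thm in FMP} (which gives the stably 2-Calabi-Yau Frobenius structure on $\mathcal{X}^{\bot_{1}}$), and Lemma \ref{two 0-Aus struc coincide} (which supplies the contravariant finiteness of $\mathcal{N}$, the equality $\mathcal{N}=\mathcal{N}^{\bot_{1}}$ in $(\mathcal{X}^{\bot_{1}},\mathbb{E}|_{\mathcal{X}^{\bot_{1}}})$, and crucially the identification $(\mathbb{E}|_{\mathcal{X}^{\bot_{1}}})_{\mathcal{N}}=\mathbb{E}_{\mathcal{T}}|_{\mathcal{C}'}$). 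Proposition \ref{relative struc on stably 2-CY}(3) then identifies $\mathsf{c\text{-}tilt}(\mathcal{X}^{\bot_{1}},\mathbb{E}|_{\mathcal{X}^{\bot_{1}}})$ with $\mathsf{f\text{-}silt}(\mathcal{X}^{\bot_{1}},(\mathbb{E}|_{\mathcal{X}^{\bot_{1}}})_{\mathcal{N}})=\mathsf{f\text{-}silt}(\mathcal{C}',\mathbb{E}_{\mathcal{T}}|_{\mathcal{C}'})$, which is the right vertical equality.

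\textbf{Horizontal bijections and commutativity.} The top horizontal bijection $\mathcal{S}\mapsto \mathcal{S}$ is Corollary \ref{reduction in 0-Aus_2} applied inside $(\mathcal{C},\mathbb{E}_{\mathcal{T}})$. The bottom horizontal bijection $\mathcal{S}\mapsto \mathcal{S}$ is Theorem \ref{main thm in FMP}. Since all four maps in the diagram send a subcategory to itself, commutativity is immediate. To make the diagram literally well-defined one still needs to check that ``functorially finite'' is preserved under the top reduction (a silting subcategory of $(\mathcal{C}',\mathbb{E}_{\mathcal{T}}|_{\mathcal{C}'})$ coming from $\mathsf{f\text{-}silt}_{\mathcal{X}}(\mathcal{C},\mathbb{E}_{\mathcal{T}})$ is automatically functorially finite in $\mathcal{C}'$ since the ambient silting subcategory is), but this drops out of the identifications via the vertical equalities.

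\textbf{Main obstacle.} The only delicate point is verifying that the ``outer'' stably 2-Calabi-Yau reduction of Theorem \ref{main thm in FMP} and the ``inner'' 0-Auslander relative substructure agree on $\mathcal{X}^{\bot_{1}}=\mathcal{C}'$; this is precisely the content of Lemma \ref{two 0-Aus struc coincide}, so once that lemma is in hand the proof reduces to bookkeeping. The rest is formal: matching two descriptions of the same collection of subcategories under two already-established reduction theorems.
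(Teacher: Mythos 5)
Your proposal is correct and follows essentially the same route as the paper: the vertical equalities come from Proposition \ref{relative struc on stably 2-CY}(3) applied once to $(\mathcal{C},\mathbb{E})$ with $\mathcal{T}$ and once to $(\mathcal{X}^{\bot_{1}},\mathbb{E}|_{\mathcal{X}^{\bot_{1}}})$ with $\mathcal{N}$ (the latter enabled by Theorem \ref{main thm in FMP} and Lemma \ref{two 0-Aus struc coincide}), the horizontal maps are Corollary \ref{reduction in 0-Aus_2} and Theorem \ref{main thm in FMP}, and commutativity is automatic since every map is the identity on underlying subcategories. Your remark that matching the functorial-finiteness conditions in the top bijection is settled through the vertical identifications is precisely why the paper also cites Theorem \ref{main thm in FMP} for the top arrow.
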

\begin{proof}
	The top bijection follows from Corollary \ref{reduction in 0-Aus_2}, Theorem \ref{main thm in FMP}. The bottom one is given in Theorem \ref{main thm in FMP}. The vertical equalities follow from Proposition \ref{relative struc on stably 2-CY} (3) and Lemma \ref{two 0-Aus struc coincide}. The commutativity is obvious.
\end{proof}

\begin{rem}
	Compared with Theorem \ref{main thm in FMP}, we assume moreover the existence of $\mathcal{T}$ in our setting. It is natural since if  $\mathcal{C}$ has no contravariantly finite subcategories  $\mathcal{T}$ such that $\mathcal{T}=\mathcal{T}^{\bot_{1}}$, 
	then $\mathsf{c\text{-}tilt}_{\mathcal{X}}\mathcal{C}\cong \mathsf{c\text{-}tilt}(\mathcal{X}^{\bot_{1}})=\emptyset$.
\end{rem}

\begin{lem}\label{reduced case}
	Let $\overline{(-)}:\mathcal{C}\rightarrow \overline{\mathcal{C}}:=\mathcal{C}/[\mathcal{P}]$, then the following holds.
	
	(1) The functor $\overline{(-)}$ induces $\mathsf{c\text{-}tilt}(\mathcal{C},\mathbb{E})\cong \mathsf{c\text{-}tilt}(\overline{\mathcal{C}},\overline{\mathbb{E}})$.
	
	(2) $\mathcal{T}$ is contravariantly finite and satisfies $\mathcal{T}=\mathcal{T}^{\bot_{1}}$ in $(\overline{\mathcal{C}},\overline{\mathbb{E}})$. Moreover $\overline{\mathbb{E}}_{\mathcal{T}}=\overline{\mathbb{E}_{\mathcal{T}}}$, that is, $(\overline{\mathcal{C}},\overline{\mathbb{E}}_{\mathcal{T}})=(\overline{\mathcal{C}},\overline{\mathbb{E}_{\mathcal{T}}})$ as reduced 0-Auslander extriangulated categories.
\end{lem}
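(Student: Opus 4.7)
For (1), I would first observe that every cluster-tilting $\mathcal{N}$ in $(\mathcal{C},\mathbb{E})$ contains $\mathcal{P}$, since $\mathcal{P}$ is rigid and thus lies in ${}^{\bot_{1}}\mathcal{N}\cap\mathcal{N}^{\bot_{1}}=\mathcal{N}$. The quotient functor $\overline{(-)}$ preserves first extensions (the standard fact $\overline{\mathbb{E}}(X,Y)=\mathbb{E}(X,Y)$ for ideal quotients by projective-injectives, already used in Lemma \ref{silting bijection}). Hence $\mathcal{N}\mapsto\overline{\mathcal{N}}$ is a well-defined map to $\mathsf{c\text{-}tilt}(\overline{\mathcal{C}},\overline{\mathbb{E}})$ with inverse sending $\overline{\mathcal{M}}$ to its pre-image (which automatically contains $\mathcal{P}$). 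Rigidity and the bot-conditions ${}^{\bot_{1}}\mathcal{N}=\mathcal{N}=\mathcal{N}^{\bot_{1}}$ translate directly via extension preservation, while functorial finiteness is stable under the quotient: approximations in $\mathcal{C}$ project to $\overline{\mathcal{C}}$, and conversely a lifted right approximation $m\colon M\to X$ can be extended to a right $\mathcal{N}$-approximation $(m,p)\colon M\oplus P\to X$ by choosing any deflation $p\colon P\twoheadrightarrow X$ with $P\in\mathcal{P}\subseteq\mathcal{N}$ and using projectivity of $P$ to lift the residue through $p$.

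For the first half of (2), $\mathcal{P}\subseteq\mathcal{T}^{\bot_{1}}=\mathcal{T}$, so $\mathcal{T}$ descends to $\overline{\mathcal{C}}$; contravariant finiteness transfers by projecting approximations, and $\mathcal{T}=\mathcal{T}^{\bot_{1}}$ in $\overline{\mathcal{C}}$ follows from $\overline{\mathbb{E}}=\mathbb{E}$ together with $\mathcal{P}\subseteq\mathcal{T}$. The main task is the equality $\overline{\mathbb{E}}_{\mathcal{T}}=\overline{\mathbb{E}_{\mathcal{T}}}$, viewed as subfunctors of $\overline{\mathbb{E}}=\mathbb{E}$. By Proposition \ref{relative struc on stably 2-CY}(1) combined with Lemma \ref{basic lem on mutation}(2), the projective-injectives of $(\mathcal{C},\mathbb{E}_{\mathcal{T}})$ are exactly $\mathcal{T}\cap\mathcal{T}'=\mathcal{P}$, so the quotient $(\overline{\mathcal{C}},\overline{\mathbb{E}_{\mathcal{T}}})$ is automatically reduced 0-Auslander and satisfies $\overline{\mathbb{E}_{\mathcal{T}}}(C,A)=\mathbb{E}_{\mathcal{T}}(C,A)$ as sub-bifunctors of $\mathbb{E}$.

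The crux is that pullback of $\delta\in\mathbb{E}(C,A)$ along morphisms $f\colon T\to C$ with $T\in\mathcal{T}$ descends to $\overline{\mathcal{C}}$: if $f=gh$ with $h\colon T\to P$ and $g\colon P\to C$ for some $P\in\mathcal{P}$, then $g^{\ast}\delta\in\mathbb{E}(P,A)=0$ by projectivity of $P$, so $f^{\ast}\delta=h^{\ast}(g^{\ast}\delta)=0$. Hence $\overline{f}^{\ast}\delta$ is well defined on quotient classes, and since every morphism in $\overline{\mathcal{C}}$ lifts to $\mathcal{C}$, the two quantifier conditions ``$f^{\ast}\delta=0$ for every $f\colon T\to C$ in $\mathcal{C}$'' and ``$\overline{f}^{\ast}\delta=0$ for every $\overline{f}\colon T\to C$ in $\overline{\mathcal{C}}$'' coincide, yielding $\overline{\mathbb{E}}_{\mathcal{T}}=\mathbb{E}_{\mathcal{T}}=\overline{\mathbb{E}_{\mathcal{T}}}$. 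The principal obstacle is keeping the three overlapping structures $\mathbb{E}$, $\mathbb{E}_{\mathcal{T}}$, and their respective quotients notationally separated; once the well-definedness of pullback at the quotient level is pinned down, the comparison is a direct unwinding of the definitions.
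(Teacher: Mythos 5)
Your proposal is correct and takes essentially the same approach as the paper: for (1) the same observation that $\overline{\mathbb{E}}=\mathbb{E}$ makes the $\bot_{1}$-conditions transfer, together with the same trick of completing a lifted approximation to $(m,p)\colon M\oplus P\to X$ via a deflation from a projective, and for (2) the same unwinding of the definitions once one knows the pullback action descends to $\overline{\mathcal{C}}$ (the paper simply cites \cite[Proposition 3.30]{NP} for $\overline{\mathbb{E}}(\overline{c},\overline{a})=\mathbb{E}(c,a)$, where you re-derive it directly). The only cosmetic slip is the phrase ``projectivity of $P$'' in (1): the residue factors through some projective object $Q$, and it is the projectivity of $Q$ that lets the map $Q\to X$ lift through the deflation $p$.
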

\begin{proof}
	(1) Since $\overline{\mathbb{E}}(C,A)=\mathbb{E}(C,A)$, for a subcategory $\mathcal{X}$, $\mathcal{X}^{\bot_{1}}$ in $(\mathcal{C},\mathbb{E})$ coincides with $\mathcal{X}^{\bot_{1}}$ in $(\overline{\mathcal{C}},\overline{\mathbb{E}})$. It suffices to show if $\mathcal{U}\in \mathsf{c\text{-}tilt}(\overline{\mathcal{C}},\overline{\mathbb{E}})$, then it is functorially finite in $(\mathcal{C},\mathbb{E})$. For each $Y$, assume $\overline{a}:U\rightarrow Y$ is a right $\mathcal{U}$-approximation in $(\overline{\mathcal{C}},\overline{\mathbb{E}})$ and $Y'\rightarrowtail P\stackrel{b}\twoheadrightarrow Y\dashrightarrow $ is an $\mathbb{E}$-triangle with $P\in \mathcal{P}$. Then $(a,b):U\oplus P\rightarrow Y$ is a right $\mathcal{U}$-approximation in $(\mathcal{C},\mathbb{E})$. Dually for left approximations.
	
	(2) It suffices to show the second part. Since $\overline{\mathbb{E}}(\overline{c},\overline{a})=\mathbb{E}(c,a)$ for any morphisms $a,c$ in $\mathcal{C}$ by \cite[Proposition 3.30]{NP}, it follows immediately by definition.
\end{proof}

\begin{prop}\label{commutative diagram reduced}
	There is a commutative diagram
	\[\begin{tikzcd}
		\mathsf{f\text{-}silt}(\mathcal{C},\mathbb{E}_{\mathcal{T}}) \arrow[r,"\cong"] \arrow[d,equal] & \mathsf{f\text{-}silt}(\overline{\mathcal{C}},\overline{\mathbb{E}_{\mathcal{T}}}) \arrow[d,equal] \\
		\mathsf{c\text{-}tilt}(\mathcal{C},\mathbb{E}) \arrow[r,"\cong"] & \mathsf{c\text{-}tilt}(\overline{\mathcal{C}},\overline{\mathbb{E}}).
	\end{tikzcd}\]
\end{prop}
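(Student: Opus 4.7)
The plan is to read off each edge of the square from results already established and then note that commutativity is automatic because every bijection is induced by the same quotient functor $\overline{(-)}:\mathcal{C}\to\overline{\mathcal{C}}$, while the vertical equalities are genuine equalities of subcategories.

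First I would record the easy edges. The bottom bijection $\mathsf{c\text{-}tilt}(\mathcal{C},\mathbb{E})\cong\mathsf{c\text{-}tilt}(\overline{\mathcal{C}},\overline{\mathbb{E}})$ is exactly Lemma \ref{reduced case} (1), and the left vertical equality $\mathsf{f\text{-}silt}(\mathcal{C},\mathbb{E}_{\mathcal{T}})=\mathsf{c\text{-}tilt}(\mathcal{C},\mathbb{E})$ is a direct instance of Proposition \ref{relative struc on stably 2-CY} (3).

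Next I would establish the right vertical equality by applying Proposition \ref{relative struc on stably 2-CY} (3) to the data $(\overline{\mathcal{C}},\overline{\mathbb{E}},\mathcal{T})$. The hypotheses have to be checked in $\overline{\mathcal{C}}$: it is a $2$-Calabi-Yau triangulated category, hence extriangulated and (trivially) stably $2$-CY Frobenius with $\mathcal{P}=0$, so the ambient setting is valid; and by Lemma \ref{reduced case} (2), $\mathcal{T}$ is contravariantly finite in $(\overline{\mathcal{C}},\overline{\mathbb{E}})$ with $\mathcal{T}=\mathcal{T}^{\bot_{1}}$. The same lemma provides the crucial identification $\overline{\mathbb{E}_{\mathcal{T}}}=\overline{\mathbb{E}}_{\mathcal{T}}$, so the relative substructure in the top-right corner of the diagram is the one produced by the proposition, and the right vertical equality follows.

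Finally, the top bijection is defined as the composition of the left equality, the bottom bijection, and the right equality; tracing an element through shows that it is the map $\mathcal{S}\mapsto\overline{\mathcal{S}}$, because the bottom arrow is literally given by $\overline{(-)}$ and the vertical sides are identity maps on the underlying subcategories. Commutativity of the square is then tautological. The argument poses no serious obstacle; the only point requiring attention is the verification that a triangulated $2$-CY category is to be read as a stably $2$-CY Frobenius extriangulated category (with zero projective-injectives), so that Proposition \ref{relative struc on stably 2-CY} is genuinely applicable to the right-hand column.
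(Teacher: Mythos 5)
Your proposal is correct, and it coincides with the paper's proof on three of the four edges: the bottom bijection is taken from Lemma \ref{reduced case} (1), and both vertical equalities come from Proposition \ref{relative struc on stably 2-CY} (3) applied to $(\mathcal{C},\mathbb{E})$ and to $(\overline{\mathcal{C}},\overline{\mathbb{E}})$ respectively, the latter being legitimate thanks to Lemma \ref{reduced case} (2) (which supplies both the hypotheses on $\mathcal{T}$ in $\overline{\mathcal{C}}$ and the identification $\overline{\mathbb{E}}_{\mathcal{T}}=\overline{\mathbb{E}_{\mathcal{T}}}$); your remark that the $2$-Calabi--Yau triangulated category $\overline{\mathcal{C}}$ is itself stably $2$-CY Frobenius with zero projective-injectives is exactly the point that needs saying there. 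The only place where you diverge from the paper is the top arrow: the paper establishes the bijection $\mathsf{f\text{-}silt}(\mathcal{C},\mathbb{E}_{\mathcal{T}})\cong\mathsf{f\text{-}silt}(\overline{\mathcal{C}},\overline{\mathbb{E}_{\mathcal{T}}})$ independently, by invoking Lemma \ref{silting bijection} (the general silting bijection under the ideal quotient by projective-injectives) together with the functorial-finiteness transfer argument of Lemma \ref{reduced case} (1), and then observes that the square commutes; you instead define the top map as the composite of the other three edges and check by tracing that it is the quotient-induced map $\mathcal{S}\mapsto\overline{\mathcal{S}}$, so commutativity is tautological. Both routes are valid and give the same statement. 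Your route is slightly more economical, since it avoids Lemma \ref{silting bijection} and the separate verification that functorial finiteness passes through the quotient; the paper's route has the advantage that the top bijection is obtained intrinsically, without appeal to the $2$-CY/cluster-tilting identifications, so it remains available in situations where the vertical equalities are not in play.
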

\begin{proof}
	The top bijection follows from Lemma \ref{silting bijection} and the same argument as Lemma \ref{reduced case} (1). The bottom one is given in Lemma \ref{reduced case} (1). The vertical equalities follow from Proposition \ref{relative struc on stably 2-CY} (3) and Lemma \ref{reduced case} (2). The commutativity is obvious.
\end{proof}

\begin{rem}
	By Theorem \ref{our reduction vs 2-CY reduction} and Proposition \ref{commutative diagram reduced}, reduction theory in 0-Auslander categories recovers the classical cluster-tilting reduction for 2-Calabi-Yau triangulated categories. Hence Theorem \ref{main thm2} also covers \cite[Theorem 4.24]{G.Jasso}.
\end{rem}

\section{$d$-Auslander extriangulated categories}\label{section 7}

Let $\mathcal{C}=(\mathcal{C},\mathbb{E},\mathfrak{s})$ be an extriangulated category. In this section, we generalize Theorem \ref{main thm: new 0-Aus} to $d$-Auslander extriangulated categories as follows.

\begin{thm}\label{d-Auslander vs mutation}
Let $\mathcal{M},\mathcal{N}\in \mathsf{silt}\,\mathcal{C}$, $\mathcal{C}':={^{\bot}\mathcal{M}}\cap \mathcal{N}^{\bot}$ and $d\geq 0$ an integer. Then $\mathcal{M}\leq \mathcal{N}$ and $\mathcal{C}'$ is $d$-Auslander if and only if there is a sequence of silting subcategories 
\[\mathcal{N}=\mathcal{T}_{0}\geq \mathcal{T}_{1}\geq \cdots \geq \mathcal{T}_{d+1}=\mathcal{M}\]
such that for $0\leq i\leq d$, $\mathcal{T}_{i+1}=\mu^{L}(\mathcal{T}_{i};\mathcal{D})$ for a good covariantly finite subcategory $\mathcal{D}$ of $\mathcal{T}_{i}$.
\end{thm}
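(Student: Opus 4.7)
The proof proceeds by induction on $d$, with the base case $d=0$ being Theorem \ref{main thm: new 0-Aus}. The overall strategy for the inductive step is to split off one mutation at a time: in both directions, identify an intermediate silting subcategory $\mathcal{T}_1\in\mathsf{silt}\,[\mathcal{M},\mathcal{N}]$ and decompose the problem into a single mutation from $\mathcal{N}$ to $\mathcal{T}_1$ (handled by the base case through Theorem \ref{main thm: new 0-Aus}) and a chain of length $d$ from $\mathcal{T}_1$ to $\mathcal{M}$ (handled by the inductive hypothesis). First I would note that any silting $\mathcal{T}\in\mathsf{silt}\,[\mathcal{M},\mathcal{N}]$ automatically sits inside $\mathcal{C}'$, because $\mathbb{E}^{\geq 1}(\mathcal{N},\mathcal{T})=\mathbb{E}^{\geq 1}(\mathcal{T},\mathcal{M})=0$, and by Theorem \ref{main thm_1} is a silting subcategory of the extriangulated category $\mathcal{C}'$; in particular every intermediate term in such a chain lives naturally in $\mathcal{C}'$.

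For the direction $(\Leftarrow)$, given the sequence $\mathcal{N}=\mathcal{T}_0\geq\mathcal{T}_1\geq\cdots\geq\mathcal{T}_{d+1}=\mathcal{M}$, applying Theorem \ref{main thm: new 0-Aus} to the first mutation $\mathcal{T}_1=\mu^{L}(\mathcal{N};\mathcal{D}_0)$ shows that ${^{\bot}\mathcal{T}_1}\cap\mathcal{N}^{\bot}$ is $0$-Auslander with projectives $\mathcal{N}$ and injectives $\mathcal{T}_1$, while the inductive hypothesis applied to the residual chain $\mathcal{T}_1\geq\mathcal{T}_2\geq\cdots\geq\mathcal{T}_{d+1}=\mathcal{M}$ shows that ${^{\bot}\mathcal{M}}\cap\mathcal{T}_1^{\bot}$ is $(d-1)$-Auslander. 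I would then combine these two structures inside $\mathcal{C}'$ via Corollary \ref{main cor}: the $\mathbb{E}_{\mathcal{C}'}$-triangle arising from the first mutation together with the shorter resolution from the second piece assemble into $d+1$ layers of $\mathbb{E}_{\mathcal{C}'}$-triangles witnessing the dominant dimension count for $\mathcal{C}'$, while the pd bound follows by dimension shift along the same layers. Conversely, in the direction $(\Rightarrow)$, starting from $\mathcal{C}'$ being $d$-Auslander I would use the dominant-dimension resolutions of the projectives $N\in\mathcal{N}$ to extract, from the first step of each resolution, cones $K^N$ that together with the projective-injectives of $\mathcal{C}'$ assemble into a silting subcategory $\mathcal{T}_1\in\mathsf{silt}\,[\mathcal{M},\mathcal{N}]$; the base case then provides $\mathcal{T}_1=\mu^{L}(\mathcal{N};\mathcal{D}_0)$, and truncating the original resolutions by one step shows that ${^{\bot}\mathcal{M}}\cap\mathcal{T}_1^{\bot}$ is $(d-1)$-Auslander, to which the inductive hypothesis applies.

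The main obstacle will be verifying the compatibility of the layered extriangulated structures under the reductions of Corollary \ref{main cor}. Specifically, one must check that the projective-injective subcategory $\mathcal{N}\cap\mathcal{M}$ of $\mathcal{C}'$ is correctly detected by the shared subcategories $\mathcal{T}_i\cap\mathcal{T}_{i+1}=\mathcal{D}_i$ at each step, and that the dominant-dimension count is preserved both when stacking a $0$-Auslander piece on top of a $(d-1)$-Auslander piece in $(\Leftarrow)$ and when truncating the resolutions in $(\Rightarrow)$. Once the correct $\mathcal{T}_1$ is identified this reduces to bookkeeping through Corollary \ref{main cor}, Lemma \ref{basic lem on mutation}, and the characterisation of the extriangulated structure on the intersections coming from Theorem \ref{main thm_1}.
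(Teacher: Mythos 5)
Your induction scheme has a genuine problem, and it is concentrated exactly at the point you flag as ``bookkeeping''. You read the hypothesis as allowing a different subcategory $\mathcal{D}_i$ at each step (you write $\mathcal{D}_0$ and ``$\mathcal{T}_i\cap\mathcal{T}_{i+1}=\mathcal{D}_i$''), but the theorem, as the paper proves and uses it, requires one and the same $\mathcal{D}$ for all $d+1$ mutations; with varying $\mathcal{D}_i$ the backward implication is false. Concretely, in $D^{b}(kA_2)$ ($A_2\colon 1\to 2$) take $\mathcal{N}=\mathsf{add}(P_1\oplus P_2)$, $\mathcal{T}_1=\mu^{L}(\mathcal{N};\mathsf{add}P_1)=\mathsf{add}(P_1\oplus S_1)$ and $\mathcal{M}=\mu^{L}(\mathcal{T}_1;\mathsf{add}S_1)=\mathsf{add}(S_1\oplus P_2[1])$: all hypotheses hold, but $\mathcal{M}\cap\mathcal{N}=0$, so the projective-injectives of $\mathcal{C}'={}^{\bot}\mathcal{M}\cap\mathcal{N}^{\bot}$ are zero while $P_1[1]\notin\mathcal{C}'$, hence ${\rm dom.dim}\,\mathcal{C}'=0$ and $\mathcal{C}'$ is not $1$-Auslander. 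The underlying issue is that ``projective-injective'' is relative to the ambient category: in your $(\Leftarrow)$ stacking, the middle terms supplied by the base case lie only in $\mathcal{N}\cap\mathcal{T}_1$ and those supplied by the inductive hypothesis only in $\mathcal{T}_1\cap\mathcal{M}$, whereas ${\rm dom.dim}\,\mathcal{C}'\geq d+1$ needs them in $\mathcal{N}\cap\mathcal{M}$. With a single $\mathcal{D}$ this is repaired by showing $\mathcal{T}_i\cap\mathcal{T}_j=\mathsf{add}\mathcal{D}$ for all $i<j$ (split the approximation triangle $X\rightarrowtail D\twoheadrightarrow Y$ using $\mathbb{E}(\mathcal{T}_{i+1},\mathcal{T}_j)=0$, as in Lemma \ref{basic lem on mutation}(2)), so that all middle terms may be taken in $\mathsf{add}\mathcal{D}=\mathcal{M}\cap\mathcal{N}$ --- but at that point you are simply concatenating the $\mathcal{D}$-triangles directly, which is the paper's non-inductive argument, and the induction buys nothing.

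There is a second gap in your $(\Rightarrow)$ step: ``truncating the original resolutions by one step shows that ${}^{\bot}\mathcal{M}\cap\mathcal{T}_1^{\bot}$ is $(d-1)$-Auslander'' is not automatic. Truncation does produce the dominant-dimension chains, but the bound ${\rm pd}\leq d$ for the smaller category does not follow from truncation and needs its own dimension-shift argument (and, for the assembled chain in $\mathcal{C}$ to satisfy the statement, you must also identify the projective-injectives $\mathcal{T}_1\cap\mathcal{M}$ of the truncated category with $\mathcal{N}\cap\mathcal{M}$, so that every stage mutates at the same $\mathcal{D}$). For comparison, the paper proceeds quite differently: the necessity is already contained in the proof of Proposition \ref{equi defn for d-Aus}, which builds the whole chain at once inside the $d$-Auslander category $\mathcal{C}'$ by repeatedly mutating at the fixed subcategory $\mathcal{D}=\mathcal{P}\cap\mathcal{I}$ of projective-injectives and identifies the final term with the injectives via \cite[Lemma 5.3]{AT22}; the sufficiency is a single global dimension shift along the concatenated $\mathcal{D}$-triangles showing $\mathbb{E}^{\geq d+2}(\mathcal{M}^{\vee},\mathcal{N}^{\wedge})=0$, with the dominant dimension read off from those same triangles because their middle terms lie in $\mathcal{D}\subseteq\mathcal{M}\cap\mathcal{N}$. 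If you want to keep your inductive framing, you must build the fixed-$\mathcal{D}$ bookkeeping and the pd-decrement into the induction explicitly; as written, the argument would not close.
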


First we need to explore the definition of $d$-Auslander extriangulated categories. Recently, Xiaofa Chen \cite{Chen Xiaofa} also defined $d$-Auslander exact dg categories.

\begin{defn}\label{defn of d-Aus} \cite{GNP23}
	$\mathcal{C}$ is called {\em $d$-Auslander} ($d\geq 0$) if it has enough projectives and ${\rm pd}~\mathcal{C}\leq d+1\leq {\rm dom.dim}\,\mathcal{C}$. 
\end{defn}

\begin{prop}\label{equi defn for d-Aus}
	The following are equivalent.
	
	(1) $\mathcal{C}$ has enough projectives and ${\rm pd}~\mathcal{C}\leq d+1\leq {\rm dom.dim}\,\mathcal{C}$.
	
	(2) $\mathcal{C}$ has enough injectives and ${\rm id}~\mathcal{C}\leq d+1\leq {\rm codom.dim}\,\mathcal{C}$.
\end{prop}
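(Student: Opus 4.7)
The plan is to prove $(1)\Rightarrow(2)$; the converse then follows by applying the same argument to the opposite extriangulated category $\mathcal{C}^{\mathrm{op}}$, which swaps projective with injective, dominant with codominant dimension, and ${\rm pd}$ with ${\rm id}$. So assume (1) throughout. The two main ingredients are dimension shift along projective resolutions (which exist because $\mathcal{C}$ has enough projectives) and repeated applications of the axiom (ET4).

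First, since $\mathbb{E}^{\geq 1}(P,-)=0$ for any projective $P$ (taking the trivial resolution $P=P$), the bound ${\rm pd}\,\mathcal{C}\leq d+1$ is equivalent to: every object $X$ admits a projective resolution of length $\leq d+1$, meaning $\mathbb{E}$-triangles $K_{i+1}\rightarrowtail P_i\twoheadrightarrow K_i\dashrightarrow$ for $0\leq i\leq d$ with $K_0=X$, each $P_i$ projective, and $K_{d+1}$ projective. Second, fix a projective $P$ with dominant dimension sequence $P\rightarrowtail I_0\twoheadrightarrow M_1$, $M_1\rightarrowtail I_1\twoheadrightarrow M_2$, \ldots, $M_d\rightarrowtail I_d\twoheadrightarrow M_{d+1}$, with each $I_k\in\mathcal{P}\cap\mathcal{I}$. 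Since each $I_k$ is injective and $\mathcal{C}$ has enough projectives, one obtains $\mathbb{E}^{\geq 1}(X,I_k)=0$ for every $X$ (by projective resolution of $X$). Iterating the first long exact sequence in Section \ref{section 2} gives $\mathbb{E}(X,M_{d+1})\cong\mathbb{E}^2(X,M_d)\cong\cdots\cong\mathbb{E}^{d+2}(X,P)$, which vanishes by the bound on ${\rm pd}$. Thus $M_{d+1}$ is injective for every projective $P$.

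The main step is to produce enough injectives. Given $X$, I combine its length-$(d+1)$ projective resolution with the $d+1$ dominant dimension sequences of $P_0,\dots,P_d$, and apply (ET4) iteratively to assemble a single $\mathbb{E}$-triangle $X\rightarrowtail J\twoheadrightarrow Y$ with $J$ projective-injective. Concretely, (ET4) applied to $K_1\rightarrowtail P_0\twoheadrightarrow X$ and $P_0\rightarrowtail I_{0,0}\twoheadrightarrow L_{0,1}$ yields $\mathbb{E}$-triangles $K_1\rightarrowtail I_{0,0}\twoheadrightarrow E$ and $X\rightarrowtail E\twoheadrightarrow L_{0,1}$; recursing on $K_1$ (which itself has ${\rm pd}\leq d$) and exploiting the dominant dimension sequences of $P_1,\dots,P_d$ together with further (ET4) assemblies, I produce the desired inflation of $X$ into a projective-injective. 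This Cartan--Eilenberg-style bookkeeping is the main obstacle of the proof, since in the extriangulated setting one cannot simply form a total complex of a bicomplex; every step must be realized by a concrete extriangulated assembly.

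Once enough injectives is established, ${\rm id}\,\mathcal{C}\leq d+1$ follows by the dual dimension-shift: $\mathbb{E}^{d+2}(-,-)=0$ can also be computed using injective coresolutions, giving the required bound on ${\rm id}$. Finally, for ${\rm codom.dim}\,\mathcal{C}\geq d+1$, the assignment $P\mapsto M_{P,d+1}$ descends to an equivalence between $\mathcal{P}$ and $\mathcal{I}$ modulo $[\mathcal{P}\cap\mathcal{I}]$ (a higher analogue of the $\Sigma$/$\Omega$ equivalence in Lemma \ref{properties of 0-Aus}). Hence every injective $J$ is of the form $M_{P,d+1}$ for some projective $P$, and re-reading the dominant dimension sequence for $P$ in reverse produces the codominant dimension sequence ending at $J$.
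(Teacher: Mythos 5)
There is a genuine gap, and as literally stated part of your main step is false. You claim to "produce the desired inflation of $X$ into a projective-injective": in any \emph{reduced} $d$-Auslander category this is impossible, since the projective-injectives are zero; for instance in $K^{[-d-1,0]}(\mathsf{proj}\Lambda)$ a conflation $X\rightarrowtail 0\twoheadrightarrow Y$ forces $Y\cong X[1]$, which lies in the window only for very special $X$. What is actually needed is an inflation of $X$ into an \emph{injective}, and even for that your sketch does not close: the single (ET4) application you write down gives $X\rightarrowtail E\twoheadrightarrow L_{0,1}$ with $E$ the cone of $K_{1}\rightarrowtail I_{0,0}$, and for $d\geq 1$ this $E$ need not be injective ($\mathbb{E}(Y,E)$ maps onto a subquotient of $\mathbb{E}^{2}(Y,K_{1})$, which only vanishes when ${\rm pd}\,K_{1}=0$, i.e.\ in the case $d=0$). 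Iterating does not obviously terminate either, because ${\rm pd}\,E\leq {\rm pd}\,K_{1}+1$ gives no decrease of the inductive invariant; the "Cartan--Eilenberg-style bookkeeping" you defer to is precisely the hard content of the proposition, and it is asserted rather than carried out. The same problem reappears at the end: the "higher analogue of the $\Sigma/\Omega$ equivalence" $P\mapsto M_{P,d+1}$ inducing an equivalence $\mathcal{P}/[\mathcal{P}\cap\mathcal{I}]\simeq\mathcal{I}/[\mathcal{P}\cap\mathcal{I}]$ --- in particular the essential surjectivity, i.e.\ that \emph{every} injective occurs (up to projective-injective summands) as a $(d+1)$-st cosyzygy of a projective --- is exactly what must be proved before one can "re-read the sequence in reverse" to get ${\rm codom.dim}\,\mathcal{C}\geq d+1$, and no argument is given. (Lemma \ref{properties of 0-Aus}(4) is only available for reduced $0$-Auslander categories and is itself a consequence of the $0$-Auslander structure, so citing it as an analogue is circular here.)

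For comparison, the paper closes these gaps with silting theory rather than bare (ET4) bookkeeping: it sets $\mathcal{D}=\mathcal{P}\cap\mathcal{I}$ and shows inductively that the successive mutations $\mathcal{T}_{i+1}=\mu^{L}(\mathcal{T}_{i};\mathcal{D})$ are silting subcategories whose objects admit coresolutions by $\mathcal{D}$ of the appropriate length ending in an injective (the (ET4) shuffle handling the summand discrepancy $M_{1}\oplus I_{0}'\cong M_{1}'\oplus I_{0}$ is the key computation), then identifies $\mathcal{T}_{d+1}=\mathcal{I}$ via \cite[Lemma 5.3]{AT22}, obtains enough injectives and ${\rm id}\,\mathcal{C}\leq d+1$ from the dual of \cite[Proposition 5.5]{AT22}, and reads off ${\rm codom.dim}\,\mathcal{C}\geq d+1$ from the fact that consecutive $\mathcal{T}_{i}$ are mutations of each other. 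Your correct ingredients (dimension shift showing the last cosyzygy $M_{d+1}$ is injective, the opposite-category reduction of (2)$\Rightarrow$(1), and the initial (ET4) step) are all compatible with that strategy, but by themselves they do not yield enough injectives, nor the codominant dimension bound; if you want a proof avoiding the silting machinery you must formulate and prove an explicit inductive statement about the cosyzygy classes of projectives, which is in effect reconstructing the paper's mutation chain by hand.
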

\begin{proof}
	The case $d=0$ is proved in \cite[Proposition 3.6]{GNP23}. Thus we assume $d\geq 1$. We only prove (1) implies (2). The other implication is dual. Denote by $\mathcal{P}$ (resp. $\mathcal{I}$) the subcategory of all projectives (resp. injectives). Let $\mathcal{D}=\mathcal{P}\cap \mathcal{I}$. Then $\mathcal{P}$ is a silting subcategory. By definition $\mathcal{T}_{1}:=\mu^{L}(\mathcal{P};\mathcal{D})$ exists, hence it is silting. We claim that $\mathcal{D}$ is a good covariantly finite subcategory of $\mathcal{T}_{1}$ and for each $M_{1}\in \mathcal{T}_{1}$, there are $\mathbb{E}$-triangles
	\[M_{1}\rightarrowtail I_{1}\twoheadrightarrow M_{2}\dashrightarrow \]
	\[\cdots\]
	\[M_{d}\rightarrowtail I_{d}\twoheadrightarrow I_{d+1}\dashrightarrow \]
	with $I_{k}\in \mathcal{D}$ for $1\leq k\leq d$ and $I_{d+1}\in \mathcal{I}$. Since $\mathcal{P}$ and $\mathcal{T}_{1}$ are mutations of each other, there is an $\mathbb{E}$-triangle $P\rightarrowtail I_{0}\twoheadrightarrow M_{1}\dashrightarrow $ with $P\in \mathcal{P}$ and $I_{0}\in \mathcal{D}$. By definition there are $\mathbb{E}$-triangles
	\[P\rightarrowtail I_{0}'\twoheadrightarrow M_{1}'\dashrightarrow \]
	\[M_{1}'\rightarrowtail I_{1}'\twoheadrightarrow M_{2}'\dashrightarrow \]
	\[\cdots\]
	\[M_{d}'\rightarrowtail I_{d}'\twoheadrightarrow I_{d+1}'\dashrightarrow \]
	with $I_{k}'\in \mathcal{D}$ for $0\leq k\leq d$ and $I_{d+1}'\in \mathcal{I}$. Since $M_{1}\oplus I_{0}'\cong M_{1}'\oplus I_{0}$, consider the following commutative diagram by (ET4)
	\[\begin{tikzcd}
		M_{1} \arrow[r,equal] \arrow[d,tail] & M_{1} \arrow[d,tail] & & \\
		M_{1}'\oplus I_{0} \arrow[r,tail] \arrow[d,two heads] & I_{1}'\oplus I_{0} \arrow[r,two heads] \arrow[d,two heads] & M_{2}' \arrow[d,equal] \arrow[r,dashed] & {} \\
		I_{0}' \arrow[r,tail] \arrow[d,dashed] & I_{0}'\oplus M_{2}' \arrow[r,two heads] \arrow[d,dashed] & M_{2}' \arrow[r,dashed] & {} \\
		{} & {} & &
	\end{tikzcd}.\]
    Then we obtain $d$ $\mathbb{E}$-triangles
    \[M_{1}\rightarrowtail I_{1}'\oplus I_{0}\twoheadrightarrow I_{0}'\oplus M_{2}'\dashrightarrow\]
    \[M_{2}'\oplus I_{0}'\rightarrowtail I_{2}'\oplus I_{0}'\twoheadrightarrow M_{3}'\dashrightarrow \]
    \[M_{3}'\rightarrowtail I_{3}'\twoheadrightarrow M_{4}'\dashrightarrow \]
    \[\cdots\]
    \[M_{d}'\rightarrowtail I_{d}'\twoheadrightarrow I_{d+1}'\dashrightarrow .\]
    Thus the claim follows. Inductively we obtain a finite sequence of silting subcategories $\mathcal{P}=\mathcal{T}_{0}\geq \mathcal{T}_{1}\geq \cdots \geq \mathcal{T}_{d}\geq \mathcal{T}_{d+1}\subseteq \mathcal{I}$ such that $\mathcal{T}_{i+1}=\mu^{L}(\mathcal{T}_{i};\mathcal{D})$ for $0\leq i\leq d$. Then $\mathcal{T}_{d+1}=\mathcal{I}$ by \cite[Lemma 5.3]{AT22}. Thus $\mathcal{I}$ is silting, hence $\mathcal{C}$ has enough injectives and ${\rm id}\,\mathcal{C}\leq d+1$ by the dual of \cite[Proposition 5.5]{AT22}. Since $\mathcal{T}_{i+1}$ and $\mathcal{T}_{i}$ are mutations of each other, ${\rm codom.dim}\,\mathcal{C}\geq d+1$ follows immediately.
\end{proof}

\begin{proof}[Proof of Theorem \ref{d-Auslander vs mutation}]
	In Proposition \ref{equi defn for d-Aus}, we actually prove the necessity by Theorem \ref{main thm_1}. For sufficiency, by Corollary \ref{main cor} and Theorem \ref{AT22thm5.7}, it suffices to show $\mathbb{E}^{\geq d+2}(\mathcal{M}^{\vee},\mathcal{N}^{\wedge})=0$. For each $N\in \mathcal{N}$, there are $\mathbb{E}$-triangles
	\[N\rightarrowtail D_{0}\twoheadrightarrow T_{1}\dashrightarrow \]
	\[T_{1}\rightarrowtail D_{1}\twoheadrightarrow T_{2}\dashrightarrow \]
	\[\cdots\]
	\[T_{d}\rightarrowtail D_{d}\twoheadrightarrow M\dashrightarrow \]
	with $D_{k}\in \mathcal{D}$ for $0\leq k\leq d$ and $M\in \mathcal{M}$. Applying the functor ${\rm Hom}_{\mathcal{C}}(\mathcal{M}^{\vee},-)$, we obtain for $k\geq 0$,  $\mathbb{E}^{d+k+2}(\mathcal{M}^{\vee},N)\cong \mathbb{E}^{d+k+1}(\mathcal{M}^{\vee},T_{1})\cong \cdots \cong \mathbb{E}^{k+2}(\mathcal{M}^{\vee},T_{d})=0$ by the proof of Theorem \ref{main thm: new 0-Aus}. By the same argument as that of Theorem \ref{main thm: new 0-Aus}, we obtain $\mathbb{E}^{\geq d+2}(\mathcal{M}^{\vee},\mathcal{N}^{\wedge})=0$.
\end{proof}

\begin{prop}
	Let $\mathcal{T}_{i}\in \mathsf{silt}\,\mathcal{C},0\leq i\leq 2$ such that for $i=0,1$, $\mathcal{T}_{i+1}=\mu^{L}(\mathcal{T}_{i};\mathcal{D})$ for a good covariantly finite subcategory $\mathcal{D}$ of $\mathcal{T}_{0},\mathcal{T}_{1}$.
	
	(1) There are mutually inverse isomorphisms of posets
	\begin{align*}
		\mathsf{silt}\,[\mathcal{T}_{1},\mathcal{T}_{0}] & \leftrightarrow \mathsf{silt}\,[\mathcal{T}_{2},\mathcal{T}_{1}] \\
		\mathcal{N} & \mapsto \mu^{L}(\mathcal{N};\mathcal{D}) \\
		\mu^{R}(\mathcal{M};\mathcal{D}) & \mapsfrom \mathcal{M}.
	\end{align*}

    (2) Let $\mathcal{M},\mathcal{N}\in \mathsf{silt}\,[\mathcal{T}_{1},\mathcal{T}_{0}]$ and $\mathcal{D}'=\mathcal{M}\cap \mathcal{N}$, $\mathcal{D}''=\mu^{L}(\mathcal{M};\mathcal{D})\cap \mu^{L}(\mathcal{N};\mathcal{D})$. Then $\mathcal{M}=\mu^{L}(\mathcal{N};\mathcal{D}')$ if and only if $\mu^{L}(\mathcal{M};\mathcal{D})=\mu^{L}(\mu^{L}(\mathcal{N};\mathcal{D});\mathcal{D}'')$.
    
    (3) If $\mathcal{C}$ is $k$-linear ($k$ is a field), Hom-finite and Krull-Schmidt and $\mathcal{D}=(\mathcal{T}_{0})_{X}$ for an $X\in \mathcal{T}_{0}$. Let $A:=\frac{\rm End_{\mathcal{C}}(X)}{[\mathcal{D}](X,X)}$. There is a commutative diagram of bijections
    \[\begin{tikzcd}
    	\mathsf{silt}\,[\mathcal{T}_{1},\mathcal{T}_{0}] \arrow[rr,leftrightarrow,"(1)"] \arrow[dr] & & \mathsf{silt}\,[\mathcal{T}_{2},\mathcal{T}_{1}] \arrow[dl] \\
    	& s\tau\text{-}\mathsf{tilt}A &
    \end{tikzcd}\]
    where the left and right maps are given by Theorem \ref{main thm_1}, Lemma \ref{silting bijection} and Corollary \ref{silt-tau-tilt}.
\end{prop}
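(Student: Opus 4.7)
The backbone is Theorem \ref{main thm: new 0-Aus} combined with Theorem \ref{main thm_1}: for $i=1,2$, the subcategory $\mathcal{C}_{i} := {^{\bot}\mathcal{T}_{i}}\cap \mathcal{T}_{i-1}^{\bot}$ is 0-Auslander with projectives $\mathcal{T}_{i-1}$, injectives $\mathcal{T}_{i}$, and projective-injectives $\mathcal{D}=\mathcal{T}_{0}\cap \mathcal{T}_{1}=\mathcal{T}_{1}\cap \mathcal{T}_{2}$ (the last equality by Lemma \ref{basic lem on mutation}(2)), together with poset isomorphisms $\mathsf{silt}\,[\mathcal{T}_{i},\mathcal{T}_{i-1}]\cong \mathsf{silt}\,\mathcal{C}_{i}$.

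For (1), first verify $\mu^{L}(\mathcal{N};\mathcal{D})$ is well-defined for every $\mathcal{N}\in \mathsf{silt}\,[\mathcal{T}_{1},\mathcal{T}_{0}]$. Since $\mathcal{D}\subseteq {^{\bot}\mathcal{N}}\cap \mathcal{N}^{\bot}=\mathcal{N}$, it remains to show $\mathcal{D}$ is good covariantly finite in $\mathcal{N}\subseteq \mathcal{C}_{1}$: for each $N\in \mathcal{N}$, take an injective envelope $N\rightarrowtail I\twoheadrightarrow K\dashrightarrow$ in $\mathcal{C}_{1}$ with $I\in \mathcal{T}_{1}$, then a good left $\mathcal{D}$-approximation $I\rightarrowtail D\twoheadrightarrow T\dashrightarrow$ supplied by the hypothesis on $\mathcal{T}_{1}$; the composite $N\rightarrowtail D$ is an inflation, and any $N\to D'$ with $D'\in \mathcal{D}$ factors as $N\to I\to D\to D'$ using injectivity of $D'$ in $\mathcal{C}_{1}$. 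Writing $\mu^{L}(\mathcal{N};\mathcal{D})=\mathsf{add}(\mathcal{D}\cup \{M_{N}\mid N\in \mathcal{N}\})$ from $N\rightarrowtail D_{N}\twoheadrightarrow M_{N}\dashrightarrow$, the bound $\mu^{L}(\mathcal{N};\mathcal{D})\leq \mathcal{T}_{1}$ follows by applying ${\rm Hom}(\mathcal{T}_{1},-)$ with $\mathbb{E}^{\geq 1}(\mathcal{T}_{1},\mathcal{N})=0$; for $\mathcal{T}_{2}\leq \mu^{L}(\mathcal{N};\mathcal{D})$, apply ${\rm Hom}(-,\mathcal{T}_{2})$ and reduce to showing every $N\to T_{2}$ with $T_{2}\in \mathcal{T}_{2}$ factors through some $D\in \mathcal{D}$, which is obtained by applying ${\rm Hom}(N,-)$ to the defining triangle $T_{1}\rightarrowtail D\twoheadrightarrow T_{2}\dashrightarrow$ of $\mathcal{T}_{2}=\mu^{L}(\mathcal{T}_{1};\mathcal{D})$ together with $\mathbb{E}(\mathcal{N},\mathcal{T}_{1})=0$. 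Lemma \ref{basic lem on mutation}(4) then supplies $\mu^{R}\circ \mu^{L}=\mathrm{id}$, and a parallel ${\rm Hom}/\mathbb{E}$-chase (for $\mathcal{N}_{1}\leq \mathcal{N}_{2}$, every $N_{2}\to M_{N_{1}}$ factors through $D_{N_{1}}\in \mathcal{D}$ via $\mathbb{E}(\mathcal{N}_{2},\mathcal{N}_{1})=0$ and hence through $D_{N_{2}}$) gives order preservation.

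For (2), both conditions are equivalent via Theorem \ref{main thm: new 0-Aus} to an order condition together with a 0-Auslander condition on the subcategories ${^{\bot}\mathcal{M}}\cap \mathcal{N}^{\bot}$ and ${^{\bot}\mu^{L}(\mathcal{M};\mathcal{D})}\cap \mu^{L}(\mathcal{N};\mathcal{D})^{\bot}$; the order conditions match by (1). To transfer the 0-Auslander condition, the plan is to construct the required defining triangles $N^{*}\rightarrowtail D''\twoheadrightarrow M^{*}\dashrightarrow$ in $\mathsf{silt}\,[\mu^{L}(\mathcal{M};\mathcal{D}),\mu^{L}(\mathcal{N};\mathcal{D})]$ from the given $N\rightarrowtail D'\twoheadrightarrow M\dashrightarrow$ in $\mathsf{silt}\,[\mathcal{M},\mathcal{N}]$ via an (ET4)-type diagram chase, using the explicit injective-envelope-plus-$\mathcal{T}_{1}$-approximation construction of the mutation from (1). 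For (3), compose the bijections from Theorem \ref{main thm_1}, Lemma \ref{silting bijection} and Corollary \ref{silt-tau-tilt} to obtain $\mathsf{silt}\,[\mathcal{T}_{i},\mathcal{T}_{i-1}]\cong \mathsf{silt}\,\widetilde{\mathcal{C}_{i}} \leftrightarrow s\tau\text{-}\mathsf{tilt}\,A_{i}$; with $\mathcal{D}=(\mathcal{T}_{0})_{X}$ the identification $A_{1}\cong A$ is immediate, and the equivalence $\Sigma:\widetilde{\mathcal{T}_{0}}\to \widetilde{\mathcal{T}_{1}}$ in the reduced 0-Auslander category $\widetilde{\mathcal{C}_{1}}$ (Lemma \ref{properties of 0-Aus}(4)) sending $\widetilde{X}\mapsto \widetilde{M_{X}}$ induces $A_{2}\cong A_{1}$, since endomorphism rings of $M_{X}$ modulo $[\mathcal{D}]$ agree whether computed in $\widetilde{\mathcal{C}_{1}}$ or $\widetilde{\mathcal{C}_{2}}$. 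Commutativity of the triangle is a direct trace through these bijections.

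The main obstacle is (2), where transferring the 0-Auslander property across the mutation bijection requires carefully extracting the new $\mathcal{D}''$-approximation triangles from the old $\mathcal{D}'$-approximation triangles.
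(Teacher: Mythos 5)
Your treatment of (1) is correct and essentially the paper's own argument: you build the left $\mathcal{D}$-approximation of $N\in\mathcal{N}$ by composing the injective copresentation $N\rightarrowtail T_1$ in ${^{\bot}\mathcal{T}_{1}}\cap\mathcal{T}_{0}^{\bot}$ with the good $\mathcal{D}$-approximation of $T_1$, then run the same ${\rm Hom}/\mathbb{E}$-chases for the bounds and order preservation and quote Lemma \ref{basic lem on mutation}(4) (with its dual handling the $\mu^{R}$ direction), exactly as in the paper.

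For (2) and (3), however, the proposal stops short of the actual content. In (2) your reformulation via Theorem \ref{main thm: new 0-Aus} (both conditions $\Leftrightarrow$ order plus 0-Auslanderness of ${^{\bot}\mathcal{M}}\cap\mathcal{N}^{\bot}$, resp. of ${^{\bot}\mu^{L}(\mathcal{M};\mathcal{D})}\cap\mu^{L}(\mathcal{N};\mathcal{D})^{\bot}$) is legitimate, but it is logically equivalent to what must be proved directly: one still has to produce, for every $X\in\mu^{L}(\mathcal{N};\mathcal{D})$, a conflation $X\rightarrowtail Z''\twoheadrightarrow Y\dashrightarrow$ with $Z''\in\mathcal{D}''$ a left $\mathcal{D}''$-approximation and $Y\in\mu^{L}(\mathcal{M};\mathcal{D})$, and this is precisely the step you leave as ``an (ET4)-type diagram chase''. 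The paper's proof shows this is not routine: it needs two (ET4) diagrams, the uniqueness of mutation cones up to $\mathcal{D}$-summands to get $X\oplus D_{0}\cong X'\oplus D$, the triangle-recombination trick from the proof of Proposition \ref{equi defn for d-Aus} to replace the conflation starting at $X'$ by one starting at $X$ itself (namely $X\rightarrowtail Z\oplus D\twoheadrightarrow Y\oplus D_{0}\dashrightarrow$), and finally \cite[Lemma 5.3]{AT22} to upgrade the inclusion $\mu^{L}(\mu^{L}(\mathcal{N};\mathcal{D});\mathcal{D}'')\subseteq\mu^{L}(\mathcal{M};\mathcal{D})$ to equality; the converse is then done dually. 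None of this is supplied, so the heart of (2) is missing (as you yourself flag). Similarly in (3), identifying $A\cong \frac{{\rm End}_{\mathcal{C}}(Y)}{[\mathcal{D}](Y,Y)}$ via $\Sigma$ of Lemma \ref{properties of 0-Aus}(4) matches the paper, but calling the commutativity ``a direct trace'' hides the real point: one must exhibit a right $A$-module isomorphism ${\rm Hom}_{\mathcal{C}/[\mathcal{D}]}(X,N)\cong{\rm Hom}_{\mathcal{C}/[\mathcal{D}]}(Y,M_{N})$ for every $N$ in the interval, which the paper obtains by a unique-completion argument for morphisms $(\widetilde{a},0,\widetilde{c})$ of conflations in the reduced $1$-Auslander category $({^{\bot}\mathcal{T}_{2}}\cap\mathcal{T}_{0}^{\bot})/[\mathcal{D}]$; this comparison of the two composite maps is absent from your sketch and does not follow formally from the bijections alone.
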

\begin{proof}
	(1) Let $\mathcal{C}':={^{\bot}\mathcal{T}_{1}}\cap \mathcal{T}_{0}^{\bot}$ and $\mathcal{N}\in \mathsf{silt}\,[\mathcal{T}_{1},\mathcal{T}_{0}]$. Then $\mathcal{C}'$ is 0-Auslander and $\mathcal{N}\in \mathsf{silt}\,\mathcal{C}'$ by Theorem \ref{main thm: new 0-Aus} and \ref{main thm_1} and $\mathcal{D}\subseteq \mathcal{N}$. For each $N\in \mathcal{N}$, there is a commutative diagram by (ET4)
	\[\begin{tikzcd}
		N \arrow[r,tail] \arrow[d,equal] & T_{1} \arrow[r,two heads] \arrow[d,tail] & T_{1}' \arrow[r,dashed] \arrow[d,tail] & {} \\
		N \arrow[r,tail,"f"] & D \arrow[r,two heads] \arrow[d,two heads] & M \arrow[r,dashed] \arrow[d,two heads] & {} \\
		& T_{2} \arrow[r,equal] \arrow[d,dashed] & T_{2} \arrow[d,dashed] & \\
		& {} & {} &
	\end{tikzcd}\]
    where the first row is the injective copresentation in $\mathcal{C}'$ (that is, $T_{1},T_{1}'\in \mathcal{T}_{1}$) and the second column is an $\mathbb{E}$-triangle with $D\in \mathcal{D}$ and $T_{2}\in \mathcal{T}_{2}$. Then $f$ is a left $\mathcal{D}$-approximation. Hence $\mu^{L}(\mathcal{N};\mathcal{D})$ exists. By the third column, we obtain $\mu^{L}(\mathcal{N};\mathcal{D})\in \mathsf{silt}\,[\mathcal{T}_{2},\mathcal{T}_{1}]$. Assume $\mathcal{N}_{1},\mathcal{N}_{2}\in \mathsf{silt}\,[\mathcal{T}_{1},\mathcal{T}_{0}]$ such that $\mathcal{N}_{1}\geq \mathcal{N}_{2}$. Then there are $\mathbb{E}$-triangles $N_{1}\stackrel{f_{1}}\rightarrowtail D_{1}\twoheadrightarrow M_{1}\dashrightarrow $ and $N_{2}\stackrel{f_{2}}\rightarrowtail D_{2}\twoheadrightarrow M_{2}\dashrightarrow $ where $N_{1}\in \mathcal{N}_{1},N_{2}\in \mathcal{N}_{2},D_{1},D_{2}\in \mathcal{D}$ and $f_{1},f_{2}$ are left $\mathcal{D}$-approximations. Applying ${\rm Hom}_{\mathcal{C}}(-,N_{2})$ to the first one, we obtain $\mathbb{E}^{\geq 2}(M_{1},N_{2})=0$. Applying ${\rm Hom}_{\mathcal{C}}(M_{1},-)$ to the second one, we obtain $\mathbb{E}^{\geq 1}(M_{1},M_{2})=0$. Hence $\mu^{L}(\mathcal{N}_{1};\mathcal{D})\geq \mu^{L}(\mathcal{N}_{2};\mathcal{D})$. The other direction is similar.
    
    (2) Clearly $\mathcal{D}\subseteq \mathcal{D}'$ and $\mathcal{D}''=\mu^{L}(\mathcal{D}';\mathcal{D})$. For each $X\in \mu^{L}(\mathcal{N};\mathcal{D})$, we have an $\mathbb{E}$-triangle $N\rightarrowtail D\twoheadrightarrow X\dashrightarrow $ with $N\in \mathcal{N},D\in \mathcal{D}$. There is a commutative diagram by (ET4)
    \[\begin{tikzcd}
    	N \arrow[r,tail] \arrow[d,equal] & D' \arrow[r,two heads] \arrow[d,tail] & M \arrow[d,tail] \arrow[r,dashed] & {} \\
    	N \arrow[r,tail] & D_{0} \arrow[r,two heads] \arrow[d,two heads] & X' \arrow[d,two heads] \arrow[r,dashed] & {} \\
    	& D'' \arrow[r,equal] \arrow[d,dashed] & D'' \arrow[d,dashed] & \\
    	& {} & {} &
    \end{tikzcd}\]
    where the first row and the second column are $\mathbb{E}$-triangles with $D'\in \mathcal{D}',M\in \mathcal{M},D_{0}\in \mathcal{D},D''\in \mathcal{D}''$. Then the second row is an $\mathbb{E}$-triangle with $X'\in \mu^{L}(\mathcal{N};\mathcal{D})$. Thus we have $X\oplus D_{0}\cong X'\oplus D$. Consider the following commutative diagram
    \[\begin{tikzcd}
    	M \arrow[r,tail] \arrow[d,tail] & X' \arrow[r,two heads] \arrow[d,tail] & D'' \arrow[r,dashed] \arrow[d,equal] & {} \\
    	D_{1} \arrow[r,tail] \arrow[d,two heads] & Z \arrow[r,two heads] \arrow[d,two heads] & D'' \arrow[r,dashed] & {} \\
    	Y \arrow[r,equal] \arrow[d,dashed] & Y \arrow[d,dashed] & & \\
    	{} & {} & &
    \end{tikzcd}\]
    where the first column is an $\mathbb{E}$-triangle with $D_{1}\in \mathcal{D}$ and $Y\in \mu^{L}(\mathcal{M};\mathcal{D})$. Then the second row splits and hence $Z\cong D_{1}\oplus D''\in \mathcal{D}''$. Consider the $\mathbb{E}$-triangle in the second column, as in the proof of Proposition \ref{equi defn for d-Aus}, we can obtain another $\mathbb{E}$-triangle $X\rightarrowtail Z\oplus D \twoheadrightarrow Y\oplus D_{0}\dashrightarrow$. Then $\mu^{L}(\mu^{L}(\mathcal{N};\mathcal{D}),\mathcal{D}'')$ exists and is a subcategory of $\mu^{L}(\mathcal{M};\mathcal{D})$. By \cite[Lemma 5.3]{AT22}, they are equal. Dually we obtain the other half. Thus the result follows.
        
    (3) Take an $\mathbb{E}$-triangle $X\stackrel{f}\rightarrowtail D\twoheadrightarrow Y\dashrightarrow$ with $D\in \mathcal{D}$ and $Y\in \mathcal{T}_{1}$. Let $A':=\frac{\rm End_{\mathcal{C}}(Y)}{[\mathcal{D}](Y,Y)}$. Since the subcategory ${^{\bot}\mathcal{T}_{1}}\cap \mathcal{T}_{0}^{\bot}$ is 0-Auslander by Theorem \ref{main thm: new 0-Aus}, we obtain $A\cong A'$ by Lemma \ref{properties of 0-Aus} (4). For $\mathcal{N}\in \mathsf{silt}\,[\mathcal{T}_{1},\mathcal{T}_{0}]$, let $N\in \mathcal{N}$ such that $\mathcal{N}=\mathsf{add}N$ in $({^{\bot}\mathcal{T}_{1}}\cap \mathcal{T}_{0}^{\bot})/[\mathcal{D}]$. Take an $\mathbb{E}$-triangle $N\stackrel{g}\rightarrowtail D_{0}\twoheadrightarrow M\dashrightarrow$ with $D_{0}\in \mathcal{D},M\in \mu^{L}(\mathcal{N};\mathcal{D})$. It suffices to show ${\rm Hom}_{\mathcal{C}/[\mathcal{D}]}(X,N)\cong {\rm Hom}_{\mathcal{C}/[\mathcal{D}]}(Y,M)$ as right $A$-modules. In the reduced 1-Auslander extriangulated category $\widetilde{\mathcal{C}'}:=({^{\bot}\mathcal{T}_{2}}\cap \mathcal{T}_{0}^{\bot})/[\mathcal{D}]$, consider a morphism of $\mathbb{E}_{\widetilde{\mathcal{C}'}}$-triangles
    \[\begin{tikzcd}
    	X \arrow[r,tail,"\widetilde{f}"] \arrow[d,"\widetilde{a}"] & D \arrow[r,two heads] \arrow[d] & Y \arrow[r,dashed] \arrow[d,"\widetilde{c}"] & {} \\
    	N \arrow[r,tail,"\widetilde{g}"] & D_{0} \arrow[r,two heads] & M \arrow[r,dashed] & {}.
    \end{tikzcd}\]
    For each $\widetilde{a}\in {\rm Hom}_{\mathcal{C}/[\mathcal{D}]}(X,N)$ there is a unique $\widetilde{c}\in {\rm Hom}_{\mathcal{C}/[\mathcal{D}]}(Y,M)$ such that $(\widetilde{a},0,\widetilde{c})$ is a morphism of $\mathbb{E}_{\widetilde{\mathcal{C}'}}$-triangles and vice versa. Thus ${\rm Hom}_{\mathcal{C}/[\mathcal{D}]}(X,N)\cong {\rm Hom}_{\mathcal{C}/[\mathcal{D}]}(Y,M)$ and it is a right $A$-module isomorphism.
\end{proof}

The following is a direct consequence.

\begin{cor}
	Assume $\mathcal{C}$ is a $k$-linear ($k$ is a field), Hom-finite, Krull-Schmidt and $d$-Auslander ($d\geq 0$) with projectives $\mathcal{P}$ and injectives $\mathcal{I}$. Let $\mathcal{D}:=\mathcal{P}\cap \mathcal{I}$ and $\widetilde{\mathcal{C}}:=\mathcal{C}/[\mathcal{D}]$. If $\mathcal{P}=\mathsf{add}P$ in $\widetilde{\mathcal{C}}$ for an object $P$. Let $A:={\rm End}_{\widetilde{\mathcal{C}}}(P)$. Then $\#\mathsf{silt}\,\mathcal{C}\geq (d+1)\#s\tau\text{-}\mathsf{tilt}A-d$, where $\#(-)$ denote the cardinality of a set.
\end{cor}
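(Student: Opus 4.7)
The plan is to exhibit a chain of silting subcategories in $\mathcal{C}$ whose $d+1$ adjacent intervals all contain $\#s\tau\text{-}\mathsf{tilt}\,A$ elements and whose pairwise overlaps are controlled, then lower-bound $\#\mathsf{silt}\,\mathcal{C}$ by a straightforward count. The first step is to invoke (the constructive proof of) Proposition \ref{equi defn for d-Aus}: the $d$-Auslander hypothesis produces a sequence
\[\mathcal{P}=\mathcal{T}_{0}\geq \mathcal{T}_{1}\geq \cdots \geq \mathcal{T}_{d+1}=\mathcal{I}\]
of silting subcategories with $\mathcal{T}_{i+1}=\mu^{L}(\mathcal{T}_{i};\mathcal{D})$ for every $0\leq i\leq d$, where $\mathcal{D}$ is good covariantly finite in each $\mathcal{T}_{i}$.

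By Krull-Schmidt I will choose a lift $P'\in\mathcal{C}$ of $P$ so that $\mathcal{P}=\mathsf{add}(P'\oplus D_{0})$ with $\mathcal{D}=\mathsf{add}D_{0}$ and no indecomposable summand of $P'$ lies in $\mathcal{D}$; then $\mathcal{D}=(\mathcal{T}_{0})_{P'}$ and $A\cong\frac{\mathrm{End}_{\mathcal{C}}(P')}{[\mathcal{D}](P',P')}$. Part (3) of the Proposition immediately preceding this corollary then furnishes a bijection $\mathsf{silt}\,[\mathcal{T}_{1},\mathcal{T}_{0}]\leftrightarrow s\tau\text{-}\mathsf{tilt}\,A$, while part (1) of the same Proposition, applied successively to the triples $(\mathcal{T}_{i},\mathcal{T}_{i+1},\mathcal{T}_{i+2})$, yields poset isomorphisms $\mathsf{silt}\,[\mathcal{T}_{i+1},\mathcal{T}_{i}]\cong \mathsf{silt}\,[\mathcal{T}_{i+2},\mathcal{T}_{i+1}]$. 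Composing these identifies each of the $d+1$ intervals $\mathsf{silt}\,[\mathcal{T}_{i+1},\mathcal{T}_{i}]$ with $s\tau\text{-}\mathsf{tilt}\,A$ as a set, so each has cardinality $\#s\tau\text{-}\mathsf{tilt}\,A$.

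The corollary then follows by counting $\bigl|\bigcup_{i=0}^{d}\mathsf{silt}\,[\mathcal{T}_{i+1},\mathcal{T}_{i}]\bigr|$. If $P=0$ the bound is trivial, so I may assume $P\neq 0$; then $A\neq 0$, and the top and bottom of each $\mathsf{silt}\,[\mathcal{T}_{i+1},\mathcal{T}_{i}]$ correspond to distinct elements of $s\tau\text{-}\mathsf{tilt}\,A$, forcing the chain $\mathcal{T}_{0}>\mathcal{T}_{1}>\cdots>\mathcal{T}_{d+1}$ to be strict. Using the poset order on $\mathsf{silt}\,\mathcal{C}$, consecutive intervals meet precisely in $\{\mathcal{T}_{i+1}\}$ and non-consecutive intervals are disjoint (any $\mathcal{S}$ in $\mathsf{silt}\,[\mathcal{T}_{j+1},\mathcal{T}_{j}]$ with $j\geq i+2$ satisfies $\mathcal{S}\leq \mathcal{T}_{j}\leq \mathcal{T}_{i+2}<\mathcal{T}_{i+1}$). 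Inclusion-exclusion then gives the desired inequality $\#\mathsf{silt}\,\mathcal{C}\geq (d+1)\#s\tau\text{-}\mathsf{tilt}\,A-d$.

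The main obstacle I anticipate is the bookkeeping in the second step: part (3) of the preceding Proposition is stated for $\mathsf{silt}\,[\mathcal{T}_{1},\mathcal{T}_{0}]$ with an algebra attached to a specific $X\in \mathcal{T}_{0}$, so I must verify that the algebras associated to each subsequent interval remain isomorphic to $A$ as one iterates along the chain. This identification is already built into the proof of that part via Lemma \ref{properties of 0-Aus}(4) applied to the 0-Auslander subcategory ${}^{\bot}\mathcal{T}_{i+1}\cap \mathcal{T}_{i}^{\bot}$; once this transport of $A$ along the chain is secured, the inclusion-exclusion count is routine.
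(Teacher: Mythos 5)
Your proposal is correct and follows exactly the route the paper intends when it calls this a ``direct consequence'': extract the mutation chain $\mathcal{P}=\mathcal{T}_0\geq\cdots\geq\mathcal{T}_{d+1}=\mathcal{I}$ from the proof of Proposition \ref{equi defn for d-Aus}, identify every interval $\mathsf{silt}\,[\mathcal{T}_{i+1},\mathcal{T}_i]$ with $s\tau\text{-}\mathsf{tilt}A$ via parts (1) and (3) of the preceding Proposition (equivalently Theorem \ref{main thm_1}, Lemma \ref{silting bijection} and Corollary \ref{silt-tau-tilt}), and count the glued chain of intervals. The only blemish is inessential: you do not need (and cannot in general assume) $\mathcal{D}=\mathsf{add}D_0$ for a single object $D_0$; all that is required is $\mathcal{D}=(\mathcal{T}_0)_{P'}$ for a suitable lift $P'$ of $P$, which your Krull--Schmidt argument already provides.
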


\subsection*{Acknowledgments} The authors would like to thank Yifei Cheng for useful discussions. The work is supported partially by National Natural Science Foundation of China Grant No. 12031007, 12371034.

\bibliographystyle{plain}

\end{document}